\newcommand{\R}{\mathbb{R}}
\DeclareMathOperator{\tr}{tr}
\DeclareMathOperator{\End}{End}
\newtheoremstyle{mythm}
{}
{}
{\slshape}
{}
{\bfseries\sffamily}
{.}
{ }
{}
\newtheoremstyle{mydef}
{}
{}
{}
{}
{\bfseries\sffamily}
{.}
{ }
{}
\theoremstyle{mythm}
\newtheorem{thm}{Theorem}[section]
\newtheorem{prop}[thm]{Proposition}
\newtheorem{cor}[thm]{Corollary}
\newtheorem{lem}[thm]{Lemma}
\theoremstyle{mydef}
\newtheorem{mydef}[thm]{Definition}
\newtheorem{rem}[thm]{Remark}
\newtheorem{ex}[thm]{Example}
\apptocmd{\sloppy}{\hbadness 10000\relax}{}{}
\title{Classification of generalized Einstein metrics on 3-dimensional Lie groups}
\author{V.\ Cort\'es}
\author{D.\ Krusche}
\affil{\normalsize Fachbereich Mathematik\hfill \protect\\  Universit\"at Hamburg\hfill \protect\\  
Bundesstra\ss e 55, D-20146 Hamburg, Germany\hfill \protect\\  
\texttt{vicente.cortes@uni-hamburg.de, david.krusche@uni-hamburg.de}  \vspace{1em} }
\date{\today}
\begin{document}
\maketitle

\begin{abstract}
We develop the theory of left-invariant generalized pseudo-Riemannian metrics on Lie groups. 
Such a metric accompanied by a choice of left-invariant divergence operator 
gives rise to a Ricci curvature tensor and we study the corresponding Einstein equation. 
We compute the Ricci tensor in terms of the tensors (on the sum of the Lie algebra and its dual) 
encoding the Courant algebroid structure, the generalized metric and the divergence operator. 
The resulting expression is  polynomial and homogeneous of degree two in the coefficients 
of the Dorfman bracket and the divergence operator with respect to a left-invariant orthonormal 
basis for the generalized metric. We determine all generalized Einstein metrics on three-dimensional Lie groups. 

	\par
	\emph{Keywords: generalized Einstein metrics}\par
	\emph{MSC classification: 53D18, 53C25}
\end{abstract}

\clearpage
\setcounter{tocdepth}{2}
\tableofcontents

\section{Introduction}
Generalized geometry was proposed by Hitchin \cite{H} 
as a framework unifying complex and symplectic structures. 
The two latter can be viewed as particular instances of the notion of a generalized 
complex structure, the theory of which was developed in \cite{Gualtieri,Gualtieri2} 
including a geometrization of Barannikov's and Kontsevich's extended deformation theory. 

Similarly, pseudo-Riemannian metrics have a fruitful counterpart in generalized geometry,  
which can be used, for instance, to unify and geometrize the structures involved in type II
supergravity \cite{CSW}.  A generalized pseudo-Riemannian metric together with a divergence operator
is indeed sufficient to define a notion of generalized Ricci curvature and thus to 
pose a generalized Einstein equation as the vanishing of the generalized Ricci curvature \cite{GSt}. 
In the context of supergravity and string theory the divergence operator is related to the dilaton field, which is 
itself subject to a field equation. 

A generalized geometry formulation of minimal six-dimensional supergravity has been given in \cite{GS} 
with a particular case of the generalized Einstein equation as the main bosonic equation of motion.  
It would be interesting to classify left-invariant solutions on six-dimensional Lie groups
using the theory developed in our present work. 
We note that by taking, for instance, the product of a pair of three-dimensional generalized Einstein Lie groups (as defined below in the introduction  and classified in our paper) we obtain a six-dimensional generalized Einstein Lie group. 
If one imposes, in addition, a self-duality condition on the three-form, one arrives at (decomposable) solutions of the equation of motion
mentioned above. Other (indecomposable) solutions on products of three-dimensional Lie groups have been constructed in  \cite{MS}. Examples of invariant Ricci-flat Bismut connections 
on compact homogeneous Riemannian manifolds 
have been constructed in \cite{GSt,PR1,PR2}. They include non Bismut-flat examples \cite{PR1,PR2} and give rise to invariant positive definite solutions of the generalized Einstein equation with Riemannian divergence operator.

In this paper we focus on left-invariant generalized pseudo-Riemannian metrics on Lie groups $G$. 
We develop the theory on arbitrary Lie groups in Section~\ref{2ndSec} and, based on that 
theory, provide a complete classification of left-invariant solutions of the generalized Einstein equation on three-dimensional Lie groups in 
Section~\ref{class:sec}. 

First we show in Proposition~\ref{NF_metric:prop} that, up to an  isomorphism, the generalized metric $\mathcal{G}$ and the Courant algebroid structure are encoded in a pair $(g,H)$ consisting of a left-invariant pseudo-Riemannian metric $g$ and a left-invariant closed three-form $H$ on $G$. Then we describe the space of left-invariant torsion-free and metric 
generalized connections $D$ on $(G,\mathcal{G}_g,H)$ as a finite-dimensional affine space modeled 
on the generalized first prolongation of $\mathfrak{so}(\mathfrak{g}\oplus \mathfrak{g}^*)$ in Proposition~\ref{LC:prop},
where $\mathcal{G}_g$ denotes the generalized metric determined by $g$. Such generalized connections $D$ are called 
left-invariant Levi-Civita generalized connections. As part of the proof, we construct a canonical left-invariant Levi-Civita generalized connection $D^0$, which can serve as an origin in the above affine space. 

A left-invariant divergence operator on $\Gamma (\mathbb{T} G)$, where $\mathbb{T}M$ denotes the 
generalized tangent bundle of a manifold $M$, can be identified with an element $\delta\in E^*$, where $E=\mathfrak{g}\oplus \mathfrak{g}^*$. We say that a left-invariant generalized connection $D$ has divergence operator $\delta$ if 
$\delta_D = \delta$, where $\delta_D(v) \coloneqq \mathrm{tr} (D v)$, $v\in E$. Here $D$ is identified with an element of
$E^*\otimes \mathfrak{so}(E)$, $E\ni u\mapsto D_u\in \mathfrak{so}(E)$.  For instance, we have $\delta_{D^0} =0$ for the 
canonical left-invariant Levi-Civita generalized connection $D^0$, compare Proposition~\ref{LCdiv0:prop}. In Proposition~\ref{S:prop} we specify for every $\delta \in E^*$ a left-invariant Levi-Civita generalized connection
$D$ such that $\delta_D=\delta$. We end Section~\ref{divergence:Section} by observing that $\delta=0$ is not the only canonical choice of left-invariant divergence operator on a Lie group.  
A more general choice is to take $\delta$ as a fixed multiple of the trace-form $ \tau $ of $ \mathfrak{g} $. The choice $\delta^\mathcal{G} =-\tau \circ \pi \in E^*$, where $\pi : E \rightarrow \mathfrak{g}$ is the canonical projection, corresponds precisely to the 
divergence operator associated with the generalized connection trivially extending the Levi-Civita connection of 
any left-invariant pseudo-Riemannian metric, as shown in Proposition~\ref{Riemdiv:prop}. The latter choice does therefore
coincide with what is called the Riemannian divergence operator \cite{GSt}. 

In Section~\ref{mainpart2ndSec} we define the Ricci curvature of any pseudo-Riemannian generalized Lie group $(G,\mathcal{G}_g,H,\delta )$ with prescribed divergence operator $\delta \in E^*$ as a certain element in $E^*\otimes E^*$  
(see Definition~\ref{Ricdef}). Then we express it in terms of the algebraic data on the Lie algebra $\mathfrak{g}$.  
The starting point is the computation of the tensorial part of the curvature of the canonical Levi-Civita generalized  connection $D^0$ in Proposition~\ref{curv:prop} as a homogeneous quadratic polynomial expression in the Dorfman bracket $\mathcal{B} = [ \cdot ,\cdot ]_H$.  The Ricci curvature of any pseudo-Riemannian generalized Lie group $(G,\mathcal{G}_g,H,\delta=0)$ with 
zero divergence operator is then obtained as  a Corollary~\ref{RicCor_delta0}. These results are then generalized to arbitrary $\delta$ by considering $D=D^0 +S$, where $S$ is an arbitrary element of the first generalized prolongation of $\mathfrak{so}(E)$, leading to  Lemma~\ref{improved:lem}, Proposition~\ref{genRic:prop} and Theorem~\ref{Ric:thm}. 

For illustration we give here the explicit expression for the Ricci curvature 
\[ Ric_\delta\in E_-^*\otimes E_+^* \oplus E_+^*\otimes E_-^*\] 
of a pseudo-Riemannian generalized Lie group $(G,\mathcal{G}_g,H,\delta )$, where $E_\pm$ stands for the ei\-gen\-spa\-ces of the generalized metric. For $u_\pm \in E_\pm$ and using the projections $\mathrm{pr}_{E_\pm}:E\rightarrow E_\pm$ we consider the linear maps 
\[ \Gamma_{u_\pm} \coloneqq \mathrm{pr}_{E_\pm}\circ \mathcal{B}(u_\pm ,\cdot )|_{E_{\mp}} : E_{\mp}\rightarrow E_{\pm}.\]
\begin{thm}
	Let $(G$$,\mathcal{G}_g$$,H$$,\delta )$ be any pseudo-Riemannian generalized Lie group. Then its Ricci curvature is given by
\begin{eqnarray*} Ric_{\delta} (u_-,u_+) &=&-\tr \left( \Gamma_{u_-}\circ \Gamma_{u_+}\right) + \delta (\mathrm{pr}_{E_+}\mathcal{B}(u_-,u_+)),\\
Ric_{\delta} (u_+,u_-) &=&-\tr \left( \Gamma_{u_-}\circ \Gamma_{u_+}\right) + \delta (\mathrm{pr}_{E_-}\mathcal{B}(u_+,u_-)).
\end{eqnarray*}
\end{thm}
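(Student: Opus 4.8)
The plan is to compute $Ric_\delta$ directly from its definition (Definition~\ref{Ricdef}) by evaluating a suitable partial trace of the tensorial part of the generalized Riemann curvature of a Levi-Civita generalized connection $D$ with $\delta_D=\delta$, and then to reorganize the resulting algebraic expression into the compact invariant form stated. Since the construction of $Ric_\delta$ is independent of the choice of such $D$, I may take $D=D^0+S$, where $D^0$ is the canonical Levi-Civita generalized connection with $\delta_{D^0}=0$ (Proposition~\ref{LCdiv0:prop}) and $S$ is the element of the generalized first prolongation of $\mathfrak{so}(E)$ supplied by Proposition~\ref{S:prop}, so that $\delta_S(v)=\tr S_v=\delta(v)$. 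I would fix a left-invariant orthonormal basis of $E$ adapted to the splitting $E=E_+\oplus E_-$ and compute the partial trace basis component by component, using the orthogonality of $E_+$ and $E_-$ to discard mixed contributions.

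For the part not involving $\delta$, I would start from the tensorial curvature of $D^0$ computed in Proposition~\ref{curv:prop}, which is homogeneous quadratic in the Dorfman bracket $\mathcal{B}$, and from the $\delta=0$ Ricci curvature already obtained in Corollary~\ref{RicCor_delta0}. The main task here is to recognize that, after taking the defining partial trace over an eigenspace and using metric compatibility, torsion-freeness and the skew-symmetry of $\mathcal{B}$ against the pairing $\langle\cdot,\cdot\rangle$, the surviving sum of products of bracket coefficients is exactly the trace of the composition $\mathrm{pr}_{E_-}\mathcal{B}(u_-,\cdot)|_{E_+}$ followed by $\mathrm{pr}_{E_+}\mathcal{B}(u_+,\cdot)|_{E_-}$, that is $\tr(\Gamma_{u_-}\circ\Gamma_{u_+})$ up to the overall sign. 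The orthogonality of the eigenspaces is what forces the intermediate vectors to lie alternately in $E_{\mp}$ and $E_{\pm}$, producing precisely the composition of the two projected bracket operators and cancelling all the diagonal terms.

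For the $\delta$-dependent part I would expand the curvature of $D=D^0+S$ into the $D^0$-curvature, a term linear in $S$ (containing $D^0S$ together with bracket corrections) and a term quadratic in $S$. Using the prolongation symmetries of $S$ one checks that the quadratic-in-$S$ contribution drops out under the partial trace, consistent with $Ric_\delta$ depending on $D$ only through $\delta_D$; the linear-in-$S$ contribution then collapses, via $\tr S_v=\delta(v)$, to the single term $\delta(\mathrm{pr}_{E_+}\mathcal{B}(u_-,u_+))$ in the first formula and to $\delta(\mathrm{pr}_{E_-}\mathcal{B}(u_+,u_-))$ in the second. The distinct projection and the non-skewness of $\mathcal{B}$ are exactly what make the two divergence terms differ. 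Much of this bookkeeping is already packaged in Lemma~\ref{improved:lem} and Proposition~\ref{genRic:prop}, so in practice I would derive the theorem by specializing those statements, and the second formula then follows from the first by interchanging $E_+$ and $E_-$ together with $\tr(\Gamma_{u_-}\circ\Gamma_{u_+})=\tr(\Gamma_{u_+}\circ\Gamma_{u_-})$, which explains the common quadratic term.

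The main obstacle I anticipate is the combinatorial reorganization of the second paragraph: verifying that the partial trace of the quadratic bracket expression from Proposition~\ref{curv:prop} collapses cleanly to the composition of the two $\Gamma$-operators, with all remaining diagonal and cross terms cancelling through the metric and torsion-free identities. The handling of the divergence term is comparatively routine once $\tr S_v=\delta(v)$ is invoked.
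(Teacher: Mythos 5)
Your proposal is correct and follows essentially the same route as the paper, which obtains this theorem (stated in adapted components as Theorem~\ref{Ric:thm}) precisely by combining Proposition~\ref{curv:prop}, Corollary~\ref{RicCor_delta0}, Lemma~\ref{curv:lem}, Lemma~\ref{improved:lem} and Proposition~\ref{genRic:prop}: write $D=D^0+S$, observe that the quadratic-in-$S$ term vanishes (in fact identically, since $S_{E_\pm}E_\mp=0$, not merely under the trace), and trace the linear term to $\delta(\mathrm{pr}_{E_\pm}\mathcal{B}(\cdot,\cdot))$, while the $\delta$-independent part is identified with $-\tr\left(\Gamma_{u_-}\circ\Gamma_{u_+}\right)$ exactly as in the corollary following Corollary~\ref{RicCor_delta0}. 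The only caveats are notational: the divergence identity you need is $\delta(v)=\tr(u\mapsto S_uv)$, a trace over the differentiating slot rather than $\tr(S_v)$ (which vanishes since $S_v\in\mathfrak{so}(E)$), and the two divergence terms differ because of the distinct projections, not because of any failure of skew-symmetry of $\mathcal{B}$ on left-invariant sections.
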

This implies that the tensor $Ric_\delta$ is polynomial of degree two and homogeneous 
in the pair $(\mathcal{B},\delta)$. Note that it depends on the generalized metric and thus on $g$ through the 
projections $\mathrm{pr}_{E_\pm}$. An equivalent convenient component expression in an adapted basis is given in 
Theorem~\ref{Ric:thm}, where also symmetry properties of $Ric_\delta$ are discussed. 

To derive an explicit expression for $Ric_\delta$ in terms of the data $(\mathfrak{g},g,H)$ rather than $(\mathfrak{g},g,\mathcal{B})$ it suffices to express the Dorfman bracket $\mathcal{B}$ in terms of the Lie bracket and the 
three-form $H$, see Proposition~\ref{B:prop}. In Proposition~\ref{rescalProp} we show that the underlying metric $g$ of an Einstein generalized pseudo-Riemannian Lie group (i.e.\ a left-invariant solution of $Ric_\delta=0$) 
can be freely rescaled without changing the Einstein property, provided that the three-form and the divergence
are appropriately rescaled. In Proposition~\ref{RicRicProp} we relate the Ricci curvature $Ric_\delta$
of the pseudo-Riemannian generalized Lie group to the Ricci curvature of the left-invariant pseudo-Riemannian
metric $g$. We show that $(G,\mathcal{G}_g,H=0,\delta=0 )$ is generalized Einstein if and only if 
$g$ satisfies a certain gradient Ricci soliton equation (\ref{soliton:Eq}) involving the trace-form $\tau$ of $\mathfrak{g}$.  
In particular, in the special case when $\mathfrak{g}$ is unimodular, the generalized Einstein equation reduces to the Einstein (vacuum) equation for $g$.

Next we describe how, building on  the general results of 
Section~\ref{2ndSec}, in Section~\ref{class:sec} we determine all left-invariant solutions $(H,\mathcal{G},\delta)$ to the Einstein equation  on three-dimensional Lie groups $G$, up to isomorphism. Here $H$ stands for the three-form which, together with the Lie bracket, determines the exact Courant algebroid structure, $\mathcal{G}$ stands for the generalized pseudo-Riemannian metric and $\delta$ for the divergence required to define the Ricci curvature uniquely. 
The data $(G,H,\mathcal{G},\delta)$ can be simply referred to as a  generalized Einstein Lie group (three-dimensional in our case). 

Up to  isomorphism, we can assume from the start that  $\mathcal{G}=\mathcal{G}_g$ is associated with a left-invariant pseudo-Riemannian metric $g$ on $G$,  compare Proposition~\ref{NF_metric:prop}. In the remaining part of the introduction we will therefore simply speak of solutions $(H,g,\delta )$ on $\mathfrak{g}$, or more 
precisely as generalized Einstein structures on $\mathfrak{g}$. In particular, we identify the left-invariant structures $(H,g,\delta)$ with tensors 
\[ H\in \bigwedge^3 \mathfrak{g}^*, 
\quad g\in \mathrm{Sym}^2\mathfrak{g}^*\quad\mbox{and}\quad \delta \in E^*= (\mathfrak{g}\oplus \mathfrak{g}^*)^*.\]  

As a preliminary, we explain in Section~\ref{prelim:sec} how, using the metric $g$, the Lie bracket of $\mathfrak{g}$ can be encoded in an endomorphism $L\in \mathrm{End}\, \mathfrak{g}$. Irrespective of the signature of $g$, the endomorphism  $L$ happens to be $g$-symmetric if and only if the Lie algebra is unimodular. This allows for the choice of an 
orthonormal basis of $(\mathfrak{g},g)$ in which $L$ takes one of five parameter-dependent normal forms, provided that 
$\mathfrak{g}$ is unimodular, see Proposition~\ref{NF:prop}. Moreover, the Jacobi identity does not impose any constraint on the normal form.

After these preliminaries, we give in Section~\ref{zerodiv:sec} the classification of solutions with zero divergence, that is solutions of the type $(H,g,\delta=0)$, beginning with the class of unimodular Lie algebras. The final results can be roughly summarized as follows, see Theorem~\ref{Einstein:thm}, Theorem~\ref{nonunimod_divzero:thm} and Remark~\ref{r3,1':rem}. 
\begin{thm} Any \textbf{divergence-free} generalized Einstein structure on a three-di\-men\-sion\-al \textbf{unimodular} Lie algebra is 
isomorphic to one in the following classes (described explicitly in Theorem~\ref{Einstein:thm}).
\begin{enumerate}
\item $\mathfrak{g}$ is abelian and $H=0$. The metric $g$ is flat of any signature. 
\item $\mathfrak{g}$ is simple, $H\neq 0$ and the metric $g$ is of non-zero constant
curvature. It is definite if and only if $\mathfrak{g}=\mathfrak{so}(3)$ and 
indefinite if and only if $\mathfrak{g}=\mathfrak{so}(2,1)$. 
\item $H=0$, $g$ is flat and $\mathfrak{g}$ is one of the following metabelian Lie algebras:  $\mathfrak{g}=\mathfrak{e}(2)$ or $\mathfrak{g}=\mathfrak{e}(1,1)$, where $\mathfrak{e}(p,q)$ denotes the Lie algebra
of the isometry group of $\mathbb{R}^{p,q}$ (the affine pseudo-orthogonal Lie algebra). The metric is definite on $ [\mathfrak{g},\mathfrak{g}] $ if and only if $\mathfrak{g}=\mathfrak{e}(2)$.  
\item $\mathfrak{g}=\mathfrak{heis}$ is the Heisenberg algebra, $H=0$ and $g$ is flat and indefinite. 
\end{enumerate} 

We note that the above list of Lie algebras, 
\[ \mathbb{R}^3, \mathfrak{so}(3), \mathfrak{so}(2,1), \mathfrak{e}(2),  \mathfrak{e}(1,1), \mathfrak{heis},\] 
is precisely the list of all unimodular three-dimensional Lie algebras.

\end{thm}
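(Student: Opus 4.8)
The plan is to turn the divergence-free equation $Ric_{\delta=0}=0$ into a finite system of polynomial equations in the parameters of a normal form and then solve it exhaustively, case by case. Since $\mathfrak{g}$ is unimodular, the endomorphism $L\in\End\mathfrak{g}$ encoding the Lie bracket is $g$-symmetric, so by Proposition~\ref{NF:prop} I may fix a $g$-orthonormal basis in which $L$ is one of the five normal forms, with no Jacobi constraint to worry about. In dimension three $\bigwedge^3\mathfrak{g}^*$ is one-dimensional, so the (automatically closed) three-form is $H=h\,\omega$ for a single scalar $h$ and the metric volume form $\omega$. Via Proposition~\ref{B:prop} I would write the Dorfman bracket $\mathcal{B}$ as its bracket part, governed by the entries of $L$, plus its $H$-part, governed by $h$. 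Because $\delta=0$, the Ricci formula of Theorem~\ref{Ric:thm} collapses to $Ric(u_\mp,u_\pm)=-\tr(\Gamma_{u_-}\circ\Gamma_{u_+})$, and with $E_\pm$ each three-dimensional the maps $\Gamma_{u_\pm}$ are represented by $3\times3$ matrices in the adapted basis; evaluating $Ric=0$ on the basis then produces explicit homogeneous quadratic equations in the parameters of $L$ and in $h$.

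I would then split according to whether $h=0$. For $h=0$, that is $H=0$, I invoke the reduction recorded above: on a unimodular Lie algebra the divergence-free generalized Einstein equation with vanishing three-form is equivalent to the Einstein vacuum equation $\ric_g=0$ for the underlying metric. Since every three-dimensional pseudo-Riemannian metric has vanishing Weyl tensor, Ricci-flatness forces $g$ to be flat; classifying which unimodular three-dimensional Lie algebras admit a flat left-invariant (possibly indefinite) metric then isolates exactly the abelian algebra, $\mathfrak{e}(2)$, $\mathfrak{e}(1,1)$ and $\mathfrak{heis}$. Matching these against the normal forms of $L$ and tracking the induced signature on $[\mathfrak{g},\mathfrak{g}]$ yields precisely classes (1), (3) and (4), including the statement that the metric is flat of arbitrary signature in the abelian case and must be indefinite for $\mathfrak{heis}$.

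The genuinely generalized case $h\neq0$ is where I expect the main difficulty to lie, since here the $H$-contribution to $\mathcal{B}$ must exactly cancel the bracket contribution rather than vanish. The key structural observation is that in dimension three the quadratic $H$-term appearing in the Ricci tensor is proportional to $g$, because $H=h\,\omega$ and the relevant contraction of $\omega$ with itself is a multiple of the metric. Combining this with Proposition~\ref{RicRicProp}, which expresses $Ric_\delta$ through $\ric_g$ and correction terms in $H$, the equation $Ric_{\delta=0}=0$ should translate into $\ric_g$ being a multiple of $g$, i.e.\ $g$ Einstein, together with the vanishing of the cross terms coupling $h$ to the off-diagonal structure of $L$. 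Working through each normal form, I would show that these cross conditions are incompatible with $h\neq0$ unless $L$ is of the type corresponding to a simple Lie algebra, so that $\mathfrak{g}=\mathfrak{so}(3)$ or $\mathfrak{so}(2,1)$ and all other normal forms (abelian, metabelian, Heisenberg) are excluded. Because an Einstein metric in dimension three has constant sectional curvature, the surviving $g$ is of nonzero constant curvature, and I would use the rescaling freedom of Proposition~\ref{rescalProp} to normalize $h$. The remaining delicate point is to pin down the sign correspondence, namely that the curvature is definite exactly for $\mathfrak{so}(3)$ and indefinite exactly for $\mathfrak{so}(2,1)$; this is read off from the signature of $g$ compatible with the simple normal form, and produces class (2). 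Finally, exhaustiveness of the case analysis over all five normal forms gives that the four classes are the complete list, and comparison with the Bianchi classification confirms that $\mathbb{R}^3,\mathfrak{so}(3),\mathfrak{so}(2,1),\mathfrak{e}(2),\mathfrak{e}(1,1),\mathfrak{heis}$ exhaust the unimodular three-dimensional Lie algebras.
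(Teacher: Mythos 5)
Your setup (normal forms of $L$ from Proposition~\ref{NF:prop}, writing $H=h\,\mathrm{vol}_g$, the trace formula of Theorem~\ref{Ric:thm}, and the reduction of the $h=0$ case via Corollary~\ref{flat:cor} plus vanishing of the Weyl tensor in dimension three) is consistent with the paper. But the $h\neq 0$ branch rests on a tool that does not exist in the paper: you invoke ``Proposition~\ref{RicRicProp}, which expresses $Ric_\delta$ through $Ric^g$ and correction terms in $H$'', yet that proposition is stated and proved only for $H=0$ and contains no $H$-terms at all. The identity you actually need --- that for unimodular $\mathfrak{g}$, $\delta=0$ and $H=h\,\mathrm{vol}_g$ the generalized Einstein equation becomes $Ric^g=\tfrac14 H^2=\tfrac{h^2}{2}g$ (the codifferential/skew part vanishing automatically) --- is true, but deriving it from Theorem~\ref{Ric:thm}, Proposition~\ref{B:prop} and Remark~\ref{Christoffel:rm} is precisely the computational core of the problem; it is what the paper does instead by expanding the coefficients $\mathcal{B}_{ABC}$ and reducing to the quadratic system $X_1Y_2+X_2Y_1=X_1Y_3+X_3Y_1=X_2Y_3+X_3Y_2=0$ of Theorem~\ref{Einstein:thm}, together with Proposition~\ref{EinsteinNF:prop} to eliminate the normal forms $L_2$, $L_5$ and to force $L_3$, $L_4$ to parameters $(0,0)$. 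As written, this step is a gap, not a citation.

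Second, both of your branches outsource the endgame to classification theorems that are neither proved nor cited and are nontrivial in indefinite signature: which three-dimensional unimodular Lie algebras admit a flat left-invariant pseudo-Riemannian metric (your $h=0$ case), and which admit a left-invariant metric of nonzero constant curvature (your $h\neq0$ case). In the Riemannian case these are Milnor's results, but for Lorentzian metrics one needs, e.g., Nomizu-type results that $\mathfrak{heis}$ and $\mathfrak{e}(1,1)$ carry flat Lorentzian metrics while $\mathfrak{so}(2,1)$ does not, and that no solvable unimodular algebra carries an indefinite metric of nonzero constant curvature; none of this is available inside the paper. Moreover, the theorem you are proving explicitly refers to the solutions ``described explicitly in Theorem~\ref{Einstein:thm}'': an existence classification of flat or constant-curvature metrics yields the list of isomorphism types of algebras but not the description of all Einstein structures (which metrics and which normal forms of $L$ occur, the fact that the Heisenberg solutions arise only from $L_3(0,0)$ and $L_4(0,0)$ with $H=0$, and the signature statement on $[\mathfrak{g},\mathfrak{g}]$ distinguishing $\mathfrak{e}(2)$ from $\mathfrak{e}(1,1)$). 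To close the argument you must either import and carefully justify the pseudo-Riemannian flat/constant-curvature classifications and then still determine all Ricci-flat (resp.\ Einstein) left-invariant metrics on each algebra, or solve the polynomial system coming from the five normal forms directly, as the paper does.
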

\begin{thm} Any \textbf{divergence-free} generalized Einstein structure on a three-di\-men\-sion\-al \textbf{non-unimodular} Lie algebra 
is of the type $(H=0,g)$, where $g$ is indefinite, non-degenerate on the unimodular kernel $\mathfrak u = \ker \tau$, $\tau = \mathrm{tr} \circ \mathrm{ad}$, and belongs to a certain one-parameter family of  metrics 
on the metabelian Lie algebra 
\[ \mathbb{R}\ltimes_A \mathbb{R}^2,\quad A=\left( \begin{array}{cc}1&1\\-1&1\end{array}\right).
\]  
The family of metrics (described in Theorem~\ref{nonunimod_divzero:thm}) consists of Ricci solitons which are not of constant curvature.
\end{thm}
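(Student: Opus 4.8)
The plan is to reduce the divergence-free generalized Einstein equation to a condition on the underlying metric $g$ and then run a case analysis over the non-unimodular three-dimensional Lie algebras. Since $\delta=0$, the explicit formula (in the component form of Theorem~\ref{Ric:thm}) gives $Ric_0(u_\mp,u_\pm)=-\tr(\Gamma_{u_-}\circ\Gamma_{u_+})$, and I would first rewrite the equation $Ric_0=0$ by means of Proposition~\ref{RicRicProp}, so that it becomes a system relating the ordinary Ricci tensor $\ric$ of $g$, the three-form $H$ (expressed through the Lie bracket via Proposition~\ref{B:prop}) and the trace-form $\tau$. Splitting this system into its symmetric and antisymmetric parts on $E_-^*\otimes E_+^*\oplus E_+^*\otimes E_-^*$ yields two tensorial equations; in the non-unimodular case $\tau\neq 0$, and this nonvanishing is precisely what drives the whole argument.

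The first substantial step is to show that $H$ must vanish. In three dimensions $H\in\bigwedge^3\mathfrak{g}^*$ is a single scalar multiple of the metric volume form, hence one real parameter. I expect the antisymmetric part of the Einstein equation to couple this parameter directly to $\tau$, producing a scalar relation of the form $c\,H=0$ with a coefficient $c$ that is nonzero exactly because $\tau\neq 0$; therefore $H=0$. This is the step in which non-unimodularity is essential: for unimodular algebras $\tau=0$ and a nonzero $H$ survives (as in the simple-Lie-algebra solutions of Theorem~\ref{Einstein:thm}), whereas here the $\tau$-coupling rules it out.

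With $H=0$ and $\delta=0$ I would then invoke the characterization proved earlier, namely that $(G,\mathcal{G}_g,0,0)$ is generalized Einstein if and only if $g$ solves the gradient Ricci soliton equation~\eqref{soliton:Eq} involving $\tau$. The classification then reduces to determining all non-unimodular three-dimensional $(\mathfrak{g},g)$ satisfying~\eqref{soliton:Eq}. I would organize this around the unimodular kernel $\mathfrak{u}=\ker\tau$, a two-dimensional ideal containing $[\mathfrak{g},\mathfrak{g}]$, together with the endomorphism $L=\mathrm{ad}_{e_0}|_{\mathfrak{u}}$ for a complementary generator $e_0$, where $\tr L=\tau(e_0)\neq 0$. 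Since $L$ is $g$-symmetric only in the unimodular case, the convenient normal forms of Proposition~\ref{NF:prop} are unavailable, so a separate reduction is needed: I would first rule out the case in which $g|_{\mathfrak{u}}$ is degenerate and the case in which $g$ is definite, and then solve~\eqref{soliton:Eq} in terms of the entries of $L$ and the remaining metric components, using Proposition~\ref{rescalProp} to normalize the scale. The outcome should be that $L$ must be conjugate to $A=\left(\begin{array}{cc}1&1\\-1&1\end{array}\right)$, that the signature must be indefinite with $g|_{\mathfrak{u}}$ non-degenerate, and that the admissible metrics form a one-parameter family; a final direct computation then confirms that these metrics are non-flat and are genuine, non-Einstein Ricci solitons, as recorded in Theorem~\ref{nonunimod_divzero:thm}.

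The main obstacle I anticipate is this last case analysis. Without a $g$-orthonormal normal form for $L$, computing $\ric$ for indefinite left-invariant metrics and solving the soliton system across all Bianchi types of non-unimodular three-dimensional Lie algebras is the delicate part; in particular, isolating the single isomorphism class $\mathbb{R}\ltimes_A\mathbb{R}^2$ with complex eigenvalues $1\pm i$, and showing that no other non-unimodular algebra admits a solution, requires careful bookkeeping of the available conjugation and rescaling freedom.
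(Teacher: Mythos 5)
Your proposal has a genuine gap at its first and decisive step, the proof that $H=0$. You propose to extract a linear relation $c\,H=0$ from the \emph{antisymmetric part} of the Einstein equation, with $c\neq 0$ precisely because $\tau\neq 0$. But for $\delta=0$ there is no antisymmetric part to exploit: by Corollary~\ref{RicCor_delta0} (see also Theorem~\ref{Ric:thm}), the Ricci tensor of $(G,H,\mathcal G_g,\delta=0)$ is \emph{always} symmetric, $Ric^+(u_-,u_+)=Ric^-(u_+,u_-)$, whatever $H$ and $\tau$ are. Moreover, $H$ enters the Ricci tensor quadratically through the Dorfman coefficients, so no linear relation in $H$ can be expected. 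In the paper the vanishing of $H$ is obtained quite differently: writing the bracket in a basis adapted to the two-dimensional abelian ideal $\mathfrak u$ (parameters $\lambda,\mu,\nu,\rho$ and $H=h\,\mathrm{vol}_g$), the symmetric Ricci components yield purely quadratic relations, namely $R_{41}-R_{63}=h^2=0$ and $R_{41}+R_{63}=\mu^2-\nu^2=0$ when $g|_{\mathfrak u\times\mathfrak u}$ is non-degenerate, and $R_{52}-R_{63}=h^2+\nu^2=0$ in the degenerate case; non-unimodularity enters only afterwards, through $\mathrm{tr}\,\mathrm{ad}_{v_2}\neq 0$, to exclude the surviving branches (e.g.\ $\mu=-\nu$, the definite case, and the degenerate case altogether). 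Note also that Proposition~\ref{RicRicProp} is stated only for $H=0$, so you cannot use it to ``rewrite $Ric_0=0$ as a system involving $H$''; likewise the soliton reduction (\ref{soliton:Eq}) of Corollary~\ref{soliton:cor} only becomes available \emph{after} $H=0$ is established. Since your route to $H=0$ fails, the rest of the plan cannot start.

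A secondary deficiency: even granting $H=0$, your classification step is a plan rather than a proof, and the difficulty you flag (no $g$-symmetric normal form for the adjoint action) is resolved in the paper by a trick your proposal lacks. Once the Einstein equations force $\mu=\nu$, the restriction $M=\mathrm{ad}_{v_2}|_{\mathfrak u}$ \emph{becomes} $g$-symmetric, so the two-dimensional analogues of the normal forms of Proposition~\ref{NF:prop} apply; the remaining equation $2\mu^2=\lambda^2+\rho^2$ together with $\mathrm{tr}\,M\neq 0$ then excludes $M_1,M_3,M_4$ and singles out $M_2(\theta,\pm\theta)$, i.e.\ the one-parameter family on $\mathbb R\ltimes_A\mathbb R^2$ with $A=\left(\begin{smallmatrix}1&1\\ -1&1\end{smallmatrix}\right)$, whose eigenvalues are $(1\pm i)\theta$. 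Non-flatness is then a one-line check, $g(\nabla_{v_1}v_2,v_1)=-\tfrac32\theta\varepsilon_1\neq 0$, as in Theorem~\ref{nonunimod_divzero:thm}.
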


The classification in the case of non-zero divergence is the content of Section~\ref{arbdiv:sec}.
The unimodular case is covered in Section~\ref{unimoddivnotzero:sec}, the non-unimodular case in Section~\ref{unimoddivnotzero_notunimod:sec}. To keep the introduction succinct we do only 
summarize the isomorphism types of the Lie algebras resulting from our classification without listing the detailed solutions, 
which can be found in Theorem~\ref{divnotzerounimod:thm}, Proposition~\ref{divnotzero_u_nondeg:prop}
and Proposition~\ref{divnotzero_u_deg:prop}.
\begin{thm} Any three-dimensional \textbf{unimodular} Lie algebra $\mathfrak g$ admits a generalized Einstein structure
with \textbf{non-zero divergence} as well as a  divergence-free solution. (See Theorem~\ref{divnotzerounimod:thm}).
\end{thm}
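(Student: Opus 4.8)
The statement has two halves. The divergence-free existence is immediate: Theorem~\ref{Einstein:thm} already produces a divergence-free solution on each Lie algebra in the list $\mathbb{R}^3,\mathfrak{so}(3),\mathfrak{so}(2,1),\mathfrak{e}(2),\mathfrak{e}(1,1),\mathfrak{heis}$, which, as noted there, exhausts the unimodular three-dimensional Lie algebras. So the real content is to exhibit, for each of these six algebras, a generalized Einstein structure $(H,g,\delta)$ with $\delta\neq 0$.

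The plan is to reduce $Ric_\delta=0$ to a finite polynomial system and solve it case by case. Fixing $\mathfrak{g}$, I would use Proposition~\ref{NF:prop} to choose an orthonormal basis of $(\mathfrak{g},g)$ in which the endomorphism $L$ encoding the bracket takes the corresponding normal form, so that $g$ is parametrized by its signature together with the normal-form parameters. The Dorfman bracket $\mathcal{B}=[\cdot,\cdot]_H$ is then expressed through the Lie bracket and the components of $H\in\bigwedge^3\mathfrak{g}^*$ by Proposition~\ref{B:prop}, and substituting this into the component expression of Theorem~\ref{Ric:thm} turns $Ric_\delta=0$ into an explicit system of polynomial equations in the normal-form parameters, the components of $H$, and the six components of $\delta\in E^*$.

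The key structural observation I would exploit is that, by the displayed Ricci formula, $Ric_\delta$ splits as a purely metric term $-\tr(\Gamma_{u_-}\circ\Gamma_{u_+})$ (quadratic in $\mathcal{B}$, independent of $\delta$) plus a term $\delta(\mathrm{pr}_{E_\pm}\mathcal{B}(\cdot,\cdot))$ that is \emph{linear} in $\delta$. Thus, for each fixed pair $(g,H)$, the Einstein equation is an affine system in $\delta$, and existence of a non-zero-divergence solution reduces to choosing $(g,H)$ so that this affine system is consistent and admits a solution $\delta\neq 0$. For the non-semisimple algebras I would first try $H=0$: there the metric part is governed by the ordinary Ricci curvature of $g$ via Proposition~\ref{RicRicProp}, and one balances it against the linear $\delta$-term, using the freedom in the normal-form parameters (and the rescaling freedom of Proposition~\ref{rescalProp}) to guarantee a non-trivial $\delta$ in the solution set. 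For $\mathfrak{so}(3)$ and $\mathfrak{so}(2,1)$, where the constant-curvature metric forces the metric term to be non-trivial and rigid, I expect to need $H\neq 0$ to supply the extra terms matching the $\delta$-contribution.

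The main obstacle is uniformity: one must certify that a genuinely non-zero $\delta$ survives in every one of the six cases, rather than collapsing back to the divergence-free solutions already found. Concretely, the danger is that the affine-in-$\delta$ system, once the metric part is fixed, either becomes inconsistent or forces $\delta=0$; avoiding this requires a judicious choice of $(g,H)$ in each case, and the semisimple cases are the most constrained. Since the full classification is carried out in Theorem~\ref{divnotzerounimod:thm}, the cleanest route is to read off from that classification one explicit structure with $\delta\neq 0$ for each isomorphism type, the existence assertion then being an immediate corollary.
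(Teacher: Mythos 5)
Your proposal is correct and takes essentially the same route as the paper: the divergence-free half is read off from Theorem~\ref{Einstein:thm}, and the non-zero-divergence half is obtained exactly as you describe, by fixing the normal forms of Proposition~\ref{NF:prop}, expressing the Dorfman coefficients via Proposition~\ref{B:prop}, and exploiting that the Ricci components of Theorem~\ref{Ric:thm} are affine in $\delta$, so that the classification in Theorem~\ref{divnotzerounimod:thm} exhibits, for each of the six unimodular algebras, solutions whose $\delta$-constraints are linear and admit non-zero solutions. The introduction's theorem is indeed stated in the paper as an immediate corollary of that classification, which is precisely your concluding reduction.
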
 

\begin{thm}Let  $(H,g,\delta)$ be a generalized Einstein structure  with \textbf{non-zero divergence} on a three-dimensional
\textbf{non-unimodular} Lie algebra $\mathfrak g$. Then either 
\begin{enumerate}
\item
the unimodular kernel  of $\mathfrak g$ is non-degenerate (with respect to $g$) and $\mathfrak{g} = \mathbb{R}\ltimes_A \mathbb{R}^2$ 
where 
\[  A=\left( \begin{array}{cc}1&0\\ 0&\lambda\end{array}\right),\mbox{ }\lambda \in (-1,1],\mbox{ and }\, H \neq 0,\] 
see Proposition~\ref{divnotzero_u_nondeg:prop} for a complete description of $(H,g,\delta)$, or
\item its unimodular kernel is degenerate, $H=0$ and $\mathfrak{g} = \mathbb{R}\ltimes_A \mathbb{R}^2$, 
where $A\in \mathfrak{gl}(2,\mathbb{R})$ is arbitrary with only real eigenvalues and 
such that $\mathrm{tr}\, A \neq 0$. (See Proposition~\ref{divnotzero_u_deg:prop}.)
\end{enumerate}
\end{thm}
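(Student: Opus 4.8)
The plan is to combine the structure theory of three-dimensional non-unimodular Lie algebras with the explicit Ricci formula of Theorem~\ref{Ric:thm}, and then to split the analysis according to the behaviour of $g$ on the unimodular kernel $\mathfrak u = \ker\tau$, $\tau = \tr\circ\operatorname{ad}$.

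First I would fix the algebraic structure. Since $\mathfrak g$ is non-unimodular, $\tau\neq 0$, so $\mathfrak u$ is a two-dimensional ideal (being the kernel of the Lie algebra homomorphism $\tau\colon\mathfrak g\to\mathbb{R}$), and it contains $[\mathfrak g,\mathfrak g]$. I claim $\mathfrak u$ is abelian: if it were isomorphic to $\mathfrak{aff}(1)$, a generator $a\in\mathfrak u$ with $[a,b]=b$ would satisfy $\tr(\operatorname{ad} a)=1\neq 0$, contradicting $a\in\ker\tau$. Hence $\mathfrak u\cong\mathbb{R}^2$ is an abelian ideal, and choosing any $c\notin\mathfrak u$ gives $\mathfrak g=\mathbb{R}c\ltimes_A\mathfrak u=\mathbb{R}\ltimes_A\mathbb{R}^2$ with $A=\operatorname{ad}(c)|_{\mathfrak u}$ and $\tr A=\tau(c)\neq 0$. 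This reduces the problem to determining which data $(A,g,H,\delta)$ with $\delta\neq 0$ solve $Ric_\delta=0$, the isomorphism type being controlled by $A$ up to conjugation and scaling.

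Next I would carry out the dichotomy that organizes the statement: the restriction $g|_{\mathfrak u}$ is either non-degenerate or degenerate. If $g$ is definite then $\mathfrak u$ is automatically non-degenerate, so the degenerate case forces $g$ indefinite (Lorentzian) with a one-dimensional null radical inside $\mathfrak u$. In the non-degenerate case one has the orthogonal splitting $\mathfrak g=\mathfrak u\oplus\mathfrak u^\perp$ and works in an adapted orthonormal basis; in the degenerate case one works in a null-adapted basis instead. In either situation I would use Proposition~\ref{B:prop} to write the Dorfman bracket $\mathcal B=[\cdot,\cdot]_H$ in terms of the entries of $A$ and the single component of $H$, and substitute into the explicit expression for $Ric_\delta$ through the maps $\Gamma_{u_\pm}$ (equivalently the component form of Theorem~\ref{Ric:thm}). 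The equation $Ric_\delta=0$ then becomes a finite system, quadratic and homogeneous in $(\mathcal B,\delta)$, in the entries of $A$, the component of $H$ and the components of $\delta$; the rescaling freedom of Proposition~\ref{rescalProp} normalizes the overall scaling of $A$, and the residual automorphism freedom puts $A$ and $\delta$ into canonical form.

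The bulk of the work, and the main obstacle, is the exhaustive solution of these polynomial systems, which is precisely the content of Proposition~\ref{divnotzero_u_nondeg:prop} and Proposition~\ref{divnotzero_u_deg:prop}. In the non-degenerate-kernel case I expect the system to separate into the subcase $H=0$, where it collapses to a Ricci-type equation (cf.\ Proposition~\ref{RicRicProp}) singling out $A=\left(\begin{smallmatrix}1&1\\-1&1\end{smallmatrix}\right)$ --- the non-flat Ricci-soliton normal form already encountered with zero divergence --- and the subcase $H\neq 0$, where the equations bind $H$ to the diagonalizable form $A=\operatorname{diag}(1,\lambda)$ and the solvability constraints on $(H,\delta)$ cut the parameter down to $\lambda\in(-1,1]$. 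In the degenerate-kernel case the null direction in $\mathfrak u$ should obstruct any nonzero three-form, forcing $H=0$, after which the remaining equations are solvable exactly when the rotational part of $A$ vanishes, i.e.\ when $A$ has only real eigenvalues, with $\tr A\neq 0$ surviving as the non-unimodularity condition. Assembling the two propositions under the dichotomy yields the stated alternative. The delicate points I anticipate are verifying that complex eigenvalues of $A$ are genuinely excluded in the degenerate case (rather than merely absent in a convenient gauge), and, in the non-degenerate case, proving that no solutions live outside the two listed families, which requires a complete exhaustion of the quadratic system rather than the mere exhibition of solutions.
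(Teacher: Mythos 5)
Your setup coincides with the paper's: the reduction of a non-unimodular $\mathfrak g$ to $\mathbb{R}\ltimes_A\mathbb{R}^2$ with $\operatorname{tr} A\neq 0$ (your $\mathfrak{aff}(1)$-exclusion argument is the standard one the paper leaves as ``easily checked''), the dichotomy on whether $g|_{\mathfrak u\times\mathfrak u}$ is degenerate, the passage to adapted orthonormal (resp.\ null-adapted) bases, and the substitution of the Dorfman coefficients of Proposition~\ref{B:prop} into the component formula of Theorem~\ref{Ric:thm}. Your predicted outcomes also agree with Propositions~\ref{divnotzero_u_nondeg:prop} and~\ref{divnotzero_u_deg:prop}.

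The genuine gap is that the proposal never solves the resulting quadratic systems; it only announces what their solution sets ``should'' be. That exhaustion is the entire mathematical content of the theorem: you yourself flag ``proving that no solutions live outside the two listed families'' and ``complex eigenvalues are genuinely excluded'' as anticipated difficulties, and these are precisely the steps the paper's proofs supply --- the exclusion of definite $g$ in the non-degenerate case (there $R^\delta_{52}$ is a negative-definite expression in $\lambda,\rho$, forcing $\lambda=\rho=0$ and contradicting $\operatorname{tr}\mathrm{ad}_{v_2}\neq 0$); the derivation of $\delta_2=\delta_5$ from $R^\delta_{41}=R^\delta_{14}$ and $R^\delta_{63}=R^\delta_{36}$; the discriminant computation showing $M$ has real eigenvalues when $\delta_2\neq 0$; and, in the degenerate case, the chain $h=\nu=0$ followed by the triangular form of $M$, plus a converse existence argument. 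Deferring all of this to the two propositions makes the theorem a tautology (the statement already cites them), so as a standalone argument the proof is incomplete. There is also a smaller structural inaccuracy: in the non-degenerate case the paper's organizing split is not on $H=0$ versus $H\neq 0$ but on $\delta_2=\delta_5=0$ versus $\delta_2=\delta_5\neq 0$ (the component of $\delta$ transverse to $\mathfrak u$); the behaviour of $H$ (namely $h=0$, resp.\ $h^2=(\operatorname{tr} M)^2\neq 0$) is a consequence, not the hypothesis. In particular the $H=0$ branch does not reduce via Proposition~\ref{RicRicProp}, which requires $\delta=0$, but via the observation that $R^\delta_{ii'}=R_{ii'}$ once $\delta_2=\delta_5=0$, which re-imports the divergence-free computation and additionally forces $\delta_1=\delta_3$ and $\delta_4=\delta_6$.
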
 
In Proposition~\ref{Riem_div_sol:prop} we indicate for which of the left-invariant generalized Einstein 
structures the divergence $\delta$ coincides with the Riemannian divergence. We find that this is not only the case for all 
divergence-free solutions on unimodular Lie algebras but also for some of the non-unimodular cases with non-zero divergence. In the latter case, the unimodular kernel can be both degenerate or non-degenerate with respect to
the metric $g$.

For better overview the results of our classification are summarized in the tables of Section~\ref{tables:sec}.

{\bfseries Acknowledgements}

This work was supported by the German Science Foundation (DFG) under Germany's Excellence Strategy  --  EXC 2121 ``Quantum Universe'' -- 390833306. We thank Liana David, Mario Garc\'ia Fern\'andez and Carlos Shahbazi for helpful comments. 
\section{Generalized Einstein metrics on Lie groups}
\label{2ndSec}
In this section we develop a general approach for the study of left-invariant generalized Einstein metrics on Lie groups. 
\subsection{Twisted generalized tangent bundle of a Lie group}
\label{prelim:subsec}
Recall that the \emph{generalized tangent bundle} of a smooth manifold $M$ is 
the sum 
\[ \mathbb{T}M \coloneqq TM \oplus T^*M\]
of its tangent and its cotangent bundle and that 
any closed three-form $H$ on $M$ defines on $\mathbb{T}M$ the structure 
of a Courant algebroid, see e.g. \cite[Example 2.5]{G}. We will write $\mathbb{T}_pM$ 
for the fiber at $p\in M$. 

Here we consider only the special case when $M=G$ is a 
Lie group and the Courant algebroid structure is left-invariant. 

Let $G$ be a Lie group with Lie algebra $\mathfrak{g}$ and $H$ a closed left-invariant 
three-form on $G$. The \emph{$H$-twisted generalized tangent bundle} of $G$ is the vector bundle $\mathbb{T}G\rightarrow G$
endowed with the Courant algebroid structure $(\pi , \langle \cdot ,\cdot \rangle , [\cdot ,\cdot]_H)$ given by 
\begin{enumerate}
\item the canonical projection $\pi : \mathbb{T}G \rightarrow TG$,  called the \emph{anchor},  
\item the canonical symmetric bilinear pairing $\langle \cdot ,\cdot \rangle \in \Gamma (\mathrm{Sym}^2 (\mathbb{T}G)^*)$,
given by 
\[ \langle X+\xi , Y+\eta \rangle = \frac12 (\xi (Y) + \eta (Y)),\]
called the \emph{scalar product}, and
\item  the \emph{($H$-twisted) Dorfman bracket} $[\cdot ,\cdot ]_H : \Gamma ( \mathbb{T}G) \times \Gamma ( \mathbb{T}G)
\rightarrow \Gamma ( \mathbb{T}G)$, given by 
\begin{equation} \label{dorfm:eq} [ X+\xi , Y + \eta ]_H = \mathcal{L}_X(Y+\eta )  -\iota_Yd\xi + H(X,Y,\cdot ),\end{equation}
where $X, Y \in \Gamma (TG)$, $\xi, \eta \in \Gamma (T^*G)$, $\mathcal{L}$ denotes the Lie derivative and $\iota$ the interior product.
\end{enumerate} 
The above data satisfy the defining axioms of a \emph{Courant algebroid:} 
\begin{itemize}
\item[(C1)] $[u,[v,w]_H]_H = [[u,v]_H,w]_H+[v,[u,w]_H]_H$,
\item[(C2)] $\pi (u) \langle v,w\rangle = \langle [u,v]_H,w\rangle + \langle v,[u,w]_H\rangle$ and 
\item[(C3)] $\pi (u) \langle v,w\rangle = \langle u,[v,w]_H + [w,v]_H\rangle$. 
\end{itemize} 
for all $u, v, w\in \Gamma ( \mathbb{T}G)$. It is well-known that the above axioms imply 
the following useful relations (compare \cite[Definition 1]{CD} and references therein), which are obvious from (\ref{dorfm:eq}).
\begin{itemize}
\item The homomorphism of bundles $\pi$ is a bracket-homomorphism, that is 
\[ \pi [u,v]_H = [\pi u,\pi v],\] 
where 
$[\pi u,\pi v]= \mathcal{L}_{\pi u}(\pi v)$ denotes the Lie bracket of $\pi u, \pi v \in \Gamma (TG)$, and  
 \item the map $[u,\cdot ]_H : \Gamma ( \mathbb{T}G) \rightarrow \Gamma ( \mathbb{T}G)$ satisfies the Leibniz rule:
 \[ [u,fv ]_H = (\pi u) (f)v + f[u,v]_H,\quad \forall f\in C^\infty (M).\]
 \end{itemize}
For notational simplicity we define 
\begin{equation} \label{uf:eq}u(f) \coloneqq (\pi u)(f).\end{equation}

We will identify left-invariant sections of $\mathbb{T}G$ (by evaluation at the neutral element $e\in G$) with elements 
\begin{equation}\label{E(g):eq} X+\xi \in E= E(\mathfrak{g})\coloneqq \mathfrak{g} \oplus \mathfrak{g}^*\end{equation}
and use the same notation to denote them. 
Correspondingly, the three-form $H\in \Gamma ({\bigwedge}^3 T^*G)$ will be identified with an element $H\in {\bigwedge}^3 \mathfrak{g}^*$. 
With these identifications, $\langle \cdot ,\cdot \rangle \in \mathrm{Sym}^2E^*$ and the Dorfman bracket of $X+\xi$ and 
$Y +\eta\in 
\mathfrak{g} \oplus \mathfrak{g}^*$ 
is   
\begin{equation} \label{dorf_LA:eq} [ X+\xi , Y + \eta ]_H = [X,Y] -\mathrm{ad}_X^*\eta   -\iota_Yd\xi + H(X,Y,\cdot ) \in \mathfrak{g} \oplus \mathfrak{g}^*,\end{equation}
where $[X,Y]$ is the Lie bracket in $\mathfrak{g}$, $\mathrm{ad}_X^*\eta = \eta \circ \mathrm{ad}_X$ and 
$d$ denotes the restriction of the de Rham differential to left-invariant forms, such that $-\iota_Yd\xi = \mathrm{ad}_Y^*\xi$. 
\subsection{Generalized metrics on Lie groups} 
\begin{mydef}A \emph{generalized pseudo-Riemannian metric} on a manifold $M$ is a section 
$\mathcal G \in \Gamma (\mathrm{Sym}^2(\mathbb{T}M)^*)$ such that the endomorphism 
$\mathcal{G}^{\mathrm{end}} \in \Gamma (\mathrm{End}\, \mathbb{T}M)$ 
defined by 
\begin{equation} \label{gm:eq}\langle \mathcal{G}^{\mathrm{end}} \cdot ,\cdot \rangle = \mathcal{G}\end{equation}
is an involution and $\mathcal G|_{\mathrm{Sym}^2(T^*M)}$ is non-degenerate. The pair 
$(M,\mathcal G)$ is called a \emph{generalized pseudo-Riemannian manifold}. 
The prefix pseudo will be omitted when $\mathcal G$ is positive definite. 
\end{mydef}
Note that for a generalized metric the equation (\ref{gm:eq}) is equivalent to 
$\mathcal{G}^{\mathrm{end}}=\mathcal{G}^{-1}\circ \langle \cdot ,\cdot \rangle$, using the 
identification $(\mathbb{T}M)^*\otimes (\mathbb{T}M)^* = \mathrm{Hom}(\mathbb{T}M,(\mathbb{T}M)^*)$ given by 
evaluation in the first argument. We do also remark that the non-degeneracy of $\mathcal G|_{\mathrm{Sym}^2(T^*M)}$
is automatic if $\mathcal G$ is positive or negative definite.

A left-invariant generalized metric on a Lie group $G$ is identified (by evaluation at the neutral element $e\in G$) 
with a generalized metric on $\mathfrak{g} =\mathrm{Lie}\, G$ as defined in the following definition. 
\begin{mydef} Let $H$ be a left-invariant closed three-form on  a Lie group $G$, which we identify (by evaluation at $e\in G$) with an element 
$H\in \bigwedge^3\mathfrak{g}^*$ . A \emph{generalized (pseudo-Riemannian) metric} on its Lie algebra $\mathfrak{g} =\mathrm{Lie}\, G$ is a 
symmetric bilinear form $\mathcal G \in \mathrm{Sym}^2E^*$, compare (\ref{E(g):eq}),  
such that 
$\mathcal{G}^{\mathrm{end}}=\mathcal{G}^{-1}\circ \langle \cdot ,\cdot \rangle$ is an involution
and $\mathcal G|_{\mathrm{Sym}^2\mathfrak g^*}$ is non-degenerate. 
The corresponding triple $(G,H,\mathcal G)$ will be called a 
\emph{pseudo-Riemannian generalized Lie group} and $(\mathfrak g,H,\mathcal G)$ a 
\emph{pseudo-Riemannian generalized Lie algebra.} The prefix pseudo will be omitted when 
$\mathcal G$ is positive definite. 

Two pseudo-Riemannian generalized Lie groups $(G,H,\mathcal G)$ and $(G',H',\mathcal G')$ are called 
\emph{isomorphic} if there exists an isomorphism of Lie groups $\varphi : G \rightarrow G'$ and 
an isomorphism of bundles $\Phi : \mathbb{T}G \rightarrow \mathbb{T}G '$ covering 
$\varphi$ such that $\Phi$ maps the Courant algebroid structure 
$(\pi , \langle \cdot , \cdot \rangle , [\cdot ,\cdot ]_H)$ on $G$ determined by $H$ 
to the Courant algebroid structure on $G'$ determined by $H'$ and the 
generalized metric $\mathcal{G}$ to the generalized metric $\mathcal{G}'$. The map $\Phi$ is called 
an \emph{isomorphism} of pseudo-Riemannian generalized Lie groups. 

Similarly, two pseudo-Riemannian generalized Lie algebras $(\mathfrak g,H,\mathcal G)$ and $(\mathfrak g',H',\mathcal G')$ are called \emph{isomorphic} if there exists an isomorphism of Lie algebras $\varphi : \mathfrak g \rightarrow \mathfrak g'$ and an isomorphism of vector spaces $\phi : E(\mathfrak g )\rightarrow E(\mathfrak{g}')$ covering 
$\varphi$ which maps the data  $(\langle \cdot , \cdot \rangle , [\cdot ,\cdot ]_H, \mathcal G)$ on $\mathfrak{g}$, 
cf.\ (\ref{dorf_LA:eq}), to the data $(\langle \cdot , \cdot \rangle' , [\cdot ,\cdot ]_{H'}, \mathcal G')$ on $\mathfrak{g}'$.
Here $\langle \cdot ,\cdot \rangle'$ denotes the canonical symmetric pairing on $E(\mathfrak g')$ induced by the 
duality between $\mathfrak g'$ and $(\mathfrak g')^*$. The map $\phi$ is called 
an \emph{isomorphism} of pseudo-Riemannian generalized Lie algebras. 
\end{mydef}
\begin{ex} \label{metric:ex}Let $g$ be a left-invariant pseudo-Riemannian metric on $G$. 
We denote the corresponding bilinear form on the Lie algebra $\mathfrak g$ by the same symbol:
$g\in \mathrm{Sym}^2\mathfrak g^*$. It extends to a generalized metric 
$\mathcal{G}_g\in \mathrm{Sym}^2E^*$ such that 
\[ \mathcal{G}_g(X+\xi , Y+\eta ) = \frac12 (g(X,Y) + g^{-1}(\xi ,\eta))\]
for all $X+\xi, Y+\eta \in E$. The corresponding endomorphism $\mathcal{G}^{\mathrm{end}}$ is 
\[ \mathcal{G}^{\mathrm{end}}=g \oplus g^{-1} : E=\mathfrak g \oplus \mathfrak g^* \rightarrow E^* = \mathfrak g^* \oplus \mathfrak g.\]
\end{ex}
\begin{prop}\label{NF_metric:prop}Let $(G,H,\mathcal G)$ be a pseudo-Riemannian generalized Lie group. Then there 
exist a left-invariant pseudo-Riemannian metric $g$ on $G$ and a closed left-invariant 
three-form $H'\in [H]\in H^3(\mathfrak{g})$ such that $(G,H,\mathcal G)$ is isomorphic to $(G,H',\mathcal{G}_g)$, by an 
isomorphism $\Phi$ covering the identity map of $G$.
\end{prop}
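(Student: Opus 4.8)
The plan is to trivialize the $B$-field by a single \emph{$B$-transform} and thereby reduce everything to the model metric $\mathcal G_g$ of Example~\ref{metric:ex}. For a left-invariant two-form $B\in\bigwedge^2\mathfrak g^*$ consider $e^{B}\colon E\to E$, $X+\xi\mapsto X+\xi+\iota_X B$. It is orthogonal for $\langle\cdot,\cdot\rangle$ (because $B(X,Y)+B(Y,X)=0$), it satisfies $\pi\circ e^{B}=\pi$, so the induced left-invariant bundle automorphism of $\mathbb T G$ covers $\mathrm{id}_G$, and it intertwines the Dorfman brackets $[\cdot,\cdot]_H$ and $[\cdot,\cdot]_{H+dB}$, where $d$ is the Chevalley--Eilenberg differential of $\mathfrak g$ (the precise sign being immaterial here). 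As $dB$ is exact, $H':=H+dB\in\bigwedge^3\mathfrak g^*$ is closed with $[H']=[H]\in H^3(\mathfrak g)$. Hence it suffices to produce a left-invariant $B$ with $\mathcal G=(e^{B})^{*}\mathcal G_g$ for a suitable pseudo-Riemannian metric $g$; then $\Phi:=e^{B}$, extended left-invariantly, is the required isomorphism $(G,H,\mathcal G)\to(G,H',\mathcal G_g)$ covering $\mathrm{id}_G$.

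To find $g$ and $B$ I would fix a basis $e_i$ of $\mathfrak g$ with dual basis $e^i$ and write $\mathcal G$ in block form with blocks $A=\bigl(\mathcal G(e_i,e_j)\bigr)$, $C=\bigl(\mathcal G(e_i,e^j)\bigr)$ and $D=\bigl(\mathcal G(e^i,e^j)\bigr)$, where $A,D$ are symmetric. The hypothesis that $\mathcal G|_{\mathrm{Sym}^2\mathfrak g^*}$ be non-degenerate is exactly the invertibility of $D$, which lets me define $g$ by $g^{-1}=2D$, a genuine (in general indefinite) pseudo-Riemannian metric. Writing $\beta$ for the matrix of $X\mapsto\iota_X B$ one computes
\[
(e^{B})^{*}\mathcal G_g=\tfrac12\begin{pmatrix} g+\beta^{\top}g^{-1}\beta & \beta^{\top}g^{-1}\\ g^{-1}\beta & g^{-1}\end{pmatrix}.
\]
Its lower-right block is $\tfrac12 g^{-1}=D$ by the choice of $g$, and matching the mixed block to $C$ forces $\beta=D^{-1}C^{\top}$.

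The crux, and the step I expect to be the only real obstacle, is to see that this $\beta$ is skew, so that it indeed comes from a two-form $B$. This is where the generalized-metric axiom enters: expanding the involution condition $(\mathcal G^{\mathrm{end}})^2=\mathrm{id}$ (with $\mathcal G^{\mathrm{end}}=\mathcal G^{-1}\circ\langle\cdot,\cdot\rangle$) into its four blocks, the off-diagonal one reads $C^{\top}D+DC=0$, i.e.\ $DC$ is skew; conjugating by $D^{-1}$ then gives $D^{-1}C^{\top}+CD^{-1}=0$, which is precisely $\beta^{\top}=-\beta$. Thus $B$ is well defined. Finally, the two diagonal block equations ($AD+C^2=\tfrac14\,\mathrm{id}$ and its transpose) together with this skew relation give $A=\tfrac12\bigl(g+\beta^{\top}g^{-1}\beta\bigr)$, matching the remaining block above; hence $\mathcal G=(e^{B})^{*}\mathcal G_g$ exactly, completing the argument. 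I would stress that this block computation works uniformly in all signatures, which is why I prefer it to the eigenspace/graph description of $\mathcal G^{\mathrm{end}}$: in indefinite signature the eigenspaces $E_\pm$ need not be definite, so the isotropic subspace $\mathfrak g^*$ could meet them non-trivially and the graph property fails to be automatic --- the non-degeneracy of $\mathcal G|_{\mathrm{Sym}^2\mathfrak g^*}$ is exactly the hypothesis that repairs this.
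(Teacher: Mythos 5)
Your proposal is correct and takes essentially the same route as the paper's proof: block-decompose $\mathcal{G}$ on $\mathfrak{g}\oplus\mathfrak{g}^*$, define $g$ as the inverse of (twice) the $\mathfrak{g}^*\times\mathfrak{g}^*$ block, use the involution condition to show the mixed block is skew and hence comes from a two-form, and realize the isomorphism as a left-invariant $B$-field transform, which replaces $H$ by $H'=H+dB$ in the same cohomology class. The only cosmetic difference is that you expand $(\mathcal{G}^{\mathrm{end}})^2=\mathrm{id}$ in the blocks of the bilinear form $\mathcal{G}$, whereas the paper imposes the involution condition directly on the block decomposition of $\mathcal{G}^{\mathrm{end}}$ (writing $A=-g^{-1}\beta$ there); the content is identical.
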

\begin{proof} The decomposition $E = \mathfrak g \oplus \mathfrak g^*$ gives rise to the following block decomposition   
\[ 2 \mathcal G = \left( \begin{array}{cc}h&A^*\\
A&\gamma \end{array}\right) ,\]
where $h\in \mathrm{Sym}^2\mathfrak g$, $A\in \mathrm{End} (\mathfrak g)$ and 
$\gamma \in \mathrm{Sym}^2\mathfrak g^*$ is non-degenerate, as follows from the symmetry of $\mathcal G$ and the 
non-degeneracy of $\mathcal G|_{\mathrm{Sym}^2 \mathfrak{g}^*}$. 
In terms of $g \coloneqq \gamma^{-1}\in \mathrm{Sym}^2\mathfrak g$ we can write the necessary and sufficient 
conditions for 
\begin{equation} \label{tr:eq}\mathcal{G}^{\mathrm{end}} = \left( \begin{array}{cc}A&g^{-1}\\
h& A^* \end{array}\right)\end{equation}
to be an involution  as:
\[ A^2 + g^{-1}h= \mathbf{1},\quad gA = -A^*g,\quad h A =- A^*h,\]
where the last two equations mean that $A$ is skew-symmetric for $g$ and $h$. 
In particular, we can write $A = -g^{-1}\beta$ for some $\beta \in \bigwedge^2\mathfrak g^*$.
Solving the first equation for $h$ we obtain
\[ h = g -gA^2 = g +\beta A = g -\beta g^{-1} \beta .\]
This implies that $\mathcal{G}^{\mathrm{end}} = \exp (B)  (\mathcal{G}_g)^{\mathrm{end}}  \exp (-B)$, where 
\[ B = \left( \begin{array}{cc}0&0\\
\beta & 0 \end{array}\right),\]
or equivalently, $\mathcal{G} = \exp (-B)^*\mathcal{G}_g$. 
Now it suffices to check that the map 
\[ \phi = \exp (-B) : E \rightarrow E,\quad  
X+\xi \mapsto X+\xi - \beta X,\] 
defines an isomorphism 
of pseudo-Riemannian generalized Lie algebras from $(\mathfrak g, H,\mathcal G)$ 
to $(\mathfrak g, H',\mathcal G_g)$ covering the identity map of $\mathfrak g$, where $H'=H+d\beta$.  
The corresponding isomorphism $\Phi$ of  pseudo-Riemannian generalized Lie groups
is also given by $\exp (-B)$, now considered as an endomorphism of $\mathbb{T}G$. 
\end{proof}
\begin{rem}\label{tr:rem}Clearly, a decomposition of the form (\ref{tr:eq}) holds for any generalized pseudo-Riemannian 
metric $\mathcal{G}$ on a manifold $M$. This shows that $\tr \mathcal{G}^{\mathrm{end}}=0$, since $A$ is skew-symmetric with respect to $g$. 
\end{rem}
\subsection{Space of left-invariant Levi-Civita generalized connections}
Let $H$ be a closed three-form on a smooth manifold $M$ and consider $\mathbb{T}M$ with 
the Courant algebroid structure defined by $H$.
\begin{mydef} A \emph{generalized connection} on $M$ 
is a linear map 
\[ D : \Gamma (\mathbb{T}M)  \rightarrow  \Gamma ((\mathbb{T}M)^*\otimes \mathbb{T}M),\quad v \mapsto Dv = (u\mapsto  D_uv),\]
such that 
\begin{enumerate}
\item $D_u(fv) = u(f) v + fD_uv$ (anchored Leibniz rule), recall (\ref{uf:eq}), and 
\item $u\langle v,w\rangle = \langle D_uv,w\rangle +  \langle v,D_uw\rangle$  
\end{enumerate}
for all $u, v, w\in \Gamma (\mathbb{T}M)$.
The \emph{torsion} of a generalized connection $D$ (with respect to the Dorfman bracket $[\cdot ,\cdot ]_H$) is the section $T\in \Gamma (\bigwedge^2 (\mathbb{T}M)^*\otimes \mathbb{T}M)$ defined by 
\[ T(u,v) \coloneqq D_uv-D_vu -[u,v]_H +(Du)^*v,\]
where $(Du)^*$ is the adjoint of $Du$ with respect to the scalar product, 
compare \cite{G}. The generalized connection $D$ is called \emph{torsion-free} if $T=0$. 

Given a  generalized pseudo-Riemannian metric $\mathcal G$ on $M$, we say that a 
generalized connection $D$ is \emph{metric} if $D\mathcal G=0$, 
where $D_u : \Gamma (\mathbb{T}M)\rightarrow \Gamma (\mathbb{T}M)$ is extended to space of sections of the tensor algebra over $\mathbb{T}M$ as a tensor derivation for all $u\in \Gamma (\mathbb{T}M)$. More explicitly, 
the latter condition is
\[ u\mathcal{G}(v,w) = \mathcal{G}(D_uv,w) + \mathcal{G}(v,D_uw),\quad \forall u,v,w\in \Gamma (\mathbb{T}M).\] 
This condition is satisfied if and only if $D$ preserves the eigenbundles of $\mathcal{G}^{\mathrm{end}}$. 

Any metric and torsion-free generalized connection on a generalized pseudo-Riemannian manifold $(M,\mathcal G)$ (endowed with the three-form $H$) is called a 
\emph{Levi-Civita generalized connection}. 
\end{mydef}
It is known \cite{G} that the torsion of a generalized connection is totally skew, that is 
$T\in  \Gamma (\bigwedge^2 (\mathbb{T}M)^*\otimes \mathbb{T}M)$ defines a
section of  $\bigwedge^3 (\mathbb{T}M)^*$ upon 
identification $\mathbb{T}M\cong (\mathbb{T}M)^*$ using the scalar product. 

Given a reduction of the structure group $\mathrm{O}(n,n)$ of $\mathbb{T}M$, $n=\dim M$, to a subgroup 
$L= \mathrm{O}(n,n)_S\subset \mathrm{O}(n,n)$ defined by a tensor $S\in \bigoplus_{k=0}^\infty \bigotimes^k \left(\mathbb{R}^n\oplus (\mathbb{R}^n)^*\right)$, we consider the tensor field $\mathcal S$ 
which in any frame of the reduction has the same coefficients as $S$ in the standard basis of $\mathbb{R}^n\oplus (\mathbb{R}^n)^*$. A generalized connection $D$ is called \emph{compatible} with the $L$-reduction if
$D\mathcal S =0$. It was shown in \cite{CD} that a torsion-free generalized connection (on a Courant algebroid) 
compatible with an $L$-reduction exists if and only if its intrinsic torsion (defined in \cite[Definition 15]{CD}) vanishes. 
In that case, it was also shown there that the space of compatible torsion-free generalized connections is an affine space modeled on the space of sections of the generalized first prolongation $(\mathfrak{so}(\mathbb{T}M)_\mathcal{S})^{\langle 1\rangle}$ (defined in \cite[Definition 16]{CD}) of $\mathfrak{so}(\mathbb{T}M)_\mathcal{S}$. 
Note that the fiber of the bundle $\mathfrak{so}(\mathbb{T}M)_\mathcal{S}$ at a point $p\in M$ is 
$\mathfrak{so}(\mathbb{T}_pM)_{\mathcal{S}_p}\cong \mathfrak{so}(n,n)_{S}=\mathfrak{l} = \mathrm{Lie}\, L$, 
so that $(\mathfrak{so}(\mathbb{T}M)_\mathcal{S})^{\langle 1\rangle}|_p\cong \mathfrak{l}^{\langle 1\rangle}$. 

As a special case, we can apply the above theory to the case when $\mathcal S = \mathcal G$ is a generalized 
pseudo-Riemannian metric.  The existence of a Levi-Civita generalized connection shown in \cite[Proposition 3.3]{G} 
implies the following.  
\begin{prop} \label{aff:prop}Let $(M,\mathcal{G})$ be a generalized pseudo-Riemannian manifold and $H$ a closed three-form on $M$. Then the space of Levi-Civita generalized connections  (with respect to the $H$-twisted Dorfman bracket) 
is an affine space modeled on $(\mathfrak{so}(\mathbb{T}M)_{\mathcal{G}})^{\langle 1\rangle}$. 
\end{prop}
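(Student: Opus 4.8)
The plan is to apply the general theory of torsion-free generalized connections compatible with an $L$-reduction, recalled immediately above from \cite{CD}, to the special case in which the tensor $\mathcal{S}$ defining the reduction is the generalized metric $\mathcal{G}$ itself. The only genuine point to verify is that, for this choice, the abstract notion of $L$-compatibility reduces precisely to the metric condition $D\mathcal{G}=0$; the remaining content is then a matter of importing the cited existence result to guarantee that the resulting affine space is non-empty.

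First I would identify the relevant reduction. The scalar product reduces the structure group of $\mathbb{T}M$ to $\mathrm{O}(n,n)$, $n=\dim M$, and the involution $\mathcal{G}^{\mathrm{end}}$, or equivalently the tensor $\mathcal{G}\in \Gamma(\mathrm{Sym}^2(\mathbb{T}M)^*)$, further reduces it to the stabilizer $L=\mathrm{O}(n,n)_{\mathcal{G}}$ of the corresponding standard tensor $S$. Concretely, $L$ preserves both the scalar product and the splitting of $\mathbb{T}M$ into the eigenbundles of $\mathcal{G}^{\mathrm{end}}$, so that $\mathfrak{l}=\mathfrak{so}(n,n)_S$ consists of the elements of $\mathfrak{so}(\mathbb{T}M)$ preserving these eigenbundles. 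Taking $\mathcal{S}=\mathcal{G}$ in the general framework, the associated tensor field is $\mathcal{G}$ itself, and the compatibility condition $D\mathcal{S}=0$ is exactly $D\mathcal{G}=0$, the metric condition. As already observed in the definition of a metric generalized connection, $D\mathcal{G}=0$ holds if and only if $D$ preserves the eigenbundles of $\mathcal{G}^{\mathrm{end}}$, which is precisely the infinitesimal form of $L$-compatibility.

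With this identification in place, a Levi-Civita generalized connection --- that is, a metric and torsion-free generalized connection --- is exactly a torsion-free generalized connection compatible with the $L$-reduction determined by $\mathcal{G}$. The result of \cite{CD} then asserts that such connections exist if and only if the intrinsic torsion of the reduction vanishes, and that, when they exist, they form an affine space modeled on the space of sections of the generalized first prolongation $(\mathfrak{so}(\mathbb{T}M)_{\mathcal{G}})^{\langle 1\rangle}$. It therefore remains only to establish non-emptiness, and for this I would invoke \cite[Proposition 3.3]{G}, which provides at least one Levi-Civita generalized connection on any generalized pseudo-Riemannian manifold endowed with a closed three-form. This simultaneously shows that the intrinsic torsion vanishes, so the hypotheses of the \cite{CD} result are satisfied and the stated affine structure follows.

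The main --- and essentially only --- obstacle is conceptual rather than computational: one must be certain that the formalism of \cite{CD}, phrased for an arbitrary $L$-reduction defined by a tensor $S$, genuinely specializes to the generalized-metric case, so that the phrases ``torsion-free and $L$-compatible'' and ``Levi-Civita'' describe the same connections, and that the fiberwise prolongation $(\mathfrak{so}(n,n)_S)^{\langle 1\rangle}$ globalizes to the model space $(\mathfrak{so}(\mathbb{T}M)_{\mathcal{G}})^{\langle 1\rangle}$. Once existence is imported from \cite{G}, no further calculation is required.
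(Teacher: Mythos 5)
Your proposal is correct and follows essentially the same route as the paper: the paper likewise obtains the proposition by specializing the general result of \cite{CD} on torsion-free generalized connections compatible with an $L$-reduction to the case $\mathcal{S}=\mathcal{G}$, and imports existence (equivalently, vanishing of the intrinsic torsion) from \cite[Proposition 3.3]{G}. Your additional remark that $D\mathcal{G}=0$ is equivalent to $D$ preserving the eigenbundles of $\mathcal{G}^{\mathrm{end}}$ is exactly the identification the paper records when defining metric generalized connections, so nothing is missing.
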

A generalized connection $D$ on a Lie group $G$ is called \emph{left-invariant} if $D_uv\in \Gamma (\mathbb{T}G)$ is  
left-invariant for all left-invariant sections $u,v\in \Gamma (\mathbb{T}G)$. A left-invariant generalized connection
on $G$ can be identified with an element $D\in E^*\otimes \mathfrak{so}(E)$, where we recall that 
$E=\mathfrak{g} \oplus \mathfrak{g}^*$.  Its torsion 
$T$ is identified with an element $T\in (\bigwedge^2E^*\otimes E)\cap (E^*\otimes \mathfrak{so}(E))\cong \bigwedge^3 E^*$. We denote by $E_+$ and $E_-$ the eigenspaces of $\mathcal{G}^{\mathrm{end}}\in \mathrm{End}(E)$
for the eigenvalues $\pm 1$, respectively.  Note that $\dim E_+ =\dim E_- =\dim G =:n$ by Remark~\ref{tr:rem}.
\begin{prop}\label{LC:prop}Let $(G,H,\mathcal G)$ be a pseudo-Riemannian generalized Lie group. Then the space 
of left-invariant Levi-Civita generalized connections on $G$ is an affine space modeled on 
$\mathfrak{so}(E)^{\langle 1\rangle} = \Sigma_+ \oplus \Sigma_-$, where $\Sigma_+ \subset 
E_+^*\otimes \mathfrak{so}(E_+)$ is the kernel of the 
map 
\[ \partial : E_+^*\otimes \mathfrak{so}(E_+) \rightarrow {\bigwedge}^3 E^*\] 
defined by 
\begin{equation} \label{partial:eq} (\partial \alpha ) (u,v,w) = \sum_{\mathfrak{S}} \langle \alpha_uv,w\rangle \quad u,v,w\in E, \end{equation}
and similarly for $\Sigma_-\subset 
E_-^*\otimes \mathfrak{so}(E_-)$. Here  $\mathfrak{S}$ indicates the sum over the cyclic permutations and 
$\alpha_u\in \mathfrak{so}(E_+)$ stands for evaluation of $\alpha \in E_+^*\otimes \mathfrak{so}(E_+) = \mathrm{Hom}(E_+,\mathfrak{so}(E_+))$ at $u$. 

Moreover, 
\[ \Sigma_+ = \mathrm{im} (\mathrm{alt}) \cong \frac{\mathrm{Sym}^2 E_+\otimes E_+}{\mathrm{Sym}^3E_+}\]
is the image of the map 
\[ \mathrm{alt} : \mathrm{Sym}^2 E_+^* \otimes E_+^*\rightarrow  E_+^*\otimes \mathfrak{so}(E_+)\]
defined by 
\[ \langle \mathrm{alt}(\sigma)_uv,w\rangle = \sigma (u,v,w) -\sigma (u,w,v)\]
and similarly for $\Sigma_-$. 
\end{prop}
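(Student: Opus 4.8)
The plan is to exhibit a canonical left-invariant origin and then to compute the generalized first prolongation by pure linear algebra. By Proposition~\ref{aff:prop}, restricted to left-invariant connections, the space in question is affine and modeled on the generalized first prolongation of the stabilizer $\mathfrak{so}(E)_{\mathcal G}=\mathfrak{so}(E_+)\oplus\mathfrak{so}(E_-)$; by the definition of that prolongation (\cite[Definition 16]{CD}) the model is $\ker\partial=\{S\in E^*\otimes\mathfrak{so}(E)_{\mathcal G}:\partial S=0\}$, with $\partial$ the cyclic-sum map extending (\ref{partial:eq}). It therefore remains to show the space is non-empty by producing an origin $D^0$, and to identify $\ker\partial$ with $\Sigma_+\oplus\Sigma_-$. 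Throughout I use that $E_+$ and $E_-$ are $\langle\cdot,\cdot\rangle$-orthogonal with non-degenerate restrictions, since $\mathcal{G}^{\mathrm{end}}$ is $\langle\cdot,\cdot\rangle$-self-adjoint with eigenspaces $E_\pm$.

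First I would construct $D^0$ directly from the Dorfman bracket. Writing $\mathrm{pr}_\pm=\mathrm{pr}_{E_\pm}$ and recalling that a metric generalized connection preserves $E_+$ and $E_-$, I set, for $u_\pm,v_\pm\in E_\pm$,
\[ D^0_{u_-}v_+=\mathrm{pr}_+[u_-,v_+]_H,\quad D^0_{u_+}v_-=\mathrm{pr}_-[u_+,v_-]_H,\quad D^0_{u_\pm}v_\pm=\tfrac13\,\mathrm{pr}_\pm[u_\pm,v_\pm]_H. \]
Two facts about left-invariant sections, both immediate from the Courant axioms, drive the verification: by (C3) the Dorfman bracket is skew-symmetric on $E$ (its symmetric part pairs trivially with all of $E$ by left-invariance of $\langle\cdot,\cdot\rangle$, hence vanishes by non-degeneracy), and by (C2) the map $v\mapsto\mathrm{pr}_\epsilon[u,v]_H$ is $\langle\cdot,\cdot\rangle$-skew on $E_\epsilon$ for each fixed $u$. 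Combining them shows that $\omega(u_+,v_+,w_+):=\langle\mathrm{pr}_+[u_+,v_+]_H,w_+\rangle$ is a totally skew $3$-form on $E_+$ (and likewise on $E_-$). Granting these, $D^0$ is manifestly metric and left-invariant, and torsion-freeness follows by evaluating the (totally skew) torsion $3$-form on triples sorted by eigenspace: the pure triples reduce to the identity $3\cdot\tfrac13\omega=\omega$, and the mixed triples to $\langle[w_-,u_+]_H,v_+\rangle=\langle\mathrm{pr}_-[u_+,v_+]_H,w_-\rangle$, both of which follow from the two facts above.

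Next I would compute $\ker\partial$. Using $\mathfrak{so}(E)_{\mathcal G}=\mathfrak{so}(E_+)\oplus\mathfrak{so}(E_-)$ and $E^*=E_+^*\oplus E_-^*$, I split $E^*\otimes\mathfrak{so}(E)_{\mathcal G}$ into the four blocks $E_\epsilon^*\otimes\mathfrak{so}(E_{\epsilon'})$. Since $E_+\perp E_-$ and $\mathfrak{so}(E_{\epsilon'})$ acts only inside $E_{\epsilon'}$, evaluating $\partial$ on eigenspace-sorted triples shows it is block-diagonal with respect to the decomposition $\bigwedge^3E^*=\bigwedge^3E_+^*\oplus(\bigwedge^2E_+^*\otimes E_-^*)\oplus(E_+^*\otimes\bigwedge^2E_-^*)\oplus\bigwedge^3E_-^*$: the pure block $E_\pm^*\otimes\mathfrak{so}(E_\pm)$ is carried into $\bigwedge^3E_\pm^*$ (this is exactly the map of the statement, with kernel $\Sigma_\pm$), while each cross block $E_\mp^*\otimes\mathfrak{so}(E_\pm)$ is carried isomorphically onto the mixed-type summand (under $\mathfrak{so}(E_\pm)\cong\bigwedge^2E_\pm^*$ this restriction is essentially the identity, hence injective). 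Thus the cross blocks contribute nothing to the kernel and $\ker\partial=\Sigma_+\oplus\Sigma_-$.

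Finally I would prove the description of $\Sigma_+$ by a Bianchi-type argument (verbatim for $\Sigma_-$). The inclusion $\mathrm{im}(\mathrm{alt})\subseteq\Sigma_+$ is direct: if $\sigma$ is symmetric in its first two arguments, the cyclic sum of $\sigma(u,v,w)-\sigma(u,w,v)$ cancels, so $\partial(\mathrm{alt}(\sigma))=0$. For the reverse inclusion, given $\alpha\in\Sigma_+$ — skew in the last two arguments with vanishing cyclic sum — I would exhibit the explicit preimage $\tilde\sigma(u,v,w)=\tfrac13(\alpha(u,v,w)+\alpha(v,u,w))$, which is symmetric in the first two, and check $\mathrm{alt}(\tilde\sigma)=\alpha$ using the identity $\alpha(v,u,w)-\alpha(w,u,v)=\alpha(u,v,w)$ that follows from skewness in the last two together with $\partial\alpha=0$. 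Then $\ker(\mathrm{alt})$ consists of the $\sigma$ that are also symmetric in the last two arguments, i.e.\ totally symmetric, so $\ker(\mathrm{alt})=\mathrm{Sym}^3E_+^*$ and $\mathrm{alt}$ descends to the stated isomorphism $\Sigma_+\cong(\mathrm{Sym}^2E_+^*\otimes E_+^*)/\mathrm{Sym}^3E_+^*\cong\mathrm{Sym}^2E_+\otimes E_+/\mathrm{Sym}^3E_+$. I expect the main obstacle to be the construction and torsion-freeness of $D^0$: this demands the correct relative normalization ($1$ on the mixed blocks versus $\tfrac13$ on the pure blocks) and careful use of the Courant axioms on left-invariant sections, whereas the prolongation computation is then routine.
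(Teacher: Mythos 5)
Your proposal is correct and follows essentially the same route as the paper: the affine structure via Proposition~\ref{aff:prop} and the definition of the generalized first prolongation, existence via the identical canonical connection $D^0$ (with the same $\tfrac13$ versus $1$ normalization, justified by the total skewness of $\mathcal{B}$ derived from axioms (C2), (C3) and left-invariance), and the same block-diagonal analysis of $\partial$ showing the mixed blocks are injective. The only variation is that where the paper cites the exact sequence $0\rightarrow \mathrm{Sym}^3V\rightarrow \mathrm{Sym}^2V\otimes V\rightarrow V\otimes\bigwedge^2V\rightarrow\bigwedge^3V\rightarrow 0$ from \cite{G}, you verify the relevant exactness directly via the explicit preimage $\tilde\sigma(u,v,w)=\tfrac13\left(\alpha(u,v,w)+\alpha(v,u,w)\right)$, which is a sound, self-contained substitute.
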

\begin{proof}
 The first part of the proposition follows easily from the existence of a left-invariant Levi-Civita generalized connection (to be shown at the end of the proof), Proposition~\ref{aff:prop} and the definition of the generalized first prolongation \cite{CD}  as the kernel of the natural map 
\[ \partial : E^*\otimes \mathfrak{so}(E)_{\mathcal{G}} \rightarrow {\bigwedge}^3 E^*\] 
given by the formula (\ref{partial:eq}). To compute the kernel we can first 
observe that $\mathfrak{so}(E)_{\mathcal{G}} = \mathfrak{so}(E_+) \oplus \mathfrak{so}(E_-)\cong 
\bigwedge^2 E_+^* \oplus \bigwedge^2 E_-^*$.  Since $\partial$ 
maps $E_{\epsilon_1}^*\otimes \mathfrak{so}(E_{\epsilon_2})$ to
$E_{\epsilon_1}^*\wedge  E_{\epsilon_2}^*\wedge E_{\epsilon_2}^* \subset \bigwedge^3 E^*$, $\epsilon_1, \epsilon_2
\in \{ -1, 1\}$, it suffices to consider the kernels of these four restrictions. On tensors of 
mixed type $\partial$ is injective, such that $\ker \partial = \Sigma_+ \oplus \Sigma_-$. 
The last part of the corollary follows from the exact sequence 
\begin{equation} \label{ex:eq} 0\rightarrow \mathrm{Sym}^3V\rightarrow \mathrm{Sym}^2V\otimes V \stackrel{\mathrm{alt}_V}{\longrightarrow} V \otimes {\bigwedge}^2 V \stackrel{\partial_V}{\longrightarrow} {\bigwedge}^3V\rightarrow 0\end{equation}
that holds for any finite-dimensional vector space $V$ and was used in \cite{G}.  
Here $\mathrm{alt}_V$ is given by
\[ (u\otimes v + v\otimes u)\otimes w \mapsto u\otimes v\wedge w + v\otimes w
\wedge u\]
and $\partial_V$ by 
\[ u\otimes v\wedge w \mapsto u\wedge v\wedge w.\]
We apply the sequence to $V=E_+$ (and similarly to $V=E_-$)
using the metric identifications $E_+ \cong E_+^*$ and $\mathfrak{so}(E_+) \cong \bigwedge^2E_+^*\cong \bigwedge^2E_+$, which allow to identify the natural maps $\mathrm{alt}_V$ and $\partial_V$ with 
$\mathrm{alt}: \mathrm{Sym}^2E_+^*\otimes E_+^* \rightarrow E_+^* \otimes \mathfrak{so}(E_+ )$ 
and $\partial : E_+^* \otimes \mathfrak{so}(E_+ ) \rightarrow \bigwedge^3 E_+^*$, respectively.

Now it suffices to show that there exists a left-invariant Levi-Civita generalized connection. 
We consider the tensor $\mathcal{B} \in  \bigotimes^3 E^*$ defined by 
\begin{equation} 
\mathcal{B}(u,v,w) = \langle [u,v]_H,w\rangle,\quad u,v,w\in E.
\end{equation}
\begin{lem} \label{B_inv:lem}$\mathcal{B}$ is totally skew. 
\end{lem} 
\begin{proof}
The skew-symmetry in 
$(u,v)$ follows from axiom $C3$ in Section~\ref{prelim:subsec}: 
\[ \mathcal{B}(u,v,w)+\mathcal{B}(v,u,w)= \langle w, [u,v]_H+[v,u]_H\rangle = w\langle u,v\rangle =0,\]
since $\langle u,v\rangle$ is a constant function. Using axiom C2, we obtain
\[ \mathcal{B}(u,v,w) = \langle [u,v]_H,w\rangle = u\langle v,w\rangle -\langle v,[u,w]_H\rangle = 
-\mathcal{B}(u,w,v).\]
Now it suffices to observe that skew-symmetry in $(u,v)$ and $(v,w)$ implies total 
skew-symmetry. 
\end{proof}
Next we define 
\[ D^0 \coloneqq \frac13 \mathcal{B}|_{\bigwedge^3E_+} \oplus  \frac13 \mathcal{B}|_{\bigwedge^3E_-}\oplus   \mathcal{B}|_{E_+\otimes \bigwedge^2 E_-} \oplus \mathcal{B}|_{E_-\otimes \bigwedge^2 E_+} .\]
As an element of $E^*\otimes \bigwedge^2 E^*\cong E^*\otimes \mathfrak{so}(E)$, it
defines a left-invariant generalized connection. It is metric, since it takes values in the subalgebra 
$\mathfrak{so}(E_+)\oplus \mathfrak{so}(E_-) \subset \mathfrak{so}(E)$. Since $\partial \mathcal{B}|_{\bigwedge^3E_\pm}
= 3 \mathcal{B}|_{\bigwedge^3E_\pm}$ and $\partial \mathcal{B}|_{E_\mp\otimes \bigwedge^2 E_\pm}= \mathcal{B}|_{E_\mp\wedge E_\pm\wedge E_\pm}$, the torsion $T^{D^0}= \partial D^0 -\mathcal{B}$ of $D^0$ is 
given by 
\[ T^{D^0}= \left( \mathcal{B}|_{\bigwedge^3E_+} \oplus   \mathcal{B}|_{\bigwedge^3E_-}\oplus   \partial \mathcal{B}|_{E_+\otimes \bigwedge^2 E_-} \oplus \partial \mathcal{B}|_{E_-\otimes \bigwedge^2 E_+}\right) -\mathcal{B} = \mathcal{B} -\mathcal{B}=0.\qedhere\]
\end{proof}
\begin{rem} Note that due to Lemma~\ref{B_inv:lem} and the Jacobi identity (axiom C1) the tensor $\mathcal{B}$ together 
with the scalar product $\langle \cdot, \cdot \rangle$ defines on $E(\mathfrak{g})$ the structure of a quadratic Lie algebra. 
Such algebras are examples of Courant algebroids with trivial anchor. Generalized metrics, generalized connections and
curvature on quadratic Lie algebras have been studied in \cite{ADG}. Their formulas are consistent with ours. 
\end{rem}

\subsection{Levi-Civita generalized connections with prescribed divergence}
\label{divergence:Section}
In this subsection we show that every left-invariant divergence operator on the generalized tangent bundle of 
a generalized pseudo-Riemannian Lie group admits a compatible left-invariant Levi-Civita generalized connection.
We then give an explicit construction of such a  generalized connection in the case when 
$\mathcal G$ is associated with a left-invariant pseudo-Riemannian metric as in Example~\ref{metric:ex}. 
In view of Proposition~\ref{NF_metric:prop} there is no loss in generality by considering this special case. 

\begin{mydef} A \emph{divergence operator} on $\mathbb{T}M$ is a first order 
differential operator $\delta : \Gamma (\mathbb{T}M) \rightarrow C^\infty (M)$ which satisfies 
\[ \delta (fv) = v(f) + f\delta v,\]
for all $v\in \Gamma (\mathbb{T}M)$, $f\in C^\infty (M)$.
\end{mydef} 
\begin{ex}\label{divergence of connection:ex}Let $D$ be a generalized connection on $M$. Then 
\[ \delta_Dv = \tr Dv,\quad v\in \Gamma (\mathbb{T}M),\]
defines a divergence operator on $\mathbb{T}M$. 
\end{ex}

When $M=G$ is a Lie group we can ask for a divergence operator $\delta$ on $\mathbb{T}G$ to be 
left-invariant, that is for the function $\delta v$ to be left-invariant (i.e.\ constant) for all left-invariant 
sections $v$ of $\mathbb{T}G$. Such operators can can be identified with 
elements of $E^*=(\mathbb{T}_eG)^*$. 

It was proved in \cite{G} that there always exists a Levi-Civita generalized connection with a prescribed divergence. We now give a proof for this in our setting.
\begin{prop}\label{existence of connection with divergence:prop} Let $(G,H,\mathcal G)$ be a generalized pseudo-Riemannian Lie group of dimension $\dim G\ge 2$ and $\delta\in E^*$.
Then there exists a left-invariant Levi-Civita generalized connection $D$ such that $\delta_D=\delta$. 
\end{prop}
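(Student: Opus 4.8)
The plan is to exploit the affine description of the space of left-invariant Levi-Civita generalized connections furnished by Proposition~\ref{LC:prop}. That space is a non-empty affine space modeled on $\mathfrak{so}(E)^{\langle 1\rangle} = \Sigma_+ \oplus \Sigma_-$; fixing any base point $D^0$ in it (for instance the canonical connection built in the proof of Proposition~\ref{LC:prop}), every left-invariant Levi-Civita generalized connection is $D = D^0 + S$ with $S \in \Sigma_+ \oplus \Sigma_-$. Since $\delta_D(v) = \tr(u \mapsto D_u v)$ by Example~\ref{divergence of connection:ex}, the assignment $D \mapsto \delta_D$ is affine with linear part $S \mapsto \delta_S$, where $\delta_S(v) = \tr(u \mapsto S_u v)$. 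Consequently the proposition follows once I show that $S \mapsto \delta_S$ maps $\Sigma_+ \oplus \Sigma_-$ onto $E^*$: for the prescribed $\delta$ I then solve $\delta_S = \delta - \delta_{D^0}$ and put $D = D^0 + S$.

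To prove this surjectivity I split $E^* = E_+^* \oplus E_-^*$. If $S_+ \in \Sigma_+ \subset E_+^* \otimes \mathfrak{so}(E_+)$, then $(S_+)_u = 0$ for $u \in E_-$, while $(S_+)_u \in \mathfrak{so}(E_+)$ annihilates $E_-$; hence $u \mapsto (S_+)_u v$ has image in $E_+$ and vanishes for $v \in E_-$, so $\delta_{S_+} \in E_+^*$, and symmetrically $\delta_{S_-} \in E_-^*$. Thus it is enough to show that $\Sigma_+ \to E_+^*$ is onto, the argument for $\Sigma_-$ being identical.

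Here I would use the identification $\Sigma_+ = \mathrm{im}(\mathrm{alt})$ of Proposition~\ref{LC:prop}. The scalar product restricts non-degenerately to $E_+$ (the $\pm 1$-eigenspaces of the self-adjoint involution $\mathcal{G}^{\mathrm{end}}$ are mutually orthogonal), so I may pick a pseudo-orthonormal basis $\{e_a\}$ of $E_+$ with $\langle e_a, e_b\rangle = \epsilon_a \delta_{ab}$, $\epsilon_a = \pm 1$. For $v \in E_+$ the trace then reads $\delta_{\mathrm{alt}(\sigma)}(v) = \sum_a \epsilon_a\big(\sigma(e_a,v,e_a) - \sigma(e_a,e_a,v)\big)$. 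Evaluating this on the explicit family $\sigma_\phi(x,y,z) = \langle x,y\rangle\,\phi(z) \in \mathrm{Sym}^2 E_+^* \otimes E_+^*$ parametrized by $\phi \in E_+^*$, the first term collapses to $\phi(v)$ and the second to $n\,\phi(v)$, where $n = \dim G = \dim E_+$ by Remark~\ref{tr:rem}, whence
\[ \delta_{\mathrm{alt}(\sigma_\phi)} = (1-n)\,\phi. \]

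The decisive point — and the reason the hypothesis $\dim G \ge 2$ is needed — is exactly this scalar: the diagonal trace $\sum_a \epsilon_a \sigma(e_a,e_a,v)$ scales like $n$, while the mixed trace $\sum_a \epsilon_a \sigma(e_a,v,e_a)$ contributes only the identity, so their difference is the coefficient $1-n$, which is invertible precisely when $n \neq 1$. Since $n \ge 2$, choosing $\sigma = \sigma_{\phi/(1-n)}$ produces an element of $\Sigma_+$ with divergence $\phi$, establishing surjectivity onto $E_+^*$ and completing the argument. The remaining steps — the affine reduction and the block decomposition of $\delta_S$ — are routine, so I expect this trace computation to be the only place requiring genuine care. (Consistently, for $n = 1$ one has $\Sigma_+ = 0$ and the map is forced to be zero.)
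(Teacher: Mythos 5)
Your proposal is correct and follows essentially the same route as the paper: reduce via the affine structure of Proposition~\ref{LC:prop} to surjectivity of the linear map $S\mapsto\lambda_S$, $\lambda_S(v)=\tr(u\mapsto S_uv)$, on $\Sigma_+\oplus\Sigma_-$, then establish surjectivity onto $E_\pm^*$ by an explicit trace computation on elements of $\mathrm{im}(\mathrm{alt})$. The only difference is the choice of test tensor: the paper uses $\mathrm{alt}(\alpha^2\otimes\beta)$, obtaining $\lambda_S=\langle\alpha,\beta\rangle\alpha-\langle\alpha,\alpha\rangle\beta$ and then a spanning argument, whereas your $\sigma_\phi=\langle\cdot,\cdot\rangle\otimes\phi$ (which, since $\langle\cdot,\cdot\rangle=\sum_a\epsilon_a(e^a)^2$, is just a linear combination of the paper's elements) yields the scalar $(1-n)$ in one stroke, giving an explicit preimage and displaying exactly where $\dim G\ge 2$ enters.
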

\begin{proof}
Let $D\in E^*\otimes \mathfrak{so}(E)$ be a left-invariant Levi-Civita generalized connection. Any other left-invariant Levi-Civita generalized connection can be written as $D'=D + S$, where 
$S\in \mathfrak{so}(E)^{\langle 1\rangle}\subset E^*\otimes 
\mathfrak{so}(E)$ (see Proposition~\ref{LC:prop}). The divergence operators are related by
\begin{equation} \label{divDDprime:eq}\delta_{D'}v - \delta_Dv = \tr S v = \tr (u\mapsto S_uv),\quad v\in E.\end{equation}
We consider the linear form $\lambda_S \in E^*$ defined by  
\begin{equation}\label{lambdaS:eq} \lambda_S (v) \coloneqq \tr S v.\end{equation}
It suffices to show that the linear map $S \mapsto \lambda_S$ is surjective. 
Given $\alpha, \beta \in E_+^*\cong (E_-)^0\subset E^*$, the element $S=\mathrm{alt} (\alpha^2 \otimes \beta)\in \Sigma_+
\subset \mathfrak{so}(E)^{\langle 1\rangle}=\Sigma_+\oplus \Sigma_-$ has 
\begin{equation} \label{lambdaSspecial:eq}\lambda_S = \langle \alpha,\beta \rangle \alpha -\langle \alpha ,\alpha\rangle \beta.\end{equation}
Since $\dim E_+=\dim G\ge 2$, this proves that $\mathrm{span}\{ \lambda_S \mid S\in \Sigma_+\}=E_+^*$, 
and similarly $\mathrm{span}\{ \lambda_S \mid S\in \Sigma_-\}=E_-^*$. 
\end{proof}
Note that the condition $\dim G\ge 2$ is necessary. If $\dim G =1$, then the Levi-Civita
generalized connection $D$ is unique and $\delta_D\in E^*$ is zero.  

From now on we assume without loss of generality (see Proposition~\ref{NF_metric:prop}) that 
$\mathcal G = \mathcal G_g$ for some left-invariant 
pseudo-Riemannian metric $g$ on $G$. We will first construct a particular left-invariant Levi-Civita 
generalized connection $D$ with $\delta_D=0\in E^*$ and later prescribe an arbitrary divergence operator
by adding a suitable element of the generalized first prolongation. 

\textbf{Adapted bases and notation.}
Let $(v_a)=(v_1,\ldots,v_n)$ be a $g$-orthonormal basis of $\mathfrak{g}$ and set 
$\varepsilon_a \coloneqq g(v_a,v_a)$. Then 
\begin{equation} \label{ea:eq}e_a \coloneqq v_a + gv_a\end{equation}
defines a 
$\mathcal G$-orthonormal basis $(e_a)_{a=1,\ldots ,n}$ of $E_+$ with 
$\mathcal{G}(e_a,e_a) = \varepsilon_a$ and 
\begin{equation} \label{ei:eq}e_{n+a}\coloneqq v_a-gv_a\end{equation}
defines 
a $\mathcal G$-orthonormal basis $(e_i)_{i=n+1,\ldots ,2n}$ of $E_-$ with $\mathcal{G}(e_{n+a},e_{n+a}) = \varepsilon_{a}$. 
Remember that $\langle \cdot ,\cdot \rangle = \pm \mathcal G$ on the summands $E_\pm$ of the 
decomposition $E=E_+ \oplus E_-$, which is orthogonal for both the generalized metric $\mathcal G$ as well as 
the scalar product $\langle \cdot ,\cdot \rangle$. 
Summarizing, we have an orthonormal basis $(e_A)_{A=1,\ldots ,2n}$ of $E$ adapted to the decomposition 
$E=E_+\oplus E_-$. Note that $\langle e_A,e_B\rangle = \varepsilon_A\delta_{AB}$, where $\varepsilon_a = -\varepsilon_{n+a}$ for $a=1,\ldots n$. From now on the indices $a, b, \ldots$ will always range 
from $1$ to $n$, $i, j, \ldots$  will range from $n+1$ to $2n$ and $A, B, \ldots$ from $1$ to $2n$. 

A left-invariant generalized connection $D$ is completely determined by its 
coefficients $\omega_{AB}^C$ with respect to the basis $(e_A)$:
\[ D_{e_A}e_B= \omega_{AB}^Ce_C,\]
where from now on we use Einstein's summation convention, according to which the sum over an upper and a lower repeated index is understood. 
Equivalently, we may use 
\begin{equation} \label{coeff:eq}\omega_{ABC} \coloneqq \langle D_{e_A}e_B,e_C\rangle,\end{equation}
which has 
the advantage that it is skew-symmetric in $(B,C)$.   In fact, any tensor $(\omega_{ABC})$ skew-symmetric in $(B,C)$
defines a left-invariant generalized connection $D$ by the formula (\ref{coeff:eq}). We will say that $(\omega_{ABC})$ are the \emph{connection coefficients} of $D$. 

The next proposition follows from the fact that $D$ is metric if and only if $DE_\pm\subset E_\pm$. 
\begin{prop} \label{metric:prop}A left-invariant generalized connection $D$ is metric if and only if 
$\omega_{ABC}=0$ whenever $B\in \{1,\ldots ,n\}$ and $C\in\{ n+1,\ldots ,2n\}$. 
\end{prop}
Using the orthonormal basis $(e_A)$  of $E$ we define 
\begin{equation} \label{B:eq}\mathcal{B}_{ABC} \coloneqq \mathcal{B}(e_A,e_B,e_C)=\langle [e_A,e_B]_H,e_C\rangle.\end{equation}
\begin{prop}  \label{LCdiv0:prop}
	Let $(G,H,\mathcal G_g)$ be a generalized pseudo-Riemannian Lie group.
The following tensor 
$(\omega_{ABC})$ defines the connection coefficients of a left-invariant 
Levi-Civita generalized connection $D^0$ with zero divergence $\delta_{D^0}$: 
\begin{equation}\label{conn:eq}\omega_{abc} \coloneqq \frac13 \mathcal{B}_{abc},\quad \omega_{ijk} \coloneqq \frac13 \mathcal{B}_{ijk},\quad \omega_{ibc} \coloneqq \mathcal{B}_{ibc},\quad \omega_{ajk} \coloneqq \mathcal{B}_{ajk},\end{equation}
where $a,b,c\in \{ 1,\ldots ,n\}$ and $i,j,k\in \{ n+1,\ldots ,2n\}$ and the remaining components are zero. 
The connection $D^0$ does not depend on the choice of orthonormal basis $(v_a)$ of $\mathfrak{g}$, 
from which the orthonormal basis $(e_A)$ of $E=\mathfrak{g}\oplus \mathfrak{g}^*$ was constructed.  
It is therefore a canonical Levi-Civita generalized connection and will be called \textit{the canonical divergence-free
Levi-Civita generalized connection}. 
\end{prop}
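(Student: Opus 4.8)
The plan is to recognize the tensor $(\omega_{ABC})$ defined in (\ref{conn:eq}) as nothing but the coefficient expression, in the adapted basis $(e_A)$, of the canonical connection $D^0$ that was already constructed inside the proof of Proposition~\ref{LC:prop}. Once this identification is in place, the Levi-Civita property and the independence of the basis come for free, and only the vanishing of the divergence remains as genuine content.

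First I would check that $(\omega_{ABC})$ is skew-symmetric in $(B,C)$, so that by the remark following (\ref{coeff:eq}) it does define a left-invariant generalized connection: each of the four blocks in (\ref{conn:eq}) is a restriction of the totally skew tensor $\mathcal{B}$ (Lemma~\ref{B_inv:lem}) and is hence skew in its last two slots, while every component with exactly one of $B,C$ in $E_+$ is set to zero. Next I would match (\ref{conn:eq}) block by block with
\[ D^0 = \tfrac13\,\mathcal{B}|_{\bigwedge^3E_+} \oplus \tfrac13\,\mathcal{B}|_{\bigwedge^3E_-} \oplus \mathcal{B}|_{E_+\otimes\bigwedge^2E_-} \oplus \mathcal{B}|_{E_-\otimes\bigwedge^2E_+}, \]
reading off $\langle D^0_{e_a}e_b,e_c\rangle = \tfrac13\mathcal{B}_{abc}$, $\langle D^0_{e_i}e_b,e_c\rangle = \mathcal{B}_{ibc}$, and so on. Since $D^0$ was shown in Proposition~\ref{LC:prop} to be metric and torsion-free, and since its abstract definition refers only to $\mathcal{B}$ and to the splitting $E = E_+\oplus E_-$ (which depend on $(\mathfrak{g},H,\mathcal{G}_g)$ but not on the orthonormal basis $(v_a)$), this gives at once the Levi-Civita property and the independence of the basis. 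The metric property may alternatively be read off from Proposition~\ref{metric:prop}, as no nonzero $\omega_{ABC}$ has $B\in\{1,\dots,n\}$ and $C\in\{n+1,\dots,2n\}$.

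The one new point is the vanishing of $\delta_{D^0}$ from Example~\ref{divergence of connection:ex}. Using that $(e_A)$ is pseudo-orthonormal with $\langle e_A,e_A\rangle=\varepsilon_A$, the trace of a linear map reads $\tr\phi = \sum_A \varepsilon_A\langle\phi(e_A),e_A\rangle$, so
\[ \delta_{D^0}(e_B) = \tr\bigl(u\mapsto D^0_u e_B\bigr) = \sum_A \varepsilon_A\,\omega_{ABA}. \]
I would then split the sum according to whether $A$ runs over $E_+$- or $E_-$-indices and invoke the pattern of (\ref{conn:eq}). For $B=b\in E_+$ the term $\omega_{aba}=\tfrac13\mathcal{B}_{aba}$ vanishes by total skew-symmetry of $\mathcal{B}$, while $\omega_{ibi}$ has its middle index in $E_+$ and its outer indices in $E_-$, hence is one of the mixed components set to zero; the case $B=j\in E_-$ is symmetric, with $\omega_{iji}=\tfrac13\mathcal{B}_{iji}=0$ and $\omega_{aja}=0$. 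Thus $\delta_{D^0}=0$. The computation is short and poses no real obstacle; the only things to watch are the sign bookkeeping in the pseudo-Riemannian trace formula and the observation that each diagonal contribution $\omega_{ABA}$ is killed either by the repeated index inside a totally skew block or by the block structure forcing it to be a vanishing mixed component.
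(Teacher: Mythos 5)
Your proposal is correct and follows essentially the same route as the paper: identify $(\omega_{ABC})$ as the coefficient expression of the connection $D^0$ constructed in the proof of Proposition~\ref{LC:prop} (so the Levi-Civita property and basis-independence are inherited), then compute $\delta_{D^0}(e_B)=\sum_A \varepsilon_A\,\omega_{ABA}$ and kill each term either by the total skew-symmetry of the blocks $\omega_{abc}$, $\omega_{ijk}$ or by the vanishing of the mixed components $\omega_{ibk}$, $\omega_{ajc}$. The paper's proof is a condensed version of exactly this argument.
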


\begin{proof}
The formulas (\ref{conn:eq}) are precisely the connection coefficients of the 
left-invariant Levi-Civita generalized connection $D^0$ defined in the proof of Proposition~\ref{LC:prop}. 
In particular, $D^0$ is independent of the basis $(v_a)$. 
To show that the divergence $\delta$ of $D^0$ vanishes, it suffices to remark that 
$\delta (e_B) = \omega_{AB}^A$ vanishes due to $\omega_{ajc}=\omega_{ibk}=0$
and the total skew-symmetry of $\omega_{abc}$ and $\omega_{ijk}$ (with the above index ranges), implied 
by Lemma~\ref{B_inv:lem}.  
\end{proof}
\begin{prop} \label{S:prop}
Let $(G,H,\mathcal G_g)$ be a generalized pseudo-Riemannian Lie group
endowed with the canonical divergence-free Levi-Civita generalized connection $D^0$ of Proposition~\ref{LCdiv0:prop}. Fix an element $\delta \in E^*$. Then a left-invariant Levi-Civita generalized connection 
$D$ with divergence $\delta_D=\delta$ can be obtained as follows. Choose, as above\footnote{Compare (\ref{ea:eq}) and (\ref{ei:eq}).}, a 
left-invariant orthonormal basis $(e_A)$ of $E$ associated with an orthonormal 
basis of $\mathfrak g$.   Define the tensor $ S:=S_+ +S_- $ where
\[ S_+\coloneqq -\mathrm{alt}\left(\delta_1 \varepsilon_2 (e^2)^2 \otimes e^1+ \sum_{a=2}^{n}  \delta_a\varepsilon_1(e^1)^2\otimes e^a \right)\in \Sigma_+,\]
and similarly for $ S_- \in\Sigma_-$. Here $(e^A)$ denotes the basis of $E^*$ dual to $(e_A)$ and $\delta_A = \delta (e_A)$. 
Then the left-invariant Levi-Civita generalized connection $D=D^0 +S$ has divergence $\delta$. 
\end{prop}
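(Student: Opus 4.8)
The plan is to reduce the statement to the single identity $\lambda_S=\delta$ and then verify that identity componentwise, using the building block already computed in the proof of the preceding proposition, namely (\ref{lambdaSspecial:eq}).

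First I would confirm that $D=D^0+S$ is genuinely a left-invariant Levi-Civita generalized connection: since $S$ is prescribed to lie in $\mathfrak{so}(E)^{\langle 1\rangle}=\Sigma_+\oplus\Sigma_-$ and $D^0$ is such a connection by Proposition~\ref{LCdiv0:prop}, this is immediate from the affine structure of Proposition~\ref{LC:prop}. For the divergence I would apply (\ref{divDDprime:eq}) and (\ref{lambdaS:eq}) with $D^0$ as the base connection, giving $\delta_D(v)-\delta_{D^0}(v)=\tr Sv=\lambda_S(v)$; since $\delta_{D^0}=0$ by Proposition~\ref{LCdiv0:prop}, this leaves $\delta_D=\lambda_S$, so the whole claim collapses to showing $\lambda_S=\delta$, i.e.\ $\lambda_S(e_B)=\delta_B$ for every $B$.

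Next I would compute $\lambda_S$ by applying the additive assignment $\mathrm{alt}(\sigma)\mapsto\lambda_{\mathrm{alt}(\sigma)}$ to each summand defining $S$, using the identity (\ref{lambdaSspecial:eq}), $\lambda_{\mathrm{alt}(\alpha^2\otimes\beta)}=\langle\alpha,\beta\rangle\alpha-\langle\alpha,\alpha\rangle\beta$, together with its $E_-$ analogue. The needed scalar products are read off from the adapted orthonormal coframe, $\langle e^A,e^B\rangle=\varepsilon_A\delta^{AB}$ with $\varepsilon_A^2=1$. Thus for $A\neq1$ the cross term vanishes and $\lambda_{\mathrm{alt}((e^1)^2\otimes e^A)}=-\varepsilon_1 e^A$, so the summands $-\delta_A\varepsilon_1\,\mathrm{alt}((e^1)^2\otimes e^A)$ for $A=2,\ldots,n$ contribute exactly $\sum_{a=2}^n\delta_a e^a$. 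The remaining component $\delta_1 e^1$ cannot come from $(e^1)^2\otimes e^1$, because $\lambda_{\mathrm{alt}((e^1)^2\otimes e^1)}=\varepsilon_1 e^1-\varepsilon_1 e^1=0$; this is exactly why the first summand switches the symmetric factor to $(e^2)^2$, for which $\lambda_{\mathrm{alt}((e^2)^2\otimes e^1)}=-\varepsilon_2 e^1$ and hence, after the coefficient $-\delta_1\varepsilon_2$, supplies the missing $\delta_1 e^1$. Running the same mechanism inside the $E_-$ block, via the $\Sigma_-$ analogue of (\ref{lambdaSspecial:eq}), recovers $\sum_{i=n+1}^{2n}\delta_i e^i$, and adding the two blocks yields $\lambda_S=\sum_B\delta_B e^B=\delta$.

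I expect the main obstacle to be bookkeeping rather than anything conceptual: keeping every sign $\varepsilon_A$ and the overall minus sign in $S$ straight, and making sure the argument of $\mathrm{alt}$ respects the block decomposition $(\mathrm{Sym}^2E_+^*\otimes E_+^*)\oplus(\mathrm{Sym}^2E_-^*\otimes E_-^*)$, so that $S$ really lands in $\Sigma_+\oplus\Sigma_-$ and $\lambda_S$ splits as $\delta_+ +\delta_-$ with $\delta_\pm\in E_\pm^*$. I would also record that the construction uses $n=\dim G\ge2$, since the index-switching trick for the diagonal entry needs a second basis covector; for $n=1$ the unique Levi-Civita connection has vanishing divergence and no such $S$ is required.
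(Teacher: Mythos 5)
Your reduction --- via (\ref{divDDprime:eq}), (\ref{lambdaS:eq}) and $\delta_{D^0}=0$ --- to the single identity $\lambda_S=\delta$, and your evaluation of the summands with $A\le n$ (including the correct explanation of why the diagonal contribution must come from $(e^2)^2\otimes e^1$ rather than $(e^1)^2\otimes e^1$), is exactly the paper's proof.

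The gap is in the terms with $A\in\{n+1,\ldots,2n\}$. In the $S$ of the statement these are $\delta_A\varepsilon_1(e^1)^2\otimes e^A$ with $e^1\in E_+^*$ but $e^A\in E_-^*$: they are of \emph{mixed} type, so $S$ has no ``$E_-$ block,'' and the $\Sigma_-$ analogue of (\ref{lambdaSspecial:eq}) --- which concerns $\mathrm{alt}(\alpha^2\otimes\beta)$ with \emph{both} $\alpha,\beta\in E_-^*$ --- says nothing about them; your appeal to it for these terms is therefore not justified. What the paper's one-line proof actually does is apply $\lambda_{\mathrm{alt}(\alpha^2\otimes\beta)}=\langle\alpha,\beta\rangle\alpha-\langle\alpha,\alpha\rangle\beta$ with $\alpha=e^1$, $\beta=e^A$ irrespective of type; if one extends $\mathrm{alt}$ to mixed tensors by its defining formula $\langle\mathrm{alt}(\sigma)_uv,w\rangle=\sigma(u,v,w)-\sigma(u,w,v)$, a short direct check does give the trace $-\varepsilon_1e^A$ and hence the contribution $\delta_Ae^A$, but that check is a computation you would still have to supply. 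Moreover, your ``bookkeeping'' worry about the block decomposition is substantive rather than cosmetic: under this extension the mixed terms take values off the diagonal of $\mathfrak{so}(E)$ --- for instance $\mathrm{alt}\bigl((e^1)^2\otimes e^A\bigr)_{e_1}e_1=\varepsilon_Ae_A\in E_-$ --- so they lie outside $E^*\otimes(\mathfrak{so}(E_+)\oplus\mathfrak{so}(E_-))$ and hence outside $\mathfrak{so}(E)^{\langle 1\rangle}$, and adding them to $D^0$ would destroy metricity. The construction that genuinely lies in $\Sigma_+\oplus\Sigma_-$ is the one your ``$E_-$ block'' reading silently substitutes: keep the stated summands for $A\le n$ and replace the mixed ones by $\delta_{n+1}\varepsilon_{n+2}(e^{n+2})^2\otimes e^{n+1}+\sum_{i=n+2}^{2n}\delta_i\varepsilon_{n+1}(e^{n+1})^2\otimes e^i$, to which the $\Sigma_-$ analogue applies verbatim and yields $\sum_{i>n}\delta_ie^i$. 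With that substitution your argument is complete and correct (and, as you note, uses $\dim G\ge 2$); as a proof of the formula displayed in the proposition, read literally, the step for $A>n$ is unjustified --- a defect it shares with the proposition's own formula and proof.
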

\begin{proof}
From (\ref{divDDprime:eq}), (\ref{lambdaS:eq}) and (\ref{lambdaSspecial:eq}) we see that $D=D^0 + S$ has divergence $ \delta $, since
\[ \lambda_{S_+} = -\delta_1 \varepsilon_2\lambda_{(e^2)^2 \otimes e^1} -\sum_{a=2}^{n}\delta_a\varepsilon_1\lambda_{(e^1)^2\otimes e^a}=\sum_{a=1}^{n}\delta_ae^a=\delta|_{E_+}\]
and similarly $ \lambda_{S_-}=\delta|_{E_-} $.
\end{proof}
We want to close this section by introducing a special divergence operator, the so-called Riemannian divergence, which is considered in the literature (\cite[Definition 2.46]{GSt}). If $ (M,\mathcal{G}) $ is a generalized pseudo-Riemannian manifold, one defines for all $v\in \Gamma (\mathbb{T}M)$
\begin{equation*}
	\delta^\mathcal{G}(v)= \tr \left(\nabla \pi v\right)=\tr\left(\Gamma(TM)\ni Y \mapsto \nabla_Y\pi(v)\in \Gamma(TM)\right),
\end{equation*}
where $ \nabla $ is the Levi-Civita connection of the pseudo-Riemannian metric $ g $ associated to $ \mathcal{G} $ via Proposition~\ref{NF_metric:prop}. Denoting by $ \mu $ the Riemannian density associated to $ g $, we recall the well known fact that the divergence $\tr \left(\nabla X\right)$ of a vector field $X$ can also be expressed by 
$\frac{\mathcal{L}_X \mu}{\mu}$, since
\begin{equation*}
	\mathcal{L}_X \mu = \nabla_X\mu -(\nabla X) \cdot \mu = \tr (\nabla X)\mu. 
\end{equation*}
The divergence operator $\delta^\mathcal{G}$ can be recovered as the divergence of a generalized connection as in Example \ref{divergence of connection:ex}. For that one first extends the Levi-Civita connection to a connection on $ \mathbb{T}M $ and then pulls it back to a generalized connection $ \widetilde{\nabla} $ via the anchor $ \pi $. Then 
\begin{equation*}
	\delta_{\widetilde{\nabla}}(v)=\tr_{\mathbb{T}M}\left(\widetilde{\nabla}v\right)=\tr_{TM}\left(\nabla\pi(v)\right)=\delta^\mathcal{G}(v),
\end{equation*}
since $ \widetilde{\nabla}v|_{T^*M}=0 $ and $ \pi\circ\widetilde{\nabla}v|_{TM}=\nabla\pi(v)$. Furthermore, note that $ \widetilde{\nabla} $ is a Levi-Civita generalized connection of $ \mathcal{G} $, if $ \mathcal{G}=\mathcal{G}^g $ and $H=0$.
\begin{prop}\label{Riemdiv:prop}
	Let $(G, H,\mathcal G)$ be a generalized pseudo-Riemannian Lie group. Then the Riemannian divergence satisfies
	\begin{equation*}
			\delta^\mathcal{G}(v)=-\tau(\pi(v)),  \quad v\in E,
		\end{equation*}
	where $\tau\in \mathfrak{g}^*$ is the trace-form defined by $\tau (X) = \tr \mathrm{ad}_{X}$, $X\in\mathfrak{g}$. In particular, the Riemannian divergence is zero, if the Lie group $ G $ is unimodular.
\end{prop}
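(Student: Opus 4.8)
The plan is to reduce the statement, via the identities recorded just above the proposition, to a classical computation of the divergence of a left-invariant vector field. Fix $v\in E$ and write $X\coloneqq \pi(v)\in\mathfrak g$ for its $\mathfrak g$-component, regarded as a left-invariant vector field on $G$. By the discussion preceding the proposition, $\delta^{\mathcal G}(v)=\tr(\nabla \pi v)=\operatorname{div}_\mu(X)$, where $\operatorname{div}_\mu(X)=\mathcal L_X\mu/\mu$ and $\mu$ is the Riemannian density of the left-invariant metric $g$; note that $\mu$ is itself left-invariant because left translations are isometries of $g$. It therefore suffices to prove that $\operatorname{div}_\mu(X)=-\tau(X)=-\tr\mathrm{ad}_X$ for every left-invariant vector field $X$.

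First I would identify the flow of $X$: since $X$ is left-invariant, its flow is right multiplication, $\phi_t=R_{\exp(tX)}$. Next I would record how a left-invariant top-degree form scales under right translation. Pulling back to $e$ and using $L_{h^{-1}}\circ R_h=\mathrm{Conj}_{h^{-1}}$, whose differential at $e$ is $\mathrm{Ad}(h^{-1})$, one gets $R_h^*\mu=\lvert\det\mathrm{Ad}(h^{-1})\rvert\,\mu$. Substituting $h=\exp(tX)$ and using $\det\mathrm{Ad}(\exp(-tX))=\det\exp(-t\,\mathrm{ad}_X)=\exp(-t\,\tr\mathrm{ad}_X)$ gives $\phi_t^*\mu=\exp(-t\,\tau(X))\,\mu$. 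Differentiating at $t=0$ yields $\mathcal L_X\mu=-\tau(X)\,\mu$, hence $\operatorname{div}_\mu(X)=-\tau(X)$, which is the claim. The unimodular case follows at once, since then $\tau=0$.

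As an alternative I would give a purely algebraic derivation avoiding densities, which also dovetails with the orthonormal-basis setup already introduced. In a $g$-orthonormal left-invariant basis $(v_a)$ with $\varepsilon_a=g(v_a,v_a)$, the Koszul formula applied to left-invariant fields (all metric-coefficient derivatives being constant) gives $2g(\nabla_{v_a}X,v_a)=g([v_a,X],v_a)-g([X,v_a],v_a)+g([v_a,v_a],X)=2g([v_a,X],v_a)$, so that $g(\nabla_{v_a}X,v_a)=g([v_a,X],v_a)=-g(\mathrm{ad}_X v_a,v_a)$. Summing with the signs $\varepsilon_a$ and using that $\tr T=\sum_a\varepsilon_a g(Tv_a,v_a)$ for any endomorphism $T$, one obtains $\tr(\nabla X)=\sum_a\varepsilon_a\, g(\nabla_{v_a}X,v_a)=-\tr\mathrm{ad}_X=-\tau(X)$.

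The computation itself is routine; the only delicate point will be the sign bookkeeping. The crux is to verify correctly that the flow of a left-invariant field is right translation and that a left-invariant density scales by $\lvert\det\mathrm{Ad}(h^{-1})\rvert$ under $R_h$ (equivalently, in the algebraic version, to track the sign coming from $[X,v_a]=-[v_a,X]$). Both routes land on the factor $-\tr\mathrm{ad}_X$, so I would expect no genuine obstacle beyond keeping these signs straight.
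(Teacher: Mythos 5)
Your proposal is correct, and in fact contains two valid arguments. Your second, ``purely algebraic'' derivation is essentially identical to the paper's own proof: the paper also applies the Koszul formula for left-invariant fields in a $g$-orthonormal basis $(v_a)$, notes that two of the three bracket terms combine while $[v_a,v_a]=0$, and concludes $\tr(\nabla X)=-\sum_a\varepsilon_a g([X,v_a],v_a)=-\tr\mathrm{ad}_X$; your sign bookkeeping (including the signature factors $\varepsilon_a$ in the trace) matches theirs. Your first route, via the flow of $X$ and the behaviour of the left-invariant density under right translations, is genuinely different: it replaces the pointwise connection computation by the global identities $\phi_t=R_{\exp(tX)}$ and $R_h^*\mu=\lvert\det\mathrm{Ad}(h^{-1})\rvert\,\mu$, so that $\mathcal L_X\mu=-\tau(X)\,\mu$ follows from $\det\exp(-t\,\mathrm{ad}_X)=\exp(-t\,\tr\mathrm{ad}_X)$. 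It does lean on the identity $\tr(\nabla X)=\mathcal L_X\mu/\mu$, but the paper records exactly this identity in the paragraph preceding the proposition, so nothing is missing. What the flow argument buys is conceptual clarity (it explains the appearance of $-\tau$ as the failure of right-invariance of the left Haar density, i.e.\ the modular function) and independence from the Levi-Civita connection; what the paper's (and your alternative) computation buys is brevity and direct compatibility with the orthonormal-frame formalism used throughout the rest of the paper.
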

\begin{proof}
	Let $ v=X+\xi\in E $ and $ (v_a) $ as usual a basis of $ \mathfrak{g} $, which is orthonormal with respect to $ g $. Furthermore, let $ \nabla $ be the Levi-Civita connection of $ g $. It satisfies 
	\begin{equation*}
		g\left(\nabla_XY,Z\right)=\frac12\left(g\left([X,Y],Z\right)-g\left([Y,Z],X\right)+g\left([Z,X],Y\right)\right)
	\end{equation*}
	for $ X,Y,Z\in\mathfrak{g} $. We can thus compute 
	\begin{eqnarray*}
			\delta^\mathcal{G}(X+\xi)&=&\tr(\nabla X)\\
			&=&\sum_a\varepsilon_ag(\nabla_{v_a}X,v_a)\\
			&=&\frac12\sum_a\varepsilon_a\left(g([v_a,X],v_a)-g([X,v_a],v_a)+g([v_a,v_a],X)\right)\\
			&=&-\sum_a\varepsilon_ag([X,v_a],v_a)\\
			&=&-\tr\mathrm{ad}_X\\
			&=&-\tau(\pi(X+\xi)).
		\end{eqnarray*}
\end{proof}

\subsection{Ricci curvatures and generalized Einstein metrics}
\label{mainpart2ndSec}
After fixing a left-invariant section $\delta$ of $(\mathbb{T}G)^*$ over a generalized 
pseudo-Riemannian Lie group $(G,H,\mathcal G)$ we 
define and compute two canonical Ricci curvature tensors 
$Ric^+\in E_-^*\otimes E_+^*$ and $Ric^-\in E_+^*\otimes E_-^*$, which depend only on 
the data $(H,\mathcal G,\delta )$. A left-invariant solution $\mathcal{G}$ of the system $Ric^+=0, Ric^-=0$ is what we will call a \emph{generalized Einstein metric} on $G$ with three-form $H$ and \emph{dilaton}  
$\delta$.

Consider the generalized tangent bundle $\mathbb{T}M$ of a smooth manifold endowed with the 
Courant algebroid structure associated with a closed three-form $H$ on $M$ and a generalized 
pseudo-Riemannian metric $\mathcal G$. We denote by $(\mathbb{T}M)_\pm$ the eigenbundles 
of $\mathcal G^{\mathrm{end}}$.

Given a Levi-Civita generalized connection $D$ on $\mathbb{T}M$ 
and two sections $u,v\in \Gamma (\mathbb{T}M)$,  
we consider the differential operator $R(u,v): \Gamma (\mathbb{T}M)\rightarrow \Gamma (\mathbb{T}M)$ defined by 
\[ R(u,v)w \coloneqq D_uD_vw-D_vD_uw-D_{[u,v]_H}w,\] for all 
$w\in \Gamma (\mathbb{T}M)$. 
It was observed in \cite{G} that $R$ restricts to tensor fields 
\begin{eqnarray*} R_D^+&\in& \Gamma \left((\mathbb{T}M)_+^*\otimes (\mathbb{T}M)_-^*\otimes \mathfrak{so}((\mathbb{T}M)_+)\right),\\
R_D^-&\in& \Gamma \left((\mathbb{T}M)_-^*\otimes (\mathbb{T}M)_+^*\otimes \mathfrak{so}((\mathbb{T}M)_-)\right). \end{eqnarray*}
Hence there are tensor fields $Ric_D^+\in \Gamma ((\mathbb{T}M)_-^*\otimes (\mathbb{T}M)_+^*)$ and 
$Ric_D^-\in \Gamma ((\mathbb{T}M)_+^*\otimes (\mathbb{T}M)_-^*)$ defined by 
\begin{equation*}\begin{split}
	Ric_D^+(u,v) = \tr R_D^+(\cdot , u)v = \tr \left(\Gamma({\mathbb{T}M}_+)\ni w\mapsto R(w,u)v\in \Gamma({\mathbb{T}M}_+)\right)&,\\
	u\in \Gamma({\mathbb{T}M}_-)&, \, v\in \Gamma({\mathbb{T}M}_+),\\
	Ric_D^-(u,v) = \tr R_D^-(\cdot , u)v = \tr \left(\Gamma({\mathbb{T}M}_-)\ni w\mapsto R(w,u)v\in  \Gamma({\mathbb{T}M}_-)\right)&,\\ 
	u\in \Gamma({\mathbb{T}M}_+)&, \, v\in \Gamma({\mathbb{T}M}_-).
\end{split}\end{equation*}
It was also shown in \cite{G} that the tensor fields $Ric_{D_1}^\pm$ and $Ric_{D_2}^\pm$ 
are the same for any pair of Levi-Civita generalized connections $D_1$, $D_2$ 
with the same divergence operator $\delta_{D_1}=\delta_{D_2}$.  

As a consequence, the following definition is meaningful. 
\begin{mydef} \label{Ricdef} Let $(G,H,\mathcal G)$ be a generalized pseudo-Riemannian Lie group and 
$\delta\in E^*$. Then the \emph{Ricci curvatures}  
\[ Ric^+=Ric^+_\delta\in E_-^*\otimes E_+^*\quad\mbox{and}\quad Ric^-=Ric^-_\delta\in E_+^*\otimes E_-^*\] 
of  $(G,H,\mathcal G, \delta)$ (or of $(\mathfrak{g},H,\mathcal G, \delta)$)  are defined by evaluation of $Ric_D^+$ and $Ric_D^-$ at $e\in G$, where 
$D$ is any left-invariant Levi-Civita generalized connection $D$ with divergence $\delta$. 
$(G,H,\mathcal G, \delta)$ is called \emph{generalized Einstein} if 
\[ Ric \coloneqq Ric^+\oplus Ric^- =0 \in E_-^*\otimes E_+^* \oplus E_+^*\otimes E_-^*.\]
We will consider $Ric$ as a bilinear form on $E$ vanishing on $E_+\times E_+$ and $E_-\times E_-$. 
\end{mydef}
Next we compute the Ricci curvatures in the case $\delta=0$ using the canonical divergence-free 
Levi-Civita generalized connection of Proposition~\ref{LCdiv0:prop}, which in the following 
we denote by $D^0$. The case of general 
divergence is then obtained by computing how the Ricci curvatures
change under addition of an element of the generalized first prolongation. 
We denote by $R^\pm_{D^0}\in E_\pm^*\otimes E_\mp^*\otimes \mathfrak{so}(E_\pm)$
the tensors which correspond to the left-invariant 
tensor fields $R^\pm_{D^0}\in \Gamma \left( (\mathbb{T}G)_\pm^*\otimes (\mathbb{T}G)_\mp^*\otimes \mathfrak{so}((\mathbb{T}G)_\pm)\right)$. 

\begin{prop}\label{curv:prop}Let $D^0$ be the canonical divergence-free Levi-Civita generalized connection of a 
generalized pseudo-Riemannian Lie group $(G,H,\mathcal G_g)$, defined  in Proposition~\ref{LCdiv0:prop}. 
The components $R_{ABCD}\coloneqq \langle R(e_A,e_B)e_C,e_D\rangle$, $A,B,C,D\in \{ 1,\ldots ,2n\}$, of the tensors 
$R^\pm_{D^0}$ are given by
\begin{eqnarray*} R_{ajcd} &=&  \frac23 \mathcal{B}_{aj}^\ell \mathcal{B}_{c\ell d} +\frac13 \mathcal{B}_{jc}^\ell \mathcal{B}_{\ell a d} +\frac13 \mathcal{B}_{ca}^\ell \mathcal{B}_{\ell j d},\\
R_{ibk\ell } &=& \frac23 \mathcal{B}_{ib}^c \mathcal{B}_{kc \ell}  +\frac13 \mathcal{B}_{bk}^c\mathcal{B}_{ci\ell }
+\frac13 \mathcal{B}_{ki}^c\mathcal{B}_{cb\ell },
\end{eqnarray*}
where $a,b,c,d\in \{ 1,\ldots ,n\}$ and $i,j,k,\ell \in \{ n+1,\ldots ,2n\}$. 
\end{prop}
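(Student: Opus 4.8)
The plan is to turn the curvature into a purely algebraic expression on $E=\mathfrak g\oplus\mathfrak g^*$ and then to exploit the Jacobi identity (axiom (C1)) for the Dorfman bracket. Since $D^0$ is left-invariant, the sections $e_A$ are constant and $[e_A,e_B]_H=\mathcal B_{AB}^Ce_C$ with $\mathcal B_{AB}^C=\varepsilon_C\mathcal B_{ABC}$, so $R(e_A,e_B)e_C=D_{e_A}D_{e_B}e_C-D_{e_B}D_{e_A}e_C-D_{[e_A,e_B]_H}e_C$ collapses, upon pairing with $e_D$ and using $\langle D_{e_A}e_E,e_D\rangle=\omega_{AED}$, to
\[ R_{ABCD}=\varepsilon_E\left(\omega_{BCE}\,\omega_{AED}-\omega_{ACE}\,\omega_{BED}-\mathcal B_{ABE}\,\omega_{ECD}\right), \]
with $E$ summed over $1,\dots,2n$. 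I would record this identity first, as it is where left-invariance enters.

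Next I would substitute the explicit coefficients (\ref{conn:eq}) of $D^0$ and specialise to the component $R_{ajcd}$ ($a,c,d\in\{1,\dots,n\}$, $j\in\{n+1,\dots,2n\}$). Proposition~\ref{metric:prop} kills every $\omega$ whose last two indices lie in different eigenspaces, and only a few terms survive. A short bookkeeping gives
\[ R_{ajcd}=\tfrac13\,\varepsilon_e\left(\mathcal B_{jce}\mathcal B_{aed}-\mathcal B_{ace}\mathcal B_{jed}-\mathcal B_{aje}\mathcal B_{ecd}\right)-\varepsilon_k\,\mathcal B_{ajk}\mathcal B_{kcd}, \]
where $e$ runs over the $E_+$-indices $\{1,\dots,n\}$ and $k$ over the $E_-$-indices $\{n+1,\dots,2n\}$; the weight $\tfrac13$ records the three coefficients $\omega_{abc}=\tfrac13\mathcal B_{abc}$, $\omega_{aed}=\tfrac13\mathcal B_{aed}$, $\omega_{ecd}=\tfrac13\mathcal B_{ecd}$, while the unit-weight last term comes from $\omega_{kcd}=\mathcal B_{kcd}$ inside the $D_{[e_a,e_j]_H}$-piece.

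The heart of the argument is the observation that the first, $E_+$-contracted sum is exactly $\tfrac13$ of the $E_+$-part of the Jacobi combination $J_F:=\mathcal B_{jcF}\mathcal B_{aFd}-\mathcal B_{ajF}\mathcal B_{Fcd}-\mathcal B_{acF}\mathcal B_{jFd}$. Pairing axiom (C1) for $(e_a,e_j,e_c)$ against $e_d$ states precisely that $\varepsilon_F J_F=0$ when summed over all $F$, hence $\varepsilon_e J_e=-\varepsilon_k J_k$ for $e$ an $E_+$-index and $k$ an $E_-$-index. Replacing the $E_+$-sum by $-\tfrac13\varepsilon_k J_k$ converts $R_{ajcd}$ into a contraction over the $E_-$-indices alone; collecting it with the surviving $-\varepsilon_k\mathcal B_{ajk}\mathcal B_{kcd}$ and relabelling by the total skew-symmetry of $\mathcal B$ (Lemma~\ref{B_inv:lem}) yields the three stated terms with weights $\tfrac23,\tfrac13,\tfrac13$.

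I expect this transfer between the two eigenspace contractions via the Courant--Jacobi identity to be the only genuine obstacle; everything after it is sign-tracking with the total skew-symmetry of $\mathcal B$. Finally, the formula for $R_{ibk\ell}$ is obtained by the identical computation with the roles of $E_+$ and $E_-$ interchanged, which leaves the shape of (\ref{conn:eq}) and of (C1) unchanged.
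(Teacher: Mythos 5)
Your proposal is correct and follows essentially the same route as the paper's proof: both substitute the connection coefficients (\ref{conn:eq}) into the left-invariant curvature formula and then invoke the component form of axiom (C1), specialized to $(e_a,e_j,e_c)$ paired with $e_d$, to trade the $E_+$-contracted quadratic terms for the $E_-$-contracted ones, finishing with the total skew-symmetry of $\mathcal{B}$. The only differences are presentational (you keep lowered indices with explicit $\varepsilon$-factors and package the Jacobi combination as $J_F$, while the paper works with one raised index and a cyclic sum), not mathematical.
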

\begin{proof}
We denote by $\eta_{AB} =\langle e_A,e_B\rangle$ the coefficients of the 
scalar product with respect to the orthonormal basis $(e_A)$ and by $\omega_{ABC}$ and $\omega_{AB}^C=\sum_{D} \eta^{CD} \omega_{ABD}$ the connection coefficients of $D^0$. Here $\eta^{AB}=\eta_{AB}$ are the coefficients of the 
induced scalar product on $E^*$. Then (taking into account the agreed index ranges) we compute 
\begin{eqnarray*}
R_{D^0}^+(e_a,e_j)e_c&=&(\omega_{jc}^d\omega_{ad}^f -\omega_{ac}^d\omega_{jd}^f - 
\mathcal{B}_{aj}^D\omega_{Dc}^f)e_f,\\
&=&(\omega_{jc}^d\omega_{ad}^f -\omega_{ac}^d\omega_{jd}^f - 
\mathcal{B}_{aj}^d\omega_{dc}^f- 
\mathcal{B}_{aj}^\ell \omega_{\ell c}^f)e_f\\
&=&(\frac13 \mathcal{B}_{jc}^d\mathcal{B}_{ad}^f -\frac13 \mathcal{B}_{ac}^d\mathcal{B}_{jd}^f - \frac13 
\mathcal{B}_{aj}^d\mathcal{B}_{dc}^f- 
\mathcal{B}_{aj}^\ell \mathcal{B}_{\ell c}^f)e_f,
\end{eqnarray*}
where the index $f$ runs from $1$ to $n$ and $\mathcal{B}_{AB}^C=\mathcal{B}_{ABD}\eta^{DC}$. 
Next we observe that the axiom (C1), the Jacobi identity for the Dorfman bracket, 
can be written in components as
\[ \sum_{\mathfrak{S}(A,B,C)} \mathcal{B}_{AD}^F\mathcal{B}_{BC}^D=0,\]
where the cyclic sum is over $(A,B,C)$. 
Specializing to $(A,B,C,F)=(a,j,c,f)$ we get 
\[ 0=\sum_{\mathfrak{S}(a,j,c)} \mathcal{B}_{aD}^f\mathcal{B}_{jc}^D=\sum_{\mathfrak{S}(a,j,c)}(\mathcal{B}_{ad}^f\mathcal{B}_{jc}^d +\mathcal{B}_{a\ell }^f\mathcal{B}_{jc}^\ell ).\]
So we obtain
\begin{eqnarray*} R_{D^0}^+(e_a,e_j)e_c&=& -\left(  \mathcal{B}_{aj}^\ell \mathcal{B}_{\ell c}^f +\frac13 \sum_{\mathfrak{S}(a,j,c)} \mathcal{B}_{jc}^\ell \mathcal{B}_{a\ell }^f \right) e_f\\
&=&\left( \frac23 \mathcal{B}_{aj}^\ell \mathcal{B}_{c\ell }^f +\frac13 \mathcal{B}_{jc}^\ell \mathcal{B}_{\ell a}^f +\frac13 \mathcal{B}_{ca}^\ell \mathcal{B}_{\ell j}^f\right) e_f
\end{eqnarray*}
Taking the scalar product with $e_d$ gives the claimed formula for $R_{ajcd}$. The other 
formula is obtained similarly. 
\end{proof}
\begin{cor}
\label{RicCor_delta0}
Let $(G,H,\mathcal G_g)$ be  a 
generalized pseudo-Riemannian Lie group. Then the Ricci curvature of $(G,H,\mathcal G_g,\delta=0)$ is 
symmetric, in the sense that $Ric^+(u,v)=Ric^-(v,u)$ for all 
$u\in E_-$, $v\in E_+$. The components $R_{ia}\coloneqq Ric^+(e_i,e_a)$ of 
$Ric^+$ are given by
\[ R_{ia} =  \mathcal{B}_{bi}^j \mathcal{B}_{aj}^b,
\]
where $a,b\in \{ 1,\ldots ,n\}$ and $i,j\in \{ n+1,\ldots ,2n\}$.  \end{cor}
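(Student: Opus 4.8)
The plan is to read off both Ricci curvatures directly from the curvature components computed in Proposition~\ref{curv:prop}, so that no new geometric input is required beyond the total skew-symmetry of $\mathcal{B}$ established in Lemma~\ref{B_inv:lem}. First I would unwind the definition of $Ric^+$: since $R^+_{D^0}$ takes values in $\mathfrak{so}(E_+)$, the map $w\mapsto R(w,e_i)e_a$ is an endomorphism of $E_+$, and in the $\mathcal{G}$-orthonormal basis $(e_b)_{b=1,\ldots,n}$ its trace is $\sum_b \varepsilon_b\langle R(e_b,e_i)e_a,e_b\rangle=\sum_b\varepsilon_b R_{biab}$. Substituting the expression for $R_{ajcd}$ from Proposition~\ref{curv:prop} with $(a,j,c,d)=(b,i,a,b)$ produces three terms.

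The key simplification is that the middle term carries the factor $\mathcal{B}_{\ell b b}$, which vanishes identically because $\mathcal{B}$ is totally skew. For the two surviving terms I would raise the contracted index using $\eta^{AB}=\varepsilon_A\delta^{AB}$ and rewrite everything in terms of the fully lowered components $\mathcal{B}_{bij}$ and $\mathcal{B}_{ajb}$ (with $b$ a ``plus'' index and $i,j$ ``minus'' indices). The first term, with coefficient $\tfrac23$, is already of this shape and gives $\tfrac23\sum_{b,j}\varepsilon_b\varepsilon_j\mathcal{B}_{bij}\mathcal{B}_{ajb}$; the third term, with coefficient $\tfrac13$, is brought to the same shape by applying skew-symmetry to each of its two factors (a transposition in each, whose signs cancel). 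Adding $\tfrac23+\tfrac13=1$ yields precisely $R_{ia}=\mathcal{B}_{bi}^{j}\mathcal{B}_{aj}^{b}$.

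For the symmetry statement I would run the identical calculation for $Ric^-(e_a,e_i)=\sum_j\varepsilon_j R_{jaij}$, now using the second formula of Proposition~\ref{curv:prop} for $R_{ibk\ell}$ with $(i,b,k,\ell)=(j,a,i,j)$. Once again the middle term dies by skew-symmetry, and the remaining two terms reduce, via cyclic permutations and transpositions of the totally skew $\mathcal{B}$, to the very same contraction $\sum_{b,j}\varepsilon_b\varepsilon_j\mathcal{B}_{bij}\mathcal{B}_{ajb}$. Since both $Ric^+(e_i,e_a)$ and $Ric^-(e_a,e_i)$ equal this common quantity, the asserted symmetry $Ric^+(u,v)=Ric^-(v,u)$ for $u\in E_-$, $v\in E_+$ follows at once.

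I expect the only real obstacle to be the sign- and index-bookkeeping: one must keep the ``plus'' indices $a,b,c,d$ strictly separate from the ``minus'' indices $i,j,k,\ell$, correctly track the factors $\varepsilon_A$ produced when raising indices (recalling $\varepsilon_a=-\varepsilon_{n+a}$), and verify that each application of the skew-symmetry of $\mathcal{B}$ contributes the right sign, so that the $\tfrac23$ and $\tfrac13$ contributions genuinely line up into one clean term rather than partially cancelling. No further use of the Jacobi identity is needed here, since it has already been absorbed into the formulas of Proposition~\ref{curv:prop}.
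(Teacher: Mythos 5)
Your proposal is correct and follows essentially the same route as the paper: both take the trace of the curvature formulas from Proposition~\ref{curv:prop}, kill the middle term via the total skew-symmetry of $\mathcal{B}$ (Lemma~\ref{B_inv:lem}), and use one more application of skew-symmetry to merge the $\tfrac23$ and $\tfrac13$ contributions into the single term $\mathcal{B}_{bi}^j\mathcal{B}_{aj}^b$. The only difference is presentational: the paper asserts $R_{ia}=R_{ai}$ in one line, while you make explicit the parallel trace computation on the second curvature formula that justifies it.
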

\begin{proof}From Proposition~\ref{curv:prop}, by taking the trace using the complete skew-symmetry of $\mathcal{B}_{ABC}$, see Lemma~\ref{B_inv:lem}, we get:
\begin{eqnarray*} R_{ia} &=&  R_{ai}= \frac23 \mathcal{B}_{bi}^j \mathcal{B}_{aj}^b +\frac13 \mathcal{B}_{ab}^j \mathcal{B}_{j i}^b\\
&=& \eta^{bb'} \eta^{jj'}\left( \frac23 \mathcal{B}_{bij} \mathcal{B}_{aj'b'} +\frac13 \mathcal{B}_{ab'j'} \mathcal{B}_{j ib}\right)\\
&=& \eta^{bb'} \eta^{jj'} \mathcal{B}_{bij} \mathcal{B}_{aj'b'} = \mathcal{B}_{bi}^j \mathcal{B}_{aj}^b.
\end{eqnarray*}
\end{proof}
For $u_\pm \in E_\pm$ we define 
\begin{equation}\label{Gamma:eq} \Gamma_{u_+} \coloneqq \mathrm{pr}_{E_+}\circ {[}u_+, \cdot {]}_H\big|_{E_-} : E_- \rightarrow E_+,\quad\Gamma_{u_-} \coloneqq \mathrm{pr}_{E_-}\circ {[}u_-, \cdot {]}_H\big|_{E_+} : E_+ \rightarrow E_-.
\end{equation}

\begin{cor} A necessary and sufficient condition for $(G,H,\mathcal G_g,\delta=0)$ to be generalized Einstein is that 
the subspace 
\[ \Gamma_{E_+} \subset \mathrm{Hom}(E_-,E_+)\quad \mbox{is perpendicular to}\quad  
\Gamma_{E_-} \subset \mathrm{Hom}(E_+,E_-),\]
with respect to the non-degenerate pairing $\mathrm{Hom}(E_-,E_+)\times \mathrm{Hom}(E_+,E_-)\rightarrow \mathbb{R}$ given by $(A,B) \mapsto \tr (AB) = \tr (BA)$. A sufficient condition in terms of the 
subspaces $\Gamma_{E_\pm}E_{\mp} \subset E_\pm$ is that 
\begin{equation} \label{suff:eq}\Gamma_{E_+}E_-\perp [E_-,E_-]_H\quad\mbox{or}\quad \Gamma_{E_-}E_+\perp [E_+,E_+]_H.\end{equation}
\end{cor}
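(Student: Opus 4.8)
The plan is to reduce the Einstein condition to the vanishing of $Ric^+$ alone and then to match $Ric^+$ with the trace pairing. First I would invoke Corollary~\ref{RicCor_delta0}, by which $Ric$ is symmetric in the sense $Ric^+(u_-,u_+)=Ric^-(u_+,u_-)$; hence $Ric=0$ if and only if $Ric^+=0$, i.e.\ $Ric^+(u_-,u_+)=0$ for all $u_-\in E_-$, $u_+\in E_+$. The whole argument then rests on the identity
\[ Ric^+(u_-,u_+)=-\tr(\Gamma_{u_-}\circ\Gamma_{u_+}),\]
with $\Gamma_{u_\pm}$ as in (\ref{Gamma:eq}). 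Once this is in place, the first assertion is immediate: the trace pairing $\mathrm{Hom}(E_-,E_+)\times\mathrm{Hom}(E_+,E_-)\to\R$, $(A,B)\mapsto\tr(AB)=\tr(BA)$, is non-degenerate (it realises $\mathrm{Hom}(E_+,E_-)$ as the dual of $\mathrm{Hom}(E_-,E_+)$), so $Ric^+=0$ says precisely that every $\Gamma_{u_+}\in\Gamma_{E_+}$ pairs trivially with every $\Gamma_{u_-}\in\Gamma_{E_-}$, which is the stated orthogonality $\Gamma_{E_+}\perp\Gamma_{E_-}$.

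To prove the displayed identity I would work in the adapted orthonormal basis $(e_A)$ of (\ref{ea:eq})--(\ref{ei:eq}) and read off the matrix coefficients of the $\Gamma$-maps. Writing $u_+=e_a$ and $u_-=e_i$, orthogonal projection together with (\ref{B:eq}) gives $\Gamma_{e_a}e_j=\sum_b\varepsilon_b\mathcal{B}_{ajb}e_b$ and $\Gamma_{e_i}e_b=\sum_j\varepsilon_j\mathcal{B}_{ibj}e_j$, so that $\tr(\Gamma_{e_i}\circ\Gamma_{e_a})=\sum_{b,j}\varepsilon_b\varepsilon_j\mathcal{B}_{ajb}\mathcal{B}_{ibj}$. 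Comparing this with the formula $R_{ia}=\mathcal{B}_{bi}^{j}\mathcal{B}_{aj}^{b}$ of Corollary~\ref{RicCor_delta0} and raising indices by $\mathcal{B}_{bi}^{j}=\varepsilon_j\mathcal{B}_{bij}$, $\mathcal{B}_{aj}^{b}=\varepsilon_b\mathcal{B}_{ajb}$, the total skew-symmetry of $\mathcal{B}$ from Lemma~\ref{B_inv:lem} (specifically $\mathcal{B}_{bij}=-\mathcal{B}_{ibj}$) yields $R_{ia}=-\tr(\Gamma_{e_i}\circ\Gamma_{e_a})$, as required. The only delicate point here is the consistent bookkeeping of the signs $\varepsilon_A$ and of the index positions under skew-symmetry; everything else is mechanical.

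For the sufficient conditions (\ref{suff:eq}) I would rewrite $R_{ia}$ invariantly rather than passing through the trace. Using the completeness relation $\langle X,Y\rangle=\sum_b\varepsilon_b\langle X,e_b\rangle\langle Y,e_b\rangle$ for $X,Y\in E_+$ together with the cyclic identity $\mathcal{B}_{bij}=\mathcal{B}_{ijb}=\langle\mathrm{pr}_{E_+}[e_i,e_j]_H,e_b\rangle$ and $\mathcal{B}_{ajb}=\langle\Gamma_{e_a}e_j,e_b\rangle$, I would collapse the $b$-sum in $R_{ia}=\sum_{b,j}\varepsilon_b\varepsilon_j\mathcal{B}_{bij}\mathcal{B}_{ajb}$ into
\[ R_{ia}=\sum_j\varepsilon_j\langle\mathrm{pr}_{E_+}[e_i,e_j]_H,\Gamma_{e_a}e_j\rangle.\]
Since $\Gamma_{e_a}e_j\in\Gamma_{E_+}E_-\subset E_+$ and $E_+\perp E_-$, each summand equals $\langle[e_i,e_j]_H,\Gamma_{e_a}e_j\rangle$; thus the hypothesis $\Gamma_{E_+}E_-\perp[E_-,E_-]_H$ forces every term, hence $R_{ia}$, to vanish, giving $Ric^+=0$ and therefore the Einstein property. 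The second alternative $\Gamma_{E_-}E_+\perp[E_+,E_+]_H$ follows by the same argument applied to $Ric^-$ (whose components arise from the second formula of Proposition~\ref{curv:prop}), interchanging the roles of $E_+$ and $E_-$; the symmetry of $Ric$ then closes the case. I expect the main obstacle to be precisely this sign-and-index bookkeeping in establishing the two invariant reformulations of $R_{ia}$.
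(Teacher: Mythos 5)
Your proposal is correct and takes essentially the same route as the paper: both rest on the identity $R_{ia}=-\tr\left(\Gamma_{e_i}\circ\Gamma_{e_a}\right)$, extracted from Corollary~\ref{RicCor_delta0} together with the total skew-symmetry of $\mathcal{B}$ (Lemma~\ref{B_inv:lem}), and your sign-and-index bookkeeping checks out. The only cosmetic difference is in the sufficiency step: the paper first observes that $\Gamma_{E_\pm}\circ\Gamma_{E_\mp}=0$ is clearly sufficient and then reformulates these conditions as (\ref{suff:eq}) via the adjoint relations $\langle \Gamma_{u_+}v_-,w_+\rangle = -\langle v_-,[u_+,w_+]_H\rangle$, whereas you collapse the trace sum directly to $R_{ia}=\sum_j\varepsilon_j\langle [e_i,e_j]_H,\Gamma_{e_a}e_j\rangle$ --- the same computation performed in a different order.
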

\begin{proof}The necessary and sufficient condition follows immediately from \[R_{ia} = R_{ai}=  \mathcal{B}_{bi}^j \mathcal{B}_{aj}^b=-\tr (\Gamma_{e_a}\circ \Gamma_{e_i}).\] Any of the two (non-equivalent) conditions $\Gamma_{E_+} \circ \Gamma_{E_-}=0$ or $\Gamma_{E_-} \circ \Gamma_{E_+}=0$ is clearly sufficient.  These can be reformulated as (\ref{suff:eq}), since by Lemma~\ref{B_inv:lem}, 
\[ \langle \Gamma_{u_+}v_-,w_+\rangle = -\langle v_-,[u_+,w_+]_H\rangle\quad\mbox{and}\quad
\langle \Gamma_{u_-}v_+,w_-\rangle = -\langle v_+,[u_-,w_-]_H\rangle,\]
for all $u_+,v_+, w_+\in E_+$, $u_-,v_-,w_-\in E_-$. 
\end{proof}
Next we will compute the Ricci curvature of an arbitrary left-invariant Levi-Civita generalized connection 
$D=D^0 + S$ on $(G,H,\mathcal G_g)$, where $D^0$ is the canonical divergence-free Levi-Civita 
generalized connection and $S$ is an arbitrary element of the first generalized prolongation of $\mathfrak{so}(E)$.   

\begin{lem}\label{curv:lem}The curvature tensors $R^\pm_D\in  \mathrm{Hom}(E_\pm \otimes E_{\mp}\otimes E_\pm,E_\pm)$ of $D$ are given by
\begin{equation} \label{R:eq}R_{D}^\pm = R_{D^0}^\pm + d^{D^0}S|_{E_\pm \otimes E_{\mp}\otimes E_\pm},\end{equation}
where 
\[ (d^{D^0}S)(u,v,w) = (d^{D^0}S)(u,v)w  \coloneqq D^0_{u}(S_{v})w-D^0_{v}(S_{u})w -S_{{[}u,v{]}_H}w,\quad u,v,w\in E.\]
\end{lem}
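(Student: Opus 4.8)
The plan is to substitute $D=D^0+S$ into the defining formula
\[ R_D(u,v)w = D_uD_vw-D_vD_uw-D_{[u,v]_H}w \]
and expand every product, sorting the terms by their degree in $S$. The terms of degree zero in $S$ are exactly $D^0_uD^0_vw-D^0_vD^0_uw-D^0_{[u,v]_H}w = R_{D^0}(u,v)w$. The terms of degree one in $S$ collect to
\[ D^0_u(S_vw)-S_v(D^0_uw)-D^0_v(S_uw)+S_u(D^0_vw)-S_{[u,v]_H}w, \]
and the single term of degree two in $S$ is $S_uS_vw-S_vS_uw=[S_u,S_v]w$.

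I would then recognize the degree-one terms as $(d^{D^0}S)(u,v)w$. Reading $D^0_u(S_v)$ as the covariant derivative of the endomorphism $S_v\in\mathfrak{so}(E)$ (for fixed left-invariant $v$), extended to endomorphisms as a tensor derivation, gives $D^0_u(S_v)w = D^0_u(S_vw)-S_v(D^0_uw)$ and likewise for $D^0_v(S_u)$; inserting these into the definition $(d^{D^0}S)(u,v)w = D^0_u(S_v)w-D^0_v(S_u)w-S_{[u,v]_H}w$ reproduces the displayed degree-one terms verbatim. Hence, as an identity of endomorphism-valued $2$-tensors valid without any restriction,
\[ R_D(u,v)w = R_{D^0}(u,v)w+(d^{D^0}S)(u,v)w+[S_u,S_v]w. \]

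It remains only to show that $[S_u,S_v]w$ vanishes once $(u,v,w)$ is restricted to $E_\pm\otimes E_\mp\otimes E_\pm$, which is where the structure of the prolongation enters. By Proposition~\ref{LC:prop} we may write $S=S^++S^-$ with $S^+\in\Sigma_+\subset E_+^*\otimes\mathfrak{so}(E_+)$ and $S^-\in\Sigma_-\subset E_-^*\otimes\mathfrak{so}(E_-)$; thus $S_u=S^+_u\in\mathfrak{so}(E_+)$ for $u\in E_+$ and $S_v=S^-_v\in\mathfrak{so}(E_-)$ for $v\in E_-$, the other components being zero. Since $\mathfrak{so}(E_+)$ and $\mathfrak{so}(E_-)$ are the two mutually commuting summands of $\mathfrak{so}(E)_{\mathcal{G}}=\mathfrak{so}(E_+)\oplus\mathfrak{so}(E_-)$, their bracket vanishes, so $[S_u,S_v]=0$ on $E_+\otimes E_-$, and symmetrically on $E_-\otimes E_+$. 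This yields (\ref{R:eq}). I expect the real work to be the middle step — the careful bookkeeping that matches the first-order terms with $d^{D^0}S$ and confirms the leftover is exactly the commutator — rather than the final vanishing, which is purely structural. As a consistency check, metricity of both $D^0$ and $D$ forces $R_{D^0}$ and $R_D$ to preserve the eigenbundles $E_\pm$, so all terms in (\ref{R:eq}) indeed take values in $E_\pm$ on $E_\pm\otimes E_\mp\otimes E_\pm$.
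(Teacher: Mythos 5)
Your proposal is correct and follows essentially the same route as the paper: expand $R_D$ for $D=D^0+S$, identify the first-order terms with $(d^{D^0}S)(u,v)w$, and show that the quadratic remainder $[S_u,S_v]w$ vanishes on $E_\pm\otimes E_\mp\otimes E_\pm$ using the prolongation structure $S\in\Sigma_+\oplus\Sigma_-$ (the paper phrases this via $S_EE_\pm\subset E_\pm$ and $S_{E_\pm}E_\mp=0$, which are the same facts you invoke through the commuting subalgebras $\mathfrak{so}(E_+)$ and $\mathfrak{so}(E_-)$). Your explicit bookkeeping of the degree-one terms is exactly the ``straightforward calculation'' the paper leaves to the reader.
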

\begin{proof} A straightforward calculation shows that $R_{D}^\pm = R_{D^0}^\pm + (d^{D^0}S + [S,S])|_{E_\pm \otimes E_{\mp}\otimes E_\pm},$
where 
\[ {[}S,S{]}(u,v,w) = {[}S,S{]}(u,v)w\coloneqq {[}S_{u},S_{v}{]}w,\quad u,v,w\in E.\]
We observe that the map $[S,S]: (u,v,w) \mapsto [S_u,S_v]w$ vanishes on $E_+ \otimes E_-\otimes E_+$ and on $E_-\otimes E_+ \otimes E_-$, since 
$S_EE_\pm \subset E_\pm$ and $S_{E_\pm}E_\mp=0$. This proves (\ref{R:eq}). 
\end{proof}
In the following, we denote by $(d^{D^0}S)^\pm$ the restriction of $d^{D^0}S$ to an element 
\[(d^{D^0}S)^\pm\in \mathrm{Hom}(E_\pm \otimes E_{\mp}\otimes E_\pm,E_\pm)\cong \mathrm{Hom}(E_\pm \otimes E_{\mp}, \mathrm{End}\, E_\pm).\] 

\begin{lem}\label{improved:lem}We have $R_{D}^\pm = R_{D^0}^\pm + (d^{D^0}S)^\pm$ and 
\[ (d^{D^0}S)^\pm (u,v)w  =  -(D^0_vS)_uw,\]
for all $(u,v,w)\in E_\pm \times E_{\mp}\times E_\pm$. 
\end{lem}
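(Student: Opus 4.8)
The plan is to handle the two asserted equalities separately. The first, $R_D^\pm = R_{D^0}^\pm + (d^{D^0}S)^\pm$, requires no new work: Lemma~\ref{curv:lem} already establishes the formula (\ref{R:eq}), namely $R_D^\pm = R_{D^0}^\pm + d^{D^0}S|_{E_\pm\otimes E_\mp\otimes E_\pm}$ (the term $[S,S]$ having been shown to vanish on $E_\pm\otimes E_\mp\otimes E_\pm$), and by definition $(d^{D^0}S)^\pm$ is precisely this restriction. The substance of the lemma is therefore the pointwise identity $(d^{D^0}S)^\pm(u,v)w = -(D^0_vS)_uw$, which I would prove for the $+$-case; the $-$-case follows verbatim after interchanging the roles of $E_+$ and $E_-$.

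For the $+$-case I fix $u\in E_+$, $v\in E_-$, $w\in E_+$ and expand both sides using the Leibniz rule for the connection that $D^0$ induces on $\End E$ and on $E^*\otimes\mathfrak{so}(E)$. Concretely, for fixed $x,y\in E$ one has $D^0_x(S_y)w = D^0_x(S_yw) - S_y(D^0_xw)$, and for the right-hand side the tensor-derivation rule $(D^0_vS)_u = D^0_v(S_u) - S_{D^0_vu}$ gives $-(D^0_vS)_uw = -D^0_v(S_uw) + S_u(D^0_vw) + S_{D^0_vu}w$. The defining formula to be matched reads $(d^{D^0}S)(u,v)w = D^0_u(S_v)w - D^0_v(S_u)w - S_{[u,v]_H}w$.

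The key simplification comes from the two structural facts recorded in the proof of Lemma~\ref{curv:lem}, that $S_xE_\pm\subset E_\pm$ and $S_{E_\pm}E_\mp=0$, together with the metricity $D^0_xE_\pm\subset E_\pm$. Since $v\in E_-$, the endomorphism $S_v\in\mathfrak{so}(E_-)$ annihilates $E_+$; because $w$ and $D^0_uw$ both lie in $E_+$, the whole term $D^0_u(S_v)w = D^0_u(S_vw)-S_v(D^0_uw)$ vanishes. Likewise, in $S_{[u,v]_H}w$ only the $E_+$-component of $[u,v]_H$ survives, since $S_{E_-}E_+=0$ and $w\in E_+$; hence $S_{[u,v]_H}w = S_{\mathrm{pr}_{E_+}[u,v]_H}w$. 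What is left of $(d^{D^0}S)(u,v)w$ is therefore $-D^0_v(S_uw)+S_u(D^0_vw) - S_{\mathrm{pr}_{E_+}[u,v]_H}w$, and comparing this with the expanded right-hand side reduces the entire identity to the relation $\mathrm{pr}_{E_+}[u,v]_H = -D^0_vu$ for $u\in E_+$, $v\in E_-$.

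This last relation is where the specific choice of $D^0$ enters, and I expect it to be the only genuinely computational point. I would verify it on the adapted basis via Proposition~\ref{LCdiv0:prop}: writing $u=e_b$ and $v=e_i$, the only nonzero components of $D^0_{e_i}e_b$ are $\langle D^0_{e_i}e_b,e_c\rangle = \omega_{ibc} = \mathcal{B}_{ibc}$ with $e_c\in E_+$, whereas $\langle\mathrm{pr}_{E_+}[e_b,e_i]_H,e_c\rangle = \langle[e_b,e_i]_H,e_c\rangle = \mathcal{B}_{bic} = -\mathcal{B}_{ibc}$ by the total skew-symmetry of $\mathcal{B}$ (Lemma~\ref{B_inv:lem}). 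Non-degeneracy of $\langle\cdot,\cdot\rangle$ on $E_+$ then yields $\mathrm{pr}_{E_+}[u,v]_H=-D^0_vu$, so that $-S_{\mathrm{pr}_{E_+}[u,v]_H}w = S_{D^0_vu}w$ and the remaining three summands on each side coincide. The main obstacle is thus not a hard estimate but the careful bookkeeping of which summands are killed by the $\mathfrak{so}(E_\pm)$-valuedness of $S$ and by the metricity of $D^0$; once those are cleared away, the identity rests entirely on the elementary bracket relation above.
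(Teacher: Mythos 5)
Your proposal is correct, and its overall skeleton coincides with the paper's proof: the first identity is quoted from Lemma~\ref{curv:lem}, the term $D^0_u(S_v)w$ is killed using $S_{E_\mp}E_\pm=0$ together with metricity of $D^0$, and the Leibniz rule reduces everything to a statement relating $S_{[u,v]_H}w$ to $S_{D^0_vu}w$. The only genuine divergence is in how that last statement is settled. The paper invokes torsion-freeness of $D^0$ abstractly: since $(D^0u)^*v=0$ for mixed arguments, one has $[u,v]_H = D^0_uv - D^0_vu$, whence $-S_{D^0_vu}w - S_{[u,v]_H}w = -S_{D^0_uv}w = 0$ because $D^0_uv\in E_\mp$ and $S_{E_\mp}E_\pm=0$; this is basis-free and works for any left-invariant Levi-Civita generalized connection in place of $D^0$. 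You instead isolate the equivalent relation $\mathrm{pr}_{E_+}[u,v]_H = -D^0_vu$ and verify it by a component computation in the adapted basis, using the explicit coefficients $\omega_{ibc}=\mathcal{B}_{ibc}$ of Proposition~\ref{LCdiv0:prop} and the total skew-symmetry of $\mathcal{B}$ from Lemma~\ref{B_inv:lem}. Your route is valid and self-contained, but it ties the argument to the particular construction of $D^0$, whereas noting that $\mathrm{pr}_{E_+}[u,v]_H = -D^0_vu$ is an immediate consequence of torsion-freeness (project $[u,v]_H = D^0_uv - D^0_vu$ onto $E_+$) would have given you the same conclusion in one line and without coordinates.
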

\begin{proof}The first formula is just (\ref{R:eq}). Since $D^0E_\pm \subset E_\pm$ and 
$S_{E_\pm}E_\mp=0$, we have 
\[ (d^{D^0}S)^\pm (u,v)w  = -D^0_{v}(S_{u})w -S_{{[}u,v{]}_H}w= -(D^0_{v}S)_{u}w -S_{D^0_vu}w -S_{{[}u,v{]}_H}w.\]
Using that $D^0$ is torsion-free we can write $[u,v]_H= D^0_uv-D^0_vu$, since $(D^0u)^*v=0$
for all $(u,v)\in E_\pm \times E_{\mp}$. Hence 
\[ -S_{D^0_vu}w -S_{{[}u,v{]}_H}w= -S_{D^0_uv}w=0,\]
again because $D^0E_\pm \subset E_\pm$ and $S_{E_\pm}E_\mp=0$. This proves the lemma. 
\end{proof}
\begin{prop}\label{genRic:prop} Let $ \delta $ be a divergence operator on $ E $ and $ S\in\mathfrak{so}(E)^{\langle 1\rangle} $ such that the Levi-Civita generalized connection $ D^0+S $ has divergence $ \delta $. Then the Ricci curvatures $Ric^\pm_\delta$ of a generalized pseudo-Riemannian Lie group 
$(G,H,\mathcal G_g,\delta)$ with arbitrary divergence $\delta\in E^*$ are related to the 
Ricci curvatures $Ric^\pm_0$ of $(G,H,\mathcal G_g,0)$ by 
\begin{equation}\label{Ric:eq} Ric^\pm_\delta = Ric^\pm_0 + \tr_{E_\pm}(d^{D^0}S)^\pm = Ric^\pm_0-D^0\delta|_{E_\mp \otimes E_\pm} ,\end{equation}
where 
 \[ (\tr_{E_+}\alpha)(e_i,e_b) = \tr (u\mapsto \alpha (u,e_i)e_b),\]
 for any $\alpha \in E_+^*\otimes E_-^*\otimes E_+^*\otimes E_+$ and, similarly, 
 \[ (\tr_{E_-}\beta)(e_a,e_j) = \tr (u\mapsto \beta(u,e_a)e_j),\] 
 when $\beta \in E_-^*\otimes E_+^*\otimes E_-^*\otimes E_-$. Here we are assuming the usual index ranges 
 for $a,b$ and $i,j$. 
\end{prop}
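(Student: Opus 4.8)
The plan is to establish the two displayed equalities separately. Fix, via Proposition~\ref{S:prop}, an element $S\in\mathfrak{so}(E)^{\langle1\rangle}$ so that $D\coloneqq D^0+S$ has divergence $\delta_D=\delta$. By the divergence-invariance of the generalized Ricci tensors recalled just before Definition~\ref{Ricdef}, the curvatures $Ric_D^\pm$ of this particular $D$ represent $Ric_\delta^\pm$, while $Ric_{D^0}^\pm=Ric_0^\pm$. It therefore suffices to compare $R_D^\pm$ with $R_{D^0}^\pm$ and then trace.

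For the first equality I would simply apply the fiberwise trace $\tr_{E_\pm}$ appearing in Definition~\ref{Ricdef} to the identity $R_D^\pm=R_{D^0}^\pm+(d^{D^0}S)^\pm$ of Lemma~\ref{improved:lem}. Since $Ric_\delta^\pm=\tr_{E_\pm}R_D^\pm$ and $Ric_0^\pm=\tr_{E_\pm}R_{D^0}^\pm$ by definition, and the trace is linear, this immediately gives $Ric_\delta^\pm=Ric_0^\pm+\tr_{E_\pm}(d^{D^0}S)^\pm$.

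The second equality is the substantive one. The key observation is that $\delta$ is itself a contraction of $S$: since $\delta_{D^0}=0$, equation (\ref{divDDprime:eq}) yields $\delta=\lambda_S$, i.e.\ $\delta(v)=\tr(u\mapsto S_uv)$ is the natural contraction of the tensor $S\in E^*\otimes E^*\otimes E$ over its first (the $u$-slot) and third (the output) indices. As any connection commutes with a natural contraction, $D^0_v\delta$ is the same contraction of $D^0_vS$, so that $(D^0_v\delta)(w)=\tr(u\mapsto(D^0_vS)_uw)$ for all $v,w\in E$, where $\tr$ is taken over $E$. Taking $(v,w)\in E_\mp\times E_\pm$ and combining this with the formula $(d^{D^0}S)^\pm(u,v)w=-(D^0_vS)_uw$ of Lemma~\ref{improved:lem}, I obtain, once the trace over $E$ is seen to localize on $E_\pm$ (see below),
\[ \tr_{E_\pm}(d^{D^0}S)^\pm(v,w)=-\tr\bigl(u\mapsto(D^0_vS)_uw\bigr)=-(D^0_v\delta)(w)=-(D^0\delta)(v,w), \]
which is exactly $-D^0\delta|_{E_\mp\otimes E_\pm}$.

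The step needing care is precisely this passage from the full trace over $E$ (implicit in the contraction defining $\delta$ and its $D^0$-derivative) to the partial trace $\tr_{E_\pm}$ in the statement; I expect this bookkeeping to be the main, if routine, obstacle. It rests on the structural facts $S_{E_\pm}E_\mp=0$ and $D^0E_\pm\subset E_\pm$. Concretely, in the $+$ case one has $v=e_i\in E_-$, $w=e_b\in E_+$, and for $u\in E_-$ the expression $(D^0_{e_i}S)_ue_b=D^0_{e_i}(S_ue_b)-S_{D^0_{e_i}u}e_b-S_u(D^0_{e_i}e_b)$ vanishes term by term, since each summand applies $S$ along an $E_-$ direction to an $E_+$ vector. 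Hence the $E_-$ contributions to the trace drop out and it localizes to $E_+$, matching $\tr_{E_+}$; the $-$ case is symmetric. Together the two equalities give (\ref{Ric:eq}).
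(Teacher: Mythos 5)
Your proof is correct and follows essentially the same route as the paper: both displayed equalities are obtained by tracing Lemma~\ref{improved:lem}, with the second one coming from the identification of $\delta$ as a trace of $S$ together with the fact that covariant differentiation by $D^0$ commutes with that trace. The only (cosmetic) difference is that the paper commutes $D^0$ directly with the parallel partial traces $\tr_{E_\pm}$ and identifies $\tr_{E_+}S$ with $\delta$ on $E_+$, whereas you commute $D^0$ with the full contraction over $E$ and then localize it to $E_\pm$ by your term-by-term vanishing argument --- both versions rest on the same structural facts $S_{E_\pm}E_\mp=0$ and $D^0E_\pm\subset E_\pm$.
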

\begin{proof} An element $ S $ of the first generalized prolongation of $\mathfrak{so}(E)$ such that $ D^0+S $ has divergence $ \delta $ exists due to Proposition~\ref{existence of connection with divergence:prop}. The first equation follows from Lemma~\ref{curv:lem} by taking traces. The 
formula \[\tr_{E_\pm}(d^{D^0}S)^\pm = -D^0\delta|_{E_\mp \otimes E_\pm}\] is a consequence of
Lemma~\ref{improved:lem}, since the trace maps $\tr_{E_+}$ and $\tr_{E_-}$ are parallel for any metric generalized connection. 
In fact, for instance, 
\[ \tr_{E_+}(d^{D^0}S)^+(e_i,e_b) = -\tr_{E_+} \left((D^0_{e_i}S)e_b\right)= -D^0_{e_i}\left(\tr_{E_+} S\right) e_b=-(D^0_{e_i}\delta) e_b,\]
where the $\tr_{E_+} S\in E^*$ is defined by $(\tr_{E_+} S)v \coloneqq \tr_{E_+}(Sv)= \tr (E_+\ni u \mapsto S_uv\in E_+)$ for all $v\in E$ and we 
have used that   $\tr_{E_+}(Sv)=\tr (Sv)=\delta (v)$ for all $v\in E_+$.
\end{proof}
Summarizing we obtain the following theorem. 
\begin{thm}\label{Ric:thm}The components $R_{ia}^\delta=Ric^+_\delta (e_i,e_a)$ and $R_{ai}^\delta=Ric^-_\delta (e_a,e_i)$ of the Ricci curvature
tensors $Ric^\pm_\delta$ of a generalized pseudo-Riemannian Lie group 
$(G,H,\mathcal G_g,\delta)$ with arbitrary divergence $\delta\in E^*$ are given as follows:
\begin{eqnarray*}
R_{ia}^\delta&=& \mathcal{B}_{bi}^j \mathcal{B}_{aj}^b +\mathcal{B}_{ia}^c\delta_c,\\ 
R_{ai}^\delta &=& \mathcal{B}_{bi}^j \mathcal{B}_{aj}^b + \mathcal{B}_{ai}^j\delta_j.
\end{eqnarray*} 
In particular, the Ricci tensor $Ric_\delta = Ric_{\delta}^+ \oplus Ric_{\delta}^-$ is symmetric 
if and only if, $\delta$ satisfies the equation $\mathcal{B}_{ia}^c\delta_c = \mathcal{B}_{ai}^j\delta_j$. 
It is skew-symmetric if $(G,H,\mathcal G_g,0)$ is generalized Einstein and $\mathcal{B}_{ia}^c\delta_c = -\mathcal{B}_{ai}^j\delta_j$. (Recall that we are always assuming the usual index ranges for $a, b$ and $i,j$.)

In terms of the linear maps $\Gamma_{u_\pm}: E_\mp \rightarrow E_\pm$ defined in (\ref{Gamma:eq}) for $u_\pm \in E_\pm$ we have
\begin{eqnarray*} Ric_{\delta}^+ (u_-,u_+) &=&-\tr \left( \Gamma_{u_-}\circ \Gamma_{u_+}\right) + \delta (\mathrm{pr}_{E_+}[u_-,u_+]_H),\\
Ric_{\delta}^- (u_+,u_-) &=&-\tr \left( \Gamma_{u_-}\circ \Gamma_{u_+}\right) + \delta (\mathrm{pr}_{E_-}[u_+,u_-]_H).
\end{eqnarray*}
\end{thm}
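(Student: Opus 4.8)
The plan is to build directly on Proposition~\ref{genRic:prop}, which already reduces $Ric^\pm_\delta$ to the sum of the divergence-free curvature $Ric^\pm_0$ and the linear-in-$\delta$ correction $-D^0\delta|_{E_\mp\otimes E_\pm}$. The quadratic part $Ric^\pm_0$ is supplied in components by Corollary~\ref{RicCor_delta0} as $R_{ia}=R_{ai}=\mathcal{B}_{bi}^j\mathcal{B}_{aj}^b$, so the whole task is to evaluate the correction term explicitly and read off the consequences. Since $\delta\in E^*$ is left-invariant, the function $\delta(e_B)=\delta_B$ is constant, and hence for the canonical connection $D^0$ with coefficients $\omega_{AB}^C$ I get $(D^0_{e_A}\delta)(e_B)=e_A(\delta_B)-\delta(D^0_{e_A}e_B)=-\omega_{AB}^C\delta_C$.

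First I would compute the $Ric^+$ correction $-(D^0_{e_i}\delta)(e_a)=\omega_{ia}^C\delta_C$. The middle index $a$ is an $E_+$ index, so by the metric property (Proposition~\ref{metric:prop}) only the coefficients $\omega_{iac}$ with $c$ an $E_+$ index survive, and these equal $\mathcal{B}_{iac}$ by Proposition~\ref{LCdiv0:prop}; raising the last index via the block-diagonal $\eta$ gives $\mathcal{B}_{ia}^c\delta_c$. Likewise the $Ric^-$ correction is $-(D^0_{e_a}\delta)(e_i)=\omega_{ai}^C\delta_C=\mathcal{B}_{ai}^j\delta_j$, using $\omega_{aik}=\mathcal{B}_{aik}$. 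Adding these to the $\delta=0$ values from Corollary~\ref{RicCor_delta0} yields the two stated component formulas. The symmetry assertion is then immediate: viewing $Ric_\delta$ as a bilinear form vanishing on $E_\pm\times E_\pm$, symmetry means $R_{ia}^\delta=R_{ai}^\delta$, and since the quadratic parts coincide this is exactly $\mathcal{B}_{ia}^c\delta_c=\mathcal{B}_{ai}^j\delta_j$. For skew-symmetry I would impose $R_{ia}^\delta=-R_{ai}^\delta$; assuming $(G,H,\mathcal G_g,0)$ is generalized Einstein kills the common quadratic term $\mathcal{B}_{bi}^j\mathcal{B}_{aj}^b$, and the condition $\mathcal{B}_{ia}^c\delta_c=-\mathcal{B}_{ai}^j\delta_j$ makes the remaining two terms cancel.

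Finally I would translate both pieces into the invariant form. As noted after Corollary~\ref{RicCor_delta0} one has $R_{ia}=-\tr(\Gamma_{e_a}\circ\Gamma_{e_i})$, so by bilinearity $Ric^+_0(u_-,u_+)=-\tr(\Gamma_{u_-}\circ\Gamma_{u_+})$. For the correction, since $\langle[e_i,e_a]_H,e_C\rangle=\mathcal{B}_{iaC}$ we have $[e_i,e_a]_H=\mathcal{B}_{ia}^De_D$, whence $\mathrm{pr}_{E_+}[e_i,e_a]_H=\mathcal{B}_{ia}^ce_c$ and $\delta(\mathrm{pr}_{E_+}[e_i,e_a]_H)=\mathcal{B}_{ia}^c\delta_c$; this matches the component correction, and the $Ric^-$ case is identical with $E_-$ replacing $E_+$. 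The computation is essentially bookkeeping; the only place demanding care is the consistent use of the metric property together with the index raising $\mathcal{B}_{ia}^c=\eta^{cc'}\mathcal{B}_{iac'}$, so that exactly the right connection coefficients from Proposition~\ref{LCdiv0:prop} enter and the block structure of $\eta$ keeps the $E_+$ and $E_-$ contributions separated. I expect this identification of the surviving coefficients, rather than any deeper argument, to be the main (mild) obstacle.
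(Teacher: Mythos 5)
Your proposal is correct and follows essentially the same route as the paper: Theorem~\ref{Ric:thm} is obtained there by combining Corollary~\ref{RicCor_delta0} with Proposition~\ref{genRic:prop}, and your evaluation of the correction term $-(D^0_{e_i}\delta)(e_a)=\omega_{ia}^{C}\delta_C=\mathcal{B}_{ia}^{c}\delta_c$ (and its $Ric^-$ analogue) via the connection coefficients of Proposition~\ref{LCdiv0:prop} and the metric block structure is exactly the bookkeeping the paper leaves implicit under ``Summarizing we obtain the following theorem.'' The symmetry/skew-symmetry discussion and the passage to the invariant formulation via $[e_i,e_a]_H=\mathcal{B}_{ia}^{D}e_D$ and bilinearity are likewise sound.
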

The theorem shows that the Ricci curvature is completely determined by the one-form $\delta$ and the  
 coefficients $\mathcal{B}_{ajk}$ and $\mathcal{B}_{ibc}$ of the Dorfman bracket 
 in the orthonormal basis $(e_A)=(e_a,e_i)$. For future use we do now compute 
 the latter coefficients in terms of the coefficients of the Lie bracket (the structure constants) and the 
 coefficients of the three-form $H$ using (\ref{dorf_LA:eq}). Recall that $(v_a)$ was a $g$-orthonormal basis
 of $\mathfrak g$. More precisely, we have $g_{ab}=g(v_a,v_b) = \langle e_a,e_b\rangle = \eta_{ab}$. 
 We denote the corresponding structure constants of the Lie algebra $\mathfrak g$ by $\kappa_{ab}^c$, such that 
 \[ [v_a,v_b]= \kappa_{ab}^cv_c.\]
 Note that $\kappa_{abc} = \kappa_{ab}^dg_{dc}=\kappa_{ab}^d\eta_{dc}$ for $ \kappa_{abc}\coloneqq\langle[v_a,v_b],v_c\rangle $. 
 \begin{prop} \label{B:prop}The Dorfman coefficients $\mathcal{B}_{ajk}$, $\mathcal{B}_{ibc}$, $\mathcal{B}_{abc}$ and $\mathcal{B}_{ijk}$ ($a,b,c\in \{ 1,\ldots ,n\}$, 
 $i,j,k\in \{ n+1,\ldots ,2n\}$) are related to the 
 structure constant $\kappa_{abc}$ as follows: 
 \begin{eqnarray*}
 \mathcal{B}_{ajk} &=& \frac12\left( H_{aj'k'}-\kappa_{aj'k'}+\kappa_{j'k'a}-\kappa_{k'aj'}\right)\\
 \mathcal{B}_{ibc} &=& \frac12\left( H_{i'bc}+\kappa_{i'bc}-\kappa_{bci'}+\kappa_{ci'b}\right)\\
 \mathcal{B}_{abc} &=& \frac12\left( H_{abc}+(\partial\kappa)_{abc}\right)\\
  \mathcal{B}_{ijk} &=& \frac12\left( H_{i'j'k'}-(\partial\kappa)_{i'j'k'} \right),
 \end{eqnarray*}
 where $i'=i-n$, for $i\in \{ n+1,\ldots, 2n\}$ and $(\partial\kappa)_{abc}=\kappa_{abc}+\kappa_{bca}+\kappa_{cab}$. 
 \end{prop}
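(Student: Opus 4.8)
The plan is to derive all four families from a single master formula and then specialize by a sign pattern. First I would record the adapted basis uniformly as $e_A = v_{a'} + s_A\, g v_{a'}$, where $a'$ is the reduced index ($a' = A$ for $A \le n$, $a' = A - n$ for $A > n$) and $s_A = +1$ for $A \le n$, $s_A = -1$ for $A > n$; thus $e_a$ and $e_i$ differ only through the sign $s_A$ of their cotangent component. Writing $[e_A, e_B]_H = Z + \zeta$ with $Z \in \mathfrak g$, $\zeta \in \mathfrak g^*$, the defining pairing gives
\[ \mathcal{B}_{ABC} = \langle Z + \zeta,\, v_{c'} + s_C\, g v_{c'} \rangle = \tfrac12\bigl(\zeta(v_{c'}) + s_C\, g(v_{c'}, Z)\bigr), \]
so the task reduces to evaluating the $\mathfrak g$- and $\mathfrak g^*$-parts of the Dorfman bracket on basis vectors.

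Next I would substitute $X_A = v_{a'}$ and $\xi_A = s_A\, g v_{a'}$ into the Lie-algebraic Dorfman formula (\ref{dorf_LA:eq}). The $\mathfrak g$-part is simply $Z = [v_{a'}, v_{b'}]$, so $g(v_{c'}, Z) = \kappa_{a'b'c'}$ by the definition $\kappa_{abc} = \kappa_{ab}^d g_{dc}$. For the $\mathfrak g^*$-part I would use two elementary identities: $(\mathrm{ad}^*_{v_{a'}} g v_{b'})(v_{c'}) = g(v_{b'}, [v_{a'}, v_{c'}]) = \kappa_{a'c'b'}$, and --- invoking that a left-invariant one-form satisfies $d\xi(X,Y) = -\xi([X,Y])$ --- the relation $(\iota_{v_{b'}}\, d(g v_{a'}))(v_{c'}) = -g(v_{a'}, [v_{b'}, v_{c'}]) = -\kappa_{b'c'a'}$. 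Together with $H(v_{a'}, v_{b'}, v_{c'}) = H_{a'b'c'}$ these assemble into the master formula
\[ \mathcal{B}_{ABC} = \tfrac12\bigl(H_{a'b'c'} - s_B\,\kappa_{a'c'b'} + s_A\,\kappa_{b'c'a'} + s_C\,\kappa_{a'b'c'}\bigr). \]

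Finally I would read off the four cases by inserting the sign patterns $(s_A, s_B, s_C) = (+,-,-)$ for $\mathcal{B}_{ajk}$, $(-,+,+)$ for $\mathcal{B}_{ibc}$, $(+,+,+)$ for $\mathcal{B}_{abc}$, and $(-,-,-)$ for $\mathcal{B}_{ijk}$, barring the index where it exceeds $n$. Each specialization first produces a term such as $+\kappa_{a k' j'}$ in place of the stated $-\kappa_{k'a j'}$; the stated normal forms then follow from the antisymmetry of the structure constants in their first two lower slots, $\kappa_{abc} = -\kappa_{bac}$, which also repackages the all-plus and all-minus cases as $\pm(\partial\kappa)$. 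I anticipate no conceptual difficulty; the only delicate point, and hence the crux, is the sign bookkeeping --- faithfully carrying the factors $s_A, s_B, s_C$ through and then matching the author's conventions via the first-slot antisymmetry of $\kappa$.
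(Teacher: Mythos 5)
Your proposal is correct and follows essentially the same route as the paper: substitute the adapted basis vectors into the Lie-algebraic Dorfman formula (\ref{dorf_LA:eq}), evaluate the $\mathrm{ad}^*$-term, use the left-invariance identity $d\xi(X,Y)=-\xi([X,Y])$ for the $\iota_Y d\xi$-term, pair with $e_C$, and then match the stated normal forms via the first-slot antisymmetry $\kappa_{abc}=-\kappa_{bac}$. The only difference is presentational: your sign variables $s_A$ treat all four component types in one master formula, whereas the paper computes $\mathcal{B}_{ajk}$ explicitly and notes that the remaining cases follow in the same way.
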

 \begin{proof}Using (\ref{dorf_LA:eq}) we compute
 \begin{eqnarray*}
  [e_a,e_j]_H &=& [v_a+gv_a,v_{j'}-gv_{j'}]_H =  [v_a,v_{j'}]_H -[v_a,gv_{j'}]_H+[gv_a,v_{j'}]_H\\
  &=& [v_a,v_{j'}] + H(v_a,v_{j'},\cdot ) +ad_{v_a}^*(gv_{j'}) -\iota_{v_{j'}}d(gv_a)\\
  &=&[v_a,v_{j'}] + H(v_a,v_{j'},\cdot ) +g(v_{j'},[v_a,\cdot ])+ g(v_a,[v_{j'}, \cdot ]).\\
 \end{eqnarray*}
 It follows that 
 \begin{eqnarray*}
  \mathcal{B}_{ajk} &=& \langle [e_a,e_j]_H,e_k\rangle =  \langle [e_a,e_j]_H,v_{k'}-gv_{k'}\rangle\\ 
  &=& \frac12 \left( H(v_a,v_{j'},v_{k'}) +g(v_{j'},[v_a,v_{k'} ])+ g(v_a,[v_{j'}, v_{k'} ])-g(v_{k'},[v_a,v_{j'}])\right)\\
  &=&\frac12 \left( H_{aj'k'} +\kappa_{ak'j'}+ \kappa_{j'k'a}-\kappa_{aj'k'}\right)\\
  &=&\frac12 \left( H_{aj'k'} -\kappa_{k'aj'}+ \kappa_{j'k'a}-\kappa_{aj'k'}\right).
 \end{eqnarray*}
 The proof of the second formula is similar, where now 
 \[ [e_i,e_b]_H = [v_{i'},v_b] + H(v_{i'},v_b,\cdot ) -g(v_b,[v_{i'},\cdot ]) -g(v_{i'},[v_b,\cdot ]).\]
 The remaining equations are obtained in the same way. 
 \end{proof}
 The next result shows that the underlying metric $g$ of an Einstein generalized pseudo-Riemannian Lie group 
 can be freely rescaled without changing the Einstein property, provided that the three-form and the divergence
 are appropriately rescaled. 
 \begin{prop} \label{rescalProp}Let $g$ be a left-invariant pseudo-Riemannian metric and $H$ a closed left-invariant three-form on a Lie group $G$. Consider $g'=\varepsilon \mu^{-2}g$ and $H'=\varepsilon \mu^{-2}H$, where $\varepsilon\in \{\pm 1\}$ and $\mu>0$. Then the generalized pseudo-Riemannian Lie group 
$(G, H,\mathcal G_g)$ is Einstein with divergence $\delta\in E^*$ if and only if $(G,H',\mathcal G_{g'})$ is Einstein with 
divergence $\delta' = \mu \delta$.  
 \end{prop}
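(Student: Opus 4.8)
The plan is to exploit the explicit, homogeneous form of the Ricci curvature in Theorem~\ref{Ric:thm} together with the expression of the Dorfman coefficients through the structure constants and the three-form in Proposition~\ref{B:prop}. The key observation is that passing from $(g,H,\delta)$ to $(g',H',\delta')$ rescales every ingredient of these formulas by a fixed power of $\mu$, so that the Ricci tensor merely scales by $\mu^2$ and hence vanishes for one structure exactly when it vanishes for the other. (Closedness of $H'=\varepsilon\mu^{-2}H$ is immediate, since $d$ is unchanged.)

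First I would fix a $g$-orthonormal basis $(v_a)$ of $\mathfrak g$ with $\varepsilon_a=g(v_a,v_a)$ and form the associated adapted orthonormal basis $(e_A)=(e_a,e_i)$ of $E$ as in (\ref{ea:eq}),(\ref{ei:eq}). Since $g'=\varepsilon\mu^{-2}g$, the family $v_a'\coloneqq \mu v_a$ is $g'$-orthonormal with $g'(v_a',v_a')=\varepsilon\varepsilon_a$; let $(e_A')$ be the adapted basis it determines. The linear map $\Psi\colon E\to E$, $X+\xi\mapsto \mu X+\varepsilon\mu^{-1}\xi$, satisfies $\Psi(e_A)=e_A'$, and it is through this identification that the statement $\delta'=\mu\delta$ is to be read, i.e.\ $\delta'(e_A')=\mu\,\delta(e_A)$, equivalently $\delta_A'=\mu\,\delta_A$ on components. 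From $[v_a',v_b']=\mu^2[v_a,v_b]$ one gets $\kappa'_{abc}=\varepsilon\mu\,\kappa_{abc}$, and evaluating $H'=\varepsilon\mu^{-2}H$ on $(v_a')$ gives $H'_{abc}=\varepsilon\mu\,H_{abc}$.

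Next I would feed these into Proposition~\ref{B:prop}. Because the four formulas there are linear in $(\kappa,H)$ with $\mu$-independent coefficients, the common factor $\varepsilon\mu$ propagates, yielding $\mathcal B'_{ABC}=\varepsilon\mu\,\mathcal B_{ABC}$ for all index types in the respective adapted bases. On the other hand $\eta'_{AB}=\langle e_A',e_B'\rangle=\varepsilon\,\eta_{AB}$, hence $\eta'^{AB}=\varepsilon\,\eta^{AB}$, and raising one index cancels a factor of $\varepsilon$: ${\mathcal B'}^{C}_{AB}=\mu\,\mathcal B^{C}_{AB}$. Substituting ${\mathcal B'}^{C}_{AB}=\mu\,\mathcal B^{C}_{AB}$ and $\delta'_A=\mu\,\delta_A$ into the component formulas of Theorem~\ref{Ric:thm}, the Ricci components $R'_{ia}\coloneqq Ric^+_{\delta'}(e_i',e_a')$ and $R'_{ai}\coloneqq Ric^-_{\delta'}(e_a',e_i')$ of $(G,H',\mathcal G_{g'},\delta')$ become
\begin{align*}
R'_{ia}&={\mathcal B'}^{j}_{bi}\,{\mathcal B'}^{b}_{aj}+{\mathcal B'}^{c}_{ia}\,\delta'_c=\mu^2\bigl(\mathcal B^{j}_{bi}\,\mathcal B^{b}_{aj}+\mathcal B^{c}_{ia}\,\delta_c\bigr)=\mu^2 R^{\delta}_{ia},\\
R'_{ai}&={\mathcal B'}^{j}_{bi}\,{\mathcal B'}^{b}_{aj}+{\mathcal B'}^{j}_{ai}\,\delta'_j=\mu^2\bigl(\mathcal B^{j}_{bi}\,\mathcal B^{b}_{aj}+\mathcal B^{j}_{ai}\,\delta_j\bigr)=\mu^2 R^{\delta}_{ai}.
\end{align*}
Thus $\Psi^{*}Ric_{\delta'}=\mu^2\,Ric_{\delta}$, and since $\mu>0$ this gives $Ric_{\delta'}=0$ if and only if $Ric_{\delta}=0$, which is the claim.

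I expect the main obstacle to be purely bookkeeping. First, one must track the orthonormal basis change when $\varepsilon=-1$, where the entire signature $(\varepsilon_a)$ is reversed, and verify that the two occurrences of $\varepsilon$ — one from $\mathcal B'_{ABC}=\varepsilon\mu\,\mathcal B_{ABC}$ and one from raising indices with $\eta'$ — cancel, leaving the clean factor $\mu^2$. Second, and more delicate, is the correct interpretation of ``$\delta'=\mu\delta$'': it must be understood through the identification $\Psi$ (equivalently, as the component rescaling $\delta_A'=\mu\,\delta_A$ in the adapted basis), which is precisely what matches the linear $\mu$-scaling of the divergence term in Theorem~\ref{Ric:thm} and makes the whole tensor homogeneous of degree two in the pair $(\mathcal B,\delta)$.
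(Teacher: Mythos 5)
Your proof is correct and follows essentially the same route as the paper: rescale the orthonormal basis via $v_a'=\mu v_a$, deduce $\kappa'_{abc}=\varepsilon\mu\,\kappa_{abc}$ and $H'_{abc}=\varepsilon\mu\,H_{abc}$, hence $\mathcal{B}'_{ABC}=\varepsilon\mu\,\mathcal{B}_{ABC}$ and $(\mathcal{B}')^{C}_{AB}=\mu\,\mathcal{B}^{C}_{AB}$ by Proposition~\ref{B:prop}, and conclude $Ric'(e_A',e_B')=\mu^2\,Ric(e_A,e_B)$ from Theorem~\ref{Ric:thm}. Your explicit map $\Psi$ and the resulting reading of $\delta'=\mu\delta$ as the component rescaling $\delta'(e_A')=\mu\,\delta(e_A)$ is precisely the interpretation the paper's proof uses implicitly (and is in fact the one required for the statement to hold, since the literal rescaling of $\delta$ as a fixed element of $E^*$ would mix the $E_\pm'$-components differently), so this extra care is a clarification rather than a deviation.
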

 \begin{proof}
 Let $(v_a)$ be a $g$-orthonormal basis of $\mathfrak{g}$. Then $v_a' = \mu v_a$ defines a 
 $g'$-orthonormal basis $(v_a')$. The corresponding basis $(e_A')$ of $E$, where 
 $e_a'=v_a'+g'v_a'$ and $e_i = v_i-g'v_i'$,   is still 
 orthonormal with respect to the scalar product: $\langle e_A',e_B'\rangle = \varepsilon \langle e_A,e_B\rangle$. 
 The structure constants $\kappa_{abc}'\coloneqq g'([v_a',v_b'],v_c')$ with respect to the basis $(v_a')$ are 
 $\kappa_{abc}' = \varepsilon \mu \kappa_{abc}$. Similarly, $H'(v_a',v_b',v_c') = \varepsilon \mu H(v_a,v_b,v_c)$. 
 Finally, from these formulas and Proposition~\ref{B:prop} we see that 
 $\mathcal{B}_{ABC}' \coloneqq \langle [e_A',e_B']_{H'},e_C'\rangle =  \varepsilon\mu \mathcal{B}_{ABC}$. 
 Taking into account that $\langle e_A',e_B'\rangle = \varepsilon \langle e_A,e_B\rangle$, we conclude that 
 $(\mathcal{B}')_{AB}^C \coloneqq (\eta')^{CD}\mathcal{B}_{ABD}'= \mu\mathcal{B}_{AB}^C$. Now Theorem~\ref{Ric:thm}
 together with Proposition~\ref{LCdiv0:prop} shows that the coefficients of the Ricci curvatures $Ric$ of  $(G, H,\mathcal G_g,\delta)$ and 
 $Ric'$ of $(G,H',\mathcal G_{g'},\delta')$ are related by $Ric'(e_A',e_B') = \mu^2 Ric(e_A,e_B)$. 
 \end{proof}
 \begin{rem}\label{Christoffel:rm}
	Denote by $ \nabla $ the Levi-Civita connection of the pseudo-Riemannian metric $ g $ and define its coefficients with respect to the orthonormal frame $ (v_a) $ as $ \Gamma_{abc}\coloneqq g(\nabla_{v_a}v_b,v_c). $ Then \[ \Gamma_{abc}=\frac12\left(g([v_a,v_b],v_c)-g([v_b,v_c],v_a)+g([v_c,v_a],v_b) \right)=\frac12 (\kappa_{abc}-\kappa_{bca}+\kappa_{cab}) \] and hence the Dorfman coefficients $\mathcal{B}_{ajk}$ and $\mathcal{B}_{ibc}$ can be expressed by
	\begin{eqnarray*}
		\mathcal{B}_{ajk}&=&\frac12\left(H_{aj'k'}-2\Gamma_{aj'k'}\right)=\frac12H_{aj'k'}-\Gamma_{aj'k'}\\
		\mathcal{B}_{ibc}&=&\frac12\left(H_{i'bc}+2\Gamma_{i'bc}\right)=\frac12H_{i'bc}+\Gamma_{i'bc}.
	\end{eqnarray*}
\end{rem}
\begin{prop}\label{RicRicProp}Let $g$ be a left-invariant pseudo-Riemannian metric on a Lie group $G$. Consider the generalized pseudo-Riemannian Lie group $(G, H=0,\mathcal G_g)$.  Then the Ricci curvature $Ric^+ _0={Ric^\pm_\delta|}_{\delta=0}$ 
of the generalized metric $\mathcal G_g$ is related to the Ricci curvature $Ric^g$ of the metric $g$ by
\[ Ric^+_0(v-gv,u+gu) =  Ric^-_0(u+gu,v-gv)=Ric^g(u,v) + (\nabla_u \tau)(v),\quad u,v\in \mathfrak{g}, \]
where $\tau\in \mathfrak{g}^*$ is the trace-form defined by $\tau (v) = \tr \mathrm{ad}_{v}$.  
\end{prop}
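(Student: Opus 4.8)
The plan is to evaluate both Ricci tensors on the adapted basis $(e_A)$ and to feed the component formula of Corollary~\ref{RicCor_delta0} with the Dorfman coefficients rewritten through the Levi-Civita data of $g$. Since Corollary~\ref{RicCor_delta0} already gives $R_{ia}=R_{ai}$, the equality $Ric^+_0(v-gv,u+gu)=Ric^-_0(u+gu,v-gv)$ is immediate, so it suffices to treat $Ric^+_0$. Writing $u=v_a$ and $v=v_c$, we have $u+gu=e_a\in E_+$ and $v-gv=e_{n+c}\in E_-$, hence $Ric^+_0(v-gv,u+gu)=R_{(n+c)\,a}$ in the notation of Corollary~\ref{RicCor_delta0}.

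First I would start from $R_{ia}=\mathcal{B}_{bi}^{j}\mathcal{B}_{aj}^{b}$, put $H=0$, and insert the expressions $\mathcal{B}_{ajk}=-\Gamma_{aj'k'}$ and $\mathcal{B}_{ibc}=\Gamma_{i'bc}$ from Remark~\ref{Christoffel:rm}, where $\Gamma_{abc}=g(\nabla_{v_a}v_b,v_c)$ are the Christoffel symbols of $g$ in the frame $(v_a)$. The second factor $\mathcal{B}_{aj}^{b}$ involves a coefficient of mixed type that matches neither template directly; here I would use the total skew-symmetry of $\mathcal{B}$ (Lemma~\ref{B_inv:lem}) to bring it to standard form, e.g.\ $\mathcal{B}_{ajb}=\mathcal{B}_{jba}=\Gamma_{j'ba}$. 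Raising indices is done with the split signature, so the sign rule $\varepsilon_{n+a}=-\varepsilon_a$ must be carried along. After reindexing the $E_-$-index $j$ by $d\coloneqq j'\in\{1,\dots,n\}$ and raising with $g$, I expect the expression to collapse to the purely Lie-algebraic quantity $-\sum_{b,d}\Gamma_{bc}^{d}\Gamma_{da}^{b}$, where $\Gamma_{ab}^{c}=\varepsilon_c\Gamma_{abc}$.

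Next I would expand the ordinary Ricci curvature in the same frame, $Ric^g(v_a,v_c)=\sum_d\varepsilon_d\,g\!\left(R(v_d,v_a)v_c,v_d\right)$ with $R(X,Y)=\nabla_X\nabla_Y-\nabla_Y\nabla_X-\nabla_{[X,Y]}$. This produces three terms: one ``divergence'' term involving the contraction $\sum_d\varepsilon_d\Gamma_{dfd}$, one further term quadratic in $\Gamma$, and one linear in the structure constants $\kappa$. Using torsion-freeness in the form $\Gamma_{ba}^{d}-\Gamma_{ab}^{d}=\kappa_{ba}^{d}$, I would check that the generalized quantity $-\sum_{b,d}\Gamma_{bc}^{d}\Gamma_{da}^{b}$ accounts for the two non-divergence terms. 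The divergence term is then controlled by the identity $\sum_d\varepsilon_d\Gamma_{dfd}=\tr(\nabla v_f)=-\tau_f$, which is exactly the content of Proposition~\ref{Riemdiv:prop} (alternatively it follows from the Koszul formula together with $\sum_d\varepsilon_d\kappa_{dfd}=-\tr\,\mathrm{ad}_{v_f}$). Since $\tau$ and the $v$'s are left-invariant, $(\nabla_{v_a}\tau)(v_c)=-\tau(\nabla_{v_a}v_c)=-\Gamma_{ac}^{f}\tau_f$, so this leftover trace is precisely what produces the $\nabla\tau$ correction, and assembling the three contributions yields the stated relation between $Ric^\pm_0$ and $Ric^g+\nabla\tau$.

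The index gymnastics are routine; the genuine obstacle is the middle step, together with the final sign bookkeeping. One has to apply the skew-symmetry of $\mathcal{B}$ to the mixed coefficient $\mathcal{B}_{ajb}$ correctly and propagate the signature signs $\varepsilon_{n+a}=-\varepsilon_a$ through every raised index, and then—after invoking the torsion-free identity—identify exactly which terms of $Ric^g$ are reproduced by the generalized expression and which one reassembles, via Proposition~\ref{Riemdiv:prop}, into the trace $-\tau_f$ responsible for $\nabla\tau$. Once that trace is isolated the appearance of $(\nabla_u\tau)(v)$ is forced and the identity closes; keeping the orientation conventions for $R$ and $Ric^g$ consistent throughout is what determines the precise sign in the final formula.
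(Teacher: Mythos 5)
Your outline coincides step for step with the paper's own proof: symmetry from Corollary~\ref{RicCor_delta0}; conversion of the Dorfman coefficients into Christoffel symbols via Remark~\ref{Christoffel:rm}, using the total skew-symmetry of $\mathcal{B}$ for the mixed coefficient exactly as you do ($\mathcal{B}_{ajb}=\mathcal{B}_{jba}=\Gamma_{j'ba}$); the identity $R_{ia}=-\sum_{b,d}\Gamma_{bc}^{d}\Gamma_{da}^{b}$; the frame expansion of $Ric^g$; torsion-freeness to absorb the two non-divergence terms; and the contraction identity $\sum_f\varepsilon_f\Gamma_{fdf}=-\tau_d$, which the paper derives from metricity and the Koszul formula and you obtain, equivalently, from Proposition~\ref{Riemdiv:prop}. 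All the intermediate formulas you state, including the signature bookkeeping, are correct.

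However, the one point you defer --- ``the precise sign in the final formula'' --- cannot be waved off as convention-keeping, and it does not come out the way you assert. With the conventions you yourself fix ($Ric^g(X,Y)=\tr\left(Z\mapsto R(Z,X)Y\right)$ with $R(X,Y)=\nabla_X\nabla_Y-\nabla_Y\nabla_X-\nabla_{[X,Y]}$, the same as in the paper), assembling your three contributions gives
\[ Ric^g(v_a,v_c)=\Gamma_{ac}^{d}\Gamma_{fd}^{f}+R_{ia}=(\nabla_{v_a}\tau)(v_c)+R_{ia}, \]
that is $R_{ia}=Ric^g(u,v)-(\nabla_u\tau)(v)$, with a \emph{minus} sign on the correction term. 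This is exactly where the paper's own proof ends as well ($R^g_{ai'}=(\nabla\tau)_{ai'}+R_{ia}$), so your outline is faithful to the paper's argument; but it is not the relation ``$Ric^g+\nabla\tau$'' that you claim the assembly yields. That the minus sign is the correct outcome of this computation can be tested on the divergence-free non-unimodular solution of Theorem~\ref{nonunimod_divzero:thm}: there the structure is generalized Einstein, so $R_{ia}=0$, while a direct computation in that frame gives $Ric^g(v_1,v_1)=(\nabla_{v_1}\tau)(v_1)=-2\theta^2\neq0$, hence $Ric^g+\nabla\tau\neq0$ but $Ric^g-\nabla\tau=0$. In your write-up you must therefore carry the sign through explicitly and state the conclusion as $Ric^+_0=Ric^g-\nabla\tau$; asserting that the computation produces the printed ``$+$'' is the single claim in your proposal that the computation does not support (and it is also the point at which the paper's displayed statement and its own proof disagree).
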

 \begin{proof} The symmetry of the Ricci tensor of $\mathcal G_g$ follows from $\delta=0$. Therefore 
 it suffices to compute $R_{ia}=Ric^+(e_i,e_a)$ from Theorem~\ref{Ric:thm} and to compare with 
 $R^g_{ai'}=Ric^g(v_a,v_{i'})$, $i'=i-n$. Note first that, by Remark~\ref{Christoffel:rm} we have 
 \[ \mathcal{B}_{aj}^k = \Gamma_{aj'}^{k'},\quad \mathcal{B}_{ib}^c=\Gamma_{i'b}^c,\]
 since $H=0$ and $\langle e_j,e_k\rangle = -\langle e_{j'},e_{k'}\rangle = -g(v_{j'},v_{k'})$. 
 Hence, using Lemma~\ref{B_inv:lem} and the 
 fact that the Levi-Civita connection has zero torsion, we obtain 
 \[ R_{ia} = \mathcal{B}_{bi}^j \mathcal{B}_{aj}^b= -\Gamma_{bi'}^{j'}\Gamma_{j'a}^b=-\Gamma_{bi'}^{j'}(\Gamma_{aj'}^b+
 \kappa_{j'a}^b).\]
 On the other hand, we have 
 \[ R^g_{ai'}=\Gamma_{ai'}^d\Gamma_{fd}^f -\Gamma_{fi'}^d\Gamma_{ad}^f-\kappa_{fa}^d\Gamma_{di'}^f= 
 \Gamma_{ai'}^d\Gamma_{fd}^f+R_{ia}.\]
 To compute the first term we note that since the Levi-Civita connection is metric, we have 
 \[ \Gamma_{fd}^f = \kappa_{fd}^f = -\tau_d=-\tau (v_d)\]
 and, hence, 
 \[ \Gamma_{ai'}^d\Gamma_{fd}^f = -\Gamma_{ai'}^d\tau_d = (\nabla \tau)_{ai'}=(\nabla_{v_a}\tau)v_{i'}.\qedhere\]
 \end{proof}
 \begin{cor}\label{soliton:cor}
 	Let $g$ be a left-invariant pseudo-Riemannian metric on a Lie group $G$. Then the generalized pseudo-Riemannian Lie group $(G, H=0,\mathcal G_g)$ is Einstein with divergence $\delta=0$ if and only if $g$ 
 satisfies the following Ricci soliton equation
 \begin{equation}\label{soliton:Eq}
 	Ric^g+\nabla \tau =0,
 \end{equation}
 where $\tau$ is the trace-form. The form $\tau$ is always closed and, hence, 
 the solutions of the above equation are gradient Ricci solitons, if the first 
 Betti number of the manifold $G$ vanishes. 
 \end{cor}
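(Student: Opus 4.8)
The plan is to read off the equivalence directly from Proposition~\ref{RicRicProp} and then dispatch the two supplementary claims about $\tau$. First I would recall from Definition~\ref{Ricdef} that $(G,H=0,\mathcal{G}_g,\delta=0)$ is generalized Einstein exactly when $Ric^+_0=0$ and $Ric^-_0=0$. Since the vectors $u+gu$ with $u\in\mathfrak{g}$ exhaust $E_+$ (cf.\ (\ref{ea:eq})) and the vectors $v-gv$ with $v\in\mathfrak{g}$ exhaust $E_-$ (cf.\ (\ref{ei:eq})), the tensor $Ric^+_0\in E_-^*\otimes E_+^*$ is determined by its values $Ric^+_0(v-gv,u+gu)$, and likewise for $Ric^-_0$. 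Proposition~\ref{RicRicProp} identifies both of these with the single scalar
\[ Ric^g(u,v)+(\nabla_u\tau)(v),\]
so that the vanishing of $Ric^+_0$ and of $Ric^-_0$ are each equivalent to
\[ Ric^g(u,v)+(\nabla_u\tau)(v)=0\quad\text{for all } u,v\in\mathfrak{g},\]
which is precisely (\ref{soliton:Eq}) once $\nabla\tau$ is read as the $(0,2)$-tensor $(u,v)\mapsto(\nabla_u\tau)(v)$. In particular, since $Ric^g$ is symmetric, (\ref{soliton:Eq}) forces $\nabla\tau$ to be symmetric as well.

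Next I would verify that $\tau$ is closed. Regarding $\tau$ as a left-invariant one-form, the Chevalley--Eilenberg formula gives $d\tau(X,Y)=-\tau([X,Y])=-\tr\mathrm{ad}_{[X,Y]}$ for $X,Y\in\mathfrak{g}$; using $\mathrm{ad}_{[X,Y]}=[\mathrm{ad}_X,\mathrm{ad}_Y]$ and the vanishing of the trace of a commutator, one gets $d\tau=0$.

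Finally I would translate (\ref{soliton:Eq}) into soliton form. Because $\tau$ is closed, for the Levi-Civita connection one has $(\nabla_u\tau)(v)-(\nabla_v\tau)(u)=d\tau(u,v)=0$, so $\nabla\tau$ is automatically symmetric and equals $\tfrac12\mathcal{L}_{\tau^\sharp}g$, where $\tau^\sharp$ denotes the metric dual of $\tau$. Thus (\ref{soliton:Eq}) is the steady Ricci soliton equation $Ric^g+\tfrac12\mathcal{L}_{\tau^\sharp}g=0$. If moreover $b_1(G)=0$, the closed form $\tau$ is exact, say $\tau=df$, whence $\tau^\sharp=\nabla f$ and $\nabla\tau=\nabla df$ is the Hessian of $f$, so the equation becomes the gradient Ricci soliton equation, as claimed.

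I expect the only point requiring care to be the bookkeeping in the first paragraph, namely checking that the one scalar identity of Proposition~\ref{RicRicProp} genuinely encodes the vanishing of both $Ric^+_0$ and $Ric^-_0$ on all of $E_-\times E_+$ and $E_+\times E_-$; the closedness of $\tau$ and the passage to (gradient) soliton form are then routine.
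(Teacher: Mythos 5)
Your proposal is correct and takes essentially the same approach as the paper: the equivalence is read off directly from Proposition~\ref{RicRicProp}, and the closedness of $\tau$ is established by the same computation $(d\tau)(u,v)=-\tau([u,v])=-\tr\,[\mathrm{ad}_u,\mathrm{ad}_v]=0$, which is in fact the only step the paper's proof writes out explicitly. The additional bookkeeping you include (that the vectors $u+gu$ and $v-gv$ exhaust $E_+$ and $E_-$, and the translation of (\ref{soliton:Eq}) into gradient-soliton form when $b_1(G)=0$) simply makes explicit what the paper leaves implicit.
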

 \begin{proof} For all $u,v\in \mathfrak{g}$ we have 
 \[ (d\tau )(u,v) = -\tau ([u,v]) = -\mathrm{tr}\, \mathrm{ad}_{[u,v]}= -\mathrm{tr}\, 
 [\mathrm{ad}_u,\mathrm{ad}_v] =0. \qedhere\]
  \end{proof}
 \begin{cor}\label{flat:cor}
 	Let $g$ be a left-invariant pseudo-Riemannian metric on a unimodular Lie group $G$. Then the generalized pseudo-Riemannian Lie group $(G, H=0,\mathcal G_g)$ is Einstein with divergence $\delta=0$ if and only if $g$ 
 is Ricci-flat. 
 \end{cor}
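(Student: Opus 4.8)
The plan is to reduce the statement directly to Corollary~\ref{soliton:cor}. By that corollary, applied to the left-invariant metric $g$, the generalized pseudo-Riemannian Lie group $(G,H=0,\mathcal G_g)$ is Einstein with divergence $\delta=0$ if and only if $g$ satisfies the Ricci soliton equation $Ric^g+\nabla\tau=0$, where $\tau\in\mathfrak g^*$ is the trace-form $\tau(X)=\tr\mathrm{ad}_X$. Thus essentially all of the content of the present corollary is already packaged in the soliton characterization, and the only remaining task is to relate the soliton term $\nabla\tau$ to the unimodularity hypothesis.

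The key (and in fact only) point to invoke is the standard characterization of unimodularity: a Lie group $G$ is unimodular precisely when $\tau(X)=\tr\mathrm{ad}_X=0$ for all $X\in\mathfrak g$, that is, when $\tau=0$ as an element of $\mathfrak g^*$. I would therefore argue as follows. Assuming $G$ unimodular, we have $\tau=0$, hence $\nabla\tau=0$, so the equation $Ric^g+\nabla\tau=0$ of Corollary~\ref{soliton:cor} collapses to $Ric^g=0$; conversely, if $g$ is Ricci-flat then $Ric^g=0$ and $\tau=0$ together trivially yield $Ric^g+\nabla\tau=0$. Combining this equivalence with Corollary~\ref{soliton:cor} gives that $(G,H=0,\mathcal G_g,\delta=0)$ is generalized Einstein if and only if $g$ is Ricci-flat.

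There is no genuine obstacle in this argument: the corollary is an immediate specialization of the soliton characterization, and the computation $\nabla\tau=0$ is forced the moment one records that unimodularity is equivalent to $\tau=0$. The only thing worth stating explicitly is this identification, which I would include for completeness rather than as a substantive step.
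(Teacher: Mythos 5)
Your proof is correct and is exactly the argument the paper intends: Corollary~\ref{flat:cor} is stated without proof precisely because it follows immediately from Corollary~\ref{soliton:cor} once one notes that unimodularity means $\tau=0$, hence $\nabla\tau=0$, collapsing the soliton equation to $Ric^g=0$.
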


 \section{Classification results in dimension 3}\label{class:sec}
 \subsection{Preliminaries}
 \label{prelim:sec}
 Let $G$ be a three-dimensional Lie group endowed with 
 a left-invariant pseudo-Riemannian metric $g$ and an orientation. We will 
 identify $g$ with a non-degenerate symmetric bilinear form  $g\in \mathrm{Sym}^2\mathfrak{g}^*$. 
 We begin by showing that the Lie bracket can be encoded in an endomorphism $L$ of $\mathfrak{g}$ and study its
 properties. 
 
 Following Milnor \cite{M}, but allowing indefinite metrics, we denote by $L\in \mathrm{End}\, \mathfrak{g}$ the endomorphism such that 
 \begin{equation}\label{defL:eq} [u,v] = L(u\times v),\quad \forall u,v\in \mathfrak{g},\end{equation}
 where the cross-product $\times \in \bigwedge^2\mathfrak{g}^*\otimes \mathfrak{g}$ is defined by 
 \begin{equation} \label{Ldef:eq} g(u\times v,w) = \mathrm{vol}_g(u,v,w),\end{equation}
 using the metric volume form $\mathrm{vol}_g$. In terms of an oriented orthonormal basis
 $(v_a)$, we have 
 \[ v_a\times v_b = \varepsilon_c v_c,\quad  \varepsilon_c=g(v_c,v_c),\] 
 for every cyclic permutation $(a,b,c)$ of $\{ 1,2,3\}$. This implies 
 that 
 \begin{equation} \label{Liebracket:eq} [v_a,v_b] = \varepsilon_c Lv_c,\quad\forall\quad\mbox{cyclic}\quad (a,b,c)\in \mathfrak{S}_3.\end{equation}
We denote by $(L^a_{\phantom{a}b})$ the matrix of $L$ in the above basis,
\[ Le_b= L^a_{\phantom{a}b}e_a,\]
and by $L^{ab}=L^a_{\phantom{a}c}g^{cb}$ the coefficients of the corresponding 
tensor $L \circ g^{-1} \in \mathrm{Hom}(\mathfrak{g}^*,\mathfrak{g}) \cong \mathfrak{g}\otimes \mathfrak{g}$.

From (\ref{Liebracket:eq}), we see that the structure constants $\kappa_{ab}^c$ of $\mathfrak{g}$ 
with respect to the basis $(v_a)$ can be written as 
\[ \kappa_{ab}^c = \varepsilon_{abd}L^{cd},\]
where $\varepsilon_{abd}=\mathrm{vol}_g(v_a,v_b,v_d)$ (in particular, $\varepsilon_{123}=1$). 

The following lemma is a straightforward generalization of \cite[Lemma 4.1]{M}.
\begin{lem} The endomorphism $L$ is symmetric with respect to $g$ if and only if $\mathfrak{g}$ is 
unimodular.
\end{lem}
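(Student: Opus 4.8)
The plan is to translate both sides of the equivalence into the single linear-algebraic condition that the matrix $(L^{ab})$ introduced above is symmetric, and to connect them through the structure constants. On one side, since $L^{ab}=L^a_{\phantom{a}c}g^{cb}$ are the components of the tensor $L\circ g^{-1}\in \mathfrak{g}\otimes \mathfrak{g}$, the endomorphism $L$ is $g$-symmetric precisely when the bilinear form $(u,v)\mapsto g(Lu,v)$ is symmetric, that is precisely when $L^{ab}=L^{ba}$ (raising both indices with the symmetric $g^{-1}$ preserves symmetry). On the other side, unimodularity of $\mathfrak{g}$ means the vanishing of the trace-form $\tau$, $\tau(X)=\tr\mathrm{ad}_X$. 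So it suffices to show that $\tau=0$ if and only if $(L^{ab})$ is symmetric.

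First I would compute $\tau$ in the oriented orthonormal basis $(v_a)$ directly from the structure constants. Using $\mathrm{ad}_{v_c}v_a=[v_c,v_a]=\kappa_{ca}^bv_b$ one gets $\tau(v_c)=\tr\mathrm{ad}_{v_c}=\kappa_{ca}^a$ (summed over $a$). Substituting the expression $\kappa_{ab}^c=\varepsilon_{abd}L^{cd}$ established above then yields
\[ \tau(v_c)=\sum_{a,d}\varepsilon_{cad}L^{ad}, \]
where $\varepsilon_{cad}=\mathrm{vol}_g(v_c,v_a,v_d)$ is the totally antisymmetric Levi-Civita symbol.

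The key observation is that, for each fixed $c$, the symbol $\varepsilon_{cad}$ is antisymmetric in the pair $(a,d)$, so the contraction with $L^{ad}$ detects only the skew part of $(L^{ab})$; explicitly $\tau(v_1)=L^{23}-L^{32}$, $\tau(v_2)=L^{31}-L^{13}$ and $\tau(v_3)=L^{12}-L^{21}$. In dimension three a skew-symmetric $2$-tensor has exactly three independent components, and these three quantities are, up to sign, precisely those components (the map sending a skew tensor to this triple is the Hodge-type isomorphism $\bigwedge^2\mathfrak{g}\to \mathfrak{g}^*$, which is nondegenerate because $\mathrm{vol}_g\neq 0$). Hence $\tau=0$ if and only if $L^{ab}=L^{ba}$, which by the first paragraph is equivalent to $L$ being $g$-symmetric, completing the equivalence.

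The only point requiring care is to keep the two distinct roles of the letter $\varepsilon$ apart: the signature signs $\varepsilon_c=g(v_c,v_c)=\pm 1$ that appear when passing between $L^a_{\phantom{a}b}$ and $L^{ab}$, and the orientation-dependent Levi-Civita symbol $\varepsilon_{abd}$ that governs the structure constants. I expect this bookkeeping, rather than any conceptual difficulty, to be the main obstacle. I would emphasize in particular that the argument is insensitive to the signature of $g$, since the Levi-Civita symbol and the three-dimensional Hodge isomorphism used in the last step are signature-independent; this is exactly why Milnor's definite-signature proof \cite{M} carries over without change.
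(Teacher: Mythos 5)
Your proof is correct and follows essentially the same route as the paper's: reduce $g$-symmetry of $L$ to symmetry of the matrix $(L^{ab})$, compute $\tr\mathrm{ad}_{v_a}=\kappa_{ab}^{b}=\varepsilon_{abc}L^{bc}$, and use the antisymmetry of $\varepsilon_{abc}$ to conclude. You merely make explicit the final step (that the three contractions $\varepsilon_{cad}L^{ad}$ are exactly the independent components of the skew part of $(L^{ab})$), which the paper leaves implicit.
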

\begin{proof} Note first that $L$ is symmetric with respect to $g$ if and only if the matrix $(L^{ab})$ is symmetric.  
Therefore, the calculation 
\[ \tr \mathrm{ad}_{v_a} = \kappa_{ab}^b =  \varepsilon_{abc}L^{bc}\] 
shows that $L$ is symmetric if and only if  $\tr \mathrm{ad}_{v_a}=0$ for all $a$, i.e.\ if 
and only if $\mathfrak g$ is unimodular. 
\end{proof}
 \begin{prop}\label{NF:prop}Let $g$ be a non-degenerate symmetric bilinear form on an oriented 
 three-di\-men\-sion\-al unimodular Lie algebra $ \mathfrak{g} $. Then there exists an orthonormal basis
 $(v_a)$ of $(\mathfrak{g},g)$ such that $g(v_1,v_1)=g(v_2,v_2)$ and such that the symmetric endomorphism $L$ 
 defined in Equation (\ref{defL:eq}) is represented by one of the following matrices:
 \begin{eqnarray*}L_1(\alpha , \beta, \gamma )  &=& \left( \begin{array}{ccc}\alpha &0&0\\
 0&\beta& 0\\
 0&0&\gamma
 \end{array}\right),\quad 
 L_2(\alpha , \beta,\gamma )=\left( \begin{array}{ccc} \gamma &0&0\\
 0&\alpha &-\beta\\
 0&\beta&\alpha
 \end{array}\right),\\
 L_3(\alpha , \beta)&=&\left( \begin{array}{ccc}\beta&0&0\\0&\frac12+\alpha &\frac12\\
 0&-\frac12&-\frac12 +\alpha\\
 \end{array}\right),\quad 
 L_4(\alpha , \beta)= \left( \begin{array}{ccc} \beta&0&0\\0&-\frac12+\alpha &-\frac12\\
 0&\frac12&\frac12 +\alpha\\
\end{array}\right),\\
 L_5(\alpha) &=&\left( \begin{array}{ccc}\alpha&\frac{1}{\sqrt{2}}&0\\
 \frac{1}{\sqrt{2}}&\alpha& \frac{1}{\sqrt{2}}\\
 0&-\frac{1}{\sqrt{2}}&\alpha\end{array}\right), 
 \end{eqnarray*}
 where $\alpha, \beta ,\gamma\in \mathbb{R}$ and $g(v_3,v_3)=-g(v_2,v_2)$ for the normal forms 
 $L_2,\ldots , L_5$. 
 If $g$ is definite, then the orthonormal basis can be chosen such that $L$ is represented by a 
diagonal matrix $L_1(\alpha , \beta , \gamma )$ and each diagonal matrix is realized in this way. 
If $g$ is indefinite, then each of the above normal forms is realized by some unimodular Lie bracket.  
 \end{prop}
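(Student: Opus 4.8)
The plan is to read the statement as the classification, up to the isometry group $\mathrm{O}(g)$, of $g$-self-adjoint endomorphisms of the three-dimensional space $(\mathfrak{g},g)$, supplemented by a realizability check. First I would invoke the preceding lemma: since $\mathfrak{g}$ is unimodular, $L$ is $g$-symmetric, so producing an orthonormal basis in which $L$ has a prescribed shape is exactly the problem of bringing a self-adjoint operator to canonical form under $\mathrm{O}(g)$-conjugation. Conversely, for the realizability assertions I would observe that every $g$-self-adjoint $L$ defines, through (\ref{defL:eq}), a bracket obeying the Jacobi identity: decomposing $L=S+A$ into its $g$-symmetric and $g$-skew parts, the obstruction to Jacobi is controlled by the skew part $A$, which here vanishes (cf.\ \cite{M} and the remark that the Jacobi identity imposes no constraint on the normal form); symmetry of $L$ simultaneously re-encodes unimodularity via the same lemma. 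Hence it suffices to exhibit the normal forms and to check that each displayed matrix is $g$-self-adjoint for the indicated signature.

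Then I would split on the signature of $g$. If $g$ is definite the spectral theorem yields an orthonormal eigenbasis, $L$ is the diagonal form $L_1(\alpha,\beta,\gamma)$, all $\varepsilon_a$ coincide so that $g(v_1,v_1)=g(v_2,v_2)$, and every diagonal matrix occurs. The real content is the indefinite case, of signature $(2,1)$ or $(1,2)$; these being interchanged by $g\mapsto -g$, which preserves $g$-self-adjointness, I may assume the signature $(+,+,-)$. Here I would classify $L$ by its Jordan structure while tracking the causal type of its eigenspaces, the obstructions to orthogonal diagonalizability being a complex-conjugate pair of eigenvalues or the presence of a null eigenvector.

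Concretely I would treat: (i) three real eigenvalues admitting an orthonormal eigenbasis, which gives $L_1$ after ordering the basis so that the two vectors of equal sign are $v_1,v_2$; (ii) a real eigenvalue together with a complex-conjugate pair $\alpha\pm i\beta$, where the real eigenvector is necessarily non-null and spans $\langle v_1\rangle$ while the pair determines a Lorentzian $2$-plane $\langle v_2,v_3\rangle$ of signature $(+,-)$ carrying the rotation-type block, giving $L_2(\alpha,\beta,\gamma)$; and (iii) the non-semisimple cases, which in Lorentzian signature force a null eigenvector: a double eigenvalue with a rank-one nilpotent on a degenerate $2$-plane gives $L_3$ or $L_4$, and a triple eigenvalue with a rank-two nilpotent gives $L_5$. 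In the cases of (iii) I would first write $L$ in a null (pseudo-orthonormal) frame adapted to the degenerate invariant subspace and then rotate into an orthonormal frame with $g(v_3,v_3)=-g(v_2,v_2)$.

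The main obstacle is precisely this final conversion. In a null frame the nilpotent off-diagonal entry can be scaled to $1$, but passing to an orthonormal pair $(v_2,v_3)$ of opposite signs mixes the two null directions, and rescaling them would break orthonormality; consequently the off-diagonal constants cannot be absorbed and are pinned to the values $\pm\tfrac12$ and $\tfrac1{\sqrt{2}}$ occurring in $L_3,L_4,L_5$, while the residual sign of the nilpotent distinguishes $L_3$ from $L_4$. Checking that orthonormality forces exactly these constants is the delicate point; the remaining steps — the $g$-self-adjointness of each normal form and hence its realizability — are short matrix verifications that I would only indicate.
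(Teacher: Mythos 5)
Your outline is correct and structurally parallel to the paper's proof (normal forms for a $g$-self-adjoint endomorphism, then a realizability check via the Jacobi identity), but at each step you invert what is computed and what is cited. For the normal forms the paper simply quotes \cite[Lemma~2.2]{CEHL}, while you sketch the underlying Jordan-type analysis; your case split (orthonormal eigenbasis; complex pair acting on a Lorentzian plane orthogonal to a necessarily non-null real eigenvector; rank-one nilpotent giving $L_3$, $L_4$; rank-two nilpotent giving $L_5$) is exactly the content of that lemma, so your route is more self-contained but leaves unproved the ``delicate point'' that orthonormality pins the constants to $\pm\tfrac12$ and $\tfrac1{\sqrt2}$ --- which is precisely what the citation buys the paper. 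For realizability the roles reverse: the paper verifies the Jacobi identity by a direct computation on a single matrix shape covering all five forms, using $\varepsilon_1=\varepsilon_2$ for the cancellation, whereas you invoke the general principle that the Jacobi obstruction for $[u,v]=L(u\times v)$ sits in the $g$-skew part of $L$ and hence vanishes for self-adjoint $L$. That principle is true in any signature and is the conceptually cleaner statement, but it cannot be cited as you do: \cite{M} treats only definite $g$ (where one may reduce to diagonal $L$), and the paper's ``remark'' is the very assertion being proved. You would therefore need to include the short computation yourself: $\mathrm{Jac}(v_1,v_2,v_3)=\sum_a\varepsilon_a[v_a,Lv_a]$, whose terms cancel in pairs by the symmetry relation $\varepsilon_bL^b_{\phantom{b}a}=\varepsilon_aL^a_{\phantom{a}b}$ --- this is the paper's own calculation, just run for a general symmetric matrix rather than the unified normal form. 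With these two verifications supplied your argument closes; what it buys is independence from \cite{CEHL} and a signature-independent Jacobi criterion, while the paper's version is shorter on the page.
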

 \begin{proof}
 It is well known that every symmetric endomorphism on a Euclidean vector space 
 can be diagonalized. According to \cite[Lemma~2.2]{CEHL} and references therein, 
 for an indefinite scalar product on a three-dimensional vector space
 there are the five normal forms of a symmetric bilinear form, from which one easily obtains the five normal forms $L_1(\alpha , \beta, \gamma ), L_2(\alpha , \beta, \gamma ), L_3(\alpha , \beta),
 L_4(\alpha, \beta )$ and $L_5(\alpha )$ for a symmetric endomorphism. 
 It remains to check that for each of  these normal forms $(L^a_{\phantom{a}b})$, the 
 bracket with structure constants $\kappa_{ab}^c = \varepsilon_{abd}L^{cd}$ satisfies the Jacobi identity. 
 
 All the cases can be treated simultaneously by considering $(L^a_{\phantom{a}b})$ 
 of the form 
 \[ \left(\begin{array}{ccc}\alpha &\lambda &0\\
 \lambda &\beta &\mu \\
 0&\varepsilon_2\varepsilon_3\mu&\gamma\end{array}
 \right),  
 \]
 where $\lambda,\mu \in \mathbb{R}$. For the corresponding endomorphism $L$ we have
 \begin{eqnarray*} \mathrm{Jac}(v_1,v_2,v_3) &\coloneqq & [v_1,[v_2,v_3]] + [v_2,[v_3,v_1]]+[v_3,[v_1,v_2]]= \sum [v_a,\varepsilon_a Lv_a]\\ 
 &=& 
\varepsilon_1 \lambda [v_1,v_2]  +\varepsilon_2\lambda [v_2,v_1] +\varepsilon_3\mu [v_2,v_3]+\varepsilon_3\mu [v_3,v_2]=0,
 \end{eqnarray*}
 where we have used that $\varepsilon_1=\varepsilon_2$.
  \end{proof}
 \subsection{Classification in the case of zero divergence}
 \label{zerodiv:sec}
 \subsubsection{Unimodular Lie groups}
\begin{prop}\label{EinsteinNF:prop}
	If $(H,\mathcal G_g,\delta=0)$ is a divergence-free generalized Einstein structure on an oriented three-dimensional unimodular Lie group $G$, then there exists a $ g $-orthonormal basis $(v_a)$ of $\mathfrak g$ such that $g(v_1,v_1)=g(v_2,v_2)$ and such that the symmetric endomorphism $L$ defined in Equation (\ref{defL:eq}) is either of the form $ L_1(\alpha,\beta,\gamma) $, that is $ L $ is diagonalizable by an orthonormal basis, or of one of the forms $ L_3(0,0) $ or $ L_4(0,0) $. In the non-diagonalizable case the three-form $ H $ is zero. 
\end{prop}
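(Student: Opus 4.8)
The plan is to turn the generalized Einstein condition into an explicit polynomial system in the parameters of the normal forms of Proposition~\ref{NF:prop} together with the single free component of $H$, and then to solve it form by form. Since $\delta=0$, Corollary~\ref{RicCor_delta0} (equivalently Theorem~\ref{Ric:thm}) tells us that the Einstein condition is exactly the vanishing of the symmetric array
\[ R_{ia} = \mathcal{B}_{bi}^{j}\,\mathcal{B}_{aj}^{b},\qquad a\in\{1,2,3\},\ i\in\{4,5,6\}. \]
If $g$ is definite, then Proposition~\ref{NF:prop} already gives that $L$ is orthonormally diagonalizable, i.e.\ of type $L_1$, and there is nothing to prove. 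So I would assume from the outset that $g$ is indefinite (Lorentzian), with $\varepsilon_1=\varepsilon_2=-\varepsilon_3$, and that $L$ is one of $L_2,\dots,L_5$; the goal is to show that imposing $R=0$ excludes $L_2$ and $L_5$ entirely and forces $L_3,L_4$ to specialize to $L_3(0,0)$ and $L_4(0,0)$ with vanishing three-form.

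Next I would make the $H$-dependence explicit. In dimension three $\bigwedge^3\mathfrak g^*$ is one-dimensional, so $H=h\,\mathrm{vol}_g$ for a single scalar $h$, i.e.\ $H_{abc}=h\,\varepsilon_{abc}$. Using Remark~\ref{Christoffel:rm} to split the relevant Dorfman coefficients into Levi-Civita and flux parts,
\[ \mathcal{B}_{ajk}=\tfrac12 H_{aj'k'}-\Gamma_{aj'k'},\qquad \mathcal{B}_{ibc}=\tfrac12 H_{i'bc}+\Gamma_{i'bc}, \]
and substituting into $R_{ia}$, the array $R$ becomes a quadratic polynomial $R=R^{(0)}+h\,R^{(1)}+h^2 R^{(2)}$ in $h$. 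Here the $h$-independent term $R^{(0)}$ is, by Proposition~\ref{RicRicProp} and unimodularity (so that $\tau=0$), just the ordinary Ricci tensor $Ric^g$ reindexed through $v_{i'}$; the pure-flux term $R^{(2)}$ is a contraction of two copies of $\mathrm{vol}_g$ and is therefore a multiple of $g$; and the cross term $R^{(1)}$ is linear in the entries of $L$, since the structure constants enter only through $\kappa_{ab}^c=\varepsilon_{abd}L^{cd}$. All three pieces are thus explicit functions of the normal-form parameters.

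The computation then proceeds case by case. For each of $L_2,\dots,L_5$ I would compute $Ric^g$ directly from $\Gamma_{abc}=\tfrac12(\kappa_{abc}-\kappa_{bca}+\kappa_{cab})$, assemble $R=Ric^g+h\,R^{(1)}+h^2 R^{(2)}$, and impose $R=0$. For $L_2(\alpha,\beta,\gamma)$ with $\beta\neq0$ (the case $\beta=0$ being already diagonal, hence of type $L_1$) and for $L_5(\alpha)$, the resulting equations are inconsistent, so these forms drop out. For $L_3(\alpha,\beta)$ and $L_4(\alpha,\beta)$ the off-diagonal equations force $\alpha=\beta=0$; the brackets $L_3(0,0),L_4(0,0)$ then turn out to be flat, so $R^{(0)}=Ric^g=0$ and the surviving $g$-proportional part $h^2R^{(2)}$ forces $h=0$, yielding $H=0$ as claimed.

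The main obstacle is the case analysis itself. Because $L$ is genuinely non-semisimple in the forms $L_2,\dots,L_5$, one cannot invoke Milnor's diagonal formulas for $Ric^g$ and must instead carry out each curvature computation by hand, tracking the sign pattern $\varepsilon_1=\varepsilon_2=-\varepsilon_3$ in every contraction. The delicate points are (i) verifying that the $h$-linear term $R^{(1)}$ can never cancel the non-diagonal part of $Ric^g$ for $L_2$ and $L_5$, which is what eliminates those forms, and (ii) confirming that no nonzero $h$ survives for $L_3(0,0),L_4(0,0)$, since it is precisely the flux term $h^2R^{(2)}\propto h^2 g$ that pins down $H=0$ once the curvature has degenerated.
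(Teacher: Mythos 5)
Your overall strategy is the same as the paper's proof of Proposition~\ref{EinsteinNF:prop}: reduce to the indefinite case, run through the normal forms of Proposition~\ref{NF:prop}, turn the divergence-free Einstein condition into a homogeneous quadratic system in the normal-form parameters and the flux scalar $h$, and solve it case by case. Your organizing decomposition $R=R^{(0)}+hR^{(1)}+h^2R^{(2)}$ is also correct as stated: by Proposition~\ref{RicRicProp} and unimodularity ($\tau=0$) the $h$-free part is $Ric^g$, and $R^{(2)}$ is proportional to $g$. In fact the paper's explicit formulas show more: in the symmetric five-parameter parametrization, $h$ enters only the diagonal components $R_{41},R_{52},R_{63}$, and only through $h^2$, so $hR^{(1)}$ vanishes identically here.

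The genuine gap is in your predicted resolution of the cases $L_3,L_4$ (and in the mechanism you give for $L_2$). For $L_3(\alpha,\beta)$ the off-diagonal Einstein equations do \emph{not} force $\alpha=\beta=0$: the only nonvanishing off-diagonal components are $R_{53}=R_{62}=-\frac12\left(2\alpha-\beta\right)$, and they contain no $h$, so the off-diagonal system is solved by the whole family $\beta=2\alpha$ with $\alpha$ and $h$ arbitrary. The conclusion $\alpha=\beta=0$ can only be reached through the diagonal equations, which is where $h$ lives: $R_{41}=0$ gives $\beta^2=h^2$, and after substituting $\beta=2\alpha$, $R_{52}=0$ gives $h^2=0$; only then do $\beta=0$ and $\alpha=0$ follow. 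Thus your proposed order of deduction --- parameters killed first by off-diagonal equations, then flatness, then $h=0$ from $h^2R^{(2)}$ --- is backwards and would stall: $h=0$ must be extracted before (or simultaneously with) $\alpha=\beta=0$, exactly as the paper does. The same issue affects the mechanism you describe for $L_2(\alpha,\beta,\gamma)$: since $R^{(1)}=0$ there is nothing for it to cancel, and with $\beta\neq0$ the off-diagonal part of $Ric^g$ \emph{can} vanish (set $\gamma=2\alpha$); the contradiction again comes from the diagonal equations, namely $R_{52}=0$ forces $h=0$ and then $R_{41}=-2\beta^2-\frac12\gamma^2=0$ forces $\beta=\gamma=0$. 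Only $L_5$ is eliminated by a purely off-diagonal, $h$-independent equation, $R_{43}=-2\lambda\mu=-1\neq0$. Your stated outcomes are all correct, but the case analysis as planned would fail at $L_3,L_4$; the diagonal (trace) equations must be woven in from the start.
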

\begin{proof}
	In the Euclidean case any symmetric endomorphism is always diagonalizable by an orthonormal basis. So we may assume that the scalar product is indefinite. By Proposition~\ref{NF:prop}, there is an orthonormal basis $ (v_a) $, such that the endomorphism $ L $ takes one of the normal forms $L_1(\alpha , \beta, \gamma ), L_2(\alpha , \beta, \gamma ), L_3(\alpha , \beta),
	L_4(\alpha, \beta )$ or $L_5(\alpha )$ from said proposition. As in the proof of Proposition~\ref{NF:prop}, we can treat all cases at once by considering the matrix 
	\[ \left(\begin{array}{ccc}\alpha &\lambda &0\\
		\lambda &\beta &\mu \\
		0&-\mu&\gamma\end{array}
	\right). 
	\]
	 Recall that we assume $ \varepsilon_{1}=\varepsilon_2=-\varepsilon_{3} $, where $ \varepsilon_a=g(v_a,v_a) $. Using  equation (\ref{Liebracket:eq}) we obtain the structure constants $ \kappa_{abc}=\varepsilon_c\kappa_{ab}^c $ of the Lie algebra in the following way. The bracket is given by
	\begin{equation*}\begin{split}
		\kappa_{12}^av_a&=[v_1,v_2]=\varepsilon_{3}Lv_3=\varepsilon_3\mu v_2+\varepsilon_3\gamma v_3=-\varepsilon_2\mu v_2+\varepsilon_3\gamma v_3\\
		\kappa_{23}^av_a&=[v_2,v_3]=\varepsilon_{1}Lv_1=\varepsilon_{1}\alpha v_1+\varepsilon_{1}\lambda v_2=\varepsilon_{1}\alpha v_1+\varepsilon_{2}\lambda v_2\\
		\kappa_{31}^av_a&=[v_3,v_1]=\varepsilon_{2}Lv_2=\varepsilon_{2}\lambda v_1+\varepsilon_{2}\beta v_2-\varepsilon_{2}\mu v_3=\varepsilon_{1}\lambda v_1+\varepsilon_{2}\beta v_2+\varepsilon_{3}\mu v_3,
	\end{split}\end{equation*}
	and hence 
	\[ \begin{array}{lll}	\kappa_{121}=0,&\kappa_{122}=\varepsilon_{2}\kappa_{12}^2=-\mu,&\kappa_{123}=\varepsilon_{3}\kappa_{12}^3=\gamma\\
		\kappa_{231}=\varepsilon_{1}\kappa_{23}^1=\alpha,&\kappa_{232}=\varepsilon_{2}\kappa_{23}^2=\lambda,&\kappa_{233}=0\\
		\kappa_{311}=\varepsilon_{1}\kappa_{31}^1=\lambda,&\kappa_{312}=\varepsilon_{2}\kappa_{31}^2=\beta,&\kappa_{313}=\varepsilon_{3}\kappa_{31}^3=\mu.\end{array} \]
	The remaining structure constants are determined by the skew-symmetry of $ \kappa_{abc} $ in the first two components. 
	
	By Proposition~\ref{B:prop}, the Dorfman coefficients are given as follows 
	\begin{eqnarray*}
		\mathcal{B}_{145}&=&\frac12\left(H_{112}-\kappa_{112}+\kappa_{121}-\kappa_{211}\right)=\kappa_{121}=0\\
		\mathcal{B}_{146}&=&\frac12\left(H_{113}-\kappa_{113}+\kappa_{131}-\kappa_{311}\right)=-\kappa_{311}=-\lambda\\
		\mathcal{B}_{156}&=&\frac12\left(H_{123}-\kappa_{123}+\kappa_{231}-\kappa_{312}\right)=\frac12\left(h-\gamma+\alpha-\beta\right)\\
		\mathcal{B}_{245}&=&\frac12\left(H_{212}-\kappa_{212}+\kappa_{122}-\kappa_{221}\right)=\kappa_{122}=-\mu\\
		\mathcal{B}_{246}&=&\frac12\left(H_{213}-\kappa_{213}+\kappa_{132}-\kappa_{321}\right)=\frac12\left(-h+\gamma-\beta+\alpha\right)\\
		\mathcal{B}_{256}&=&\frac12\left(H_{223}-\kappa_{223}+\kappa_{232}-\kappa_{322}\right)=\kappa_{232}=\lambda\\
		\mathcal{B}_{345}&=&\frac12\left(H_{312}-\kappa_{312}+\kappa_{123}-\kappa_{231}\right)=\frac12\left(h-\beta+\gamma-\alpha\right)\\
		\mathcal{B}_{346}&=&\frac12\left(H_{313}-\kappa_{313}+\kappa_{133}-\kappa_{331}\right)=-\kappa_{313}=-\mu\\
		\mathcal{B}_{356}&=&\frac12\left(H_{323}-\kappa_{323}+\kappa_{233}-\kappa_{332}\right)=\kappa_{233}=0\\
		\mathcal{B}_{412}&=&\frac12\left(H_{112}+\kappa_{112}-\kappa_{121}+\kappa_{211}\right)=-\kappa_{121}=0\\
		\mathcal{B}_{413}&=&\frac12\left(H_{113}+\kappa_{113}-\kappa_{131}+\kappa_{311}\right)=\kappa_{311}=\lambda\\
		\mathcal{B}_{423}&=&\frac12\left(H_{123}+\kappa_{123}-\kappa_{231}+\kappa_{312}\right)=\frac12\left(h+\gamma-\alpha+\beta\right)\\
		\mathcal{B}_{512}&=&\frac12\left(H_{212}+\kappa_{212}-\kappa_{122}+\kappa_{221}\right)=-\kappa_{122}=\mu\\
		\mathcal{B}_{513}&=&\frac12\left(H_{213}+\kappa_{213}-\kappa_{132}+\kappa_{321}\right)=\frac12\left(-h-\gamma+\beta-\alpha\right)\\
		\mathcal{B}_{523}&=&\frac12\left(H_{223}+\kappa_{223}-\kappa_{232}+\kappa_{322}\right)=-\kappa_{232}=-\lambda\\
		\mathcal{B}_{612}&=&\frac12\left(H_{312}+\kappa_{312}-\kappa_{123}+\kappa_{231}\right)=\frac12\left(h+\beta-\gamma+\alpha\right)\\
		\mathcal{B}_{613}&=&\frac12\left(H_{313}+\kappa_{313}-\kappa_{133}+\kappa_{331}\right)=\kappa_{313}=\mu\\
		\mathcal{B}_{623}&=&\frac12\left(H_{323}+\kappa_{323}-\kappa_{233}+\kappa_{332}\right)=-\kappa_{233}=0\\
	\end{eqnarray*}	
	Now, Theorem~\ref{Ric:thm} allows us to compute the Ricci curvature (for zero divergence $\delta$) with respect to the orthonormal basis $(e_A)=(e_a,e_i)$, $e_a = v_a + gv_a$, $e_i=v_{i'}-gv_{i'}$, of $E=\mathfrak g \oplus \mathfrak g^*$ as
	\begin{equation}\label{RicCoordinates:eq}
		R_{ia} = \sum_{j,b}\mathcal{B}_{bi}^j \mathcal{B}_{aj}^b =  \sum_{j,b} \mathcal{B}_{bij} \mathcal{B}_{ajb}(-\varepsilon_{j'})\varepsilon_b = \sum_{j,b} \mathcal{B}_{bij} \mathcal{B}_{jab}\varepsilon_{j'}\varepsilon_b
	\end{equation}
	where we have used that $\langle e_i,e_i\rangle = -\langle e_{i'},e_{i'}\rangle= -\varepsilon_{i'}$ and the standard index ranges $a,b\in \{ 1,2,3\}$, $i,j\in \{ 4,5,6\}$. 
	\begin{eqnarray*}
		R_{41}&=&\mathcal{B}_{245}\mathcal{B}_{512}\varepsilon_{2}\varepsilon_2+\mathcal{B}_{345}\mathcal{B}_{513}\varepsilon_{2}\varepsilon_3+\mathcal{B}_{246}\mathcal{B}_{612}\varepsilon_{3}\varepsilon_2+\mathcal{B}_{346}\mathcal{B}_{613}\varepsilon_{3}\varepsilon_3\\
		&=&\mathcal{B}_{245}\mathcal{B}_{512}-\mathcal{B}_{345}\mathcal{B}_{513}-\mathcal{B}_{246}\mathcal{B}_{612}+\mathcal{B}_{346}\mathcal{B}_{613}\\
		&=&-\mu^2-\frac14\left(h-\beta+\gamma-\alpha\right)\left(-h-\gamma+\beta-\alpha\right)\\
		&&\qquad-\frac14\left(-h+\gamma-\beta+\alpha\right)\left(h+\beta-\gamma+\alpha\right)-\mu^2\\
		&=&-2\mu^2-\frac14\left(\alpha^2-\left(h-\left(\beta-\gamma\right)\right)^2+\alpha^2-\left(h+\left(\beta-\gamma\right)\right)^2\right)\\
		&=&-2\mu^2-\frac12\alpha^2+\frac12h^2+\frac12(\beta-\gamma)^2\\
		R_{42}&=&\mathcal{B}_{145}\mathcal{B}_{521}\varepsilon_{2}\varepsilon_1+\mathcal{B}_{345}\mathcal{B}_{523}\varepsilon_{2}\varepsilon_3+\mathcal{B}_{146}\mathcal{B}_{621}\varepsilon_{3}\varepsilon_1+\mathcal{B}_{346}\mathcal{B}_{623}\varepsilon_{3}\varepsilon_3\\
		&=&0-\mathcal{B}_{345}\mathcal{B}_{523}+\mathcal{B}_{146}\mathcal{B}_{612}+0\\
		&=&\frac12\lambda\left(h-\beta+\gamma-\alpha\right)-\frac12\lambda\left(h+\beta-\gamma+\alpha\right)\\
		&=&-\lambda\left(\beta-\gamma+\alpha\right)\\
		R_{43}&=&\mathcal{B}_{145}\mathcal{B}_{531}\varepsilon_{2}\varepsilon_1+\mathcal{B}_{245}\mathcal{B}_{532}\varepsilon_{2}\varepsilon_2+\mathcal{B}_{146}\mathcal{B}_{631}\varepsilon_{3}\varepsilon_1+\mathcal{B}_{246}\mathcal{B}_{632}\varepsilon_{3}\varepsilon_2\\
		&=&0-\mathcal{B}_{245}\mathcal{B}_{523}+\mathcal{B}_{146}\mathcal{B}_{613}+0\\
		&=&-2\mu\lambda\\
		R_{51}&=&\mathcal{B}_{254}\mathcal{B}_{412}\varepsilon_{1}\varepsilon_2+\mathcal{B}_{354}\mathcal{B}_{413}\varepsilon_{1}\varepsilon_3+\mathcal{B}_{256}\mathcal{B}_{612}\varepsilon_{3}\varepsilon_2+\mathcal{B}_{356}\mathcal{B}_{613}\varepsilon_{3}\varepsilon_3\\
		&=&0+\mathcal{B}_{345}\mathcal{B}_{413}-\mathcal{B}_{256}\mathcal{B}_{612}+0\\
		&=&\frac12\lambda\left(h-\beta+\gamma-\alpha\right)-\frac12\lambda\left(h+\beta-\gamma+\alpha\right)\\
		&=&-\lambda\left(\beta-\gamma+\alpha\right)\\
		R_{52}&=&\mathcal{B}_{154}\mathcal{B}_{421}\varepsilon_{1}\varepsilon_1+\mathcal{B}_{354}\mathcal{B}_{423}\varepsilon_{1}\varepsilon_3+\mathcal{B}_{156}\mathcal{B}_{621}\varepsilon_{3}\varepsilon_1+\mathcal{B}_{356}\mathcal{B}_{623}\varepsilon_{3}\varepsilon_3\\
		&=&0+\mathcal{B}_{345}\mathcal{B}_{423}+\mathcal{B}_{156}\mathcal{B}_{612}+0\\
		&=&\frac14\left(h-\beta+\gamma-\alpha\right)\left(h+\gamma-\alpha+\beta\right)+\frac14\left(h-\gamma+\alpha-\beta\right)\left(h+\beta-\gamma+\alpha\right)\\
		&=&\frac14\left(\left(h+\left(\gamma-\alpha\right)\right)^2-\beta^2+\left(h-\left(\gamma-\alpha\right)\right)^2-\beta^2\right)\\
		&=&-\frac12\beta^2+\frac12h^2+\frac12\left(\gamma-\alpha\right)^2\\
		R_{53}&=&\mathcal{B}_{154}\mathcal{B}_{431}\varepsilon_{1}\varepsilon_1+\mathcal{B}_{254}\mathcal{B}_{432}\varepsilon_{1}\varepsilon_2+\mathcal{B}_{156}\mathcal{B}_{631}\varepsilon_{3}\varepsilon_1+\mathcal{B}_{256}\mathcal{B}_{632}\varepsilon_{3}\varepsilon_2\\
		&=&0+\mathcal{B}_{245}\mathcal{B}_{423}+\mathcal{B}_{156}\mathcal{B}_{613}+0\\
		&=&-\frac12\mu\left(h+\gamma-\alpha+\beta\right)+\frac12\mu\left(h-\gamma+\alpha-\beta\right)\\
		&=&-\mu\left(\gamma-\alpha+\beta\right)\\
		R_{61}&=&\mathcal{B}_{264}\mathcal{B}_{412}\varepsilon_{1}\varepsilon_2+\mathcal{B}_{364}\mathcal{B}_{413}\varepsilon_{1}\varepsilon_3+\mathcal{B}_{265}\mathcal{B}_{512}\varepsilon_{2}\varepsilon_2+\mathcal{B}_{365}\mathcal{B}_{513}\varepsilon_{2}\varepsilon_3\\
		&=&0+\mathcal{B}_{346}\mathcal{B}_{413}-\mathcal{B}_{256}\mathcal{B}_{512}+0\\
		&=&-2\lambda\mu\\
		R_{62}&=&\mathcal{B}_{164}\mathcal{B}_{421}\varepsilon_{1}\varepsilon_1+\mathcal{B}_{364}\mathcal{B}_{423}\varepsilon_{1}\varepsilon_3+\mathcal{B}_{165}\mathcal{B}_{521}\varepsilon_{2}\varepsilon_1+\mathcal{B}_{365}\mathcal{B}_{523}\varepsilon_{2}\varepsilon_3\\
		&=&0+\mathcal{B}_{346}\mathcal{B}_{423}+\mathcal{B}_{156}\mathcal{B}_{512}+0\\
		&=&-\frac12\mu\left(h+\gamma-\alpha+\beta\right)+\frac12\mu\left(h-\gamma+\alpha-\beta\right)\\
		&=&-\mu\left(\gamma-\alpha+\beta\right)\\
		R_{63}&=&\mathcal{B}_{164}\mathcal{B}_{431}\varepsilon_{1}\varepsilon_1+\mathcal{B}_{264}\mathcal{B}_{432}\varepsilon_{1}\varepsilon_2+\mathcal{B}_{165}\mathcal{B}_{531}\varepsilon_{2}\varepsilon_1+\mathcal{B}_{265}\mathcal{B}_{532}\varepsilon_{2}\varepsilon_2\\
		&=&\mathcal{B}_{146}\mathcal{B}_{413}+\mathcal{B}_{246}\mathcal{B}_{423}+\mathcal{B}_{156}\mathcal{B}_{513}+\mathcal{B}_{256}\mathcal{B}_{523}\\
		&=&-\lambda^2+\frac14\left(-h+\gamma-\beta+\alpha\right)\left(h+\gamma-\alpha+\beta\right)\\
		&&\qquad+\frac14\left(h-\gamma+\alpha-\beta\right)\left(-h-\gamma+\beta-\alpha\right)-\lambda^2\\
		&=&-2\lambda^2+\frac14\left(\gamma^2-\left(h+\left(\beta-\alpha\right)\right)^2+\gamma^2-\left(h-\left(\beta-\alpha\right)\right)^2\right)\\
		&=&-2\lambda^2+\frac12\gamma^2-\frac12h^2-\frac12\left(\beta-\alpha\right)^2
	\end{eqnarray*}
	We see that the Einstein equations yield a system of homogeneous quadratic equations in the real variables $ \alpha,\beta,\gamma,\lambda $ and $ \mu $. 
	
	The normal form $ L_5(\alpha) $ is excluded by equation $ R_{43}=0 $ for any $ \alpha\in\R $. 
	
	Equation $ R_{53}=0 $ for the normal form $ L_2(\alpha,\beta,\gamma) $ reads as \[ 0=\beta\left(2\alpha-\gamma\right). \] If $ \beta=0 $, then the matrix is diagonal, so assume that $ \gamma=2\alpha $. Then $ R_{52}=0 $ yields 
	\begin{eqnarray*}
		0&=&-\frac12\alpha^2+\frac12h^2+\frac12\left(\alpha-\gamma\right)^2\\
		&=&\frac12h^2-\alpha\gamma+\frac12\gamma^2\\
		&=&\frac12h^2
	\end{eqnarray*}
	and hence $ h=0 $. Therefore, equation $ R_{41}=0 $ is 
	\begin{equation*}
		0=-2\beta^2-\frac12\gamma^2,
	\end{equation*}
	which gives $ \beta=\gamma=0 $. Hence $ L $ is diagonalizable by an orthonormal basis. 
	
	If we consider the normal form $ L_3(\alpha,\beta) $, the equation $ R_{53}=0 $ is 
	\begin{equation*}
		0=-\frac12\left(\alpha-\frac12-\beta+\alpha+\frac12\right)=-\frac12\left(2\alpha-\beta\right)
	\end{equation*}
	and hence $ 2\alpha=\beta $. Now, the equation for $ R_{41} $ yields 
	\begin{eqnarray*}
		0&=&-2\left(\frac12\right)^2-\frac12\beta^2+\frac12h^2+\frac12\left(\alpha+\frac12-\alpha+\frac12\right)^2\\
		&=&-\frac12-\frac12\beta^2+\frac12h^2+\frac12\\
		&=&-\frac12\beta^2+\frac12h^2,
	\end{eqnarray*}
	hence $ \beta^2=h^2 $. Applying this to the equation $ R_{52}=0 $ gives 
	\begin{eqnarray*}
		0&=&-\frac12\left(\frac12+\alpha\right)^2+\frac12h^2+\frac12\left(\alpha-\frac12-\beta\right)^2\\
		&=&-\frac12\left(\frac12+\alpha\right)^2+\frac12h^2+\frac12\left(-\alpha-\frac12\right)^2\\
		&=&\frac12h^2.
	\end{eqnarray*}
	From that see $ h=0 $ and therefore $ \alpha=\beta=0 $. 
	
	A similar computation for $ L_4(\alpha,\beta) $, shows that the only possibility is $ L_4(0,0) $ with $ h=0 $.

\end{proof}
  \begin{thm}\label{Einstein:thm} Let $(H,\mathcal{G}_g)$ be a divergence-free generalized Einstein structure on an oriented  
  three-dimensional unimodular Lie group $G$. If the endomorphism $L\in \mathrm{End}\, \mathfrak g$ defined in (\ref{Ldef:eq}) is diagonalizable, then there exists an oriented $g$-orthonormal basis $(v_a)$ 
  of   $\mathfrak g = \mathrm{Lie}\, G$ and $\alpha_1,\alpha_2,\alpha_3, h \in \mathbb{R}$ such that 
  \begin{equation}\label{diag:eq} [v_a,v_b] = \alpha_c \varepsilon_cv_c,\quad \forall\quad\mbox{cyclic}\quad(a,b,c)\in \mathfrak{S}_3,\quad H=h\mathrm{vol}_g,\end{equation}
  where $\varepsilon_a=g(v_a,v_a)$ satisfies $\varepsilon_1= \varepsilon_2$. The constants 
  $(\alpha_1,\alpha_2,\alpha_3,h)$ can take the following values.
  \begin{enumerate}
  \item $\alpha_1=\alpha_2=\alpha_3 = \pm h$, in which case $\mathfrak g$ is either abelian and $g$ is flat (the case $h=0$) or 
  $\mathfrak g$ is isomorphic to  
  $\mathfrak{so}(2,1)$ or $\mathfrak{so}(3)$. The case $\mathfrak{so}(3)$ occurs precisely when $g$ is 
  definite (and $h\neq 0$). 
  \item There exists a cyclic 
permutation $\sigma\in \mathfrak{S}_3$ such that 
\[ \alpha_{\sigma (1)}=\alpha_{\sigma (2)} \neq0\quad\mbox{and}\quad h=\alpha_{\sigma (3)}=0.\]
In this case $g$ is flat and $[\mathfrak g, \mathfrak g]$ is abelian of dimension $2$, that is $\mathfrak g$ is metabelian. More precisely, $\mathfrak g$ is isomorphic to $\mathfrak{e}(2)$ ($g$ definite on $ [\mathfrak{g},\mathfrak{g}] $) or $\mathfrak{e}(1,1)$ ($g$ indefinite on $ [\mathfrak{g},\mathfrak{g}] $), where $\mathfrak{e}(p,q)$ denotes the Lie algebra of the 
isometry group of the pseudo-Euclidean space $\mathbb{R}^{p,q}$. 
  \end{enumerate}
	If the endomorphism is not diagonalizable ($ g $ is necessarily indefinite in this case), then $ h=0 $ and the Lie group $ G $ is isomorphic to the Heisenberg group. 
  \end{thm}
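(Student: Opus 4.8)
The plan is to build directly on Proposition~\ref{EinsteinNF:prop}, which already reduces the classification to three normal forms for the endomorphism $L$: the diagonal form $L_1(\alpha,\beta,\gamma)$, and the two non-diagonalizable forms $L_3(0,0)$ and $L_4(0,0)$, the latter two forcing $H=0$. It then remains to solve the Einstein system in the diagonal case, to identify the resulting Lie algebras together with their metric properties, and to treat the two non-diagonal forms. First I would record that, for $L=L_1$, the bracket (\ref{Liebracket:eq}) reads $[v_a,v_b]=\alpha_c\varepsilon_c v_c$ with $(\alpha_1,\alpha_2,\alpha_3)=(\alpha,\beta,\gamma)$ and $H=h\,\mathrm{vol}_g$, so that the normal form (\ref{diag:eq}) holds by construction, with $\varepsilon_1=\varepsilon_2$ as in Proposition~\ref{EinsteinNF:prop}.

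Next I would specialize the Ricci components computed in Proposition~\ref{EinsteinNF:prop} to the diagonal case by setting the off-diagonal parameters $\lambda=\mu=0$. All the mixed components $R_{42},R_{43},R_{51},R_{53},R_{61},R_{62}$ then vanish identically, and the vanishing of the remaining components $R_{41},R_{52},R_{63}$ is equivalent to the symmetric quadratic system
\[ \alpha_1^2=h^2+(\alpha_2-\alpha_3)^2,\quad \alpha_2^2=h^2+(\alpha_3-\alpha_1)^2,\quad \alpha_3^2=h^2+(\alpha_1-\alpha_2)^2. \]
Subtracting the equations pairwise factors each difference as $(\alpha_i-\alpha_j)(\alpha_i+\alpha_j-\alpha_k)=0$, where $k$ is the complementary index. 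Since $(\alpha_1-\alpha_2)+(\alpha_2-\alpha_3)+(\alpha_3-\alpha_1)=0$, at least one small factor must vanish (if none did, all three large factors would vanish, forcing every $\alpha_i=0$). A short case analysis then shows that either all three $\alpha_i$ coincide, whence $\alpha_i=\pm h$ from any one equation, giving alternative~(1); or exactly one $\alpha_i$ vanishes while the other two agree and $h=0$, which up to the cyclic relabelling $\sigma$ is alternative~(2).

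It then remains to identify the Lie algebras and metrics. For alternative~(1) with $\alpha_i\equiv a$ the bracket is $[u,v]=a\,(u\times v)$, so $g([u,v],w)=a\,\mathrm{vol}_g(u,v,w)$ is totally skew and $g$ is $\mathrm{ad}$-invariant (bi-invariant); for $a=0$ this is the abelian, flat case ($H=0$), while for $a\neq0$ the algebra is simple with $g$ proportional to the Killing form (unique up to scale by Schur), hence of non-zero constant curvature, and the signature of the Killing form identifies $\mathfrak{so}(3)$ (definite $g$) versus $\mathfrak{so}(2,1)$ (indefinite $g$). For alternative~(2) one has $H=0$, and the two generators with nonzero $\alpha_i$ span a $2$-dimensional abelian ideal $[\mathfrak{g},\mathfrak{g}]$ on which the remaining generator acts by $\mathrm{ad}$ as a rotation or a boost; this gives $\mathfrak{e}(2)$ when $g|_{[\mathfrak{g},\mathfrak{g}]}$ is definite and $\mathfrak{e}(1,1)$ when it is indefinite, and flatness follows from Corollary~\ref{flat:cor} since $g$ is Ricci-flat and hence flat in dimension three. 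Finally, for $L_3(0,0)$ (and identically $L_4(0,0)$) I would read off the brackets from (\ref{Liebracket:eq}): with the forced signature $\varepsilon_1=\varepsilon_2=-\varepsilon_3$ one finds that $[\mathfrak{g},\mathfrak{g}]$ is one-dimensional, spanned by the \emph{null} central vector $v_2-v_3$, so that $\mathfrak{g}$ is the Heisenberg algebra and $g$ is again flat by Corollary~\ref{flat:cor}.

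The main obstacle I anticipate is the bookkeeping of signs: the metric signs $\varepsilon_a$ enter both the Einstein system and the identification step, and one must keep careful track of how the position of the vanishing $\alpha_i$ relative to the signature determines whether $g|_{[\mathfrak{g},\mathfrak{g}]}$ is definite or indefinite — this is exactly what distinguishes $\mathfrak{e}(2)$ from $\mathfrak{e}(1,1)$, and $\mathfrak{so}(3)$ from $\mathfrak{so}(2,1)$ — while simultaneously checking that the cyclic permutation $\sigma$ in alternative~(2) genuinely accounts for all three placements of the zero eigenvalue.
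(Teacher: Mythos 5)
Your proposal is correct, and its skeleton coincides with the paper's: reduce to the normal forms of Proposition~\ref{EinsteinNF:prop}, solve the resulting homogeneous quadratic system in the diagonal case, identify the Lie algebras, and treat $L_3(0,0)$, $L_4(0,0)$ separately. Where you differ is in execution, and both differences are sound. You solve the system in the form $\alpha_i^2=h^2+(\alpha_j-\alpha_k)^2$ by pairwise subtraction and the factorization $(\alpha_i-\alpha_j)(\alpha_i+\alpha_j-\alpha_k)=0$; the paper instead works with the Dorfman coefficients $X_a,Y_a$ and sorts solutions by how many of the $X_a$ vanish. The two are equivalent, since $X_jY_k+X_kY_j=2\bigl(h^2+(\alpha_j-\alpha_k)^2-\alpha_i^2\bigr)$ for $\{i,j,k\}=\{1,2,3\}$; your route is more self-contained as pure algebra, while the paper's $X,Y$ bookkeeping is what gets recycled later in the non-zero-divergence classification (Theorem~\ref{divnotzerounimod:thm}). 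For the non-diagonalizable forms you identify $\mathfrak{heis}$ structurally (one-dimensional commutator ideal contained in the center, spanned by the null vector $v_2-v_3$), whereas the paper exhibits explicit generators $P,Q,R$ satisfying the Heisenberg relations; again equivalent.

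One step needs repair, though it is easily fixed and does not affect the conclusions: the Ricci components you propose to specialize were computed in the proof of Proposition~\ref{EinsteinNF:prop} under the standing assumption $\varepsilon_1=\varepsilon_2=-\varepsilon_3$, so setting $\lambda=\mu=0$ there yields your quadratic system only for indefinite $g$. The definite case cannot be skipped, since it is exactly where $\mathfrak{so}(3)$ and the definite $\mathfrak{e}(2)$ solutions live. Either redo the (short) trace computation with $\varepsilon_1=\varepsilon_2=\varepsilon_3$, or argue as the paper's proof of Theorem~\ref{Einstein:thm} does: keeping the signs general, each potentially non-zero Ricci component has the form $-\tfrac{1}{4}\,\varepsilon\varepsilon'\,(X_jY_k+X_kY_j)$ for suitable signs $\varepsilon,\varepsilon'\in\{\pm1\}$, so the sign prefactor is irrelevant to the Einstein condition and the same system $\alpha_i^2=h^2+(\alpha_j-\alpha_k)^2$ holds in both signatures. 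With that observation inserted, your case analysis and the identifications (including flatness via Corollary~\ref{flat:cor} and the rotation/boost dichotomy distinguishing $\mathfrak{e}(2)$ from $\mathfrak{e}(1,1)$) go through exactly as you describe.
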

  \begin{proof}
  	Assume first that $ L $ is diagonalizable. Note, that the existence of $(\alpha_1,\alpha_2,\alpha_3,h)$ such that (\ref{diag:eq})
  is an immediate consequence of the diagonalizability of $L$. The corresponding 
  structure constants $\kappa_{abc}$ are given by 
\[ \kappa_{abc}= \alpha_c,\quad \forall\quad\mbox{cyclic}\quad(a,b,c)\in \mathfrak{S}_3.\]
In virtue of Proposition~\ref{B:prop}, this implies the following\footnote{The first two formulas are not needed for the proof. 
They are only included for future use.}. 
\begin{enumerate}
\item For all $a,b,c\in \{1,2,3\}$:   
\[ \mathcal{B}_{abc} = \frac12 (h+\alpha_1+\alpha_2+\alpha_3)\varepsilon_{abc},\]
where $\varepsilon_{abc} = \mathrm{vol}_g(v_a,v_b,v_c)$. 
\item For all $i,j,k \in \{4,5,6\}$: 
\[  \mathcal{B}_{ijk}=\frac12 (h-\alpha_1-\alpha_2-\alpha_3)\varepsilon_{i'j'k'},\]
where $i'=i-3$ for all $i\in \{4,5,6\}$. 
\item For $a\in \{1,2,3\}$ and $j,k\in \{4,5,6\}$ the coefficients 
\[ \mathcal{B}_{ajk}
= \frac12 (H_{aj'k'} -\kappa_{aj'k'} +\kappa_{j'k'a} -\kappa_{k'aj'})\] 
are given explicitly by 
\begin{eqnarray*} \mathcal{B}_{156}&=&-\mathcal{B}_{165} = \frac12 (h -\alpha_{3} +\alpha_{1} -\alpha_{2})=:\frac12 X_1\\
\mathcal{B}_{264}&=&-\mathcal{B}_{246}= \frac12 (h -\alpha_{1} +\alpha_{2} -\alpha_{3})=:\frac12 X_2\\
\mathcal{B}_{345}
&=& - \mathcal{B}_{354}= \frac12 (h -\alpha_{2} +\alpha_{3} -\alpha_{1})=:\frac12 X_3,
\end{eqnarray*}
with all other components equal to zero.
\item For $i\in \{ 4,5,6\}$ and $b,c\in \{ 1,2,3\}$ the coefficients 
\[ \mathcal{B}_{ibc}
= \frac12 (H_{i'bc} +\kappa_{i'bc} -\kappa_{bci'} +\kappa_{ci'b})\] 
are given explicitly by 
\begin{eqnarray*} \mathcal{B}_{423} &=& - \mathcal{B}_{432} = \frac12 (h +\alpha_{3} -\alpha_{1} +\alpha_{2}) =: 
\frac12 Y_1\\
\mathcal{B}_{531} &=& - \mathcal{B}_{513} = \frac12 (h+\alpha_{1} -\alpha_{2} +\alpha_{3}) =: \frac12 Y_2\\
\mathcal{B}_{612} &=& - \mathcal{B}_{621} = \frac12 (h+\alpha_{2} -\alpha_{3} +\alpha_{1}) =: \frac12 Y_3,
\end{eqnarray*}
with all other components equal to zero.
\end{enumerate}
From these formulas and equation (\ref{RicCoordinates:eq}), we can now compute the components
\[ R_{ia} = \sum_{j,b} \mathcal{B}_{bij} \mathcal{B}_{jab}\varepsilon_{j'}\varepsilon_b,
\]
of the Ricci curvature (for zero divergence $\delta=0$) with respect to the orthonormal basis $(e_A)=(e_a,e_i)$, $e_a = v_a + gv_a$, $e_i=v_{i'}-gv_{i'}$, of $E=\mathfrak g \oplus \mathfrak g^*$. Explicitly we obtain
\begin{eqnarray*} R_{41} &=& \mathcal{B}_{246}\mathcal{B}_{612}\varepsilon_3\varepsilon_2 + \mathcal{B}_{345}\mathcal{B}_{513}\varepsilon_2\varepsilon_3=-\frac{\varepsilon_2\varepsilon_3}{4}(X_2Y_3+X_3Y_2)\\
R_{52}&=&\mathcal{B}_{156} \mathcal{B}_{621}\varepsilon_{3}\varepsilon_1 + \mathcal{B}_{354} \mathcal{B}_{423}\varepsilon_{1}\varepsilon_3= -\frac{\varepsilon_1\varepsilon_3}{4}(X_1Y_3+X_3Y_1)\\
R_{63}&=& \mathcal{B}_{264} \mathcal{B}_{432}\varepsilon_{1}\varepsilon_2 + \mathcal{B}_{165} \mathcal{B}_{531}\varepsilon_{2}\varepsilon_1 =-\frac{\varepsilon_1\varepsilon_2}{4}(X_1Y_2+X_2Y_1),
\end{eqnarray*}
with all other components equal to zero. We conclude that the generalized Einstein equations reduce to 
a system of three homogeneous quadratic equations in the variables $X_a$ and $Y_a$:
\[ X_1Y_2+X_2Y_1=X_1Y_3+X_3Y_1=X_2Y_3+X_3Y_2=0.\]
A priori, we can distinguish four types of solutions depending on how many components of the vector $(X_1,X_2,X_3)$ 
are equal to zero: 0,1,2 or 3.
 
Solutions of type $0$: $X_1X_2X_3\neq 0$ implies $Y_1=Y_2=Y_3=0$ and finally 
\[ \alpha_1=\alpha_2=\alpha_3=-h\neq0.\] 
In this case the Lie algebra $\mathfrak g$ is isomorphic to $\mathfrak{so}(2,1)$ or $\mathfrak{so}(3)$. 
The latter case happens if and only if the metric $g$ is definite. 

Solutions of type $1$: assume for example that $X_1X_2\neq 0$, $X_3=0$.  
This implies that $Y_3=0$ and, hence, $\alpha_3=\alpha_1+\alpha_2$ and $h=0$. 
But then the equation $X_1Y_2+X_2Y_1=0$ reduces to $\alpha_1\alpha_2=0$,
which is inconsistent with $X_1X_2\neq 0$. This shows that solutions of type $1$
do not exist. 

Solutions of type $2$: assume for example that $X_1\neq 0$, $X_2=X_3=0$.  
This implies $Y_2=Y_3=0$ and finally $h=\alpha_1=0$, $\alpha_2=\alpha_3\neq0$.
So the solutions of type $2$ are of the following form. There exists a cyclic 
permutation $\sigma\in \mathfrak{S}_3$ such that 
\[ \alpha_{\sigma (1)}=\alpha_{\sigma (2)}\neq0\quad\mbox{and}\quad h=\alpha_{\sigma (3)}=0.\]
We conclude, for solutions of type 2,  that $g$ is flat (see Corollary~\ref{flat:cor}) and $\mathfrak g$ is metabelian. 
$[\mathfrak g,\mathfrak g]= \mathrm{span}\{ v_{\sigma (1)},v_{\sigma (2)}\}$ 
is two-dimensional and $\mathrm{ad}_{v_{\sigma (3)}}$ acts on it by a non-zero $g$-skew-symmetric 
endomorphism. This implies that $\mathfrak g$ is isomorphic to $\mathfrak{e}(2)$ or $\mathfrak{e}(1,1)$. 

Solutions of type $3$: assume $X_1=X_2=X_3=0$. This implies 
\[ \alpha_1=\alpha_2=\alpha_3=h.\] 
In this case $\mathfrak g$ is either abelian and $g$ is flat (the case $h=0$ again by Corollary \ref{flat:cor}) or
$\mathfrak g$ is isomorphic to $\mathfrak{so}(2,1)$ or $\mathfrak{so}(3)$, as for type $ 0 $.

If $ L $ is not diagonalizable, then $ g $ is indefinite and there exists an orthonormal basis $ (v_a)_a $ with $g(v_1,v_1)=g(v_2,v_2)=-g(v_3,v_3)$ such that $ L $ is either of the form $ L_3(0,0) $ or $ L_4(0,0) $, and $ h=0 $ by Proposition~\ref{EinsteinNF:prop}. We consider first the case $ L_3(0,0) $. To prove that $ G $ is isomorphic to the Heisenberg group, we show, using equation (\ref{Liebracket:eq}), that the generators $ P\coloneqq v_1, Q\coloneqq v_2+v_3 $ and $ R\coloneqq \varepsilon_2(v_3-v_2) $ of its Lie algebra $ \mathfrak{g} $ satisfy the relations $ [P, Q] = R $ and $[P, R] = [Q, R] = 0 $: 
\begin{eqnarray*}
	\left[P,Q\right]&=&[v_1,v_2+v_3]=[v_1,v_2]-[v_3,v_1]\\
	&=&\varepsilon_3Lv_3-\varepsilon_2Lv_2\\
	&=&\frac12\varepsilon_3v_2-\frac12\varepsilon_3v_3-\frac12\varepsilon_2v_2+\frac12\varepsilon_2v_3\\
	&=&-\frac12\varepsilon_2v_2+\frac12\varepsilon_2v_3-\frac12\varepsilon_2v_2+\frac12\varepsilon_2v_3\\
	&=&\varepsilon_2(v_3-v_2)\\
	&=&R\\
	\left[P,R\right]&=&[v_1,\varepsilon_2(v_3-v_2)]=-\varepsilon_2[v_3,v_1]-\varepsilon_2[v_1,v_2]\\
	&=&-\varepsilon_2\varepsilon_2Lv_2-\varepsilon_2\varepsilon_3Lv_3\\
	&=&-Lv_2+Lv_3\\
	&=&-\frac12v_2+\frac12v_3+\frac12v_2-\frac12v_3\\
	&=&0\\
	\left[Q,R\right]&=&[v_2+v_3,\varepsilon_2(v_3-v_2)]\\
	&=&\varepsilon_2[v_2,v_3]-\varepsilon_2[v_3,v_2]\\
	&=&2\varepsilon_2[v_2,v_3]\\
	&=&2\varepsilon_2\varepsilon_1Lv_1\\
	&=&0.
\end{eqnarray*}
In the case that $ L $ takes the form $ L_4(0,0) $, we see analogously that the generators  $ P=v_1, Q=v_2+v_3 $ and $ R=\varepsilon_2(v_2-v_3) $ satisfy the relations $ [P, Q] = R $ and $[P, R] = [Q, R] = 0 $.
  \end{proof}
\subsubsection{Non-unimodular Lie groups}
We assume now that the Lie group $ G $ is not unimodular. Let $ \mathfrak{u}\coloneqq \{x\in \mathfrak{g}\mid\tr \mathrm{ad}_x=0\} $ be the \emph{unimodular kernel} of $ \mathfrak{g} $. It can be easily checked that $ \mathfrak{u} $ is a two-dimensional abelian ideal of $ \mathfrak{g} $, containing the commutator ideal $ [\mathfrak{g},\mathfrak{g}] $. This means that the Lie algebra $ \mathfrak{g} $ is a semidirect product of $ \R $ and $ \R^2 $, where $ \R $ is acting on $ \R^2 $ by an endomorphism with non-zero trace. For details on the classification of non-unimodular, three-dimensional Lie algebras in terms of the Jordan normal form of this endomorphism we refer to \cite[Ch.\ 7, Theorem 1.4]{GOV}.

 We first treat the case that the restriction $ g|_{\mathfrak{u}\times\mathfrak{u}} $ of the metric $ g $ to $ \mathfrak{u} $ is non-degenerate. 
\begin{prop}\label{non-unimodular,non-degenerate:prop}
	Let $(H,\mathcal G_g,\delta=0)$ be a divergence-free generalized Einstein structure on an three-dimensional non-unimodular Lie group $G$. Let $ \mathfrak{u} $ be the unimodular kernel of the Lie algebra $ \mathfrak{g} $ and assume that $ g|_{\mathfrak{u}\times\mathfrak{u}} $ is non-degenerate. Then $ H=0 $ and $ g $ is indefinite. Furthermore there exists an orthonormal basis $(v_a)$ of $(\mathfrak{g},g)$ such that $ v_1,v_3\in\mathfrak{u} $ and $g(v_1,v_1)=g(v_2,v_2)=-g(v_3,v_3)$ and a positive constant $ \theta>0 $ such that 
	\begin{eqnarray*}
		[v_1,v_3]&=&0\\
		\left[v_2,v_1\right]&=&\theta v_1-\theta v_3\\
		\left[v_2,v_3\right]&=&\theta v_1+\theta v_3.
	\end{eqnarray*}
\end{prop}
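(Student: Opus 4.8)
The plan is to run the same computational scheme as in Proposition~\ref{EinsteinNF:prop} and Theorem~\ref{Einstein:thm}, adapted to the non-unimodular structure. First I would use that $\mathfrak u=\ker\tau$ is a two-dimensional abelian ideal containing $[\mathfrak g,\mathfrak g]$ on which $g$ is assumed non-degenerate: choose a $g$-orthonormal basis $(v_1,v_2,v_3)$ with $v_1,v_3\in\mathfrak u$ and $v_2\in\mathfrak u^\perp$. Because $\mathfrak u$ is an abelian ideal, $[v_1,v_3]=0$ and $\mathrm{ad}_{v_2}$ preserves $\mathfrak u$; writing $[v_2,v_1]=av_1+\beta v_3$ and $[v_2,v_3]=cv_1+dv_3$, non-unimodularity becomes $\tau(v_2)=a+d\neq0$ (while $\tau|_{\mathfrak u}=0$). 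With $\varepsilon_a=g(v_a,v_a)$ and $H=h\,\mathrm{vol}_g$, all the data are encoded in $(a,\beta,c,d,h)$ together with the signs $\varepsilon_a$.

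Next I would compute the structure constants $\kappa_{abc}$ and, via Remark~\ref{Christoffel:rm} (equivalently Proposition~\ref{B:prop}), the Dorfman coefficients $\mathcal B_{ajk}=\tfrac12H_{aj'k'}-\Gamma_{aj'k'}$ and $\mathcal B_{ibc}=\tfrac12H_{i'bc}+\Gamma_{i'bc}$, whose only nonzero Christoffel symbols $\Gamma_{abc}=g(\nabla_{v_a}v_b,v_c)$ involve $a,\beta,c,d$. Feeding these into Theorem~\ref{Ric:thm} with $\delta=0$ (so that $R_{ia}=\mathcal B_{bi}^{j}\mathcal B_{aj}^{b}$, cf.\ (\ref{RicCoordinates:eq})), I would evaluate the nine components $R_{ia}$. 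The key simplification I expect is that the mixed components $R_{42},R_{51},R_{53},R_{62}$ vanish identically, because for each of them the index sets supporting the two Dorfman factors are disjoint; thus the Einstein system collapses to the four equations coming from $R_{41}$, $R_{63}$, $R_{52}$ and $R_{43}=R_{61}$.

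I would then extract the consequences in order. The pair $R_{41}=0$, $R_{63}=0$ yields $h^2+\beta^2-c^2=0$ and $h^2+c^2-\beta^2=0$, whose sum forces $h=0$, hence $H=0$, together with $\beta^2=c^2$. After substituting $h=0$, the equation $R_{52}=0$ reads $a^2+d^2+\beta c+\tfrac12\varepsilon_1\varepsilon_3(\beta^2+c^2)=0$; combined with $\beta^2=c^2$ and $a+d\neq0$ this is inconsistent whenever $\varepsilon_1\varepsilon_3=+1$. This single observation rules out both the definite case and the Lorentzian case in which $g|_{\mathfrak u}$ is definite, forcing $g$ indefinite with $\varepsilon_1\varepsilon_3=-1$; after relabelling $v_1\leftrightarrow v_3$ and using the overall-sign freedom of Proposition~\ref{rescalProp} I may assume $\varepsilon_1=\varepsilon_2=-\varepsilon_3$. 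In this signature, $R_{52}=0$ kills the branch $c=\beta$ (which would give $a=d=0$) and leaves $c=-\beta$ with $\beta^2=\tfrac12(a^2+d^2)$, while $R_{43}=0$ then holds automatically.

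Finally I would read these relations invariantly. The condition $c=-\beta$ says precisely that $A:=\mathrm{ad}_{v_2}|_{\mathfrak u}$ is self-adjoint for the indefinite form $g|_{\mathfrak u}$, and $\beta^2=\tfrac12(a^2+d^2)$ says its eigenvalues $\tfrac{a+d}{2}(1\pm i)$ are complex with equal real and imaginary parts. Invoking the normal-form classification of $g|_{\mathfrak u}$-self-adjoint operators on a Lorentzian plane (the same source as in Proposition~\ref{NF:prop}, \cite{CEHL}), and flipping/rescaling $v_2$ so that $\theta:=\tfrac12(a+d)>0$, I obtain an orthonormal basis in which $A=\theta\left(\begin{smallmatrix}1&1\\-1&1\end{smallmatrix}\right)$, i.e.\ the stated brackets. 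The main obstacle is the bookkeeping in computing the nine $R_{ia}$ while tracking the signature signs $\varepsilon_a$ correctly; the conceptual crux is recognizing the surviving Einstein constraints as self-adjointness of $A$ plus the equal-real-and-imaginary-parts condition on its spectrum, which is exactly what pins the bracket down to its normal form.
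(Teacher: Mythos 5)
Your proposal is correct and follows essentially the same route as the paper's proof: the same basis adapted to $\mathfrak{u}$, the same Dorfman/Ricci computation, the same elimination ($R_{41}=R_{63}=0$ give $h=0$ and $\beta^2=c^2$; $R_{52}=0$ rules out the case $g|_{\mathfrak{u}}$ definite as well as the branch $c=\beta$, leaving $c=-\beta$ and $2\beta^2=a^2+d^2$), and the same finish via self-adjointness of $\mathrm{ad}_{v_2}|_{\mathfrak{u}}$ and the Lorentzian normal forms from \cite{CEHL}, with sign flips of $v_1,v_2$ to reach $\theta>0$. One immaterial slip: $R_{43}=R_{61}$ is not an identity (their difference equals $\mp h\,\tr \mathrm{ad}_{v_2}$, so it holds only once $h=0$ is established), but since both components vanish automatically after $h=0$ and $c=-\beta$ are imposed, your solution set and conclusion are unaffected.
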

\begin{proof}
	A $ g $-orthonormal basis $ (v_a)_a $ of $ \mathfrak g $ such that $ v_1,v_3\in\mathfrak{u} $ 
	exists, because $ g|_{\mathfrak{u}\times\mathfrak{u}} $ is non-degenerate. 
	Since $ \mathfrak{u} $ is an abelian ideal, there are $ \lambda,\mu,\nu,\rho\in\R $ such that 
	\begin{eqnarray*}
		[v_3,v_1]&=&0\\
		\left[v_2,v_1\right]&=&\varepsilon_1\lambda v_1+\varepsilon_3\mu v_3\\
		\left[v_2,v_3\right]&=&\varepsilon_1\nu v_1+\varepsilon_{3}\rho v_3
	\end{eqnarray*}
	with $ 0\neq\tr \mathrm{ad}_{v_2}=\varepsilon_1\lambda +\varepsilon_{3}\rho $. Using $ \lambda=\kappa_{211},\mu=\kappa_{213},\nu=\kappa_{231} $ and $ \rho=\kappa_{233} $, we can compute the Dorfman coefficients 
	\begin{eqnarray*}
		\mathcal B_{145}&=&\frac12\left(H_{112}-\kappa_{112}+\kappa_{121}-\kappa_{211}\right)=-\kappa_{211}=-\lambda\\
		\mathcal B_{146}&=&\frac12\left(H_{113}-\kappa_{113}+\kappa_{131}-\kappa_{311}\right)=-\kappa_{311}=0\\
		\mathcal B_{156}&=&\frac12\left(H_{123}-\kappa_{123}+\kappa_{231}-\kappa_{312}\right)=\frac12\left(h+\kappa_{213}+\kappa_{231}\right)=\frac12\left(h+\mu+\nu\right)\\
		\mathcal B_{245}&=&\frac12\left(H_{212}-\kappa_{212}+\kappa_{122}-\kappa_{221}\right)=-\kappa_{212}=0\\
		\mathcal B_{246}&=&\frac12\left(H_{213}-\kappa_{213}+\kappa_{132}-\kappa_{321}\right)=\frac12\left(-h-\kappa_{213}+\kappa_{231}\right)=-\frac12\left(h+\mu-\nu\right)\\
		\mathcal B_{256}&=&\frac12\left(H_{223}-\kappa_{223}+\kappa_{232}-\kappa_{322}\right)=\kappa_{232}=0\\
		\mathcal B_{345}&=&\frac12\left(H_{312}-\kappa_{312}+\kappa_{123}-\kappa_{231}\right)=\frac12\left(h-\kappa_{213}-\kappa_{231}\right)=\frac12\left(h-\mu-\nu\right)\\
		\mathcal B_{346}&=&\frac12\left(H_{313}-\kappa_{313}+\kappa_{133}-\kappa_{331}\right)=-\kappa_{313}=0\\
		\mathcal B_{356}&=&\frac12\left(H_{323}-\kappa_{323}+\kappa_{233}-\kappa_{332}\right)=\kappa_{233}=\rho\\
		\mathcal B_{412}&=&\frac12\left(H_{112}+\kappa_{112}-\kappa_{121}+\kappa_{211}\right)=\kappa_{211}=\lambda\\
		\mathcal B_{413}&=&\frac12\left(H_{113}+\kappa_{113}-\kappa_{131}+\kappa_{311}\right)=\kappa_{311}=0\\
		\mathcal B_{423}&=&\frac12\left(H_{123}+\kappa_{123}-\kappa_{231}+\kappa_{312}\right)=\frac12\left(h-\kappa_{213}-\kappa_{231}\right)=\frac12\left(h-\mu-\nu\right)\\
		\mathcal B_{512}&=&\frac12\left(H_{212}+\kappa_{212}-\kappa_{122}+\kappa_{221}\right)=\kappa_{212}=0\\
		\mathcal B_{513}&=&\frac12\left(H_{213}+\kappa_{213}-\kappa_{132}+\kappa_{321}\right)=\frac12\left(-h+\kappa_{213}-\kappa_{231}\right)=-\frac12\left(h-\mu+\nu\right)\\
		\mathcal B_{523}&=&\frac12\left(H_{223}+\kappa_{223}-\kappa_{232}+\kappa_{322}\right)=-\kappa_{232}=0\\
		\mathcal B_{612}&=&\frac12\left(H_{312}+\kappa_{312}-\kappa_{123}+\kappa_{231}\right)=\frac12\left(h+\kappa_{213}+\kappa_{231}\right)=\frac12\left(h+\mu+\nu\right)\\
		\mathcal B_{613}&=&\frac12\left(H_{313}+\kappa_{313}-\kappa_{133}+\kappa_{331}\right)=\kappa_{313}=0\\
		\mathcal B_{623}&=&\frac12\left(H_{323}+\kappa_{323}-\kappa_{233}+\kappa_{332}\right)=-\kappa_{233}=-\rho.
	\end{eqnarray*}
	To prove that the case $\varepsilon_1=\varepsilon_3$ cannot occur, we compute using equation (\ref{RicCoordinates:eq})
	\begin{eqnarray*}
		R_{52}&=&\mathcal{B}_{154}\mathcal{B}_{421}\varepsilon_{1}\varepsilon_1+\mathcal{B}_{354}\mathcal{B}_{423}\varepsilon_{1}\varepsilon_3+\mathcal{B}_{156}\mathcal{B}_{621}\varepsilon_{3}\varepsilon_1+\mathcal{B}_{356}\mathcal{B}_{623}\varepsilon_{3}\varepsilon_3\\
		&=&\mathcal{B}_{145}\mathcal{B}_{412}-\mathcal{B}_{345}\mathcal{B}_{423}-\mathcal{B}_{156}\mathcal{B}_{612}+\mathcal{B}_{356}\mathcal{B}_{623}\\
		&=&-\lambda^2-\frac14\left(h-\mu-\nu\right)^2-\frac14\left(h+\mu+\nu\right)^2-\rho^2
	\end{eqnarray*}
	where we have used that $ \varepsilon_{1}=\varepsilon_{3} $. But this can only be zero if $ \lambda=\rho=0 $, which contradicts $ 0\neq\tr \mathrm{ad}_{v_2}=\varepsilon_1\lambda +\varepsilon_{3}\rho $. This proves that 
	$\varepsilon_1=-\varepsilon_3$. Hence, we can assume that the basis is chosen such that $ \varepsilon_{1}=\varepsilon_{2}=-\varepsilon_{3} $.
	
	In this case the components of the Ricci curvature are 
	\begin{eqnarray*}
		R_{41}&=&\mathcal{B}_{245}\mathcal{B}_{512}\varepsilon_{2}\varepsilon_2+\mathcal{B}_{345}\mathcal{B}_{513}\varepsilon_{2}\varepsilon_3+\mathcal{B}_{246}\mathcal{B}_{612}\varepsilon_{3}\varepsilon_2+\mathcal{B}_{346}\mathcal{B}_{613}\varepsilon_{3}\varepsilon_3\\
		&=&0-\mathcal{B}_{345}\mathcal{B}_{513}-\mathcal{B}_{246}\mathcal{B}_{612}+0\\
		&=&\frac14\left(h-\mu-\nu\right)\left(h-\mu+\nu\right)+\frac14\left(h+\mu-\nu\right)\left(h+\mu+\nu\right)\\
		&=&\frac14\left(\left(h-\mu\right)^2-\nu^2+\left(h+\mu\right)^2-\nu^2\right)\\
		&=&\frac12\left(h^2+\mu^2-\nu^2\right)\\
		R_{42}&=&\mathcal{B}_{145}\mathcal{B}_{521}\varepsilon_{2}\varepsilon_1+\mathcal{B}_{345}\mathcal{B}_{523}\varepsilon_{2}\varepsilon_3+\mathcal{B}_{146}\mathcal{B}_{621}\varepsilon_{3}\varepsilon_1+\mathcal{B}_{346}\mathcal{B}_{623}\varepsilon_{3}\varepsilon_3\\
		&=&0\\
		R_{43}&=&\mathcal{B}_{145}\mathcal{B}_{531}\varepsilon_{2}\varepsilon_1+\mathcal{B}_{245}\mathcal{B}_{532}\varepsilon_{2}\varepsilon_2+\mathcal{B}_{146}\mathcal{B}_{631}\varepsilon_{3}\varepsilon_1+\mathcal{B}_{246}\mathcal{B}_{632}\varepsilon_{3}\varepsilon_2\\
		&=&-\mathcal{B}_{145}\mathcal{B}_{513}+0+0+\mathcal{B}_{246}\mathcal{B}_{623}\\
		&=&-\frac12\lambda\left(h-\mu+\nu\right)+\frac12\rho\left(h+\mu-\nu\right)\\
		R_{51}&=&\mathcal{B}_{254}\mathcal{B}_{412}\varepsilon_{1}\varepsilon_2+\mathcal{B}_{354}\mathcal{B}_{413}\varepsilon_{1}\varepsilon_3+\mathcal{B}_{256}\mathcal{B}_{612}\varepsilon_{3}\varepsilon_2+\mathcal{B}_{356}\mathcal{B}_{613}\varepsilon_{3}\varepsilon_3\\
		&=&0\\
		R_{52}&=&\mathcal{B}_{154}\mathcal{B}_{421}\varepsilon_{1}\varepsilon_1+\mathcal{B}_{354}\mathcal{B}_{423}\varepsilon_{1}\varepsilon_3+\mathcal{B}_{156}\mathcal{B}_{621}\varepsilon_{3}\varepsilon_1+\mathcal{B}_{356}\mathcal{B}_{623}\varepsilon_{3}\varepsilon_3\\
		&=&\mathcal{B}_{145}\mathcal{B}_{412}+\mathcal{B}_{345}\mathcal{B}_{423}+\mathcal{B}_{156}\mathcal{B}_{612}+\mathcal{B}_{356}\mathcal{B}_{623}\\
		&=&-\lambda^2+\frac14\left(h-\mu-\nu\right)^2+\frac14\left(h+\mu+\nu\right)^2-\rho^2\\
		&=&-\lambda^2+\frac12h^2+\frac12\left(\mu+\nu\right)^2-\rho^2\\
		R_{53}&=&\mathcal{B}_{154}\mathcal{B}_{431}\varepsilon_{1}\varepsilon_1+\mathcal{B}_{254}\mathcal{B}_{432}\varepsilon_{1}\varepsilon_2+\mathcal{B}_{156}\mathcal{B}_{631}\varepsilon_{3}\varepsilon_1+\mathcal{B}_{256}\mathcal{B}_{632}\varepsilon_{3}\varepsilon_2\\
		&=&0\\
		R_{61}&=&\mathcal{B}_{264}\mathcal{B}_{412}\varepsilon_{1}\varepsilon_2+\mathcal{B}_{364}\mathcal{B}_{413}\varepsilon_{1}\varepsilon_3+\mathcal{B}_{265}\mathcal{B}_{512}\varepsilon_{2}\varepsilon_2+\mathcal{B}_{365}\mathcal{B}_{513}\varepsilon_{2}\varepsilon_3\\
		&=&-\mathcal{B}_{246}\mathcal{B}_{412}+0+0+\mathcal{B}_{356}\mathcal{B}_{513}\\
		&=&\frac12\lambda\left(h+\mu-\nu\right)-\frac12\rho\left(h-\mu+\nu\right)\\
		R_{62}&=&\mathcal{B}_{164}\mathcal{B}_{421}\varepsilon_{1}\varepsilon_1+\mathcal{B}_{364}\mathcal{B}_{423}\varepsilon_{1}\varepsilon_3+\mathcal{B}_{165}\mathcal{B}_{521}\varepsilon_{2}\varepsilon_1+\mathcal{B}_{365}\mathcal{B}_{523}\varepsilon_{2}\varepsilon_3\\
		&=&0\\
		R_{63}&=&\mathcal{B}_{164}\mathcal{B}_{431}\varepsilon_{1}\varepsilon_1+\mathcal{B}_{264}\mathcal{B}_{432}\varepsilon_{1}\varepsilon_2+\mathcal{B}_{165}\mathcal{B}_{531}\varepsilon_{2}\varepsilon_1+\mathcal{B}_{265}\mathcal{B}_{532}\varepsilon_{2}\varepsilon_2\\
		&=&0+\mathcal{B}_{246}\mathcal{B}_{423}+\mathcal{B}_{156}\mathcal{B}_{513}+0\\
		&=&-\frac14\left(h+\mu-\nu\right)\left(h-\mu-\nu\right)-\frac14\left(h+\mu+\nu\right)\left(h-\mu+\nu\right)\\
		&=&-\frac14\left(\left(h-\nu\right)^2-\mu^2+\left(h+\nu\right)^2-\mu^2\right)\\
		&=&-\frac12\left(h^2+\nu^2-\mu^2\right)
	\end{eqnarray*}
	Imposing the Einstein condition, we see from the equations $ R_{41}+R_{63}=0 $ and $ R_{41}-R_{63}=0 $, that $ h^2=0 $ and $ \mu^2=\nu^2 $. If $ \mu=-\nu $, then $ R_{52}=0 $ reads as $ 0=-\lambda^2-\rho^2 $, hence $ \lambda=\rho=0 $, which contradicts $ 0\neq\tr \mathrm{ad}_{v_2}=\varepsilon_1\lambda +\varepsilon_{3}\rho $. Therefore $ \mu=\nu $ and, from $ R_{52}=0 $, \[ 2\mu^2=\lambda^2+\rho^2. \] In particular $ \mu\neq0 $, due to $ 0\neq\tr \mathrm{ad}_{v_2}=\varepsilon_1\lambda +\varepsilon_{3}\rho $. Note now that $ \mu=\nu $ implies that the endomorphism $ M\in\End(\mathfrak{u}) $, defined as the restriction of $ \mathrm{ad}_{v_2} $ to $ \mathfrak{u} $, is symmetric. A simple consequence of \cite[Lemma~2.2]{CEHL} (compare Proposition~\ref{NF:prop}) is that there exists an orthonormal basis of $ \mathfrak{u} $ such that $ M $ is represented by one of the matrices 
	\begin{eqnarray*}
		M_1(\theta,\eta )  &=& \left( \begin{array}{cc}
			\theta& 0\\
			0&\eta
		\end{array}\right),\quad 
		M_2(\theta,\eta )=\left( \begin{array}{cc}
			\theta& -\eta\\
			\eta&\theta
		\end{array}\right),\\
		M_3(\theta)&=&\left( \begin{array}{cc}\frac12+\theta &\frac12\\
			-\frac12&-\frac12 +\theta
		\end{array}\right),\quad 
		M_4(\theta)= \left( \begin{array}{cc} -\frac12+\theta &-\frac12\\
			\frac12&\frac12 +\theta\\
		\end{array}\right)
	\end{eqnarray*}
	in this basis. We may assume that the basis $ v_1,v_3 $ of $ \mathfrak{u} $ is chosen such that $ M $ takes one of these normal forms with respect to $ v_1,v_3 $. We see that $ M_1(\theta,\eta) $ is excluded by the condition $ \mu\neq0 $. Applying $ 2\mu^2=\lambda^2+\rho^2 $ to the normal form $ M_3(\theta) $ yields \[ 2\left(\frac12\right)^2=\left(\frac12+\theta\right)^2+\left(-\frac12+\theta\right)^2=2\left(\frac12\right)^2+\theta^2. \] Hence $ \theta=0 $, which contradicts  $ \tr \mathrm{ad}_{v_2}\neq0 $. For the same reason $ M $ also cannot have the normal form $ M_4(\theta) $. In the remaining case $ M_2(\theta,\eta ) $ the equation $ 2\mu^2=\lambda^2+\rho^2 $ reads as $ 2\left(-\eta\right)^2=\theta^2+\left(-\theta\right)^2 $. Therefore $ \eta=\pm\theta $. Furthermore $ \eta\neq0 $ because $ \mu\neq0 $. Hence the only two normal forms are 
	\begin{equation*}
		M_2(\theta,\theta )=\left( \begin{array}{cc}
			\theta& -\theta\\
			\theta&\theta
		\end{array}\right),\quad
	M_2(\theta,-\theta )=\left( \begin{array}{cc}
		\theta& \theta\\
		-\theta&\theta
	\end{array}\right),\quad \theta\neq0.
	\end{equation*}
	Replacing $ v_1 $ by $ -v_1 $ (exchanging $ M_2(\theta,\theta ) $ with $ M_2(\theta,-\theta ) $) and $ v_2 $ by $ -v_2 $ (replacing $ \theta $ by $ -\theta $), if necessary, we obtain the claimed equations for $ \theta>0 $. 
	\end{proof}
\begin{rem}
\label{r3,1':rem}
	Note that, while all the occurring Lie algebras in the previous Proposition are non-isomorphic as metric Lie algebras, they are isomorphic as Lie algebras. They are a semidirect product of $ \R^2 $ and $ \R $, where $ \R $ acts on $ \R^2 $ by the endomorphism $ \mathrm{ad}_{v_2}|_\mathfrak{u} $, which has non-real and non-imaginary eigenvalues $ (1+i)\theta $ and $ (1-i)\theta $. This corresponds to the Lie algebra $ \mathfrak{r}'_{3,1}(\R) $ in the notation of \cite[Ch.\ 7, Theorem 1.4]{GOV}.
\end{rem}
\begin{prop}\label{non-unimodular,degenerate:prop}
	There is no divergence-free generalized Einstein structure $(H,\mathcal G_g,\delta=0)$ on a  three-dimensional non-unimodular Lie group $G$ such that $ g $ is degenerate on the unimodular kernel $ \mathfrak{u} $ of $ \mathfrak{g} $.
\end{prop}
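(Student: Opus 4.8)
The plan is to mirror the computation of Proposition~\ref{non-unimodular,non-degenerate:prop}, but with an orthonormal basis adapted to the degenerate plane $\mathfrak u$ rather than containing it. First I would record the linear algebra of the degeneracy. Since a three-dimensional non-degenerate space admits no two-dimensional totally isotropic subspace, $g|_{\mathfrak u\times\mathfrak u}$ has rank exactly one, so $g$ is indefinite, the radical of $g|_{\mathfrak u}$ is a null line $\langle\zeta\rangle$, and $\mathfrak u=\zeta^\perp$. Replacing $g$ by $-g$ if necessary (this interchanges $E_+$ and $E_-$ and hence preserves the generalized Einstein condition with $\delta=0$), I may normalise the signature so that there is a $g$-orthonormal basis $(v_1,v_2,v_3)$ with $\varepsilon_1=\varepsilon_2=-\varepsilon_3=1$: choosing $v_1\in\mathfrak u$ a spacelike unit vector and $(v_2,v_3)$ an orthonormal basis of the Lorentzian plane $v_1^\perp$ with $\zeta$ proportional to $v_2+v_3$, I arrange $\mathfrak u=\mathrm{span}\{v_1,\zeta\}$ with $\zeta=v_2+v_3$ null.

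Next I would parametrise the bracket. Since $[\mathfrak g,\mathfrak g]\subseteq\mathfrak u$ and $\mathfrak u$ is an abelian ideal, the Jacobi identity is automatic and the bracket is determined by four reals $p,q,r,s$ via
\[ [v_1,v_2]=pv_1+q\zeta,\quad [v_1,v_3]=-pv_1-q\zeta,\quad [v_2,v_3]=rv_1+s\zeta, \]
together with $H=h\,\mathrm{vol}_g$. Computing $\tau=\mathrm{tr}\circ\mathrm{ad}$ gives $\tau(v_1)=0$ and $\tau(v_2)=-\tau(v_3)=s-p$, so $\mathfrak u$ is the unimodular kernel exactly when $s\neq p$; thus non-unimodularity is the condition $s\neq p$. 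I would then feed the structure constants $\kappa_{abc}$ and the three-form $H$ into Proposition~\ref{B:prop} to obtain all mixed Dorfman coefficients $\mathcal B_{ajk}$ and $\mathcal B_{ibc}$, and finally evaluate the Ricci components $R_{ia}$ from equation (\ref{RicCoordinates:eq}) (equivalently Theorem~\ref{Ric:thm}) with $\delta=0$, in the adapted basis $e_a=v_a+gv_a$, $e_i=v_{i'}-gv_{i'}$.

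The decisive point is that two of the diagonal components come out as
\[ R_{52}=-p^2-s^2+\frac12 h^2+qr+\frac12 r^2,\qquad R_{63}=-p^2-s^2-\frac12 h^2+qr-\frac12 r^2. \]
Hence the Einstein equations $R_{52}=R_{63}=0$ force first $R_{52}-R_{63}=h^2+r^2=0$, that is $h=r=0$, and then $R_{52}=-(p^2+s^2)=0$, that is $p=s=0$. This contradicts $s\neq p$, so no divergence-free generalized Einstein structure can exist when $g$ is degenerate on $\mathfrak u$.

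I expect the main obstacle to be organisational rather than conceptual: unlike in Proposition~\ref{non-unimodular,non-degenerate:prop}, the degenerate plane $\mathfrak u$ cannot be spanned by two orthonormal basis vectors, so one must work with the null combination $\zeta=v_2+v_3$ inside an orthonormal frame and carry out the index bookkeeping of Proposition~\ref{B:prop} carefully (in particular distinguishing $\kappa_{k'aj'}$ from $\kappa_{j'ak'}$ in the mixed coefficients $\mathcal B_{156}$ and $\mathcal B_{423}$). The two indefinite signatures $(2,1)$ and $(1,2)$ are reconciled at the very start by the replacement $g\mapsto-g$, and the full list of the remaining components $R_{ia}$ is then only needed to confirm consistency, the contradiction already being produced by $R_{52}$ and $R_{63}$.
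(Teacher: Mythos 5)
Your proposal is correct and follows essentially the same route as the paper: an orthonormal basis adapted to the degenerate unimodular kernel, the bracket parametrized by four constants (using that $\mathfrak u$ is a $2$-dimensional abelian ideal containing $[\mathfrak g,\mathfrak g]$), Dorfman coefficients via Proposition~\ref{B:prop}, and the contradiction extracted from exactly the two diagonal components — your $R_{52}$, $R_{63}$ coincide with the paper's formulas under the identification $(p,q,r,s)=(\lambda,\mu,\nu,\rho)$, giving $h^2+r^2=0$ and then $p^2+s^2=0$ against non-unimodularity. The only deviations are cosmetic: you normalize the signature to $(2,1)$ by $g\mapsto -g$ (correctly observing this swaps $E_\pm$ and preserves the divergence-free Einstein condition), whereas the paper carries a sign $\varepsilon$ through, and you work directly with the null generator $\zeta=v_2+v_3$ instead of the paper's auxiliary basis $(w_a)$.
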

\begin{proof}
	Note first that the metric $ g $ necessarily has to be indefinite. We define $ \varepsilon\coloneqq 1 $ if the signature of $ g $ is $ (2,1) $ and $ \varepsilon\coloneqq -1 $ if it is $ (1,2) $. Note that in both cases there is a 
	two-dimensional subspace of $\mathfrak{g}$ on which $\varepsilon g$ is positive definite. Taking the intersection with $\mathfrak{u}$ we obtain a one-dimensional subspace generated by a vector $w_1$ such that $g(w_1,w_1)=\varepsilon$. 
	Next we choose a generator $w_2$ of the kernel of $g|_{\mathfrak{u} \times \mathfrak{u}}$ and a 
	null vector $w_3$ orthogonal to $w_1$ such that $g(w_2,w_3)=\frac\varepsilon2$. Summarizing, we obtain a 
	basis $(w_a)$ of $\mathfrak{g}$ such that:  
	\begin{equation}\label{indef basis:eq}
		g(w_1,w_1)=\varepsilon,\, g(w_1,w_2)=g(w_1,w_3)=g(w_2,w_2)=g(w_3,w_3)=0,\,g(w_2,w_3)=\frac\varepsilon 2 
	\end{equation}	
	and $ w_1,w_2\in\mathfrak{u} $. Denote by $ \theta_{ab}^c $ the structure constants of $ \mathfrak{g} $ in the basis $(w_a)$, $ [w_a,w_b]=\theta_{ab}^cw_c $. Then
	\begin{eqnarray*}
		\left[w_1,w_2\right]&=&0\\
		\left[w_3,w_1\right]&=&\theta_{31}^1w_1+\theta_{31}^2w_2\\
		\left[w_3,w_2\right]&=&\theta_{32}^1w_1+\theta_{32}^2w_2
	\end{eqnarray*}
	with $ 0\neq\tr \mathrm{ad}_{w_3}=\theta_{31}^1+\theta_{32}^2. $ The basis $ v_1\coloneqq w_1,v_2\coloneqq w_2+w_3,v_3\coloneqq w_2-w_3 $ of $ \mathfrak{g} $ is orthonormal with respect to $ g $ satisfying $ g(v_1,v_1)=g(v_2,v_2)=-g(v_3,v_3) $. If we define $ \lambda\coloneqq -\varepsilon_1\theta_{31}^1,\mu\coloneqq -\varepsilon_2\frac12\theta_{31}^2, \nu\coloneqq -\varepsilon_12\theta_{32}^1 $ and $ \rho\coloneqq -\varepsilon_2\theta_{32}^2$, where $ \varepsilon_a=g(v_a,v_a) $, then 
	\begin{eqnarray*}
		\kappa_{12}^cv_c=\left[v_1,v_2\right]&=&\left[w_1,w_2+w_3\right]=-\left[w_3,w_1\right]\\
		&=&-\theta_{31}^1w_1-\theta_{31}^2w_2\\
		&=&-\theta_{31}^1v_1-\frac12\theta_{31}^2v_2-\frac12\theta_{31}^2v_3\\
		&=&\varepsilon_{1}\lambda v_1+\varepsilon_2\mu v_2-\varepsilon_3\mu v_3\\
		\kappa_{23}^cv_c=\left[v_2,v_3\right]&=&\left[w_2+w_3,w_2-w_3\right]=-2\left[w_3,w_2\right]\\
		&=&-2\theta_{32}^1w_1-2\theta_{32}^2w_2\\
		&=&-2\theta_{32}^1v_1-\theta_{32}^2v_2-\theta_{32}^2v_3\\
		&=&\varepsilon_{1}\nu v_1+\varepsilon_2\rho v_2-\varepsilon_3\rho v_3\\
		\kappa_{31}^cv_c=\left[v_3,v_1\right]&=&\left[w_2-w_3,w_1\right]=-\left[w_3,w_1\right]\\
		&=&-\theta_{31}^1w_1-\theta_{31}^2w_2\\
		&=&-\theta_{31}^1v_1-\frac12\theta_{31}^2v_2-\frac12\theta_{31}^2v_3\\
		&=&\varepsilon_{1}\lambda v_1+\varepsilon_2\mu v_2-\varepsilon_3\mu v_3
	\end{eqnarray*}
	with $ \lambda+\rho\neq0 $.
	Hence the structure constants $ \kappa_{abc} $ of $ \mathfrak{g} $ with respect to $ (v_a)_a $ are 
	\[ \begin{array}{lll}	\kappa_{121}=\lambda,&\kappa_{122}=\mu,&\kappa_{123}=-\mu\\
		\kappa_{231}=\nu,&\kappa_{232}=\rho,&\kappa_{233}=-\rho\\
		\kappa_{311}=\lambda,&\kappa_{312}=\mu,&\kappa_{313}=-\mu.\end{array} \]
	Now the Dorfman coefficients are 
	\begin{eqnarray*}
		\mathcal B_{145}&=&\frac12\left(H_{112}-\kappa_{112}+\kappa_{121}-\kappa_{211}\right)=\kappa_{121}=\lambda\\
		\mathcal B_{146}&=&\frac12\left(H_{113}-\kappa_{113}+\kappa_{131}-\kappa_{311}\right)=-\kappa_{311}=-\lambda\\
		\mathcal B_{156}&=&\frac12\left(H_{123}-\kappa_{123}+\kappa_{231}-\kappa_{312}\right)=\frac12\left(h+\mu+\nu-\mu\right)=\frac12\left(h+\nu\right)\\
		\mathcal B_{245}&=&\frac12\left(H_{212}-\kappa_{212}+\kappa_{122}-\kappa_{221}\right)=\kappa_{122}=\mu\\
		\mathcal B_{246}&=&\frac12\left(H_{213}-\kappa_{213}+\kappa_{132}-\kappa_{321}\right)=\frac12\left(-h-\mu-\mu+\nu\right)=-\frac12\left(h+2\mu-\nu\right)\\
		\mathcal B_{256}&=&\frac12\left(H_{223}-\kappa_{223}+\kappa_{232}-\kappa_{322}\right)=\kappa_{232}=\rho\\
		\mathcal B_{345}&=&\frac12\left(H_{312}-\kappa_{312}+\kappa_{123}-\kappa_{231}\right)=\frac12\left(h-\mu-\mu-\nu\right)=\frac12\left(h-2\mu-\nu\right)\\
		\mathcal B_{346}&=&\frac12\left(H_{313}-\kappa_{313}+\kappa_{133}-\kappa_{331}\right)=-\kappa_{313}=\mu\\
		\mathcal B_{356}&=&\frac12\left(H_{323}-\kappa_{323}+\kappa_{233}-\kappa_{332}\right)=\kappa_{233}=-\rho\\
		\mathcal B_{412}&=&\frac12\left(H_{112}+\kappa_{112}-\kappa_{121}+\kappa_{211}\right)=-\kappa_{121}=-\lambda\\
		\mathcal B_{413}&=&\frac12\left(H_{113}+\kappa_{113}-\kappa_{131}+\kappa_{311}\right)=\kappa_{311}=\lambda\\
		\mathcal B_{423}&=&\frac12\left(H_{123}+\kappa_{123}-\kappa_{231}+\kappa_{312}\right)=\frac12\left(h-\mu-\nu+\mu\right)=\frac12\left(h-\nu\right)\\
		\mathcal B_{512}&=&\frac12\left(H_{212}+\kappa_{212}-\kappa_{122}+\kappa_{221}\right)=-\kappa_{122}=-\mu\\
		\mathcal B_{513}&=&\frac12\left(H_{213}+\kappa_{213}-\kappa_{132}+\kappa_{321}\right)=\frac12\left(-h+\mu+\mu-\nu\right)=-\frac12\left(h-2\mu+\nu\right)\\
		\mathcal B_{523}&=&\frac12\left(H_{223}+\kappa_{223}-\kappa_{232}+\kappa_{322}\right)=-\kappa_{232}=-\rho\\
		\mathcal B_{612}&=&\frac12\left(H_{312}+\kappa_{312}-\kappa_{123}+\kappa_{231}\right)=\frac12\left(h+\mu+\mu+\nu\right)=\frac12\left(h+2\mu+\nu\right)\\
		\mathcal B_{613}&=&\frac12\left(H_{313}+\kappa_{313}-\kappa_{133}+\kappa_{331}\right)=\kappa_{313}=-\mu\\
		\mathcal B_{623}&=&\frac12\left(H_{323}+\kappa_{323}-\kappa_{233}+\kappa_{332}\right)=-\kappa_{233}=\rho.
	\end{eqnarray*}
	By equation (\ref{RicCoordinates:eq}) the components of the generalized Ricci curvature are
	\begin{eqnarray*}
		R_{41}&=&\mathcal{B}_{245}\mathcal{B}_{512}\varepsilon_{2}\varepsilon_2+\mathcal{B}_{345}\mathcal{B}_{513}\varepsilon_{2}\varepsilon_3+\mathcal{B}_{246}\mathcal{B}_{612}\varepsilon_{3}\varepsilon_2+\mathcal{B}_{346}\mathcal{B}_{613}\varepsilon_{3}\varepsilon_3\\
		&=&\mathcal{B}_{245}\mathcal{B}_{512}-\mathcal{B}_{345}\mathcal{B}_{513}-\mathcal{B}_{246}\mathcal{B}_{612}+\mathcal{B}_{346}\mathcal{B}_{613}\\
		&=&-\mu^2+\frac14\left(h-2\mu-\nu\right)\left(h-2\mu+\nu\right)+\frac14\left(h+2\mu-\nu\right)\left(h+2\mu-\nu\right)-\mu^2\\
		&=&-2\mu^2+\frac14\left(\left(h-2\mu\right)^2-\nu^2+\left(h+2\mu\right)^2-\nu^2\right)\\
		&=&-2\mu^2+\frac12h^2+\frac12\left(2\mu\right)^2-\frac12\nu^2\\
		&=&\frac12h^2-\frac12\nu^2\\
		R_{42}&=&\mathcal{B}_{145}\mathcal{B}_{521}\varepsilon_{2}\varepsilon_1+\mathcal{B}_{345}\mathcal{B}_{523}\varepsilon_{2}\varepsilon_3+\mathcal{B}_{146}\mathcal{B}_{621}\varepsilon_{3}\varepsilon_1+\mathcal{B}_{346}\mathcal{B}_{623}\varepsilon_{3}\varepsilon_3\\
		&=&-\mathcal{B}_{145}\mathcal{B}_{512}-\mathcal{B}_{345}\mathcal{B}_{523}+\mathcal{B}_{146}\mathcal{B}_{612}+\mathcal{B}_{346}\mathcal{B}_{623}\\
		&=&\lambda\mu+\frac12\rho\left(h-2\mu-\nu\right)-\frac12\lambda\left(h+2\mu+\nu\right)+\rho\mu\\
		&=&\frac12\rho\left(h-\nu\right)-\frac12\lambda\left(h+\nu\right)\\
		R_{43}&=&\mathcal{B}_{145}\mathcal{B}_{531}\varepsilon_{2}\varepsilon_1+\mathcal{B}_{245}\mathcal{B}_{532}\varepsilon_{2}\varepsilon_2+\mathcal{B}_{146}\mathcal{B}_{631}\varepsilon_{3}\varepsilon_1+\mathcal{B}_{246}\mathcal{B}_{632}\varepsilon_{3}\varepsilon_2\\
		&=&-\mathcal{B}_{145}\mathcal{B}_{513}-\mathcal{B}_{245}\mathcal{B}_{523}+\mathcal{B}_{146}\mathcal{B}_{613}+\mathcal{B}_{246}\mathcal{B}_{623}\\
		&=&\frac12\lambda\left(h-2\mu+\nu\right)+\mu\rho+\lambda\mu-\frac12\rho\left(h+2\mu-\nu\right)\\
		&=&\frac12\lambda\left(h+\nu\right)-\frac12\rho\left(h-\nu\right)\\
		R_{51}&=&\mathcal{B}_{254}\mathcal{B}_{412}\varepsilon_{1}\varepsilon_2+\mathcal{B}_{354}\mathcal{B}_{413}\varepsilon_{1}\varepsilon_3+\mathcal{B}_{256}\mathcal{B}_{612}\varepsilon_{3}\varepsilon_2+\mathcal{B}_{356}\mathcal{B}_{613}\varepsilon_{3}\varepsilon_3\\
		&=&-\mathcal{B}_{245}\mathcal{B}_{412}+\mathcal{B}_{345}\mathcal{B}_{413}-\mathcal{B}_{256}\mathcal{B}_{612}+\mathcal{B}_{356}\mathcal{B}_{613}\\
		&=&\mu\lambda+\frac12\lambda\left(h-2\mu-\nu\right)-\frac12\rho\left(h+2\mu+\nu\right)+\rho\mu\\
		&=&\frac12\left(h-\nu\right)-\frac12\left(h+\nu\right)\\
		R_{52}&=&\mathcal{B}_{154}\mathcal{B}_{421}\varepsilon_{1}\varepsilon_1+\mathcal{B}_{354}\mathcal{B}_{423}\varepsilon_{1}\varepsilon_3+\mathcal{B}_{156}\mathcal{B}_{621}\varepsilon_{3}\varepsilon_1+\mathcal{B}_{356}\mathcal{B}_{623}\varepsilon_{3}\varepsilon_3\\
		&=&\mathcal{B}_{145}\mathcal{B}_{412}+\mathcal{B}_{345}\mathcal{B}_{423}+\mathcal{B}_{156}\mathcal{B}_{612}+\mathcal{B}_{356}\mathcal{B}_{623}\\
		&=&-\lambda^2+\frac14\left(h-2\mu-\nu\right)\left(h-\nu\right)+\frac14\left(h+\nu\right)\left(h+2\mu+\nu\right)-\rho^2\\
		&=&-\lambda^2+\frac14\left(h-\nu\right)^2-\frac12\mu\left(h-\nu\right)+\frac14\left(h+\nu\right)^2+\frac12\mu\left(h+\nu\right)-\rho^2\\
		&=&-\lambda^2+\frac12h^2+\frac12\nu^2+\mu\nu-\rho^2\\
		R_{53}&=&\mathcal{B}_{154}\mathcal{B}_{431}\varepsilon_{1}\varepsilon_1+\mathcal{B}_{254}\mathcal{B}_{432}\varepsilon_{1}\varepsilon_2+\mathcal{B}_{156}\mathcal{B}_{631}\varepsilon_{3}\varepsilon_1+\mathcal{B}_{256}\mathcal{B}_{632}\varepsilon_{3}\varepsilon_2\\
		&=&\mathcal{B}_{145}\mathcal{B}_{413}+\mathcal{B}_{245}\mathcal{B}_{423}+\mathcal{B}_{156}\mathcal{B}_{613}+\mathcal{B}_{256}\mathcal{B}_{623}\\
		&=&\lambda^2+\frac12\mu\left(h-\nu\right)-\frac12\mu\left(h+\nu\right)+\rho^2\\
		&=&\lambda^2-\mu\nu+\rho^2\\
		R_{61}&=&\mathcal{B}_{264}\mathcal{B}_{412}\varepsilon_{1}\varepsilon_2+\mathcal{B}_{364}\mathcal{B}_{413}\varepsilon_{1}\varepsilon_3+\mathcal{B}_{265}\mathcal{B}_{512}\varepsilon_{2}\varepsilon_2+\mathcal{B}_{365}\mathcal{B}_{513}\varepsilon_{2}\varepsilon_3\\
		&=&-\mathcal{B}_{246}\mathcal{B}_{412}+\mathcal{B}_{346}\mathcal{B}_{413}-\mathcal{B}_{256}\mathcal{B}_{512}+\mathcal{B}_{356}\mathcal{B}_{513}\\
		&=&-\frac12\lambda\left(h+2\mu-\nu\right)+\mu\lambda+\mu\rho+\frac12\rho\left(h-2\mu+\nu\right)\\
		&=&-\frac12\lambda\left(h-\nu\right)+\frac12\rho\left(h+\nu\right)\\
		R_{62}&=&\mathcal{B}_{164}\mathcal{B}_{421}\varepsilon_{1}\varepsilon_1+\mathcal{B}_{364}\mathcal{B}_{423}\varepsilon_{1}\varepsilon_3+\mathcal{B}_{165}\mathcal{B}_{521}\varepsilon_{2}\varepsilon_1+\mathcal{B}_{365}\mathcal{B}_{523}\varepsilon_{2}\varepsilon_3\\
		&=&\mathcal{B}_{146}\mathcal{B}_{412}+\mathcal{B}_{346}\mathcal{B}_{423}+\mathcal{B}_{156}\mathcal{B}_{512}+\mathcal{B}_{356}\mathcal{B}_{523}\\
		&=&\lambda^2+\frac12\mu\left(h-\nu\right)-\frac12\mu\left(h+\nu\right)+\rho^2\\
		&=&\lambda^2-\mu\nu+\rho^2\\
		R_{63}&=&\mathcal{B}_{164}\mathcal{B}_{431}\varepsilon_{1}\varepsilon_1+\mathcal{B}_{264}\mathcal{B}_{432}\varepsilon_{1}\varepsilon_2+\mathcal{B}_{165}\mathcal{B}_{531}\varepsilon_{2}\varepsilon_1+\mathcal{B}_{265}\mathcal{B}_{532}\varepsilon_{2}\varepsilon_2\\
		&=&\mathcal{B}_{146}\mathcal{B}_{413}+\mathcal{B}_{246}\mathcal{B}_{423}+\mathcal{B}_{156}\mathcal{B}_{513}+\mathcal{B}_{256}\mathcal{B}_{523}\\
		&=&-\lambda^2-\frac14\left(h+2\mu-\nu\right)\left(h-\nu\right)-\frac14\left(h+\nu\right)\left(h-2\mu+\nu\right)-\rho^2\\
		&=&-\lambda^2-\frac14\left(h-\nu\right)^2-\frac12\mu\left(h-\nu\right)-\frac14\left(h+\nu\right)^2+\frac12\mu\left(h+\nu\right)-\rho^2\\
		&=&-\lambda^2-\frac12h^2-\frac12\nu^2+\mu\nu-\rho^2.
	\end{eqnarray*}
	If we impose the Einstein condition, we see that $ 0=R_{52}-R_{63}=h^2+\nu^2 $, hence $ h=\nu=0 $. Therefore the equation $ R_{63}=0 $ reads as $ 0=-\lambda^2-\rho^2 $. This implies $ \lambda=\rho=0 $, which is a contradiction to $ \lambda+\rho\neq0 $.
\end{proof}
We summarize this by the following theorem. 
\begin{thm}
\label{nonunimod_divzero:thm}
	Let $(H,\mathcal G_g,\delta=0)$ be a divergence-free generalized Einstein structure on an  three-dimensional non-unimodular Lie group $G$. Then $ H=0 $ and $ g $ is indefinite. Furthermore there exists an orthonormal basis $(v_a)$ of $(\mathfrak{g},g)$ such that $ v_1,v_3\in\mathfrak{u} $ and $g(v_1,v_1)=g(v_2,v_2)=-g(v_3,v_3)$ as well as a positive constant $ \theta>0 $ such that 
	\begin{eqnarray*}
		[v_1,v_3]&=&0\\
		\left[v_2,v_1\right]&=&\theta v_1-\theta v_3\\
		\left[v_2,v_3\right]&=&\theta v_1+\theta v_3.
	\end{eqnarray*}
	The metric $g$ is a Ricci-soliton which is not of constant curvature.
\end{thm}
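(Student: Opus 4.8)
The theorem combines Propositions~\ref{non-unimodular,non-degenerate:prop} and~\ref{non-unimodular,degenerate:prop} with a single curvature computation, so the plan splits accordingly. Recall that the unimodular kernel $\mathfrak u=\ker\tau$ is a two-dimensional abelian ideal, and that $g$ restricted to $\mathfrak u$ is either non-degenerate or degenerate. First I would invoke Proposition~\ref{non-unimodular,degenerate:prop} to discard the degenerate case outright, since it admits no divergence-free generalized Einstein structure. In the remaining non-degenerate case, Proposition~\ref{non-unimodular,non-degenerate:prop} furnishes exactly the claimed conclusion: $H=0$, the metric $g$ is indefinite, and there is an orthonormal basis $(v_a)$ with $v_1,v_3\in\mathfrak u$, $g(v_1,v_1)=g(v_2,v_2)=-g(v_3,v_3)$, and a parameter $\theta>0$ realizing the displayed brackets. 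Thus the classification part of the statement is immediate once the two propositions are in hand.

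It remains to establish the final assertion that $g$ is a \emph{non-flat} Ricci soliton. Since $H=0$ and $\delta=0$, Corollary~\ref{soliton:cor} applies and shows that the generalized Einstein condition is equivalent to the soliton equation $Ric^g+\nabla\tau=0$; in particular $g$ is automatically a Ricci soliton (and a gradient one, as $\tau$ is closed and $G\cong\mathbb R^3$ has vanishing first Betti number). Hence the only genuinely new content is non-flatness. For this I would compute the scalar curvature directly from the normal form. A short calculation of the adjoint representation gives $\tau(v_2)=2\theta$ and $\tau(v_1)=\tau(v_3)=0$, so $\tau^\sharp$ is a nonzero multiple of $v_2$. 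Taking the metric trace of the soliton equation yields
\[ \scal^g=\tr_g Ric^g=-\tr_g\nabla\tau=-\operatorname{div}(\tau^\sharp), \]
and evaluating the right-hand side with the Koszul formula $g(\nabla_{v_a}v_b,v_c)=\tfrac12(\kappa_{abc}-\kappa_{bca}+\kappa_{cab})$, using the structure constants $\kappa_{abc}$ read off from the displayed brackets, gives $\scal^g=\pm4\theta^2\neq0$. Since a flat metric has vanishing scalar curvature, this proves that $g$ is non-flat.

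The classification part of the theorem is entirely contained in the two cited propositions, so no real obstacle arises there. The one computation to be carried out is the scalar-curvature evaluation; the only point requiring slight care is the bookkeeping of the signs $\varepsilon_a$ (equivalently, of the two admissible signatures $(2,1)$ and $(1,2)$), but in either case the two nonzero contributions $\varepsilon_1\Gamma_{121}$ and $\varepsilon_3\Gamma_{323}$ add up to the same value, so the conclusion $\scal^g=\pm4\theta^2\neq0$ is robust. An alternative route to non-flatness would be to observe that flatness would force $\nabla\tau=0$, making $\tau^\sharp$ a nonzero parallel—hence Killing—vector field, which is incompatible with the explicit bracket relations; the direct scalar-curvature computation is, however, shorter and self-contained.
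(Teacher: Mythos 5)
Your proposal is correct and follows essentially the same route as the paper: the classification is obtained by combining Propositions~\ref{non-unimodular,degenerate:prop} and~\ref{non-unimodular,non-degenerate:prop}, the soliton property comes from Corollary~\ref{soliton:cor}, and non-flatness is reduced via the soliton equation to a short Christoffel computation in the normal form. The only (immaterial) difference is the final step: you take the metric trace of $Ric^g=-\nabla\tau$ and verify $\scal^g=\pm 4\theta^2\neq 0$, whereas the paper checks a single component, $g(\nabla_{v_1}v_2,v_1)=g([v_1,v_2],v_1)=-\theta\varepsilon_1\neq 0$, to conclude $\nabla\tau\neq 0$; both computations are valid and of the same nature.
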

\begin{proof} It remains to prove the last statement. The fact that $g$ is a Ricci soliton is a 
direct consequence of Corollary~\ref{soliton:cor}. To see that the metric is non-flat, it suffices to check that $\nabla \tau \neq 0$. Since $\tau = 2\theta v_2^*$, where $(v_a^*)$ denotes the dual basis, it suffices to compute $\nabla v_2$:
\[ g(\nabla_{v_1}v_2,v_1) = g([v_1,v_2],v_1) = -\theta \varepsilon_1\neq 0.\]
Similarly,  $\nabla_{v_2}v_2=0$ shows that $g$ is neither of non-zero constant curvature.
\end{proof}
\begin{cor}
	If the metric is definite there are no solutions to the Ricci soliton equation (\ref{soliton:Eq}) in the non-unimodular case. 
\end{cor}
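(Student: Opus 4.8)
The plan is to deduce the statement directly from the two results already established for the non-unimodular case, so that no fresh computation is required. The key observation is the dictionary furnished by Corollary~\ref{soliton:cor}: a left-invariant pseudo-Riemannian metric $g$ on $G$ solves the Ricci soliton equation (\ref{soliton:Eq}) if and only if the generalized pseudo-Riemannian Lie group $(G, H=0, \mathcal{G}_g)$ is generalized Einstein with vanishing divergence $\delta = 0$. Thus a solution of (\ref{soliton:Eq}) is nothing but a divergence-free generalized Einstein structure of the special form $(H=0, \mathcal{G}_g, \delta = 0)$.

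With this translation in hand I would argue by contradiction. Suppose $\mathfrak{g}$ is non-unimodular and that $g$ is definite and solves (\ref{soliton:Eq}). Then $(H=0, \mathcal{G}_g, \delta=0)$ is a divergence-free generalized Einstein structure on a three-dimensional non-unimodular Lie group. But Theorem~\ref{nonunimod_divzero:thm} asserts that every such structure forces $g$ to be indefinite, which contradicts the assumed definiteness. Hence no definite solution of (\ref{soliton:Eq}) can exist when $\mathfrak{g}$ is non-unimodular.

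The hard part has in fact been front-loaded into the classification underlying Theorem~\ref{nonunimod_divzero:thm}, so no real obstacle remains at this stage. For a non-degenerate unimodular kernel the exclusion of a definite metric was already extracted in Proposition~\ref{non-unimodular,non-degenerate:prop} from the single Ricci component $R_{52} = -\lambda^2 - \frac14(h-\mu-\nu)^2 - \frac14(h+\mu+\nu)^2 - \rho^2$, whose vanishing forces $\lambda = \rho = 0$ and hence $\tr\, \mathrm{ad}_{v_2} = 0$, contradicting non-unimodularity; while a degenerate unimodular kernel automatically entails an indefinite metric (Proposition~\ref{non-unimodular,degenerate:prop}). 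One could therefore present the corollary either as the immediate consequence of Corollary~\ref{soliton:cor} together with Theorem~\ref{nonunimod_divzero:thm}, or, equivalently, by isolating directly this sign obstruction in the Ricci curvature.
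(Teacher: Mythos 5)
Your proof is correct and is exactly the argument the paper intends: the corollary is stated without proof precisely because it follows immediately from the equivalence in Corollary~\ref{soliton:cor} (Ricci soliton solutions $\leftrightarrow$ divergence-free generalized Einstein structures with $H=0$) combined with the indefiniteness of $g$ asserted in Theorem~\ref{nonunimod_divzero:thm}. Your closing remarks correctly identify where the sign obstruction actually lives (the component $R_{52}$ in Proposition~\ref{non-unimodular,non-degenerate:prop} and the automatic indefiniteness in the degenerate-kernel case), but this is supplementary rather than a different route.
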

\begin{rem}
	Note that in all our proofs in the unimodular and in the non-unimodular case we only used that the diagonal components $ R_{ii'} $ are zero. In particular, the Ricci tensor is zero, if $ R_{ii'}=0 $ for all $ i\in\{4,5,6\} $, in the divergence free case. 
\end{rem}
\subsection{Arbitrary divergence}
\label{arbdiv:sec}
Recall that $R_{ia}^\delta=Ric^+_\delta (e_i,e_a)$ and $R_{ai}^\delta=Ric^-_\delta (e_a,e_i)$ denote the components of the Ricci curvature tensors $Ric^\pm_\delta$ of a generalized pseudo-Riemannian Lie group $(G,H,\mathcal G_g,\delta)$ with arbitrary divergence $\delta\in E^*$. If $ \delta=0 $ we often write $ R_{ia}=R_{ia}^0 $ and $ R_{ai}=R_{ai}^0 $. By Theorem \ref{Ric:thm} we have 
\begin{eqnarray*}
	R_{ia}^\delta&=&R_{ia}+\sum_cB_{ia}^c\delta_c=R_{ia}+\sum_c\varepsilon_cB_{iac}\delta_c\\
	R_{ai}^\delta&=&R_{ai}+\sum_jB_{ai}^j\delta_j=R_{ia}-\sum_{j}\varepsilon_{j'}B_{aij}\delta_j.
\end{eqnarray*}
\subsubsection{Unimodular Lie groups}
\label{unimoddivnotzero:sec}
\begin{prop}\label{DivergenceNF:prop}
	If $(H,\mathcal G_g,\delta)$ is a generalized Einstein structure on an oriented three-dimensional unimodular Lie group $G$, then there exists a $ g $-orthonormal basis $(v_a)$ of $\mathfrak{g}$ such that $g(v_1,v_1)=g(v_2,v_2)$ and such that the symmetric endomorphism $L$ defined in equation (\ref{defL:eq}) takes one of the following forms
	\begin{enumerate}
		\item $ L_1(\alpha,\beta,\gamma) $, that is $ L $ is diagonalizable by an orthonormal basis;
		\item $ L_3(\alpha,0) $ or $ L_4(\alpha,0) $, in both cases $ -\varepsilon_1\frac12\delta_1=-\varepsilon_1\frac12\delta_4=\alpha $ as well as $ \delta_2=\delta_3 $ and $ \delta_5=\delta_6 $. If $ \alpha\ne0 $, then $ \delta_2=\delta_3=\delta_5=\delta_6=0 $;	
		\item $ L_5(0) $ with $ \delta_2=\delta_5=0 $ and $ \delta_1=\delta_3=\delta_4=\delta_6=-\varepsilon_1\sqrt2 $, where $ \delta_a=\delta(v_a+g(v_a)) $ and $ \delta_i=\delta(v_{i'}-g(v_{i'})) $.
	\end{enumerate}
	
%
	
	Furthermore, in the non-diagonalizable case the three-form $ H $ is always zero (see Proposition~\ref{NF:prop} for the notation of the normal forms of $ L $). 
\end{prop}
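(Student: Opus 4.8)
The plan is to rerun the argument of Proposition~\ref{EinsteinNF:prop}, this time keeping the $\delta$-dependent terms in the Ricci curvature. Since every $g$-symmetric endomorphism of a Euclidean space is diagonalizable by an orthonormal basis, the definite case yields $L_1$ with no constraint, so I assume $g$ is indefinite and fix the orthonormal basis of Proposition~\ref{NF:prop} with $\varepsilon_1=\varepsilon_2=-\varepsilon_3$, writing $L$ as the single matrix
\[ \left(\begin{array}{ccc}\alpha &\lambda &0\\ \lambda &\beta &\mu \\ 0&-\mu&\gamma\end{array}\right) \]
that specializes to each of $L_1,\dots,L_5$. Because $\dim\mathfrak g=3$, every three-form is $H=h\,\mathrm{vol}_g$, so the structure constants and the Dorfman coefficients $\mathcal{B}_{ABC}$ are precisely those tabulated in the proof of Proposition~\ref{EinsteinNF:prop}, now read as functions of $(\alpha,\beta,\gamma,\lambda,\mu,h)$.

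Next I would assemble the full Ricci tensor from the two identities at the start of Section~\ref{arbdiv:sec}, $R_{ia}^\delta=R_{ia}+\sum_c\varepsilon_c\mathcal{B}_{iac}\delta_c$ and $R_{ai}^\delta=R_{ia}-\sum_j\varepsilon_{j'}\mathcal{B}_{aij}\delta_j$, inserting the components $R_{ia}$ and the Dorfman coefficients from the previous proof. This produces eighteen scalar equations $R_{ia}^\delta=R_{ai}^\delta=0$, each quadratic in the bracket data $(\alpha,\beta,\gamma,\lambda,\mu,h)$ but only linear in the six unknowns $\delta_1,\dots,\delta_6$. The structural point to exploit is that the antisymmetric combinations $R_{ia}^\delta-R_{ai}^\delta$ are linear in $\delta$ and constrain the $\delta_A$ directly through the off-diagonal Dorfman coefficients, whereas the symmetric combinations $R_{ia}^\delta+R_{ai}^\delta=2R_{ia}+(\text{linear in }\delta)$ tie the quadratic quantities $R_{ia}$ to these $\delta$-values.

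The proof then splits according to the normal form, with $L_1$ already admissible. For $L_2$ (rotational block, $\lambda=0$, $\mu\neq0$) I expect the equations that forced $\mu=0$ when $\delta=0$, once their $\delta$-corrections are tracked, to again collapse $L_2$ to a diagonal form, i.e.\ to $L_1$. For $L_3$ and $L_4$ I would read off from the off-diagonal equations that the free diagonal entry must vanish, yielding $L_3(\alpha,0)$ and $L_4(\alpha,0)$, together with the relations $-\tfrac12\varepsilon_1\delta_1=-\tfrac12\varepsilon_1\delta_4=\alpha$, $\delta_2=\delta_3$, $\delta_5=\delta_6$, and the further collapse $\delta_2=\delta_3=\delta_5=\delta_6=0$ once $\alpha\neq0$. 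For $L_5(\alpha)$, where the divergence-free proof produced the contradiction $R_{43}=-2\mu\lambda=-1$, the new term in $R_{43}^\delta$ now permits a solution; propagating that constraint through the remaining equations should force $\alpha=0$ and pin the divergence down to $\delta_2=\delta_5=0$ and $\varepsilon_1\delta_1=-\varepsilon_3\delta_3=\varepsilon_1\delta_4=-\varepsilon_5\delta_6=-\sqrt2$. Finally, in each non-diagonalizable case the vanishing of $h$, hence $H=0$, should fall out of the same system exactly as in Proposition~\ref{EinsteinNF:prop}.

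The main obstacle is bookkeeping rather than conceptual difficulty. In contrast to the divergence-free situation, where only the three diagonal components $R_{ii'}$ were needed, the off-diagonal components here genuinely enter, so one must solve a moderately large inhomogeneous linear system for $\delta$ whose coefficients are the normal-form-specific Dorfman coefficients while simultaneously enforcing the quadratic constraints on the bracket. The subtlest case is $L_5$: the divergence is forced to be nonzero merely to cancel the constant $-1$ in $R_{43}^\delta$, so one cannot separate ``quadratic part vanishes'' from ``$\delta$-part vanishes'' as in the diagonal analysis---the two must cancel against each other---and isolating $\alpha=0$ requires combining several of the off-diagonal equations.
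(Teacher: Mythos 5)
Your proposal follows essentially the same route as the paper's own proof: reduce to the indefinite case, treat all normal forms via the single five-parameter matrix from Proposition~\ref{NF:prop}, reuse the Dorfman coefficients and divergence-free Ricci components of Proposition~\ref{EinsteinNF:prop}, add the $\delta$-linear corrections via the formulas of Section~\ref{arbdiv:sec}, and then argue case by case — $L_2$ collapsing to diagonal form, $L_3/L_4$ forced to $L_3(\alpha,0)/L_4(\alpha,0)$ with the stated linear constraints on $\delta$, and $L_5$ surviving only with $\alpha=h=0$ and the divergence pinned to the $\pm\sqrt2$ values. The intermediate facts you anticipate (the symmetric/antisymmetric splitting in $\delta$, the forced cancellation between the constant $-1$ and the $\delta$-terms in the $L_5$ case, and $H=0$ in the non-diagonalizable cases) are exactly the ones the paper's computation establishes.
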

\begin{proof}
	Since in the Euclidean case any symmetric endomorphism is always diagonalizable by an orthonormal basis, we may assume that the scalar product is indefinite. By Proposition~\ref{NF:prop}, there is an orthonormal basis $ (v_a) $, such that the endomorphism $ L $ takes one of the normal forms $L_1(\alpha , \beta, \gamma ), L_2(\alpha , \beta, \gamma ), L_3(\alpha , \beta),
	L_4(\alpha, \beta )$ or $L_5(\alpha )$ from said Proposition. As in the proof of Proposition~\ref{EinsteinNF:prop}, we can treat all these cases at once by considering the matrix 
	\[ \left(\begin{array}{ccc}\alpha &\lambda &0\\
		\lambda &\beta &\mu \\
		0&-\mu&\gamma\end{array}
	\right). 
	\]
	Recall that we assume $ \varepsilon_{1}=\varepsilon_2=-\varepsilon_{3} $, where $ \varepsilon_a=g(v_a,v_a) $. Using the Dorfman coefficients and the coefficients of the Ricci curvature with divergence zero from the proof of Proposition~\ref{EinsteinNF:prop}, we can compute the components of the Ricci curvature with divergence $ \delta $ as
	\begin{eqnarray*}
		R_{41}^\delta&=&R_{41}+\varepsilon_2\mathcal{B}_{412}\delta_2+\varepsilon_3\mathcal{B}_{413}\delta_3\\
		&=&-2\mu^2-\frac12\alpha^2+\frac12h^2+\frac12(\beta-\gamma)^2+\varepsilon_3\lambda\delta_3\\
		R_{42}^\delta&=&R_{42}+\varepsilon_1\mathcal{B}_{421}\delta_1+\varepsilon_3\mathcal{B}_{423}\delta_3\\
		&=&-\lambda\left(\beta-\gamma+\alpha\right)+\varepsilon_3\frac12\left(h+\gamma-\alpha+\beta\right)\delta_3\\
		R_{43}^\delta&=&R_{43}+\varepsilon_1\mathcal{B}_{431}\delta_1+\varepsilon_2\mathcal{B}_{432}\delta_2\\
		&=&-2\mu\lambda-\varepsilon_{1}\lambda\delta_1-\varepsilon_{2}\frac12\left(h+\gamma-\alpha+\beta\right)\delta_2\\
		R_{51}^\delta&=&R_{51}+\varepsilon_2\mathcal{B}_{512}\delta_2+\varepsilon_3\mathcal{B}_{513}\delta_3\\
		&=&-\lambda\left(\beta-\gamma+\alpha\right)+\varepsilon_{2}\mu\delta_2+\varepsilon_{3}\frac12\left(-h-\gamma+\beta-\alpha\right)\delta_3\\
		R_{52}^\delta&=&R_{52}+\varepsilon_1\mathcal{B}_{521}\delta_1+\varepsilon_3\mathcal{B}_{523}\delta_3\\
		&=&-\frac12\beta^2+\frac12h^2+\frac12\left(\gamma-\alpha\right)^2-\varepsilon_{1}\mu\delta_1-\varepsilon_{3}\lambda\delta_3\\
		R_{53}^\delta&=&R_{53}+\varepsilon_1\mathcal{B}_{531}\delta_1+\varepsilon_2\mathcal{B}_{532}\delta_2\\
		&=&-\mu\left(\gamma-\alpha+\beta\right)-\varepsilon_1\frac12\left(-h-\gamma+\beta-\alpha\right)\delta_1+\varepsilon_{2}\lambda\delta_2\\
		R_{61}^\delta&=&R_{61}+\varepsilon_2\mathcal{B}_{612}\delta_2+\varepsilon_3\mathcal{B}_{613}\delta_3\\
		&=&-2\lambda\mu+\varepsilon_{2}\frac12\left(h+\beta-\gamma+\alpha\right)\delta_2+\varepsilon_3\mu\delta_3\\
		R_{62}^\delta&=&R_{62}+\varepsilon_1\mathcal{B}_{621}\delta_1+\varepsilon_3\mathcal{B}_{623}\delta_3\\
		&=&-\mu\left(\gamma-\alpha+\beta\right)-\varepsilon_1\frac12\left(h+\beta-\gamma+\alpha\right)\delta_1\\
		R_{63}^\delta&=&R_{63}+\varepsilon_1\mathcal{B}_{631}\delta_1+\varepsilon_2\mathcal{B}_{632}\delta_2\\
		&=&-2\lambda^2+\frac12\gamma^2-\frac12h^2-\frac12\left(\beta-\alpha\right)^2-\varepsilon_{1}\mu\delta_1\\
		R_{14}^\delta&=&R_{41}-\varepsilon_2\mathcal{B}_{145}\delta_5-\varepsilon_3\mathcal{B}_{146}\delta_6\\
		&=&-2\mu^2-\frac12\alpha^2+\frac12h^2+\frac12(\beta-\gamma)^2+\varepsilon_3\lambda\delta_6\\
		R_{24}^\delta&=&R_{42}-\varepsilon_2\mathcal{B}_{245}\delta_5-\varepsilon_3\mathcal{B}_{246}\delta_6\\
		&=&-\lambda\left(\beta-\gamma+\alpha\right)+\varepsilon_2\mu\delta_5-\varepsilon_3\frac12\left(-h+\gamma-\beta+\alpha\right)\delta_6\\
		R_{34}^\delta&=&R_{43}-\varepsilon_2\mathcal{B}_{345}\delta_5-\varepsilon_3\mathcal{B}_{346}\delta_6\\
		&=&-2\mu\lambda-\varepsilon_{2}\frac12\left(h-\beta+\gamma-\alpha\right)\delta_5+\varepsilon_{3}\mu\delta_6\\
		R_{15}^\delta&=&R_{51}-\varepsilon_1\mathcal{B}_{154}\delta_4-\varepsilon_3\mathcal{B}_{156}\delta_6\\
		&=&-\lambda\left(\beta-\gamma+\alpha\right)-\varepsilon_{3}\frac12\left(h-\gamma+\alpha-\beta\right)\delta_6\\
		R_{25}^\delta&=&R_{52}-\varepsilon_1\mathcal{B}_{254}\delta_4-\varepsilon_3\mathcal{B}_{256}\delta_6\\
		&=&-\frac12\beta^2+\frac12h^2+\frac12\left(\gamma-\alpha\right)^2-\varepsilon_{1}\mu\delta_4-\varepsilon_3\lambda\delta_6\\
		R_{35}^\delta&=&R_{53}-\varepsilon_1\mathcal{B}_{354}\delta_4-\varepsilon_3\mathcal{B}_{356}\delta_6\\
		&=&-\mu\left(\gamma-\alpha+\beta\right)+\varepsilon_{1}\frac12\left(h-\beta+\gamma-\alpha\right)\delta_4\\
		R_{16}^\delta&=&R_{61}-\varepsilon_1\mathcal{B}_{164}\delta_4-\varepsilon_2\mathcal{B}_{165}\delta_5\\
		&=&-2\lambda\mu-\varepsilon_{1}\lambda\delta_4+\varepsilon_2\frac12\left(h-\gamma+\alpha-\beta\right)\delta_5\\
		R_{26}^\delta&=&R_{62}-\varepsilon_1\mathcal{B}_{264}\delta_4-\varepsilon_2\mathcal{B}_{265}\delta_5\\
		&=&-\mu\left(\gamma-\alpha+\beta\right)+\varepsilon_1\frac12\left(-h+\gamma-\beta+\alpha\right)\delta_4+\varepsilon_2\lambda\delta_5\\
		R_{36}^\delta&=&R_{63}-\varepsilon_1\mathcal{B}_{364}\delta_4-\varepsilon_2\mathcal{B}_{365}\delta_5\\
		&=&-2\lambda^2+\frac12\gamma^2-\frac12h^2-\frac12\left(\beta-\alpha\right)^2-\varepsilon_{1}\mu\delta_4.
	\end{eqnarray*}
	For the normal form $ L_2(\alpha,\beta,\gamma) $ the equations for $ Ric^+_\delta $ read  
	\begin{eqnarray*}
		R_{41}^\delta&=&-2\beta^2-\frac12\gamma^2+\frac12h^2\\
		R_{42}^\delta&=&\varepsilon_3\frac12\left(h+2\alpha-\gamma\right)\delta_3\\
		R_{43}^\delta&=&-\varepsilon_2\frac12\left(h+2\alpha-\gamma\right)\delta_2\\
		R_{51}^\delta&=&-\varepsilon_2\beta\delta_2+\varepsilon_3\frac12\left(-h-\gamma\right)\delta_3\\
		R_{52}^\delta&=&-\frac12\alpha^2+\frac12h^2+\frac12\left(\alpha-\gamma\right)^2+\varepsilon_1\beta\delta_1\\
		R_{53}^\delta&=&\beta\left(2\alpha-\gamma\right)-\varepsilon_{1}\frac12\left(-h-\gamma\right)\delta_1\\
		R_{61}^\delta&=&\varepsilon_2\frac12\left(h+\gamma\right)\delta_2-\varepsilon_3\beta\delta_3\\
		R_{62}^\delta&=&\beta\left(2\alpha-\gamma\right)-\varepsilon_1\frac12\left(h+\gamma\right)\delta_1\\
		R_{63}^\delta&=&\frac12\alpha^2-\frac12h^2-\frac12\left(\alpha-\gamma\right)^2+\varepsilon_1\beta\delta_1.
	\end{eqnarray*}
	Imposing now the Einstein condition, we get $ 0=R_{53}^\delta+R_{62}^\delta=2\beta\left(2\alpha-\gamma\right) $. So either $ L $ is diagonalizable, if $ \beta=0 $, or $ 2\alpha=\gamma $. But then the equation $ 0=R_{52}^\delta-R_{63}^\delta $ is \[ 0=-\alpha^2+h^2+\left(\alpha-\gamma\right)^2=h^2, \] and hence $ h=0 $. Applying this to the equation for $ R_{41}^\delta $ yields $ 0=-2\beta^2-\frac12\gamma^2 $. Therefore $ \beta=0 $ and the endomorphism $ L $ is diagonalizable by an orthonormal basis. 
	
	If $ L $ takes the normal form $ L_3(\alpha,\beta) $, the components of the Ricci tensor are 
	\begin{eqnarray*}
		R_{41}^\delta&=&-\frac12\beta^2+\frac12h^2\\
		R_{42}^\delta&=&\varepsilon_3\frac12\left(h+2\alpha-\beta\right)\delta_3\\
		R_{43}^\delta&=&-\varepsilon_2\frac12\left(h+2\alpha-\beta\right)\delta_2\\
		R_{51}^\delta&=&\varepsilon_{2}\frac12\delta_2+\varepsilon_3\frac12\left(-h+1-\beta\right)\delta_3\\
		R_{52}^\delta&=&-\frac12\left(\frac12+\alpha\right)^2+\frac12h^2+\frac12\left(-\frac12+\alpha-\beta\right)^2-\varepsilon_1\frac12\delta_1\\
		R_{53}^\delta&=&-\frac12\left(2\alpha-\beta\right)-\varepsilon_{1}\frac12\left(-h+1-\beta\right)\delta_1\\
		R_{61}^\delta&=&\varepsilon_2\frac12\left(h+1+\beta\right)\delta_2+\varepsilon_3\frac12\delta_3\\
		R_{62}^\delta&=&-\frac12\left(2\alpha-\beta\right)-\varepsilon_1\frac12\left(h+1+\beta\right)\delta_1\\
		R_{63}^\delta&=&\frac12\left(-\frac12+\alpha\right)^2-\frac12h^2-\frac12\left(\frac12+\alpha-\beta\right)^2-\varepsilon_1\frac12\delta_1\\
		R_{14}^\delta&=&-\frac12\beta^2+\frac12h^2\\
		R_{24}^\delta&=&\varepsilon_2\frac12\delta_5-\varepsilon_3\frac12\left(-h-1+\beta\right)\delta_6\\
		R_{34}^\delta&=&-\varepsilon_2\frac12(h-1-\beta)\delta_5+\varepsilon_3\frac12\delta_6\\
		R_{15}^\delta&=&-\varepsilon_3\frac12\left(h-2\alpha+\beta\right)\delta_6\\
		R_{25}^\delta&=&-\frac12\left(\frac12+\alpha^2\right)^2+\frac12h^2+\frac12\left(-\frac12+\alpha-\beta\right)^2-\varepsilon_1\frac12\delta_4\\
		R_{35}^\delta&=&-\frac12\left(2\alpha-\beta\right)+\varepsilon_1\frac12\left(h-1-\beta\right)\delta_4\\
		R_{16}^\delta&=&\varepsilon_2\frac12\left(h-2\alpha+\beta\right)\delta_5\\
		R_{26}^\delta&=&-\frac12\left(2\alpha-\beta\right)+\varepsilon_1\frac12\left(-h-1+\beta\right)\delta_4\\
		R_{36}^\delta&=&\frac12\left(-\frac12+\alpha\right)^2-\frac12h^2-\frac12\left(\frac12+\alpha-\beta\right)^2-\varepsilon_1\frac12\delta_4.
	\end{eqnarray*}
	First, equation $ R_{63}^\delta-R_{36}^\delta=0 $ yields $ \delta_1=\delta_4 $. Furthermore, due to $ 0=R_{53}^\delta-R_{62}^\delta=\varepsilon_1\left(h+\beta\right)\delta_1 $ and $ 0=R_{35}^\delta-R_{26}^\delta=\varepsilon_1\left(h-\beta\right)\delta_4=\varepsilon_1\left(h-\beta\right)\delta_1 $, we have $ \varepsilon_1\beta\delta_1=0 $. If $ \delta_1=0 $, then we see from $ 0=R_{53}^\delta=-\frac12\left(2\alpha-\beta\right) $ that $ 2\alpha=\beta $. Then $ 0=R_{63}^\delta=\frac12\left(-\frac12+\alpha\right)^2-\frac12h^2-\frac12\left(\frac12-\alpha\right)^2=-\frac12h^2 $ and $ h=0 $. Equation $ R_{41}^\delta=0 $ shows $ \beta=0 $ and therefore $ \alpha=0 $. Furthermore, $ R_{51}^\delta=0 $ shows $ \delta_2=\delta_3 $ and $ R_{15}^\delta=0 $ shows $ \delta_5=\delta_6 $, because $ \varepsilon_2=-\varepsilon_3 $. If otherwise $ \beta=0 $, we see again from $ R_{41}^\delta=0 $ that $ h=0 $ and also $ \delta_2=\delta_3 $ and $ \delta_5=\delta_6 $, because of  $ R_{51}^\delta=0 $ and $ R_{15}^\delta=0 $, respectively. Finally, $ 0=\alpha\delta_2=\alpha\delta_3=\alpha\delta_2=\alpha\delta_3 $ due to $ 0=R_{42}^\delta=R_{43}^\delta=R_{15}^\delta=R_{16}^\delta $, as well as $ \alpha=-\varepsilon_1\frac12\delta_1=-\varepsilon_1\frac12\delta_4 $ due to $ R_{62}^\delta=R_{26}^\delta=0 $.
	
	In a similar way we obtain the same equations for the normal form $ L_4(\alpha, \beta) $. 
	
	Finally the equations for $ Ric^+_\delta $ for the normal form $ L_5(\alpha) $ are 
	\begin{eqnarray*}
		R_{41}^\delta&=&-1-\frac12\alpha^2+\frac12h^2+\varepsilon_3\frac{1}{\sqrt2}\delta_3\\
		R_{42}^\delta&=&-\frac{1}{\sqrt2}\alpha+\varepsilon_3\frac12\left(h+\alpha\right)\delta_3\\
		R_{43}^\delta&=&-1-\varepsilon_1\frac{1}{\sqrt2}\delta_1-\varepsilon_2\frac12\left(h+\alpha\right)\delta_2\\
		R_{51}^\delta&=&-\frac{1}{\sqrt2}\alpha+\varepsilon_2\frac{1}{\sqrt2}\delta_2+\varepsilon_3\frac12\left(-h-\alpha\right)\delta_3\\
		R_{52}^\delta&=&-\frac12\alpha^2+\frac12h^2-\varepsilon_{1}\frac{1}{\sqrt2}\delta_1-\varepsilon_3\frac{1}{\sqrt2}\delta_3\\
		R_{53}^\delta&=&-\frac{1}{\sqrt2}\alpha-\varepsilon_1\frac12\left(-h-\alpha\right)\delta_1+\varepsilon_2\frac{1}{\sqrt2}\delta_2\\
		R_{61}^\delta&=&-1+\varepsilon_2\frac12\left(h+\alpha\right)\delta_2+\varepsilon_3\frac{1}{\sqrt2}\delta_3\\
		R_{62}^\delta&=&-\frac{1}{\sqrt2}\alpha-\varepsilon_1\frac12\left(h+\alpha\right)\delta_1\\
		R_{63}^\delta&=&-1+\frac12\alpha^2-\frac12h^2-\varepsilon_1\frac{1}{\sqrt2}\delta_1.
	\end{eqnarray*}
	From $ 0=R_{41}^\delta-R_{52}^\delta-R_{63}^\delta=-\frac12\left(\alpha^2-h^2\right) $ we see $ \alpha^2=h^2 $. Therefore $ \varepsilon_3\delta_3=\sqrt2 $ by $ 0=R_{41}^\delta=-1+\varepsilon_3\frac{1}{\sqrt2}\delta_3 $ as well as $ \varepsilon_1\delta_1=-\sqrt2 $ by $ 0=R_{63}^\delta=-1-\varepsilon_1\frac{1}{\sqrt2}\delta_1 $. If now $ \alpha=-h $, then $ 0=R_{42}^\delta=-\frac{1}{\sqrt2}\alpha $ and $ \alpha=h=0 $. By $ 0=R_{53}^\delta=\varepsilon_2\frac{1}{\sqrt2}\delta_2 $ also $ \delta_2=0 $. If otherwise $ \alpha=h $, we have $ 0=R_{42}^\delta+R_{51}^\delta=-\sqrt2\alpha+\varepsilon_2\frac{1}{\sqrt2}\delta_2 $ and thus $ 2\alpha=\varepsilon_2\delta_2 $. But at the same time $ 0=R_{43}^\delta=-\varepsilon_2\alpha\delta_2 $. This is only possible, if $ \alpha=\delta_2=0 $. The equations for $ Ric^-_\delta $ are now 
	\begin{eqnarray*}
		R_{14}^\delta&=&-1+\varepsilon_3\frac{1}{\sqrt2}\delta_6\\
		R_{24}^\delta&=&\varepsilon_2\frac{1}{\sqrt2}\delta_5\\
		R_{34}^\delta&=&-1+\varepsilon_3\frac{1}{\sqrt2}\delta_6\\
		R_{15}^\delta&=&0\\
		R_{25}^\delta&=&-\varepsilon_1\frac{1}{\sqrt2}\delta_4-\varepsilon_3\frac{1}{\sqrt2}\delta_6\\
		R_{35}^\delta&=&0\\
		R_{16}^\delta&=&-1-\varepsilon_1\frac{1}{\sqrt2}\delta_4\\
		R_{26}^\delta&=&\varepsilon_2\frac{1}{\sqrt2}\delta_5\\
		R_{36}^\delta&=&-1-\varepsilon_{1}\frac{1}{\sqrt2}\delta_4. 
	\end{eqnarray*}
	This finally yields $ \delta_5=0 $ and $ \varepsilon_1\delta_4=-\varepsilon_3\delta_6=-\sqrt2 $.
\end{proof}
\begin{thm} 
\label{divnotzerounimod:thm}
	Let $(H,\mathcal{G}_g,\delta)$ be a generalized Einstein structure on an oriented three-dimensional unimodular Lie group $G$. If the endomorphism $L\in \mathrm{End}\, \mathfrak g$ defined in (\ref{Ldef:eq}) is diagonalizable, then there exists an oriented $g$-orthonormal basis $(v_a)$ of $\mathfrak g = \mathrm{Lie}\, G$ and $\alpha_1,\alpha_2,\alpha_3, h \in \mathbb{R}$ such that 
	\begin{equation*}
		[v_a,v_b] = \alpha_c \varepsilon_cv_c,\quad \forall\quad\mbox{cyclic}\quad(a,b,c)\in \mathfrak{S}_3,\quad H=h\mathrm{vol}_g,
	\end{equation*}
	where $\varepsilon_a=g(v_a,v_a)$ satisfies $\varepsilon_1= \varepsilon_2$. The constants $(\alpha_1,\alpha_2,\alpha_3,h)$ can take the following values.
	\begin{enumerate}
		\item $\alpha_1=\alpha_2=\alpha_3=h=0$, in which case $\mathfrak g$ is abelian. The divergence can take an arbitrary value in $ E^* $.
		\item $\alpha_1=\alpha_2=\alpha_3=\pm h\neq0$, and $\mathfrak g$ is isomorphic to $\mathfrak{so}(2,1)$ or $\mathfrak{so}(3)$. The case $\mathfrak{so}(3)$ occurs precisely when $g$ is definite. Furthermore $ \delta|_{E_\pm}=0 $.
		\item There exists a cyclic permutation $\sigma\in \mathfrak{S}_3$ such that such that 
		\[ \alpha_{\sigma (1)}=\alpha_{\sigma (2)}\neq0\quad\mbox{and}\quad h=\alpha_{\sigma (3)}=0.\]
		In this case $[\mathfrak g, \mathfrak g]$ is abelian of dimension $2$, that is $\mathfrak g$ is metabelian. More precisely, $\mathfrak g$ is isomorphic to $\mathfrak{e}(2)$ ($g$ definite  on $ [\mathfrak{g},\mathfrak{g}] $) or $\mathfrak{e}(1,1)$ ($g$ indefinite on $ [\mathfrak{g},\mathfrak{g}] $). The components of the divergence $ \delta $ satisfy $ \delta_{\sigma(1)}=\delta_{\sigma(2)}=\delta_{\sigma(1)+3}=\delta_{\sigma(2)+3}=0 $
	\end{enumerate}
	If $ L $ is not diagonalizable, then $ h=0 $.
	\begin{enumerate}
		\item If $ L $ takes the normal form $ L_3(0,0) $ or $ L_4(0,0) $, then the Lie algebra $ \mathfrak{g} $ is isomorphic to the Heisenberg algebra $ \mathfrak{heis} $. In this case $ \delta_1=\delta_4=0 $, $ \delta_2=\delta_3 $ and $ \delta_5=\delta_6 $.
		\item If $ L $ takes the normal form $ L_3(\alpha,0) $ or $ L_4(\alpha,0) $, $ \alpha\ne0 $, then $ \mathfrak{g} $ is isomorphic to $ \mathfrak{e}(1,1) $. In these cases $ -\varepsilon_1\frac12\delta_1=-\varepsilon_1\frac12\delta_4=\alpha $ as well as $ \delta_2=\delta_3=\delta_5=\delta_6=0 $.
		\item If $ L $ takes the normal form $ L_5(0) $, then $ \mathfrak{g} $ is isomorphic to $ \mathfrak{e}(1,1) $. In this case  $ \varepsilon_1\delta_1=-\varepsilon_3\delta_3=\varepsilon_1\delta_4=-\varepsilon_3\delta_6=-\sqrt2 $ and $ \delta_2=\delta_5=0 $.
	\end{enumerate}
%
\end{thm}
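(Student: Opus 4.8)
The plan is to combine the normal-form reduction of Proposition~\ref{DivergenceNF:prop} with the Lie-algebra identifications of Theorem~\ref{Einstein:thm}. By Proposition~\ref{DivergenceNF:prop} we may assume, after choosing a suitable orthonormal basis with $g(v_1,v_1)=g(v_2,v_2)$, that $L$ is either diagonal (form $L_1(\alpha_1,\alpha_2,\alpha_3)$) or one of $L_3(\alpha,0)$, $L_4(\alpha,0)$, $L_5(0)$; in the latter, non-diagonalizable cases that same proposition already records $h=0$ together with the admissible components of $\delta$. Thus the two things left to establish are the finer classification within the diagonal family together with its divergence constraints, and the isomorphism type of $\mathfrak{g}$ for each surviving normal form.

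First I would dispatch the diagonalizable case. With $L=\mathrm{diag}(\alpha_1,\alpha_2,\alpha_3)$ and $H=h\,\mathrm{vol}_g$, the Dorfman coefficients are exactly those tabulated in the proof of Theorem~\ref{Einstein:thm}, encoded by the quantities $X_a$ and $Y_a$ introduced there. The decisive observation is that the diagonal Ricci components $R^\delta_{41},R^\delta_{52},R^\delta_{63}$ receive no divergence correction, since the coefficients $\mathcal{B}_{iac}$ that would enter vanish on the diagonal; hence they coincide verbatim with the divergence-free system $X_1Y_2+X_2Y_1=X_1Y_3+X_3Y_1=X_2Y_3+X_3Y_2=0$ of Theorem~\ref{Einstein:thm}. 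Consequently the trichotomy of that theorem is inherited unchanged---abelian ($h=0$, case 1), simple $\mathfrak{so}(3)/\mathfrak{so}(2,1)$ ($\alpha_1=\alpha_2=\alpha_3=\pm h\neq0$, case 2), metabelian $\mathfrak{e}(2)/\mathfrak{e}(1,1)$ (case 3), the type-$1$ pattern being excluded as before---together with the stated Lie-algebra identifications. The divergence constraints then come from the off-diagonal components: setting $\lambda=\mu=0$ in the list of Proposition~\ref{DivergenceNF:prop}, these collapse to the bilinear relations $Y_a\delta_b=0$ (from $Ric^+$) and $X_a\delta_i=0$ (from $Ric^-$). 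Reading them off case by case, in case 1 every $X_a,Y_a$ vanishes so $\delta$ is unrestricted; in case 2 the sign $\alpha_c=\pm h$ makes exactly one of the families $(X_a),(Y_a)$ vanish identically and the other nowhere zero, forcing the matching restriction $\delta|_{E_\pm}=0$ (the sign agreeing with $\alpha_c=\pm h$); in case 3 exactly one $X_a$ and one $Y_a$ survive, both in the direction $\sigma(3)$, which forces the four components $\delta_{\sigma(1)},\delta_{\sigma(2)},\delta_{\sigma(1)+3},\delta_{\sigma(2)+3}$ to vanish. Back-substitution into Theorem~\ref{Ric:thm} then confirms that these conditions are also sufficient.

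It remains to identify $\mathfrak{g}$ in the non-diagonalizable cases, where the admissible $\delta$ and the vanishing of $H$ are already supplied by Proposition~\ref{DivergenceNF:prop}. Here I would write the brackets out explicitly from (\ref{Liebracket:eq}), exhibit the two-dimensional abelian commutator ideal $\mathfrak{u}=[\mathfrak{g},\mathfrak{g}]$, and compute the eigenvalues of the adjoint action of a transverse generator on $\mathfrak{u}$. For $L_3(0,0)$ and $L_4(0,0)$ this adjoint is nilpotent, recovering $\mathfrak{heis}$ exactly as in Theorem~\ref{Einstein:thm}; for $L_3(\alpha,0)$ and $L_4(\alpha,0)$ with $\alpha\neq0$ its characteristic polynomial is $\lambda^2-\alpha^2$, and for $L_5(0)$ the eigenvalues come out as $\pm\tfrac{1}{\sqrt2}$. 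In both of the latter situations the transverse adjoint is a trace-free endomorphism with real nonzero eigenvalues of opposite sign, which pins down $\mathfrak{e}(1,1)$.

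I expect the main obstacle to be the bookkeeping of the diagonal case: although the decoupling of the diagonal equations from $\delta$ makes the Lie-algebra part immediate, tracking which individual divergence components are killed by the bilinear relations $X_a\delta_i=0$, $Y_a\delta_b=0$ in each of the three cases, and then verifying sufficiency by back-substitution, is where the genuine effort lies. A secondary subtlety is the indefinite-signature identification of $\mathfrak{e}(1,1)$ for $L_5(0)$, where the abelian ideal is spanned not by basis vectors but by $v_2$ and $v_1-v_3$, so the transverse element and the ideal must be chosen with care before the eigenvalue computation becomes transparent.
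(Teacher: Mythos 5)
Your proposal is correct and follows essentially the same route as the paper: the paper likewise observes that in the diagonalizable case the diagonal components $R^\delta_{41},R^\delta_{52},R^\delta_{63}$ carry no divergence correction (so the $(\alpha_a,h)$-classification of Theorem~\ref{Einstein:thm} is inherited), reads off the divergence constraints from the off-diagonal products $Y_a\delta_b=0$, $X_a\delta_{b+3}=0$ in each of the three cases, and handles the non-diagonalizable normal forms via Proposition~\ref{DivergenceNF:prop} together with the eigenvalues of the transverse adjoint action on the two-dimensional abelian ideal (your eigenvalues $\pm\tfrac{1}{\sqrt 2}$ for $L_5(0)$ correspond to the transverse generator $v_1$, the paper's $\pm\sqrt 2$ to $v_1+v_3$; both give $\mathfrak{e}(1,1)$).
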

\begin{proof}
	Assume first $ L $ is diagonalizable. To compute the components of the Ricci curvature, we use the formulas for the Dorfman coefficients and the notation for variables $ X_a $ and $ Y_a $ from the proof of Theorem~\ref{Einstein:thm}. 
	\begin{eqnarray*}
		R_{41}^\delta&=&R_{41}+\varepsilon_2\mathcal{B}_{412}\delta_2+\varepsilon_3\mathcal{B}_{413}\delta_3\\
		&=&R_{41}\\
		R_{42}^\delta&=&R_{42}+\varepsilon_1\mathcal{B}_{421}\delta_1+\varepsilon_3\mathcal{B}_{423}\delta_3\\
		&=&\frac12\varepsilon_3\delta_3 Y_1\\
		R_{43}^\delta&=&R_{43}+\varepsilon_1\mathcal{B}_{431}\delta_1+\varepsilon_2\mathcal{B}_{432}\delta_2\\
		&=&-\frac12\varepsilon_2\delta_2 Y_1\\
		R_{51}^\delta&=&R_{51}+\varepsilon_2\mathcal{B}_{512}\delta_2+\varepsilon_3\mathcal{B}_{513}\delta_3\\
		&=&-\frac12\varepsilon_3\delta_3 Y_2\\
		R_{52}^\delta&=&R_{52}+\varepsilon_1\mathcal{B}_{521}\delta_1+\varepsilon_3\mathcal{B}_{523}\delta_3\\
		&=&R_{52}\\
		R_{53}^\delta&=&R_{53}+\varepsilon_1\mathcal{B}_{531}\delta_1+\varepsilon_2\mathcal{B}_{532}\delta_2\\
		&=&\frac12\varepsilon_1\delta_1 Y_2\\
		R_{61}^\delta&=&R_{61}+\varepsilon_2\mathcal{B}_{612}\delta_2+\varepsilon_3\mathcal{B}_{613}\delta_3\\
		&=&\frac12\varepsilon_2\delta_2 Y_3\\
		R_{62}^\delta&=&R_{62}+\varepsilon_1\mathcal{B}_{621}\delta_1+\varepsilon_3\mathcal{B}_{623}\delta_3\\
		&=&-\frac12\varepsilon_1\delta_1 Y_3\\
		R_{63}^\delta&=&R_{63}+\varepsilon_1\mathcal{B}_{631}\delta_1+\varepsilon_2\mathcal{B}_{632}\delta_2\\
		&=&R_{63}\\
		R_{14}^\delta&=&R_{41}-\varepsilon_2\mathcal{B}_{145}\delta_5-\varepsilon_3\mathcal{B}_{146}\delta_6\\
		&=&R_{41}\\
		R_{24}^\delta&=&R_{42}-\varepsilon_2\mathcal{B}_{245}\delta_5-\varepsilon_3\mathcal{B}_{246}\delta_6\\
		&=&\frac12\varepsilon_3\delta_6 X_2\\
		R_{34}^\delta&=&R_{43}-\varepsilon_2\mathcal{B}_{345}\delta_5-\varepsilon_3\mathcal{B}_{346}\delta_6\\
		&=&-\frac12\varepsilon_2\delta_5 X_3\\
		R_{15}^\delta&=&R_{51}-\varepsilon_1\mathcal{B}_{154}\delta_4-\varepsilon_3\mathcal{B}_{156}\delta_6\\
		&=&-\frac12\varepsilon_3\delta_6 X_1\\
		R_{25}^\delta&=&R_{52}-\varepsilon_1\mathcal{B}_{254}\delta_4-\varepsilon_3\mathcal{B}_{256}\delta_6\\
		&=&R_{52}\\
		R_{35}^\delta&=&R_{53}-\varepsilon_1\mathcal{B}_{354}\delta_4-\varepsilon_3\mathcal{B}_{356}\delta_6\\
		&=&\frac12\varepsilon_1\delta_4 X_3\\
		R_{16}^\delta&=&R_{61}-\varepsilon_1\mathcal{B}_{164}\delta_4-\varepsilon_2\mathcal{B}_{165}\delta_5\\
		&=&\frac12\varepsilon_2\delta_5 X_1\\
		R_{26}^\delta&=&R_{62}-\varepsilon_1\mathcal{B}_{264}\delta_4-\varepsilon_2\mathcal{B}_{265}\delta_5\\
		&=&-\frac12\varepsilon_1\delta_4 X_2\\
		R_{36}^\delta&=&R_{63}-\varepsilon_1\mathcal{B}_{364}\delta_4-\varepsilon_2\mathcal{B}_{365}\delta_5\\
		&=&R_{63}.
	\end{eqnarray*}
	Note that if $(H,\mathcal G_g,\delta)$ is a generalized Einstein structure, also $(H,\mathcal G_g, 0)$ is. Therefore, as in the proof of Theorem~\ref{Einstein:thm}, we can distinguish cases depending on how many components of the vector $(X_1,X_2,X_3)$ are equal to zero. 
	
	Solutions of type $0$: $X_1X_2X_3\neq 0$ implies $ \delta_4=\delta_5=\delta_6=0 $. Furthermore recall that $Y_1=Y_2=Y_3=0$ and \[ \alpha_1=\alpha_2=\alpha_3=-h\neq0.\] In this case the Lie algebra $\mathfrak g$ is isomorphic to $\mathfrak{so}(2,1)$ ($ g $ indefinite) or $\mathfrak{so}(3)$ ($ g $ definite). 
	
	We have seen that solutions of type $1$ do not exist.
	
	Solutions of type $2$: assume for example that $X_1\neq 0$, $X_2=X_3=0$. This implies $ \delta_5=\delta_6=0 $. Moreover we have seen that $Y_2=Y_3=0$, $h=\alpha_1=0$ and $\alpha_2=\alpha_3\neq0$. This shows $ Y_1\neq 0 $ and thus $ \delta_2=\delta_3=0 $. So the solutions of type $2$ are of the following form. There exists a cyclic permutation $\sigma\in \mathfrak{S}_3$ such that \[ \alpha_{\sigma (1)}=\alpha_{\sigma (2)} \neq0\quad\mbox{and}\quad h=\alpha_{\sigma (3)}=\delta_{\sigma(1)}=\delta_{\sigma(2)}=\delta_{\sigma(1)+3}=\delta_{\sigma(2)+3}=0.\]
	As in the divergence-free case, we conclude that $\mathfrak g$ is metabelian. The commutator ideal $[\mathfrak g,\mathfrak g]= \mathrm{span}\{ v_{\sigma (1)},v_{\sigma (2)}\}$ is two-dimensional and $\mathrm{ad}_{v_{\sigma (3)}}$ acts on it by a non-zero $g$-skew-symmetric endomorphism. This implies that $\mathfrak g$ is isomorphic to $\mathfrak{e}(2)$ or $\mathfrak{e}(1,1)$. 
	
	Solutions of type $3$: assume $X_1=X_2=X_3=0$. This implies 
	\[ \alpha_1=\alpha_2=\alpha_3=h.\] 
	If $ h=0 $, then $ Y_1=Y_2=Y_3=0 $ and $ \delta\in E^* $ arbitrary, and if $ h\neq0 $, then $ Y_1=Y_2=Y_3=2h\neq0 $ and therefore $ \delta_1=\delta_2=\delta_3=0 $. 
	
	By Proposition~\ref{DivergenceNF:prop}, if $ L $ is not diagonalizable, it is of the forms $ L_3(\alpha,0) $, $ L_4(\alpha,0) $ or $ L_5(0) $ and the divergence has the claimed properties. From Theorem~\ref{Einstein:thm} we know that $ G $ is the Heisenberg group if $ L $ takes the normal for $ L_3(0,0) $ or $ L_4(0,0) $. If $ \alpha\ne0 $,  $\mathrm{ad}_{v_1}$ acts on $[\mathfrak g,\mathfrak g]= \mathrm{span}\{ v_2,v_3\}$ by a symmetric endomorphism with eigenvalues $ \alpha $ and $ -\alpha $. Therefore $ \mathfrak{g}\cong\mathfrak{e}(1,1) $.
	
	If $ L $ takes the normal form $ L_5(0) $, one can show that the only unimodular Lie algebra whose Killing form has the same signature as the one of $ \mathfrak{g} $, is the Lie algebra $ \mathfrak{e}(1,1) $. Alternatively, one can check that $\mathrm{ad}_{v_1+v_3}$ acts on $ \mathrm{span}\{ v_2,v_1-v_3\} $ a symmetric endomorphism with eigenvalues $ \sqrt2 $ and $ -\sqrt2 $. Therefore again $ \mathfrak{g}\cong\mathfrak{e}(1,1) $.
\end{proof}
\begin{rem}
	Except for the cases that the endomorphism $ L $ takes the normal form $ L_3(\alpha,0)$, $L_4(\alpha,0) $ ($ \alpha\ne0 $) and $ L_5(0) $, the solutions are such that the Ricci tensor for zero divergence and the contribution of the divergence to the Ricci tensor vanish simultaneously. 
\end{rem}
\begin{cor}Let $(H,\mathcal{G}_g,\delta)$ be a generalized Einstein structure on an  three-dimensional Lie group $G$. 
Then the left-invariant metric defined by $g$ is bi-invariant if and only if $\mathfrak{g}$ is isomorphic to $\mathfrak{so}(3)$, 
$\mathfrak{so}(2,1)$ or $\mathbb{R}^3$. 
\end{cor}
\begin{proof} This follows from the fact that the only three-dimensional Lie algebras admitting 
an ad-invariant scalar product are the above three Lie algebras together with our classification 
of generalized Einstein structures on these Lie algebras. 
\end{proof}

\subsubsection{Non-unimodular Lie groups}
\label{unimoddivnotzero_notunimod:sec}
\begin{prop}
\label{divnotzero_u_nondeg:prop}
	Let $(H,\mathcal G_g,\delta)$ be a generalized Einstein structure on an three-dimensional non-unimodular Lie group $G$. Let $ \mathfrak{u} $ be the unimodular kernel of the Lie algebra $ \mathfrak{g} $ and assume that $ g|_{\mathfrak{u}\times\mathfrak{u}} $ is non-degenerate. Then there exists an orthonormal basis $(v_a)$ of $(\mathfrak{g},g)$ such that $ v_1,v_3\in\mathfrak{u} $ and $g(v_1,v_1)=g(v_2,v_2)=-g(v_3,v_3)$.  Furthermore $\delta_2=\delta_5$. If $ \delta_2=\delta_5=0 $, then $\delta=0$, $ h=0 $ and one can choose $ v_1 $ and $ v_3 $ such that there is a positive constant $ \theta>0 $ such that 
	\begin{eqnarray*}
		\left[v_2,v_1\right]&=&\theta v_1-\theta v_3\\
		\left[v_2,v_3\right]&=&\theta v_1+\theta v_3.
	\end{eqnarray*}
	If $ \delta_2=\delta_5\neq0 $, $ M\coloneqq \mathrm{ad}_{v_2}|_\mathfrak{u} $ is diagonalizable. We have $ h^2=\left(\tr M\right)^2\ne0 $ and $ \delta_2=\delta_5=-\tr M \ne0 $. In the special case that $ M $ has a double eigenvalue, it is diagonalizable by an orthonormal basis. That is, one can choose $ v_1 $ and $ v_3 $ such that there exists a positive constant $ \theta>0 $ such that
	\begin{eqnarray*}
		\left[v_2,v_1\right]&=&\theta v_1\\
		\left[v_2,v_3\right]&=&\theta v_3.
	\end{eqnarray*}
	In this case $ h^2=\left(2\theta\right)^2\ne0 $ and $ \delta_2=\delta_5=-2\theta\ne0 $. Furthermore $ \delta_1=\delta_3=\delta_4=\delta_6=0 $.
In the case  $ \delta_2=\delta_5\neq 0 $ and two distinct real eigenvalues of $M$ there are the following families of solutions of the 
generalized Einstein equation:
\begin{enumerate}
\item $h=\pm 2\lambda$,  $\delta_2=\delta_5= -2\varepsilon_2\lambda$ and 
$\delta_1=\delta_3=\delta_4=\delta_6=0$, 
\[
M=\left( 
\begin{array}{cc}
		\varepsilon_1\lambda &-\varepsilon_1\mu\\
		\varepsilon_3\mu&-\varepsilon_{3}\lambda
	\end{array}
	\right) 
	\]
where $\lambda,\mu \in \mathbb{R}\setminus \{0\}$ and $|\mu |\neq |\lambda|$. 
\item[2A.] $h=\mu -\nu$,  $\delta_2=\delta_5=\varepsilon_2(-\mu +\nu)$, $\delta_4=\delta_6=0$, $\delta_1$ and $\delta_3$ are related by  $\mu \delta_1-\nu \delta_3=0$
and 
\[
M=\left( 
\begin{array}{cc}
		\varepsilon_1\mu&\varepsilon_1\nu\\
		\varepsilon_3\mu&\varepsilon_{3}\nu
	\end{array}
	\right) 
	\]
where $\mu, \nu \in \mathbb{R}$ are such that $\mu -\nu\neq 0$. 
\item[2B.] $h=\mu -\nu$,  $\delta_2=\delta_5=\varepsilon_2(\mu -\nu)$, $\delta_4=\delta_6=0$, $\delta_1$ and $\delta_3$ are related by  $\mu \delta_1+\nu \delta_3=0$ and 
\[
M=\left( 
\begin{array}{cc}-\varepsilon_1\mu &\varepsilon_1\nu\\		
\varepsilon_3\mu&-\varepsilon_{3}\nu
\end{array}
	\right) 
	\]	
where $\mu, \nu \in \mathbb{R}$ are such that $\mu -\nu\neq 0$. 
\item[2C.] $h=2\mu$,  $\delta_2=\delta_5=2\varepsilon_2 \mu$, $\delta_1=\delta_3$, $\delta_4=\delta_6=0$ and 
\[
M=\left( 
\begin{array}{cc}-\varepsilon_1\mu &-\varepsilon_1\mu\\
		\varepsilon_3\mu &\varepsilon_{3}\mu
		\end{array}
	\right) 
	\]		
where $\mu\in\mathbb{R}\setminus \{0\}$. 
\item[2D.] $h=2\mu$,  $\delta_2=\delta_5= -2\varepsilon_2\mu$, $\delta_1=-\delta_3$, $\delta_4=\delta_6=0$ and 
\[
M=\left( 
\begin{array}{cc}\varepsilon_1\mu&-\varepsilon_1\mu\\
		\varepsilon_3\mu&-\varepsilon_{3}\mu
	\end{array}
	\right) 
	\]	
where $\mu\in\mathbb{R}\setminus \{ 0\}$. 
\item[3A.] $h=\nu-\mu$, $\delta_2=\delta_5= \varepsilon_2(\nu -\mu)$, $\delta_1=\delta_3=0$, $\delta_4$ and $\delta_6$ are related by $\mu\delta_4-\nu\delta_6=0$ and 
\[
M=\left( 
\begin{array}{cc}\varepsilon_1\mu& \varepsilon_1\nu \\
		\varepsilon_3\mu&\varepsilon_{3}\nu 
	\end{array}
	\right) 
	\]	
where $\mu, \nu \in \mathbb{R}$ are such that $\mu -\nu\neq 0$. 
\item[3B.] $h=\nu-\mu$, $\delta_2=\delta_5= \varepsilon_2(\mu -\nu)$, $\delta_1=\delta_3=0$, $\delta_4$ and $\delta_6$ are related by $\mu \delta_4+\nu\delta_6=0$ and 
\[
M=\left( 
\begin{array}{cc}
-\varepsilon_1\mu&\varepsilon_1\nu\\
		 \varepsilon_3\mu&-\varepsilon_{3}\nu
	\end{array}
	\right) 
	\]
where $\mu, \nu \in \mathbb{R}$ are such that $\mu -\nu\neq 0$. 
\item[3C.] $h=-2\mu$, $\delta_2=\delta_5= 2\varepsilon_2\mu$, $\delta_1=\delta_3=0$, $\delta_4=\delta_6$ and 
\[ M=\left( 
\begin{array}{cc}-\varepsilon_1\mu &-\varepsilon_1\mu\\
		\varepsilon_3\mu &\varepsilon_{3}\mu	
	\end{array}
	\right) 
	\]
where $\mu\in \mathbb{R}\setminus \{ 0\}$. 
\item[3D.] $h=-2\mu$, $\delta_2=\delta_5= -2\varepsilon_2\mu$, $\delta_1=\delta_3=0$, $\delta_4=-\delta_6$ and 
\[ M=\left( 
\begin{array}{cc}
	\varepsilon_1\mu &-\varepsilon_1\mu \\
		\varepsilon_3\mu &-\varepsilon_{3}\mu 
\end{array}
	\right) 
	\]	
where $\mu\in \mathbb{R}\setminus \{ 0\}$. 
\end{enumerate} 
\end{prop}
\begin{proof}
	As in the proof of Proposition~\ref{non-unimodular,non-degenerate:prop} there exists a $ g $-orthonormal basis $ (v_a)_a $ of $ \mathfrak g $ such that $ v_1,v_3\in\mathfrak{u} $ and $ \lambda,\mu,\nu,\rho\in\R $ such that 
	\begin{eqnarray*}
		[v_3,v_1]&=&0\\
		\left[v_2,v_1\right]&=&\varepsilon_1\lambda v_1+\varepsilon_3\mu v_3\\
		\left[v_2,v_3\right]&=&\varepsilon_1\nu v_1+\varepsilon_{3}\rho v_3
	\end{eqnarray*}
	with $ 0\neq\tr \mathrm{ad}_{v_2}=\varepsilon_1\lambda +\varepsilon_{3}\rho $. Using the Dorfman coefficients, that were computed in the proof of Proposition~\ref{non-unimodular,non-degenerate:prop}, we obtain the components of the Ricci tensor. 
	
	In the case $ \varepsilon_1=\varepsilon_3 $, we have 
	\begin{eqnarray*}
		R^\delta_{52}&=&R_{52}+\varepsilon_1\mathcal{B}_{521}\delta_1+\varepsilon_3\mathcal{B}_{523}\delta_3\\
		&=&-\lambda^2-\frac14\left(h-\mu-\nu\right)^2-\frac14\left(h+\mu+\nu\right)^2-\rho^2,
	\end{eqnarray*}
	which is always non-zero due to $ 0\neq\varepsilon_1\lambda +\varepsilon_{3}\rho $. Hence, we can assume that the basis is chosen such that $ \varepsilon_{1}=\varepsilon_{2}=-\varepsilon_{3} $.
	
	In this case the components of the Ricci tensor are 
	\begin{eqnarray*}
		R_{41}^\delta&=&R_{41}+\varepsilon_2\mathcal{B}_{412}\delta_2+\varepsilon_3\mathcal{B}_{413}\delta_3\\
		&=&\frac12\left(h^2+\mu^2-\nu^2\right)+\varepsilon_2\lambda\delta_2\\
		R_{42}^\delta&=&R_{42}+\varepsilon_1\mathcal{B}_{421}\delta_1+\varepsilon_3\mathcal{B}_{423}\delta_3\\
		&=&-\varepsilon_1\lambda\delta_1+\varepsilon_3\frac12\left(h-\mu-\nu\right)\delta_3\\
		R_{43}^\delta&=&R_{43}+\varepsilon_1\mathcal{B}_{431}\delta_1+\varepsilon_2\mathcal{B}_{432}\delta_2\\
		&=&-\frac12\lambda\left(h-\mu+\nu\right)+\frac12\rho\left(h+\mu-\nu\right)-\varepsilon_2\frac12\left(h-\mu-\nu\right)\delta_2\\
		R_{51}^\delta&=&R_{51}+\varepsilon_2\mathcal{B}_{512}\delta_2+\varepsilon_3\mathcal{B}_{513}\delta_3\\
		&=&-\varepsilon_3\frac12\left(h-\mu+\nu\right)\delta_3\\
		R_{52}^\delta&=&R_{52}+\varepsilon_1\mathcal{B}_{521}\delta_1+\varepsilon_3\mathcal{B}_{523}\delta_3\\
		&=&-\lambda^2+\frac12h^2+\frac12\left(\mu+\nu\right)^2-\rho^2\\
		R_{53}^\delta&=&R_{53}+\varepsilon_1\mathcal{B}_{531}\delta_1+\varepsilon_2\mathcal{B}_{532}\delta_2\\
		&=&\varepsilon_1\frac12\left(h-\mu+\nu\right)\delta_1\\
		R_{61}^\delta&=&R_{61}+\varepsilon_2\mathcal{B}_{612}\delta_2+\varepsilon_3\mathcal{B}_{613}\delta_3\\
		&=&\frac12\lambda\left(h+\mu-\nu\right)-\frac12\rho\left(h-\mu+\nu\right)+\varepsilon_2\frac12\left(h+\mu+\nu\right)\delta_2\\
		R_{62}^\delta&=&R_{62}+\varepsilon_1\mathcal{B}_{621}\delta_1+\varepsilon_3\mathcal{B}_{623}\delta_3\\
		&=&-\varepsilon_1\frac12\left(h+\mu+\nu\right)\delta_1-\varepsilon_3\rho\delta_3\\
		R_{63}^\delta&=&R_{63}+\varepsilon_1\mathcal{B}_{631}\delta_1+\varepsilon_2\mathcal{B}_{632}\delta_2\\
		&=&-\frac12\left(h^2-\mu^2+\nu^2\right)+\varepsilon_2\rho\delta_2\\
		R_{14}^\delta&=&R_{41}-\varepsilon_2\mathcal{B}_{145}\delta_5-\varepsilon_3\mathcal{B}_{146}\delta_6\\
		&=&\frac12\left(h^2+\mu^2-\nu^2\right)+\varepsilon_2\lambda\delta_5\\
		R_{24}^\delta&=&R_{42}-\varepsilon_2\mathcal{B}_{245}\delta_5-\varepsilon_3\mathcal{B}_{246}\delta_6\\
		&=&\varepsilon_3\frac12\left(h+\mu-\nu\right)\delta_6\\
		R_{34}^\delta&=&R_{43}-\varepsilon_2\mathcal{B}_{345}\delta_5-\varepsilon_3\mathcal{B}_{346}\delta_6\\
		&=&-\frac12\lambda\left(h-\mu+\nu\right)+\frac12\rho\left(h+\mu-\nu\right)-\varepsilon_2\frac12\left(h-\mu-\nu\right)\delta_5\\
		R_{15}^\delta&=&R_{51}-\varepsilon_1\mathcal{B}_{154}\delta_4-\varepsilon_3\mathcal{B}_{156}\delta_6\\
		&=&-\varepsilon_1\lambda\delta_4-\varepsilon_3\frac12\left(h+\mu+\nu\right)\delta_6\\
		R_{25}^\delta&=&R_{52}-\varepsilon_1\mathcal{B}_{254}\delta_4-\varepsilon_3\mathcal{B}_{256}\delta_6\\
		&=&-\lambda^2+\frac12h^2+\frac12\left(\mu+\nu\right)^2-\rho^2\\
		R_{35}^\delta&=&R_{53}-\varepsilon_1\mathcal{B}_{354}\delta_4-\varepsilon_3\mathcal{B}_{356}\delta_6\\
		&=&\varepsilon_1\frac12\left(h-\mu-\nu\right)\delta_4-\varepsilon_3\rho\delta_6\\
		R_{16}^\delta&=&R_{61}-\varepsilon_1\mathcal{B}_{164}\delta_4-\varepsilon_2\mathcal{B}_{165}\delta_5\\
		&=&\frac12\lambda\left(h+\mu-\nu\right)-\frac12\rho\left(h-\mu+\nu\right)+\varepsilon_2\frac12\left(h+\mu+\nu\right)\delta_5\\
		R_{26}^\delta&=&R_{62}-\varepsilon_1\mathcal{B}_{264}\delta_4-\varepsilon_2\mathcal{B}_{265}\delta_5\\
		&=&-\varepsilon_1\frac12\left(h+\mu-\nu\right)\delta_4\\
		R_{36}^\delta&=&R_{63}-\varepsilon_1\mathcal{B}_{364}\delta_4-\varepsilon_2\mathcal{B}_{365}\delta_5\\
		&=&-\frac12\left(h^2-\mu^2+\nu^2\right)+\varepsilon_2\rho\delta_5.
	\end{eqnarray*}
	Note first that $ R_{41}^\delta=R_{14}^\delta $ and $ R_{63}^\delta=R_{36}^\delta $ yield $ \varepsilon_2\lambda\left(\delta_2-\delta_5\right)=0 $ and $ \varepsilon_2\rho\left(\delta_2-\delta_5\right)=0 $. Therefore $ \delta_2=\delta_5 $, because $ 0\neq\varepsilon_1\lambda +\varepsilon_{3}\rho $. If $ \delta_2=\delta_5=0 $, we see that $ R_{ii'}^\delta=R_{i'i}^\delta=R_{ii'} $ for all $ i\in\{4,5,6\} $.  So we can deduce the same way as in the proof of Proposition~\ref{non-unimodular,non-degenerate:prop}, that $ h=0 $ and there is a positive constant $ \theta>0 $ such that $ \varepsilon_1\lambda=-\varepsilon_3\mu=\varepsilon_1\nu=\varepsilon_3\rho=\theta $. Then we see that $ 0=R_{42}^\delta=-\theta\delta_1+\theta\delta_3 $ and $ R_{15}^\delta=-\theta\delta_4+\theta\delta_6 $ imply $ \delta_1=\delta_3 $ and $ \delta_4=\delta_6 $. Similarly, $R_{62}^\delta=-\theta \delta_1 -\theta \delta_3=0$ and $R_{35}^\delta=-\theta \delta_4-\theta \delta_6=0$ imply 
	$ \delta_1=-\delta_3 $ and $ \delta_4=-\delta_6 $. This proves that $\delta=0$ if $\delta_2=\delta_5=0$. 
	Note that the endomorphism  $ M\in\End(\mathfrak{u}) $, defined as the restriction of $ \mathrm{ad}_{v_2} $ to $ \mathfrak{u} $, has the two complex eigenvalues $ \theta\pm i\theta $. Assume now $ \delta_2=\delta_5\neq0 $. Then 
	\begin{equation}\label{41-63:eq}
		 0=R_{41}^\delta-R_{63}^\delta=h^2+\varepsilon_2\delta_2\left(\lambda-\rho\right).
	\end{equation}
	Using $ \lambda-\rho=\varepsilon_1\left(\varepsilon_1\lambda+\varepsilon_3\rho\right)\neq0 $, we see $ h\neq0 $. From $ 0=R_{61}^\delta-R_{43}^\delta=h\left(\lambda-\rho+\varepsilon_2\delta_2\right) $ we see $ \varepsilon_2\delta_2=-\lambda+\rho=-\varepsilon_2\tr M $. Then equation (\ref{41-63:eq}) is equivalent to $ h^2=\left(\lambda-\rho\right)^2=\left(\tr M\right)^2 $. 
	Since
	\begin{eqnarray*}
		R_{52}^\delta&=&-\lambda^2+\frac12h^2+\frac12\left(\mu+\nu\right)^2-\rho^2\\
		&=&-\lambda^2-\rho^2+\frac12\left(\lambda-\rho\right)^2+\frac12\left(\mu+\nu\right)^2\\
		&=&-\frac12\left(\lambda+\rho\right)^2+\frac12\left(\mu+\nu\right)^2,
	\end{eqnarray*}
	the equation $ R_{52}^\delta=0 $ is equivalent to  
	\begin{equation}\label{52:eq}
		\left(\lambda+\rho\right)^2=\left(\mu+\nu\right)^2.
	\end{equation}
	Note now that the discriminant $ \Delta $ of the characteristic polynomial $ X^2-\varepsilon_1\left(\lambda-\rho\right)X-\lambda\rho+\mu\nu $ of $ M $ is 
	\begin{eqnarray*}
		\Delta&=&\left(\lambda-\rho\right)^2+4\lambda\rho-4\mu\nu\\
		&=&\left(\lambda+\rho\right)^2-4\mu\nu\\
		&=&\left(\mu+\nu\right)^2-4\mu\nu\\
		&=&\left(\mu-\nu\right)^2,
	\end{eqnarray*}
	which is never negative. Therefore $ M $ has real eigenvalues. Hence $ M $ is either diagonalizable with two distinct eigenvalues, or it has a double eigenvalue.  The latter happens precisely if the discriminant is zero, that is if $ \mu=\nu $. But then $ M $ is a symmetric endomorphism and hence takes one of the normal forms 
	\begin{eqnarray*}
		M_1(\theta,\eta )  &=& \left( \begin{array}{cc}
			\theta& 0\\
			0&\eta
		\end{array}\right),\quad 
		M_2(\theta,\eta )=\left( \begin{array}{cc}
			\theta& -\eta\\
			\eta&\theta
		\end{array}\right),\\
		M_3(\theta)&=&\left( \begin{array}{cc}\frac12+\theta &\frac12\\
			-\frac12&-\frac12 +\theta
		\end{array}\right),\quad 
		M_4(\theta)= \left( \begin{array}{cc} -\frac12+\theta &-\frac12\\
			\frac12&\frac12 +\theta\\
		\end{array}\right)
	\end{eqnarray*}
	with respect to an orthonormal basis $ v_1,v_3 $ of $ \mathfrak{u} $, as in the proof of Proposition \ref{non-unimodular,non-degenerate:prop}. If it takes the normal form $ M_1(\theta,\eta) $, then $ \theta=\eta $, since $ M $ has a double eigenvalue. Hence that $ M $ is of the form   
	\begin{equation*}
		M_1(\theta,\theta)=\left( \begin{array}{cc}
			\theta& 0\\
			0 &\theta
		\end{array}\right). 
	\end{equation*}
	We may assume $ \theta $ is positive by replacing $ v_2 $ with $ -v_2 $. If it takes the normal form $ M_2(\theta,\eta) $, then we have $ R_{43}^\delta+R_{61}^\delta=-2\theta\eta $. But $ \theta\ne0 $, since $ \tr M \ne 0 $. Therefore $ \eta=0 $ and we get the same normal form as before. The normal forms $ M_3(\theta) $ and $ M_4(\theta) $ are excluded, because in both cases $ R_{43}^\delta+R_{61}^\delta=2\theta $, which cannot be zero because $ \tr M \ne 0 $. Note, that in the case that $ M $ takes the normal form $ M_1(\theta,\theta) $, we have $ \delta_1=\delta_3=\delta_4=\delta_6=0 $, due to $ R_{53}^\delta=R_{51}^\delta=R_{26}^\delta=R_{24}^\delta=0 $.
	
	It remains to consider the case $\mu-\nu\neq 0$. Recall that $\delta_2=\delta_5 = \varepsilon_2(-\lambda +\rho)\neq 0$ and $h=\pm (\lambda -\rho )\neq 0$. We distinguish the following cases. Note that the expressions $h-\mu+\nu$ and $h+\mu-\nu$ cannot both vanish simultaneously, since $h\neq 0$
	
	\noindent
	\textbf{Case 1: $h-\mu+\nu \neq 0$ and $h+\mu-\nu\neq 0$.} 
	In this case we conclude from $R_{51}^\delta= R_{53}^\delta=R_{24}^\delta=R_{26}^\delta=0$ that 
$\delta_1=\delta_3=\delta_4=\delta_6=0$. Then from the remaining equations we obtain
\[ \mu^2 -\lambda^2 =-(\rho^2-\nu^2),\; \rho\mu-\lambda\nu=0,\; (\lambda+\rho)^2 = (\mu +\nu)^2.\]
The first two equations are satisfied if and only if $(\mu,\lambda)$ and $(\rho, \nu)$ are non-zero orthogonal
vectors of equal length in the Minkowski plane. This implies that  
$(\mu ,\lambda )= - (\nu,\rho)$, since $\mu-\nu\neq 0$. This yields the first family of solutions for which $M$ has two distinct real eigenvalues. 

	\textbf{Case 2: $h-\mu+\nu = 0$ and $h+\mu-\nu\neq 0$.} In this case the equations $R_{51}^\delta= R_{53}^\delta=R_{24}^\delta=R_{26}^\delta=0$ 
	reduce to $\delta_4=\delta_6=0$. Recall that $R_{52}^\delta=0$ yields the equation (\ref{52:eq}) and hence $\mu +\nu = \sigma_+ (\lambda +\rho )$ for some 
	$\sigma_+\in \{ \pm 1\}$. Since now $h=\mu -\nu$, the equation 
	$h^2 = (\lambda -\rho)^2$ yields $\mu -\nu = \sigma_-(\lambda -\rho)$, $\sigma_-\in \{\pm 1\}$. 
	We consider four subcases depending on the signs $\sigma_\pm$. In each case we first solve the two equations 
	$\mu \pm \nu = \sigma_\pm (\lambda \pm \rho)$. 
	
	\textbf{Case 2A: $\sigma_+=\sigma_-=1$.} In this case $\mu=\lambda$ and $\nu = \rho$. It turns out that the remaining components of the 
	generalized Ricci curvature vanish if $R_{42}^\delta$ does. Its vanishing is equivalent to $\mu \delta_1-\nu \delta_3=0$.  
	
	\textbf{Case 2B: $\sigma_+=\sigma_-=-1$.} In this case $\mu=-\lambda$ and $\nu = -\rho$. The remaining components of the 
	generalized Ricci curvature vanish if and only if $\mu \delta_1+\nu \delta_3=0$.  
	
	\textbf{Case 2C: $\sigma_+=1$ and $\sigma_-=-1$.} Then $\mu=\rho$, $\nu = \lambda$ and from $R_{41}^\delta=0$ we get $\mu^2-\nu^2=0$.
	We conclude that $\mu = -\nu$, since $h=\mu-\nu \neq 0$. The equation $R_{42}^\delta=0$ reduces to $\delta_1=\delta_3$ and the 
	remaining components of the generalized Ricci tensor then vanish.
	
	\textbf{Case 2D: $\sigma_+=-1$ and $\sigma_-=1$.} Then $\mu=-\rho$, $\nu = -\lambda$ and from $R_{41}^\delta=0$ we get again $\mu = -\nu$.
	In this case, the equation $R_{42}^\delta=0$ reduces to $\delta_1=-\delta_3$ and the 
	remaining components of the generalized Ricci tensor vanish.
	
	\textbf{Case 3: $h-\mu+\nu \neq 0$ and $h+\mu-\nu= 0$.} In this case the equations $R_{51}^\delta= R_{53}^\delta=R_{24}^\delta=R_{26}^\delta=0$ 
	reduce to $\delta_1=\delta_3=0$. From (\ref{52:eq}) and $h=\nu-\mu$, we still obtain $\mu \pm \nu = \sigma_\pm (\lambda \pm \rho)$ with 
	$\sigma_+, \sigma_-\in \{\pm 1\}$.	We consider
	again four subcases depending on the values of $\sigma_\pm$.
	
	\textbf{Case 3A: $\sigma_+=\sigma_-=1$.} As above, $\mu=\lambda$ and $\nu = \rho$. The equation $R_{15}^\delta=0$ yields 
	$\mu \delta_4-\nu\delta_6=0$ and the remaining components then vanish.
	
	\textbf{Case 3B: $\sigma_+=\sigma_-=-1$.} Here $\mu=-\lambda$, $\nu = -\rho$ and the equation $R_{15}^\delta=0$ yields 
	$\mu \delta_4+\nu\delta_6=0$. The remaining components then vanish.
	
	\textbf{Case 3C: $\sigma_+=1$ and $\sigma_-=-1$.} Here $\mu=\rho$, $\nu = \lambda$ and $R_{41}^\delta=0$ implies $\mu = -\nu$. Finally, $R_{15}^\delta=0$ yields $\delta_4=\delta_6$ and the remaining components vanish. 
	
	\textbf{Case 3D: $\sigma_+=-1$ and $\sigma_-=1$.} Here $\mu = -\rho$, $\nu = -\lambda$ and $R_{41}^\delta=0$ implies $\mu = -\nu$. 
	Finally, the equation $R_{15}^\delta=0$ yields $\delta_4=-\delta_6$ and all other components vanish. 	
\end{proof}
\begin{prop}
\label{divnotzero_u_deg:prop}
%
	
	Let $ G $ be an three-dimensional non-unimodular Lie group. As any three-dimensional non-unimodular Lie algebra, its Lie algebra $ \mathfrak{g} $ is isomorphic to a semidirect product of $ \R $ and $ \R^2 $, with $ \R $ acting on $ \R^2 $ by a 2 by 2 matrix $ M $ (of non-zero trace). Then there exists a generalized Einstein structure $(H,\mathcal G_g,\delta)$ on $ G $, such that the restriction of $ g $ to the unimodular kernel $ \mathfrak{u} $ is degenerate, if and only if $ H=0 $ and $ M $ has real eigenvalues. (All such structures have $\delta\neq 0$ and are described at the end of the proof.)
\end{prop}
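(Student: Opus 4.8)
The plan is to run the same computation as in the proof of Proposition~\ref{non-unimodular,degenerate:prop}, now carrying a general divergence. Since $g|_{\mathfrak{u}\times\mathfrak{u}}$ is degenerate the metric is forced to be indefinite, so I would take the null-adapted basis $(w_a)$ with $w_1,w_2\in\mathfrak{u}$, $w_2$ spanning the radical of $g|_{\mathfrak{u}}$, and $w_3$ transversal, together with the orthonormal basis $(v_a)$ and parameters $\lambda,\mu,\nu,\rho,h$ (with $\lambda+\rho\neq0$) introduced there. Reusing verbatim the Dorfman coefficients $\mathcal{B}_{ABC}$ and the divergence-free components $R_{ia}$ computed in that proof, I would add the divergence terms $R^\delta_{ia}=R_{ia}+\sum_c\varepsilon_c\mathcal{B}_{iac}\delta_c$ and $R^\delta_{ai}=R_{ia}-\sum_j\varepsilon_{j'}\mathcal{B}_{aij}\delta_j$ from the opening of Section~\ref{arbdiv:sec}, producing all eighteen components explicitly. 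Throughout I use that $M=\mathrm{ad}_{w_3}|_{\mathfrak{u}}$ is represented in the basis $(w_1,w_2)$ by a matrix whose characteristic discriminant equals $(\lambda-\rho)^2+4\mu\nu$, so that $M$ has real eigenvalues if and only if this quantity is non-negative.

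For necessity I would avoid solving the system and instead extract $h=0$ by a short chain of combinations. The equations $R^\delta_{41}=0$ and $R^\delta_{52}-R^\delta_{63}=0$ read $\tfrac12(h^2-\nu^2)=\varepsilon\lambda(\delta_2+\delta_3)$ and $h^2+\nu^2=-\varepsilon\rho(\delta_2+\delta_3)$, and the analogous $Ric^-$ identities replace $\delta_2+\delta_3$ by $\delta_5+\delta_6$; since $\lambda+\rho\neq0$ this forces $\delta_2+\delta_3=\delta_5+\delta_6$. Next, $R^\delta_{42}+R^\delta_{43}=0$ gives $(h-\nu)(\delta_2+\delta_3)=0$, while $R^\delta_{35}-R^\delta_{26}=0$ gives $(h-\nu)\delta_4=-\varepsilon(h^2+\nu^2)$. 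A two-case analysis then closes the argument: if $\delta_2+\delta_3=0$, the first pair yields $h^2-\nu^2=0$ and $h^2+\nu^2=0$, hence $h=\nu=0$; if instead $h=\nu$, the last identity forces $2h^2=0$, again $h=\nu=0$. Thus $H=h\,\mathrm{vol}_g=0$, and $\nu=0$ means $[w_3,w_2]\in\mathbb{R}w_2$, i.e.\ the null line is a real eigenvector of $M$; equivalently the discriminant is $(\lambda-\rho)^2\geq0$, so $M$ has real eigenvalues.

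For sufficiency, assume $H=0$ and that $M$ has real eigenvalues. Because $\mathrm{tr}\,M\neq0$, at least one eigenvalue is non-zero, so $M$ admits a real eigenvector $\xi$ with non-zero eigenvalue, and I would choose $g$ to be Lorentzian with $g|_{\mathfrak{u}}$ degenerate of radical $\mathbb{R}\xi$. In the adapted basis this gives $h=\nu=0$ and $\rho\neq0$. The off-diagonal equations then reduce to $\lambda\delta_1=\lambda\delta_4=0$ and $\mu(\delta_2+\delta_3)=\mu(\delta_5+\delta_6)=0$, the differences $R^\delta_{52}-R^\delta_{63}=0$ and $R^\delta_{25}-R^\delta_{36}=0$ force $\delta_2+\delta_3=\delta_5+\delta_6=0$, and the remaining diagonal equations become the two scalar conditions $\varepsilon\mu\delta_1-\varepsilon\rho\delta_2=\lambda^2+\rho^2$ and $\varepsilon\mu\delta_4-\varepsilon\rho\delta_5=\lambda^2+\rho^2$. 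Since $\rho\neq0$ these are solved by $\delta_1=\delta_4=0$, $\delta_2=\delta_5=-\varepsilon(\lambda^2+\rho^2)/\rho$, $\delta_3=-\delta_2$, $\delta_6=-\delta_5$, which satisfies all eighteen equations and yields the required structure $(H=0,\mathcal{G}_g,\delta)$.

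The main obstacle is not the (lengthy but mechanical) bookkeeping of the eighteen components but the sufficiency step. The reduced divergence equations are solvable only when the eigenvalue attached to the null line is non-zero: if it vanished, $\lambda\delta_1=0$ together with $\varepsilon\mu\delta_1=\lambda^2$ would be incompatible. The decisive point is therefore that one is free to align the null line of $g|_{\mathfrak{u}}$ with any real eigenvector of $M$, and that non-unimodularity guarantees one with non-zero eigenvalue. This is exactly where the real-eigenvalue hypothesis enters, and it separates this case from the complex-eigenvalue one, where $M$ has no real eigenvector and hence, by the necessity argument $\nu=0$, no degenerate solution can exist.
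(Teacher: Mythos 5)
Your proposal is correct and runs on the same computational rails as the paper's proof: the same null-adapted basis $(w_a)$ and orthonormal basis $(v_a)$ with parameters $(\lambda,\mu,\nu,\rho,h)$, $\lambda+\rho\neq 0$, from Proposition~\ref{non-unimodular,degenerate:prop}, the same Dorfman coefficients, and the same divergence corrections $R^\delta_{ia}=R_{ia}+\sum_c\varepsilon_c\mathcal{B}_{iac}\delta_c$, $R^\delta_{ai}=R_{ia}-\sum_j\varepsilon_{j'}\mathcal{B}_{aij}\delta_j$. Your necessity chain (via $R^\delta_{41}$, $R^\delta_{52}-R^\delta_{63}$, their $Ric^-$ counterparts, $R^\delta_{42}+R^\delta_{43}$ and $R^\delta_{35}-R^\delta_{26}$) is a different but equivalent selection of linear combinations from the paper's (which uses $R^\delta_{42}+R^\delta_{43}$ and $R^\delta_{51}+R^\delta_{61}$ and then a two-case analysis); both land on $h=\nu=0$, hence $H=0$ and triangularity, so reality of the eigenvalues of $M$.

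Where you genuinely improve on the paper is the sufficiency step. The paper reduces the Einstein condition to the system (\ref{divnotzero_u_deg:eq}) and then simply asserts that for any such matrix with $\nu=0$ one can find $\delta\in E^*$ solving it. As you observe, this is not literally true: if the eigenvalue attached to the null line vanishes, i.e.\ $\rho=0$ and hence $\lambda\neq 0$, then $\lambda\varepsilon_1\delta_1=0$ forces $\delta_1=0$, and the equation $\lambda^2+\rho^2-\varepsilon_1\mu\delta_1+\varepsilon_2\rho\delta_2=0$ collapses to $\lambda^2=0$, a contradiction. Your fix --- aligning the radical of $g|_{\mathfrak{u}\times\mathfrak{u}}$ with a real eigenvector of $M$ whose eigenvalue is \emph{non-zero}, which exists precisely because $\mathrm{tr}\,M\neq 0$ --- is the missing observation; it both repairs the argument and explains why the real-eigenvalue hypothesis is exactly the right condition. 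Your explicit solution $\delta_1=\delta_4=0$, $\delta_2=\delta_5=-\varepsilon_1(\lambda^2+\rho^2)/\rho$, $\delta_3=-\delta_2$, $\delta_6=-\delta_5$ does satisfy all eighteen components, so the construction closes correctly.
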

\begin{proof}
	Note first that the metric $ g $ necessarily has to be indefinite. As in the proof of Proposition~\ref{non-unimodular,degenerate:prop}, there exists an orthonormal basis $ (v_a)_a $ of $ (\mathfrak{g},g) $ such that $g(v_1,v_1)=g(v_2,v_2)$ and $ \lambda,\mu,\nu,\rho\in\R $ such that 
	\begin{eqnarray*}
		\left[v_1,v_2\right]&=&\varepsilon_{1}\lambda v_1+\varepsilon_2\mu v_2-\varepsilon_3\mu v_3\\
		\left[v_2,v_3\right]&=&\varepsilon_{1}\nu v_1+\varepsilon_2\rho v_2-\varepsilon_3\rho v_3\\
		\left[v_3,v_1\right]&=&\varepsilon_{1}\lambda v_1+\varepsilon_2\mu v_2-\varepsilon_3\mu v_3
	\end{eqnarray*}
	with $ \lambda+\rho\neq0 $. Using the Dorfman coefficients, that were computed in the proof of Proposition~\ref{non-unimodular,degenerate:prop}, we obtain the components of the Ricci tensor
		\begin{eqnarray*}
		R_{41}^\delta&=&R_{41}+\varepsilon_2\mathcal{B}_{412}\delta_2+\varepsilon_3\mathcal{B}_{413}\delta_3\\
		&=&\frac12h^2-\frac12\nu^2-\varepsilon_2\lambda\delta_2+\varepsilon_3\lambda\delta_3\\
		R_{42}^\delta&=&R_{42}+\varepsilon_1\mathcal{B}_{421}\delta_1+\varepsilon_3\mathcal{B}_{423}\delta_3\\
		&=&\frac12\rho\left(h-\nu\right)-\frac12\lambda\left(h+\nu\right)+\varepsilon_1\lambda\delta_1+\varepsilon_3\frac12\left(h-\nu\right)\delta_3\\
		R_{43}^\delta&=&R_{43}+\varepsilon_1\mathcal{B}_{431}\delta_1+\varepsilon_2\mathcal{B}_{432}\delta_2\\
		&=&\frac12\lambda\left(h+\nu\right)-\frac12\rho\left(h-\nu\right)-\varepsilon_1\lambda\delta_1-\varepsilon_2\frac12\left(h-\nu\right)\delta_2\\
		R_{51}^\delta&=&R_{51}+\varepsilon_2\mathcal{B}_{512}\delta_2+\varepsilon_3\mathcal{B}_{513}\delta_3\\
		&=&\frac12\lambda\left(h-\nu\right)-\frac12\rho\left(h+\nu\right)-\varepsilon_2\mu\delta_2-\varepsilon_3\frac12\left(h-2\mu+\nu\right)\delta_3\\
		R_{52}^\delta&=&R_{52}+\varepsilon_1\mathcal{B}_{521}\delta_1+\varepsilon_3\mathcal{B}_{523}\delta_3\\
		&=&-\lambda^2+\frac12h^2+\frac12\nu^2+\mu\nu-\rho^2+\varepsilon_1\mu\delta_1-\varepsilon_3\rho\delta_3\\
		R_{53}^\delta&=&R_{53}+\varepsilon_1\mathcal{B}_{531}\delta_1+\varepsilon_2\mathcal{B}_{532}\delta_2\\
		&=&\lambda^2-\mu\nu+\rho^2+\varepsilon_1\frac12\left(h-2\mu+\nu\right)\delta_1+\varepsilon_2\rho\delta_2\\
		R_{61}^\delta&=&R_{61}+\varepsilon_2\mathcal{B}_{612}\delta_2+\varepsilon_3\mathcal{B}_{613}\delta_3\\
		&=&-\frac12\lambda\left(h-\nu\right)+\frac12\rho\left(h+\nu\right)+\varepsilon_2\frac12\left(h+2\mu+\nu\right)\delta_2-\varepsilon_3\mu\delta_3\\
		R_{62}^\delta&=&R_{62}+\varepsilon_1\mathcal{B}_{621}\delta_1+\varepsilon_3\mathcal{B}_{623}\delta_3\\
		&=&\lambda^2-\mu\nu+\rho^2-\varepsilon_1\frac12\left(h+2\mu+\nu\right)\delta_1+\varepsilon_3\rho\delta_3\\
		R_{63}^\delta&=&R_{63}+\varepsilon_1\mathcal{B}_{631}\delta_1+\varepsilon_2\mathcal{B}_{632}\delta_2\\
		&=&-\lambda^2-\frac12h^2-\frac12\nu^2+\mu\nu-\rho^2+\varepsilon_1\mu\delta_1-\varepsilon_2\rho\delta_2\\
		R_{14}^\delta&=&R_{41}-\varepsilon_2\mathcal{B}_{145}\delta_5-\varepsilon_3\mathcal{B}_{146}\delta_6\\
		&=&\frac12h^2-\frac12\nu^2-\varepsilon_2\lambda\delta_5+\varepsilon_3\lambda\delta_6\\
		R_{24}^\delta&=&R_{42}-\varepsilon_2\mathcal{B}_{245}\delta_5-\varepsilon_3\mathcal{B}_{246}\delta_6\\
		&=&\frac12\rho\left(h-\nu\right)-\frac12\lambda\left(h+\nu\right)-\varepsilon_2\mu\delta_5+\varepsilon_3\frac12\left(h+2\mu-\nu\right)\delta_6\\
		R_{34}^\delta&=&R_{43}-\varepsilon_2\mathcal{B}_{345}\delta_5-\varepsilon_3\mathcal{B}_{346}\delta_6\\
		&=&\frac12\lambda\left(h+\nu\right)-\frac12\rho\left(h-\nu\right)-\varepsilon_2\frac12\left(h-2\mu-\nu\right)\delta_5-\varepsilon_3\mu\delta_6\\
		R_{15}^\delta&=&R_{51}-\varepsilon_1\mathcal{B}_{154}\delta_4-\varepsilon_3\mathcal{B}_{156}\delta_6\\
		&=&\frac12\lambda\left(h-\nu\right)-\frac12\rho\left(h+\nu\right)+\varepsilon_1\lambda\delta_4-\varepsilon_3\frac12\left(h+\nu\right)\delta_6\\
		R_{25}^\delta&=&R_{52}-\varepsilon_1\mathcal{B}_{254}\delta_4-\varepsilon_3\mathcal{B}_{256}\delta_6\\
		&=&-\lambda^2+\frac12h^2+\frac12\nu^2+\mu\nu-\rho^2+\varepsilon_1\mu\delta_4-\varepsilon_3\rho\delta_6\\
		R_{35}^\delta&=&R_{53}-\varepsilon_1\mathcal{B}_{354}\delta_4-\varepsilon_3\mathcal{B}_{356}\delta_6\\
		&=&\lambda^2-\mu\nu+\rho^2+\varepsilon_1\frac12\left(h-2\mu-\nu\right)\delta_4+\varepsilon_3\rho\delta_6\\
		R_{16}^\delta&=&R_{61}-\varepsilon_1\mathcal{B}_{164}\delta_4-\varepsilon_2\mathcal{B}_{165}\delta_5\\
		&=&-\frac12\lambda\left(h-\nu\right)+\frac12\rho\left(h+\nu\right)-\varepsilon_1\lambda\delta_4+\varepsilon_2\frac12\left(h+\nu\right)\delta_5\\
		R_{26}^\delta&=&R_{62}-\varepsilon_1\mathcal{B}_{264}\delta_4-\varepsilon_2\mathcal{B}_{265}\delta_5\\
		&=&\lambda^2-\mu\nu+\rho^2-\varepsilon_1\frac12\left(h+2\mu-\nu\right)\delta_4+\varepsilon_2\rho\delta_5\\
		R_{36}^\delta&=&R_{63}-\varepsilon_1\mathcal{B}_{364}\delta_4-\varepsilon_2\mathcal{B}_{365}\delta_5\\
		&=&-\lambda^2-\frac12h^2-\frac12\nu^2+\mu\nu-\rho^2+\varepsilon_1\mu\delta_4-\varepsilon_2\rho\delta_5.
	\end{eqnarray*}
	Assume now that $(H,\mathcal G_g,\delta)$ is generalized Einstein. We first want to show that $ h=\nu=0 $ and $ \varepsilon_2\delta_2=\varepsilon_3\delta_3 $. For this, consider the system of equations $ 0=R_{42}^\delta+R_{43}^\delta=-\frac12\left(h-\nu\right)\left(\varepsilon_2\delta_2-\varepsilon_3\delta_3\right) $ and $ 0=R_{51}^\delta+R_{61}^\delta=\frac12\left(h+\nu\right)\left(\varepsilon_2\delta_2-\varepsilon_3\delta_3\right) $. This implies that either $ h=\nu=0 $ or $ \varepsilon_2\delta_2=\varepsilon_3\delta_3 $. If $ h=\nu=0 $, then $ 0=R_{41}^\delta=-\lambda\left(\varepsilon_2\delta_2-\varepsilon_3\delta_3\right) $ and $ 0=R_{63}^\delta-R_{52}^\delta=-\rho\left(\varepsilon_2\delta_2-\varepsilon_3\delta_3\right) $, which can only be the case if  $ \varepsilon_2\delta_2=\varepsilon_3\delta_3 $, since $ \lambda+\rho\ne0 $. If we otherwise assume, that $ \varepsilon_2\delta_2=\varepsilon_3\delta_3 $, then $ 0=R_{63}^\delta-R_{52}^\delta=-h^2-\nu^2 $ and therefore $ h=\nu=0 $. Similarly one can also show that $ \varepsilon_2\delta_5=\varepsilon_3\delta_6 $. Hence, the Einstein condition is equivalent to the set of equations 
	\begin{equation}\label{divnotzero_u_deg:eq}
		\begin{split}
			h=\nu&=0\\
			\lambda\varepsilon_1\delta_1=\lambda\varepsilon_1\delta_4&=0\\
			\varepsilon_2\delta_2&=\varepsilon_3\delta_3\\
			\varepsilon_2\delta_5&=\varepsilon_3\delta_6\\
			\lambda^2+\rho^2-\varepsilon_1\mu\delta_1+\varepsilon_2\rho\delta_2&=0\\
			\lambda^2+\rho^2-\varepsilon_1\mu\delta_4+\varepsilon_2\rho\delta_5&=0.
		\end{split}	
	\end{equation}
	Now, as in the proof of Proposition~\ref{non-unimodular,degenerate:prop}, there exists a basis $ (w_a) $ of $ \mathfrak{g} $, such that $ w_1,w_2\in\mathfrak{u} $,
	\begin{eqnarray*}
		\left[w_1,w_2\right]&=&0\\
		\left[w_3,w_1\right]&=&-\varepsilon_1\lambda w_1-2\varepsilon_2\mu w_2\\
		\left[w_3,w_2\right]&=&-\frac12\varepsilon_1\nu w_1-\varepsilon_2\rho w_2 
	\end{eqnarray*}
	and $ g(w_a,w_b) $ satisfies Equation (\ref{indef basis:eq}). Hence $ \mathfrak{g} $ is a semidirect product of $ \R\cong \mathrm{span}\{ w_3\} $ and $ \R^2\cong \mathrm{span}\{ w_1,w_2\} $, the former acting on the latter with the matrix 
	\begin{equation*}
		 M=\left( \begin{array}{cc}-\varepsilon_1\lambda& -\frac12\varepsilon_1\nu\\ -2\varepsilon_2\mu & -\varepsilon_2\rho \end{array}\right).
	\end{equation*}
	Since in the Einstein case $ \nu=0 $, its eigenvalues $ -\varepsilon_1\lambda $ and $ -\varepsilon_2\rho $ are real. Furthermore, for any such matrix, with $ \nu=0 $, one can find $ \delta\in E^* $, such that $(H=0,\mathcal G_g,\delta)$ is generalized Einstein. In fact, 
	if $\lambda\neq 0$, then $h=\nu=\delta_1=\delta_4=0$, $\rho\neq 0$ and the solution is uniquely determined by the free parameters  
	$\lambda \neq 0, \rho\neq 0$ and $\mu$ as $\delta_2=\delta_5=-\delta_3=-\delta_6= -\varepsilon_2 (\lambda^2 +\rho^2)/\rho\neq 0$. 
	If $\lambda=0$, then $\rho \neq 0$ (as $\lambda +\rho \neq 0$), $h=\nu = \lambda=0$ and  the solution is uniquely determined by the free parameters 
	$\mu, \delta_1$ and $\delta_4$ as $\delta_2= -\delta_3=-\varepsilon_2(\rho^2-\varepsilon_1\mu \delta_1)/\rho$ and $\delta_5=-\delta_6=-\varepsilon_2(\rho^2 -\varepsilon_1\mu \delta_4)/\rho$. Note that 
	all the solutions have non-zero divergence and that $M$ has rank 1 if and only if $\lambda =0$. 
\end{proof}

\subsection{Riemannian divergence}
In this section we want to determine those solutions $(G,H,\mathcal{G},\delta)$ to the generalized Einstein equation for which the divergence $ \delta $ coincides with the Riemannian divergence $\delta^\mathcal{G} =-\tau \circ \pi \in E^*$ (see Proposition \ref{Riemdiv:prop}). If the Lie group is unimodular, the trace-form $ \tau $, and therefore the Riemannian divergence, is zero. This was covered in Theorem \ref{Einstein:thm}. It remains to specify the results of Proposition \ref{divnotzero_u_nondeg:prop} and Proposition \ref{divnotzero_u_deg:prop} to the case $ \delta=\delta^\mathcal{G} $. 

In the case that $ g $ is non-degenerate on the unimodular kernel $ \mathfrak{u} $, $ \delta=\delta^\mathcal{G} $ holds if and only if the components of $ \delta $ in the basis $ (v_a) $ of $ \mathfrak{g} $ from Proposition \ref{divnotzero_u_nondeg:prop} are
\begin{equation*}
	\begin{split}
		\delta_1=\delta_4=-\tr \mathrm{ad}_{v_1}&=0\\
		\delta_2=\delta_5=-\tr \mathrm{ad}_{v_2}&\ne0\\
		\delta_3=\delta_6=-\tr \mathrm{ad}_{v_3}&=0.
	\end{split}
\end{equation*}
Therefore the relevant solutions are those for which $ M=\mathrm{ad}_{v_2}|_{\mathfrak{u}} $ is diagonalizable and 
$\delta_1=\delta_3=\delta_4=\delta_6=0$, in virtue of Proposition~\ref{divnotzero_u_nondeg:prop}.

In the case that $ g $ is degenerate on the unimodular kernel $ \mathfrak{u} $, we compute the components of $ \delta $ in the basis $ (v_a) $ of $ \mathfrak{g} $ from Proposition~\ref{divnotzero_u_deg:prop} as
\begin{equation*}
\begin{split}
\delta_1=\delta_4=-\tr \mathrm{ad}_{v_1}&=0\\
\delta_2=\delta_5=-\tr \mathrm{ad}_{v_2}&=\varepsilon_1\left(\lambda-\rho\right)\\
\delta_3=\delta_6=-\tr \mathrm{ad}_{v_3} 
&=-\varepsilon_1\left(\lambda-\rho\right).
\end{split}
\end{equation*}
From the system of equations (\ref{divnotzero_u_deg:eq}), which is equivalent to the Einstein condition, we see now that $ \lambda^2+\lambda\rho=0 $. Hence $ \lambda=0 $, since $ \lambda+\rho\ne0 $. Finally we conclude that $ \mathfrak{g}\cong\R\ltimes_A \R^2 $, where $ A $ has one eigenvalue equal to zero and one non-zero eigenvalue.

\begin{prop}
\label{Riem_div_sol:prop}
	Let $(H,\mathcal G_g,\delta)$ be a generalized Einstein structure on an three\--di\-men\-sio\-nal non-unimodular Lie group $G$, with $ \delta=\delta^{\mathcal{G}_g} $ the Riemannian divergence of $ \mathcal{G}_g $. Let $ \mathfrak{u} $ be the unimodular kernel of the Lie algebra $ \mathfrak{g} $. If the pseudo-Riemannian metric $ g $ is non-degenerate on $ \mathfrak{u} $, then $\mathfrak{g} \cong \mathbb{R}\ltimes_A \mathbb{R}^2$ for a diagonalizable matrix $ A $, with $ \tr A \ne 0 $. If  $ g $ is degenerate on  $ \mathfrak{u} $, then $ \mathfrak{g}\cong\R\ltimes_A \R^2 $ for a matrix $ A $, whose kernel is one-dimensional. In both cases,  the matrix $A$ can be brought to the form 
\[A = \left( \begin{array}{cc}1&0\\ 0&s\end{array}\right), \quad s \in (-1,1],\] 
by an automorphism of $\mathfrak{g}$, where $s =0$ if 
$\mathfrak{u}$ is degenerate. (The precise tensors $H$, $g$ and $\delta$ are specified in Proposition~\ref{divnotzero_u_nondeg:prop}, Proposition~\ref{divnotzero_u_deg:prop} 
by specializing to the formulas for $\delta=\delta^{\mathcal{G}_g}$ given in this section.)
\end{prop}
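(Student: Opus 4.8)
The plan is to read off the consequences of specializing the divergence to the Riemannian one, $\delta=\delta^{\mathcal{G}_g}=-\tau\circ\pi$ (Proposition~\ref{Riemdiv:prop}), in the two classification results already obtained for the non-unimodular case, Proposition~\ref{divnotzero_u_nondeg:prop} and Proposition~\ref{divnotzero_u_deg:prop}. The guiding observation, used in the computations preceding this statement, is that the Riemannian divergence is entirely governed by the trace-form: in any adapted orthonormal basis $(v_a)$ its components are $\delta_a=\delta_{a+3}=-\tau(v_a)=-\tr\mathrm{ad}_{v_a}$, which in particular vanish on the unimodular kernel $\mathfrak{u}=\ker\tau$. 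Thus the compatibility condition $\delta_i=\delta_{i'}$ required in the earlier propositions holds automatically, and only the position of $\mathfrak{u}$ relative to the basis matters.

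For the non-degenerate case I would use the basis of Proposition~\ref{divnotzero_u_nondeg:prop}, in which $v_1,v_3\in\mathfrak{u}$, so that $\delta_1=\delta_3=\delta_4=\delta_6=0$ while $\delta_2=\delta_5=-\tr\mathrm{ad}_{v_2}\neq0$ because $v_2\notin\mathfrak{u}$. This places the structure in the branch $\delta_2=\delta_5\neq0$ of that proposition, which already yields that $M:=\mathrm{ad}_{v_2}|_{\mathfrak{u}}$ is diagonalizable with real eigenvalues and $\tr M\neq0$; writing $\mathfrak{g}=\mathbb{R}\ltimes_A\mathbb{R}^2$ with $A=M$ gives the first assertion. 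For the degenerate case I would substitute the computed components $\delta_1=\delta_4=0$ and $\delta_2=\delta_5=-\delta_3=-\delta_6=\varepsilon_1(\lambda-\rho)$ into the Einstein system~(\ref{divnotzero_u_deg:eq}); its last two relations then collapse to $\lambda(\lambda+\rho)=0$, and since non-unimodularity forces $\lambda+\rho\neq0$ one obtains $\lambda=0$. Hence the structure matrix has eigenvalues $0$ and a non-zero one, i.e.\ a one-dimensional kernel, which is the second assertion.

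It remains to bring $A$ to the common normal form, the one genuinely new step. I would invoke the classification of the semidirect products $\mathbb{R}\ltimes_A\mathbb{R}^2$ up to isomorphism: any isomorphism between two such algebras must preserve the characteristic ideal $\mathfrak{u}=\mathbb{R}^2$ and rescale the complementary $\mathbb{R}$-factor, so on the defining matrix it acts by $A\mapsto tPAP^{-1}$ with $t\in\mathbb{R}\setminus\{0\}$ and $P\in\mathrm{GL}(2,\mathbb{R})$. Consequently $A$ is determined, up to automorphism of $\mathfrak{g}$, by its unordered pair of eigenvalues up to a common non-zero scale. In both cases these eigenvalues are real --- diagonalizable with real spectrum in the non-degenerate case by Proposition~\ref{divnotzero_u_nondeg:prop}, and $0$ together with a non-zero value in the degenerate case. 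Ordering them as $(a,b)$ with $|a|\ge|b|$, so that $a\neq0$ because $\tr A\neq0$, and rescaling by $1/a$ puts $A$ into the form $\mathrm{diag}(1,s)$ with $s=b/a\in[-1,1]$. The value $s=-1$ would mean $b=-a$, hence $\tr A=0$, and is excluded; thus $s\in(-1,1]$, and in the degenerate case $b=0$ gives $s=0$.

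I expect the only genuine subtlety to lie in this last step: one must verify that the eigenvalue rescaling and interchange are realized by actual Lie-algebra automorphisms rather than mere linear maps, and then treat the endpoints of the parameter interval with care --- retaining the double-eigenvalue value $s=1$ while ruling out $s=-1$ through $\tr A\neq0$. Everything else reduces to substituting the Riemannian-divergence components into the results already established.
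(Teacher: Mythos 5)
Your proposal is correct and takes essentially the same approach as the paper: you substitute the components of $\delta^{\mathcal{G}_g}=-\tau\circ\pi$ into Propositions~\ref{divnotzero_u_nondeg:prop} and~\ref{divnotzero_u_deg:prop}, which forces the diagonalizable branch ($\delta_2=\delta_5=-\tr M\neq0$) when $g|_{\mathfrak{u}\times\mathfrak{u}}$ is non-degenerate and collapses the system (\ref{divnotzero_u_deg:eq}) to $\lambda(\lambda+\rho)=0$, hence $\lambda=0$, in the degenerate case. Your explicit normal-form step (isomorphisms act on the defining matrix by $A\mapsto tPAP^{-1}$, so one orders and rescales the real eigenvalues and excludes $s=-1$ via $\tr A\neq0$) is sound and simply fills in what the paper asserts with reference to the classification in \cite{GOV}.
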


\section{Tables}
\label{tables:sec}
In this section we want to summarize our results. For further details we refer to Section~\ref{class:sec}. Here $ L \mathsf{D} $ and $ L \neg\mathsf{D} $ mean that the endomorphism $ L $ defined in Equation~(\ref{defL:eq}) is diagonalizable and not diagonalizable, respectively. Furthermore we write def, indef, deg and non-deg instead of definite, indefinite, degenerate and non-degenerate. For the notations of the isomorphism classes of Lie algebras we refer to \cite[Ch.\ 7, Theorem 1.4]{GOV}. Following  \cite[Ch.\ 7, Theorem 1.4]{GOV}, we restrict the parameter $\lambda$ in $\mathfrak{r}_{3,\lambda}(\R)$  to 
$0<|\lambda| \le 1$. In addition, we exclude $\lambda=-1$, since  $\mathfrak{r}_{3,-1}(\R)\cong \mathfrak{e}(1,1)$.

\begin{table}[h]
	\centering
\begin{tabular}{ |c|c|c|c| } 
	\hline
	class of Lie algebras & H & g &  \\ 
	\hline
	$ \R^3 $ & $ =0 $ & flat & $ L \mathsf{D}$ \\ 
	$ \mathfrak{so}(3) $ & $ \ne0 $ & def & $ L \mathsf{D}$ \\ 
	$ \mathfrak{so}(2,1) $ & $ \ne0 $ & indef & $ L \mathsf{D}$ \\
	$ \mathfrak{e}(2) $ & $ =0 $ & flat, def on $ [\mathfrak{g},\mathfrak{g}] $ & $ L \mathsf{D}$ \\
	$ \mathfrak{e}(1,1) $ & $ =0 $ & flat, indef on $ [\mathfrak{g},\mathfrak{g}] $ & $ L \mathsf{D}$ \\
	$ \mathfrak{heis} $ & $ =0 $ & flat, indef & $ L \neg\mathsf{D}$ \\
	$ \mathfrak{r}'_{3,1}(\R)$ & $ =0 $ & indef & $g|_{\mathfrak{u}\times\mathfrak{u}} $ non-deg \\
	\hline
\end{tabular}
\caption{Divergence-free solutions to the generalized Einstein equation}
\end{table}

\begin{sidewaystable}[p]	
	\begin{tabular}{ |c|c|c|c|c| } 
		\hline
		class of Lie algebra & H & g & $ \delta $ &  \\ 
		\hline
		$ \R^3 $ & $ =0 $ &  & $ \delta\in E^* $ arbitrary & $ L \mathsf{D}$ \\ 
		$ \mathfrak{so}(3) $ & $ \ne0 $ & def & $ \delta|_{E_+}=0 $ or $ \delta|_{E_-}=0 $ & $ L \mathsf{D}$ \\ 
		$ \mathfrak{so}(2,1) $ & $ \ne0 $ & indef & $ \delta|_{E_+}=0 $ or $ \delta|_{E_-}=0 $ & $ L \mathsf{D}$ \\
		$ \mathfrak{e}(2) $ & $ =0 $ & def on $ [\mathfrak{g},\mathfrak{g}] $& $ \delta_{\sigma(1)}=\delta_{\sigma(2)}=\delta_{\sigma(1)+3}=\delta_{\sigma(2)+3}=0 $ &  $ L \mathsf{D}$ \\
		$ \mathfrak{e}(1,1) $ & $ =0 $ & indef on $ [\mathfrak{g},\mathfrak{g}] $& $ \delta_{\sigma(1)}=\delta_{\sigma(2)}=\delta_{\sigma(1)+3}=\delta_{\sigma(2)+3}=0 $ & $ L \mathsf{D}$ \\
		$ \mathfrak{heis} $ & $ =0 $ & indef & $ \delta_1=\delta_4=0,\delta_2=\delta_3,\delta_5=\delta_6 $ & $ L \neg\mathsf{D}$ \\
		$ \mathfrak{e}(1,1) $ & $ =0 $ & indef & $ \delta_1=\delta_4\ne0,\delta_2=\delta_3=\delta_5=\delta_6=0 $ & $ L \neg\mathsf{D}$\\
		$ \mathfrak{e}(1,1) $ & $ =0 $ & indef & $ \delta_{1}=-\delta_4=-\delta_3=\delta_6=-\sqrt2, \delta_2=\delta_5 $ & $ L \neg\mathsf{D}$ \\
		$ \mathfrak{r}_2(\R)\oplus\R $ & $ \ne0 $ & indef & $\delta_2=\delta_5=-\tr \mathrm{ad}_{v_2}\ne0 $, $\delta$ specified in 
		Prop.~\ref{divnotzero_u_nondeg:prop}  & $g|_{\mathfrak{u}\times\mathfrak{u}} $ non-deg \\
		$ \mathfrak{r}_{3,\lambda}(\R),\quad \lambda\ne1 $ & $ \ne0 $ & indef & $\delta_2=\delta_5=-\tr \mathrm{ad}_{v_2}\ne0$, $\delta$ specified in 
		Prop.~\ref{divnotzero_u_nondeg:prop} & $g|_{\mathfrak{u}\times\mathfrak{u}} $ non-deg \\
		$ \mathfrak{r}_{3,1}(\R) $ & $ \ne0 $ & indef & $\delta_A=0, A=1,3,4,6, \delta_2=\delta_5=-\tr \mathrm{ad}_{v_2}\ne0 $ & $g|_{\mathfrak{u}\times\mathfrak{u}} $ non-deg \\
		$ \mathfrak{r}_{2}(\R)\oplus\R $ & $ =0 $ & indef &$\delta_1$ and $\delta_4$ arbitrary determine $\delta\neq 0$, cf.\ Prop.~\ref{divnotzero_u_deg:prop}& $g|_{\mathfrak{u}\times\mathfrak{u}} $ deg\\
		$ \mathfrak{r}_{3}(\R)$ & $ =0 $ & indef &$\delta_1=\delta_4=0$, $\delta_2=\delta_5=-\delta_3=-\delta_6\neq 0$ & $g|_{\mathfrak{u}\times\mathfrak{u}} $ deg\\
		$ \mathfrak{r}_{3,\lambda}(\R), $ & $ =0 $ & indef &$\delta_1=\delta_4=0$, $\delta_2=\delta_5=-\delta_3=-\delta_6\neq 0$ & $g|_{\mathfrak{u}\times\mathfrak{u}} $ deg\\
		\hline
	\end{tabular}
	\centering
	\caption{Solutions to the generalized Einstein equation with arbitrary divergence}
\end{sidewaystable}
\newpage


\begin{thebibliography}{CEHL}
\bibitem[ADG]{ADG}L. \'Alvarez-C\'onsul, A. De Arriba de La Hera, M. Garcia-Fernandez, \textit{(0,2) Mirror Symmetry on homogeneous Hopf surfaces}, 	arXiv:2012.01851 [math.DG].
\bibitem[CSW]{CSW} A. Coimbra, C. Strickland-Constable, D. Waldram, \textit{Supergravity as Generalised Geometry I: Type II Theories}, JHEP \textbf{1111}, 091--091.
\bibitem[CD]{CD} V. Cort\'es and L. David, \textit{Generalized connections, spinors, and integrability of generalized structures on Courant algebroids}, Moscow Mathematical Journal \textbf{21}, no. 4 (2021), 695--736.
\bibitem[CEHL]{CEHL} V. Cort\'es, J.\ Ehlert, A.\ Haupt and D.\ Lindemann, \textit{Classification of left-invariant Einstein metrics on $\mathrm{SL}(2, \mathbb{R}) \times \mathrm{SL}(2, \mathbb{R})$ that are bi-invariant under a one-parameter subgroup}, arXiv:2201.07343 [math.DG].
\bibitem[G]{G} M. Garc\'ia Fern\'andez, \textit{Ricci flow, Killing spinors and T-duality in generalized geometry}, Adv. Math. \textbf{350} (2019), 1059--1108.
\bibitem[GS]{GS} M. Garc\'ia Fern\'andez, C. S. Shahbazi, \textit{Self-dual generalized metrics for pure $\mathcal{N}=1$ six-dimensional Supergravity}, arXiv:1505.03088 [hep-th].
\bibitem[GSt]{GSt} M. Garc\'ia Fern\'andez, J. Streets, \textit{Generalized Ricci Flow}, Vol. 76, American Mathematical Soc., 2021.
\bibitem[GOV]{GOV} V. Gorbatsevich, A. Onishchik, E. Vinberg, \textit{Lie groups and Lie algebras III: Structure of Lie groups and Lie algebras}, English transl. in Encycl. Math Sc. 41, Springer-Verlag, Berlin, 1994.
\bibitem[Gu1]{Gualtieri}  M. Gualtieri, \textit{Generalized complex geometry}, Ph.D thesis, University of Oxford, 2004.
\bibitem[Gu2]{Gualtieri2} M. Gualtieri, \textit{Generalized complex geometry}, Ann. Math. \textbf{174} (2011), 75--123. 
\bibitem[H]{H} N. J. Hitchin, \textit{Generalized Calabi-Yau manifolds}, Q. J. Math. \textbf{54} no. 3 (2003),  281--308.
\bibitem[M]{M} J.\ Milnor, \textit{Curvatures of left-invariant metrics on Lie groups}, Adv.\ Math. \textbf{21} (1976), 293--329.
\bibitem[MS]{MS} \'A. Murcia, C. S. Shahbazi, \textit{Contact metric three manifolds and Lorentzian geometry with 
torsion in six-dimensional supergravity}, J. Geom. and Phys. \textbf{158},
(2020), p. 103868. 
\bibitem[PR1]{PR1} F. Podest\`a, A. Raffero, \textit{Bismut Ricci flat manifolds with symmetries}, arXiv:2202.00417 [math.DG].
\bibitem[PR2]{PR2} F. Podest\`a, A. Raffero, \textit{Infinite families of homogeneous Bismut Ricci flat manifolds}, arXiv:2205.12690 [math.DG].
\end{thebibliography}
\end{document}